\theoremstyle{plain}
\newtheorem{thm}{Theorem}
\newtheorem{prop}{Proposition}[section]
\newtheorem{lem}[prop]{Lemma}
\newtheorem{cor}[prop]{Corollary}
\newtheorem{defi}[prop]{Definition}
\newtheorem{rmk}[prop]{Remark}
\newcommand {\R} {\mathbb{R}}
\newcommand {\N} {\mathbb{N}}
\newcommand {\G} {\mathcal{G}}
\newcommand {\p} {\partial}
\newcommand {\dt} {\partial_t}
\newcommand {\supp} {\text{supp}}
\DeclareMathOperator{\di}{div}
\DeclareMathOperator {\Ree} {Re}
\numberwithin{equation}{section}
\begin{document}
\title[parabolic Signorini problem]{Optimal regularity for the variable coefficients parabolic Signorini problem}
\author{Vedansh Arya}
\address{Department of Mathematics and Statistics,
University of Jyväskylä, Finland}
\email{vedansh.v.arya@jyu.fi}
\author{Wenhui Shi}
\address{Lehrstuhl für Angewandte Analysis, RWTH Aachen, 52056 Aachen, Germany}
\email{shi@math1.rwth-aachen.de}

\begin{abstract}
In this paper we discuss the optimal regularity  of the variable coefficient parabolic Signorini problem with $W^{1,1}_p$ coefficients and $L^p$ inhomogeneity, where $p>n+2$ with $n$ being the space dimension. Relying on an parabolic Carleman estimate and an epiperimetric inequality, we show the optimal regularity of the solutions as well as the regularity of the regular free boundary. 
\end{abstract}

\maketitle

\tableofcontents

\section{Introduction}
In this paper, we study the local regularity of solutions and free boundaries to the parabolic Signorini problem with variable coefficients and nonzero inhomogeneities. To describe the problem at hand,  let $Q_1:=B_1\times (-1,0]\subset \R^{n+1}$, $n\geq 2$, be the parabolic cylinder, where $B_1\subset \R^n$ is the open unit ball centered at the origin; $Q_1^+:=Q_1\cap \{x_n>0\}$ and $Q'_1:=Q_1\cap \{x_n=0\}$. Let $u:Q_1^+\rightarrow \R$ be a solution to 
\begin{equation}\label{eq:main}
\begin{split}
\p_t u - \p_i(a^{ij}(x,t)\p_j u)= f(x,t) &\text{ in } Q_1^+,\\
u\geq 0, \quad \p_\nu^A u \geq 0,\quad u\p_\nu^A u =0 &\text{ on } Q'_1.
\end{split}
\end{equation}
Here $A=(a^{ij})_{i,j\in\{1,\cdots,n\}}$ is symmetric and uniformly elliptic; given the unit outer normal $\nu=(\nu_1,\cdots, \nu_n)$, $\p^A_{\nu} u := a^{ij}\nu_i\p_ju$ denotes the conormal derivatives of $u$, and $f:Q_1^+\rightarrow \R$ is a given inhomogeneity. We assume that  
\begin{equation}\label{eq:assumption_f}
\text{ for some }p\in (n+2,\infty],\quad f\in L^p(Q_1^+)
\end{equation}
 and furthermore, the coefficients $a^{ij}$ satisfy the following assumptions: 
\begin{itemize}
\item [(i)] (Uniform ellipticity) $a^{ij}(0,0)=\delta^{ij}$  and 
\begin{align*}
\frac{7}{8} |\xi|^2 \leq a^{ij}(x,t)\xi_i\xi_j\leq \frac{9}{8}|\xi|^2, \quad \forall \xi\in \R^n, \ \forall (x,t)\in Q_1^+\cup Q'_1;
\end{align*}
\item [(ii)](Sobolev regularity) $a^{ij}$ are in the parabolic Sobolev class $W^{1,1}_p(Q_1^+)$, i.e. $a^{ij}$, $\p_ta^{ij}$ and $\nabla_x a^{ij}\in L^p(Q_1^+)$. 
\item [(iii)] (Off-diagonal) $a^{in}(x',0,t)=0$ for $i\in \{1,\cdots, n-1\}$ and for all $(x',t)\in Q'_1$.
\end{itemize}
We make some remarks about assumptions (i)--(iii): by Sobolev embedding assumption (ii)  implies that $a^{ij}$ are H\"older continuous: 
\begin{align*}
|a^{ij}(x,t)-a^{ij}(y,s)|\leq C_n (|x-y|+|t-s|)^{1-\frac{n+1}{p}},\quad \forall (x,t), (y,s)\in Q_1^+\cup Q'_1.
\end{align*}
In particular, $a^{ij}$ is in the parabolic H\"older class $H^{\gamma,\gamma/2}(Q_1^+\cup Q'_1)$ with $\gamma=1-\frac{n+2}{p}$. Assumption (i) then can be achieved by a linear transformation and rescaling, which posts no restrictions as we only care about the local regularity properties. Assumption (iii) can be achieved by means of a change of variables, cf. Proposition \ref{prop:off_diag}. This assumption allows us to extend the problem to the full cylinder $Q_1$ by an even reflection in $x_n$.

Given suitable initial and boundary data, it is well-known that there exists a unique solution to \eqref{eq:main} in the class $\{v\in W^{1,0}_2(Q'_1): v\geq 0 \text{ a.e.  on }Q'_1\}$ in the sense of variational inequalities, cf. \cite{DL76}.
Due to the work of Arkhipova and Uraltseva \cite{AU}, in this paper we may assume that solutions to \eqref{eq:main} have the following Sobolev and pointwise regularity:
\begin{align}\label{eq:assump_u}
u\in W^{2,1}_2(Q_1^+), \quad \nabla_x u\in  H^{\alpha, \frac{\alpha}{2}}(Q_1^+\cup Q'_1)
\end{align} 
 for some $\alpha\in (0,1)$, so that the boundary conditions in \eqref{eq:main} are satisfied in the classical sense. We refer to Section \ref{subsec:notation} for the definition of the parabolic Sobolev and H\"older classes. 
In the sequel, we aim to obtain the optimal regularity of the solution $u$, as well as the regularity of the free boundary 
\begin{equation}\label{eq:fb}
\Gamma_u:=\p_{Q'_1}\{(x',0,t)\in Q'_1: u(x',0,t)>0\}.
\end{equation}
\subsection{Main results}
Our first main result states that solutions to \eqref{eq:main} has the optimal regularity:
\begin{thm}\label{thm:opt}
Let $u\in W^{1,0}_2(Q_1^+)$ be a solution to the parabolic Signorini problem \eqref{eq:main}. Assume that $a^{ij}\in W^{1,1}_p(Q_1^+)$ and $f\in L^p(Q_1^+)$ with $p>n+2$. Let $\gamma:=1-\frac{n+2}{p}$. Then there exits a constant $C=C(n,p,\|f\|_{L^p(Q_1^+)}, \|a^{ij}\|_{W^{1,1}_p(Q_1^+)},\|u\|_{L^2(Q_1^+)})$ such that the following statements hold true:
\begin{itemize}
\item [(i)] If $p\in (2(n+2),\infty]$, then 
\begin{equation*}
\|u\|_{H^{\frac{3}{2},\frac{3}{4}}(\overline{Q_{1/4}^+})}\leq C.
\end{equation*} 
\item [(ii)] If $p\in (n+2, 2(n+2)]$, then  $u$ is almost $H^{1+\gamma, \frac{1+\gamma}{2}}$-regular in the sense that
\begin{align*}
|\nabla u(x,t)-\nabla u(y,s)|&\leq C|\ln (|x-y|+\sqrt{|t-s|})|^2 (|x-y|+\sqrt{|t-s|})^\gamma,\\
|u(x,t)-u(x,s)|&\leq C(\ln |t-s|)|^2 |t-s|^{\frac{1+\gamma}{2}}
\end{align*}
for all $(x,t), (y,s)\in \overline{Q_{1/4}^+}$.
\end{itemize}
\end{thm}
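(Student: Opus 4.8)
The plan is to follow the now-standard monotonicity-formula strategy adapted to the parabolic Signorini setting, as pioneered by Danielli–Garofalo–Petrosyan–To and developed for variable coefficients via Carleman estimates. The key quantities are a (frequency-type) Almgren–Poon functional $N(r)$ built from the Gaussian-weighted Dirichlet energy and mass, together with a suitable rescaling procedure. The main point is to show that the vanishing order of $u$ at any free boundary point is at least $3/2$, and then to transfer this pointwise decay into the asserted Hölder bounds. I would proceed as follows.

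\emph{Step 1: Reduction and normalization.} Using assumptions (i) and (iii), I would first reduce to the case where the coefficients are normalized at a given free boundary point $(x_0,t_0)\in\Gamma_u$, and extend $u$ evenly in $x_n$ to the full cylinder $Q_1$ (this is where (iii) is essential, cf. Proposition~\ref{prop:off_diag}). By the a priori regularity \eqref{eq:assump_u} of Arkhipova–Uraltseva and interior parabolic estimates, it suffices to prove the estimates near $\Gamma_u$; away from the free boundary the equation is either uniformly parabolic (on $\{u>0\}$) or a Neumann problem (on the interior of the contact set), and classical $L^p$-parabolic theory plus Sobolev embedding already gives the stated regularity there. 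So the whole content is the behaviour at free boundary points.

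\emph{Step 2: Almgren–Poon frequency and almost-monotonicity via the Carleman estimate.} Here I would invoke the parabolic Carleman estimate advertised in the abstract to establish an almost-monotonicity formula for the truncated frequency $N(r) = N(r,u,(x_0,t_0))$: there is a constant such that $e^{Cr^{\sigma}}(N(r)+C)$ (or a similar modification accounting for the $W^{1,1}_p$ error terms and the $L^p$ inhomogeneity $f$) is nondecreasing in $r$. The inhomogeneity $f\in L^p$ with $p>n+2$ contributes controllable error terms because $p>n+2$ gives a subcritical scaling; the variable coefficients contribute errors controlled by the Hölder modulus $\gamma=1-\frac{n+2}{p}$. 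From almost-monotonicity one extracts the existence of $N(0^+,u,(x_0,t_0))\ge \kappa_0$ for a universal gap constant $\kappa_0$; the Signorini sign conditions force $\kappa_0 = 3/2$ at free boundary points (this is the classical computation of admissible homogeneities: homogeneous global solutions of the Signorini problem have frequency in $\{3/2\}\cup[2,\infty)$, and $3/2$ is the smallest).

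\emph{Step 3: From frequency gap to pointwise decay, and then to Hölder norms.} A standard consequence of $N(0^+)\ge 3/2$ together with the almost-monotonicity (growth) estimate is the decay bound
\[
\fint_{Q_r^+(x_0,t_0)} |u|^2 \le C\, r^{2\cdot\frac{3}{2}} \cdot (\text{log correction})
\]
uniformly over $(x_0,t_0)\in\Gamma_u$, where the log correction is absent when $p>2(n+2)$ and is a power of $|\ln r|$ otherwise — this dichotomy is exactly the source of the two cases in the theorem. Combining this mass decay with interior parabolic Schauder/Calderón–Zygmund estimates (rescaled on dyadic parabolic cylinders), I would upgrade it to a pointwise bound $|u(x,t)| \le C d^{3/2}(1+|\ln d|^2)$ where $d$ is the parabolic distance to $\Gamma_u$, and then to gradient bounds. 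Finally, a covering/interpolation argument comparing points on the free boundary, points near it, and points far from it yields the Hölder seminorm estimates: when $p>2(n+2)$ one gets the clean $H^{3/2,3/4}$ bound (the exponent $3/2$ being dictated by the minimal frequency, independent of $p$ in this range), and when $n+2 < p \le 2(n+2)$ the error terms only close up with logarithmic losses, giving the stated almost-$H^{1+\gamma,\frac{1+\gamma}{2}}$ estimates with the $|\ln(\cdot)|^2$ factors.

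\emph{Main obstacle.} I expect the hard part to be Step 2: proving the almost-monotonicity of the frequency with only $W^{1,1}_p$ coefficients and $L^p$ right-hand side. The low regularity of the coefficients means one cannot differentiate the equation, so the Carleman estimate must be engineered to absorb error terms of the form $\int |\nabla a||u||\nabla u| G$ and $\int |f||u|G$ against the good terms in the Rellich–Nečas / differential identities; balancing these, and in particular tracking precisely when the errors are summable (the $p>2(n+2)$ threshold) versus only log-summable, is the delicate technical core. A secondary subtlety is ensuring the even reflection across $\{x_n=0\}$ keeps the coefficients in $W^{1,1}_p$ and preserves uniform ellipticity, which is exactly what assumption (iii) is designed to guarantee.
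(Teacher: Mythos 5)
There is a genuine gap in your Step 3, and it is precisely the point where the paper has to do substantial extra work. Your Carleman-based route (three-sphere/doubling-type information at free boundary points) cannot by itself produce the clean, log-free bound $\sup_{Q_r^+}|u|\lesssim r^{3/2}$ when $p>2(n+2)$: the Carleman weight $\phi$ is only logarithmically convex, and the resulting growth estimate at free boundary points unavoidably carries a $|\ln r|^2$ factor in \emph{both} regimes of $p$ (this is exactly Proposition \ref{prop:almost_op} and Corollary \ref{cor:al_opt_infty}, which give $r^{\bar\kappa}|\ln r|^2$). So your assertion that "the log correction is absent when $p>2(n+2)$" as a direct consequence of the frequency gap is not justified. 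The paper removes the logarithm by a separate mechanism: an almost-monotone Weiss energy $W^{3/2}_u(r)$ (Propositions \ref{prop:Weiss2}, \ref{prop:Weiss}), a compactness-based discrete decay (epiperimetric-type) estimate at $\kappa=3/2$ points (Propositions \ref{prop:epi}, \ref{prop:epi1}), the resulting geometric decay $|W^{3/2}_u(r)|\lesssim r^{\epsilon_0}$ (Corollary \ref{Cor:Weiss_pos}), and a dyadic integration of $\frac{d}{dr}\|u_{3/2,r}\sqrt{G}\|_{L^2}$ giving the optimal growth (Proposition \ref{prop:opt_growth}), which then yields Theorem \ref{thm:opt}(i). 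Your proposal contains no substitute for this step. Case (ii), by contrast, really is obtained the way you suggest (almost-optimal growth with $|\ln r|^2$ plus interior estimates, cf. Remark \ref{rmk:opt_h}), and there the logs remain in the statement — which also shows the dichotomy between (i) and (ii) is not "logs appear only when $p\le 2(n+2)$" at the level of the Carleman argument, but rather that the doubling/blow-up machinery only reaches $\kappa=3/2$ when $3/2<2-\frac{n+2}{p}$, i.e. $p>2(n+2)$.

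A secondary issue in Step 2: you propose to prove almost-monotonicity of an Almgren–Poon frequency $N(r)$ with $W^{1,1}_p$ coefficients via the Carleman estimate. The paper explicitly states that such a generalization of Poon's frequency to $\p_t-\p_i(a^{ij}\p_j)$ is not available, and it never establishes one; instead it uses the Carleman estimate only to derive a (strengthened) three-sphere inequality, from which upper semicontinuity of the vanishing order, a doubling inequality, and compactness of blow-ups follow. These are weaker tools than frequency monotonicity but suffice for the almost-optimal (log-lossy) estimates; they do not suffice for the optimal one, which is why the Weiss-energy argument is indispensable.
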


\begin{rmk}\label{rmk:opti}
When $p\in (2(n+2),\infty]$, the $H^{3/2,3/4}$-regularity is optimal in space due to the explicit stationary solution $u_0(x)=\Ree(x_{n-1}+ix_n)^{\frac{3}{2}}$.  When $p\in (n+2, 2(n+2)]$, the regularity result is almost optimal because $H^{1+\gamma,\frac{1+\gamma}{2}}$ is the optimal regularity for the linear parabolic equations away from the free boundary. We also remark that under our assumptions of the coefficients and inhomogeneity, the optimal regularity of the solution in time remains unknown (this is unknown even for the heat operator with $L^\infty$ inhomogeneity). 
\end{rmk}

\begin{rmk}\label{rmk:zero_inhomo}
When $f=0$ in \eqref{eq:main}, we obtain the optimal growth estimate of the solutions around the free boundary points when $p\in (n+2, 2(n+2)]$:
\begin{align*}
\sup_{Q_r^+} |u(x,t)|\leq C r^{\frac{3}{2}},\quad \forall r\in (0,1/4).
\end{align*}
This together with the regularity for the linear problem away from the free boundary gives that $u\in H^{1+\gamma,\frac{1+\gamma}{2}}(\overline{Q_{1/4}^+})$.
\end{rmk}

The main ingredient in our proof for Theorem \ref{thm:opt} is the parabolic Carleman estimate. The Carleman approach was first developed in \cite{KRS16,KRS17} to treat the  elliptic Signorini problem with $W^{1,p}$ coefficients. In the simplest form the elliptic Carleman estimate reads, for $\tau\geq 1$,
\begin{align*}
\tau^{\frac{3}{2}}\| e^{\tau \phi(-\ln |x|)} |x|^{-1}\phi'(\phi'')^{\frac{1}{2}} u\|_{L^2} \lesssim \|e^{\tau\phi(-\ln |x|)}|x| \Delta u\|_{L^2},
\end{align*}
where $u\in C_c(B_1\setminus \{0\})$ satisfies the Signorini boundary condition on $B'_1$, and the weight function $\phi:\R_+\rightarrow \R$ is chosen to be slightly convex and asymptotically linear at infinity, see \cite{KRS16}. The convexity of the weight function makes the above estimate robust under the variable coefficient perturbation. 

In the parabolic case,  a generalized Poon's monotonicity formula was used to treat the Signorini problem for the heat operator, cf. \cite{DGPT}:
\begin{align*}
r\mapsto \frac{\int_{-r^2}^0\int_{\R^n_+} |t||\nabla u|^2 G\ dxdt }{\int_{-r^2}^0 \int_{\R^n_+} |u|^2 G\ dxdt} \quad \nearrow \text{ in } (0,1),
\end{align*}
where $u:\R^n_+\times (-1,0]$ is a global solution to the Signorini problem for the heat operator with zero inhomogeneity, and $G$ is the standard Gaussian. The Poon's monotonicity formula was first introduced by Poon to prove the unique continuation principle for the heat equation \cite{Poon96}, and was later generalized to study the nodal set properties and unique continuation principle for the fractional heat operator, cf. \cite{AT20,BG18}.   Unlike in the elliptic case, generalization of the Poon's frequency function for the variable coefficient operator $\p_t - \p_i(a^{ij}(x,t)\p_j)$ is unknown. This motivates us to explore the Carleman approach to treat the variable coefficient perturbation. 

Similar as in the elliptic case in \cite{KRS16}, our parabolic Carleman estimate is derived based on a perturbation argument to the constant coefficient case:
\begin{align*}
\tau^{\frac{1}{2}}\|e^{\tau\phi(-\ln|t|)+ \frac{|x|^2}{8t}}|t|^{-\frac{1}{2}} \phi'(\phi'')^{\frac{1}{2}} u\|_{L^2} \lesssim \|e^{\tau\phi(-|\ln|t|)+ \frac{|x|^2}{8t}}|t|^{\frac{1}{2}} (\p_t-\Delta u)\|_{L^2},
\end{align*}
where $\phi$ is the same weight function as in the elliptic case and $u$ is a global solution compactly supported in $\R^n\times [-1,-\delta^2]$.  
  However, compared to the elliptic case in \cite{KRS16}, the proof for the parabolic Carleman estimate is much more involved: firstly, due to the lack of regularity in $\p_t u$, instead of the original problem we have to work with a family of penalized problems; Secondly, we need to derive a new weighted $H^2$-estimate to deal with the perturbation, because of the Gaussian weight in the parabolic Carleman estimate.  
  
The parabolic Carleman estimate allows us to study 
qualitative properties of the vanishing order:
 \begin{align*}
 \kappa_{(x_0,t_0)}:= \limsup_{r\rightarrow 0} \frac{ \ln \left(\frac{1}{r^2} \int_{t_0-(2r)^2}^{t_0-r^2}\int_{\R^n}|u|^2 G_{(x_0,t_0)}\right)^{\frac{1}{2}}}{\ln r},
 \end{align*} 
 where $G_{(x_0,t_0)}$ is the parametrix defined in \eqref{eq:parametrix}. 
Furthermore, it gives the crucial compactness properties needed in the blow-up analysis.    
Consequently, we obtain an almost optimal regularity  of the solution (up to a logarithmic loss), which is crucial in the study of the regular set of the free boundary. The up to an arbitrary small $\epsilon$ loss almost optimal regularity may also be obtained via a compactness argument (cf. \cite{And13} and \cite{RS17} for the elliptic counterpart). However, the Carleman estimate provides more: it yields a refined growth estimate of solutions around the free boundary points. This allows us to study the singular set of the free boundary, which will be carried out in the forthcoming paper.  

\medskip

To improve the almost optimal regularity to the optimal regularity,  we adopt the parabolic epiperimetric inequality approach, cf. \cite{Shi20}. This approach is based on a compactness argument, which is robust under the perturbation. We obtain the parabolic H\"older regularity of the regular free boundary at the same time. This is our second main result:

\begin{thm}\label{thm:reg_fb}
Let $u\in W^{1,0}_2(Q_1^+)$ be a solution to the parabolic Signorini problem \eqref{eq:main}. Assume that $a^{ij}\in W^{1,1}_p(Q_1^+)$ and $f\in L^p(Q_1^+)$ with $p>2(n+2)$. Let $\mathcal{R}_u:=\{(x_0,t_0)\in \Gamma_u\cap Q_{1}:\kappa_{(x_0,t_0)}=\frac{3}{2}\}$ be the regular free boundary. Then $\mathcal{R}_u$ is a relatively open (possibly empty) subset of $\Gamma_u$. Moreover, given $(x_0,t_0)\in \mathcal{R}_u$, there is $\delta>0$ small such that up to a rotation of coordinates,
\begin{align*}
\mathcal{R}_u\cap Q_{\delta}(x_0,t_0)=\{(x'',x_{n-1},0,t)\in Q_\delta(x_0,t_0): x_{n-1}=g(x'', t)\}
\end{align*}
for some function $g\in H^{1,1/2}$ and $\nabla_{x''}g\in H^{\beta, \beta/2}$ for some $\beta\in (0,1)$.
\end{thm}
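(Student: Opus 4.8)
The plan is to follow the now-standard blow-up/epiperimetric-inequality scheme for free boundary regularity, adapted to the variable-coefficient parabolic setting. First I would fix a regular free boundary point $(x_0,t_0)\in\mathcal{R}_u$, i.e. one with $\kappa_{(x_0,t_0)}=\tfrac32$. After translating it to the origin and using assumption (iii) to extend $u$ evenly in $x_n$ to the full cylinder, I would introduce the appropriate (Almgren-type / Weiss-type) rescalings $u_r(x,t):=u(rx,r^2t)/d_r$ with $d_r$ the natural $L^2$-Gaussian normalization appearing in the definition of $\kappa$. Using the Carleman estimate from the earlier part of the paper — which supplies the doubling and the non-degeneracy $d_r\gtrsim r^{3/2}$ at regular points — together with the weighted $H^2$-estimates, I would show the family $\{u_r\}$ is precompact in the relevant parabolic space, and that every blow-up limit is a global homogeneous solution of the constant-coefficient Signorini problem of degree $\tfrac32$. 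The classification of such solutions (they are half-space solutions, rotations of $c\,\Ree(x_{n-1}+ix_n)^{3/2}$) then identifies the blow-up up to a rotation fixing $x_n$; this is where the coordinate rotation in the statement enters.

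The second step is to upgrade convergence of the rescalings to a rate. Here I would invoke the parabolic epiperimetric inequality (in the spirit of \cite{Shi20}), which is a compactness statement and hence stable under the $W^{1,1}_p$-coefficient and $L^p$-inhomogeneity perturbations: one controls the decay of a Weiss-type energy $W(r)$ toward its limiting value $W(0^+)$, deducing $W(r)-W(0^+)\lesssim r^\mu$ for some $\mu>0$. Standard manipulations (Gronwall-type arguments on the monotonicity-formula differential inequality, with the perturbation terms absorbed using the Hölder continuity of $a^{ij}$ and the $L^p$-bound on $f$ via $p>2(n+2)$) then turn this energy decay into a uniform rate of convergence $\|u_r-u_0\|\lesssim r^\beta$ of the rescalings to the (point-dependent) blow-up profile, with $\beta>0$ depending only on $n,p$. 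In particular the blow-up is unique at each regular point.

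The third step is the passage from pointwise rate to free boundary regularity. The uniform rate of convergence of the rescalings, made uniform over nearby regular points (using continuity of $(x_0,t_0)\mapsto$ blow-up and an open-ness argument showing $\kappa\equiv\tfrac32$ persists near a regular point, so $\mathcal{R}_u$ is relatively open in $\Gamma_u$), forces the contact set $\{u(\cdot,0,\cdot)>0\}$ to be, near the origin, a subgraph $x_{n-1}>g(x'',t)$ with $g$ parabolically Lipschitz, i.e. $g\in H^{1,1/2}$. To get the extra regularity $\nabla_{x''}g\in H^{\beta,\beta/2}$, I would bootstrap: once $\Gamma_u$ is locally a Lipschitz graph one can differentiate the problem in the tangential directions $x''$, observe that $\partial_{x''}u$ solves (up to controlled lower-order terms coming from $\nabla a^{ij}$ and $\nabla f$) a parabolic Signorini-type problem whose positivity/contact structure is governed by the same graph, and run a hodograph-type transformation or a second-round blow-up for the normalized tangential derivatives; Hölder continuity of the normal at the blow-up scale, propagated uniformly, yields Hölder continuity of $\nabla_{x''}g$.

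The main obstacle, I expect, is the rate step (step two): making the epiperimetric/Weiss-energy argument genuinely quantitative in the presence of the non-smooth coefficients. The Weiss monotonicity formula is only \emph{almost} monotone here — its derivative picks up error terms of the form $\|\nabla a^{ij}\|_{L^p}$, $\|\partial_t a^{ij}\|_{L^p}$ and $\|f\|_{L^p}$ times Gaussian-weighted quantities — and one must show these errors are summable in dyadic scales and do not destroy the $r^\beta$ decay; this is exactly where the quantitative input of the Carleman estimate (non-degeneracy at regular points, and the weighted $H^2$-bounds that let one differentiate the equation) is indispensable and where the hypothesis $p>2(n+2)$ (rather than merely $p>n+2$) is used. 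A secondary technical point is ensuring the tangential-differentiation/bootstrap in step three is legitimate given only $W^{1,1}_p$ regularity of $A$ — one differentiates in $x''$ only, which is harmless because $a^{in}(x',0,t)=0$, but the off-diagonal structure must be tracked carefully through the blow-up.
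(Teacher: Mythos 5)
Your first two steps follow essentially the same route as the paper: blow-up at a regular point with compactness supplied by the doubling inequality and weighted $H^2$-estimates coming from the Carleman machinery (Proposition \ref{prop:compactness}), relative openness of $\mathcal{R}_u$ from the frequency gap plus upper semi-continuity, and a quantitative decay of the Weiss energy obtained not from a true (minimality-based) epiperimetric inequality but from a compactness/contradiction argument in the spirit of \cite{Shi20} (Propositions \ref{prop:epi}, \ref{prop:epi1}, Corollary \ref{Cor:Weiss_pos}), which then yields uniqueness of the blow-up with a rate $r^{\sigma_0}$ (Proposition \ref{prop:rate_conv}). One technical point you gloss over, but which the paper must handle, is that the Weiss energy is not known to be nonnegative a priori, so a separate argument (Proposition \ref{prop:epi1}) is needed to rule out $W^{3/2}_u(r)\to-\infty$; this is absorbable into your ``standard manipulations'' but deserves mention.

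The genuine gap is in your third step. To get $\nabla_{x''}g\in H^{\beta,\beta/2}$ you propose differentiating the problem tangentially, claiming $\p_{x''}u$ solves a Signorini-type problem ``up to controlled lower-order terms coming from $\nabla a^{ij}$ and $\nabla f$,'' followed by a hodograph transformation or second blow-up. Under the stated hypotheses this fails: $f$ is only in $L^p$, so $\nabla_{x''}f$ does not exist in any usable sense, and tangential derivatives of $u$ do not inherit the Signorini sign structure, so there is no Signorini problem for $\p_{x''}u$ to blow up; the classical route through directional derivatives requires a boundary Harnack inequality in parabolic Lipschitz domains with rough right-hand side, which is precisely the delicate ingredient of \cite{TL23} and needs extra hypotheses the theorem does not assume. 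The paper avoids differentiation entirely: from the rate of convergence $\|u_{(x_0,t_0),r}-u_{(x_0,t_0)}\|\lesssim r^{\sigma_0}$ and the interior Lipschitz bound on $u$ it deduces that the blow-up data $(c_{(x_0,t_0)},e_{(x_0,t_0)})$ depend parabolically H\"older continuously on the free boundary point (Corollary \ref{cor:holder_var}); uniform cone conditions from the uniform convergence then give the spatial Lipschitz graph, and H\"older continuity of the normal $e_{(x_0,t_0)}$ directly gives $\nabla_{x''}g\in H^{\theta,\theta/2}$. Likewise, the $\tfrac12$-H\"older regularity of $g$ in $t$ is not a by-product of the Lipschitz-graph step: it is proved separately from the nondegeneracy estimate $u(x',0,t)\gtrsim (x_{n-1}-g(x'',t))^{3/2}$ combined with the optimal $H^{3/2,3/4}$ time regularity of $u$ from Theorem \ref{thm:opt}. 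You should replace the differentiation/hodograph bootstrap by this ``H\"older dependence of the blow-up on the base point'' argument, which uses only what has already been established.
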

We remark that if $f=0$ in \eqref{eq:main}, we are able to relax the restriction on $p$ to $p>n+2$. The regularity of $g$ in $t$ is not necessarily optimal. This is closely related to the regularity of the solution in $t$.

The epiperimetric inequality method goes  back to the work of Weiss \cite{W99} in the context of the classical obstacle problem, and \cite{GPG,FS16,CSV20,CSV202} for the elliptic Signorini problem. The approach crucially relies on the minimality (or almost minimality) of the solution to the so-called Weiss energy, thus is usually applied to the stationary problems. It has also been used to study the parabolic problems once one shows the boundedness of the time derivative of the solution (such that the equation on each time slice can be viewed as an elliptic equation with bounded inhomogeneity), cf. \cite{BDGP20, BDGP21}. 
The main consequence of the epiperimetric inequality is the decay rate (in $r$) of the Weiss energy applied to the blow-up sequence $u_r$. This in turn gives the convergence rate of the  distance of $u_r$ to its blow-up limit as $r\rightarrow 0$.

In the parabolic case, without relying on the minimiality of the solution, one can still derive a decay estimate of the Weiss energy around regular free boundary points, based on the observation that the problem can be viewed (formally) as a constrained $L^2$-gradient flow of the Weiss energy, and a compactness argument, cf. \cite{Shi20}. The argument is robust under the perturbation, and allows us to treat the Signorini problem with variable coefficients and $L^p$ inhomogeneities.  As a consequence we obtain the uniqueness of the blow-up limit at the regular points as well as the following asymptotic expansion of the solution: 
\begin{align*}
\|u-c_{(x_0,t_0)}\Ree(x'\cdot \nu_{(x_0,t_0)}+ix_n)^{\frac{3}{2}}\|_{H^{1,0}(Q_{r}^+(x_0,t_0))}\leq C r^{\frac{3}{2}+\beta}
\end{align*} 
for some $c_{(x_0,t_0)}>0$ and $\nu_{(x_0,t_0)}\in \mathbb{S}^{n-1}\cap \{x_n=0\}$. This in turn yields the H\"older continuity of the map $(x_0,t_0)\mapsto \nu_{(x_0,t_0)}$, hence the desired regularity of $g$. 


\subsection{Background and known results}
The parabolic Signorini problem arises naturally in modelling fluids passing through a semi-permeable membrane, cf. \cite{DL76}. More precisely, let $\Omega$ be a region occupied by a porous medium and let $u$ be the pressure of a viscous fluid which is only slightly compressible. Let $\mathcal{M}$ be a semi-permeable membrane with zero thickness and $h$ be a given a fluid pressure on $\mathcal{M}$. Then it can be shown that $u$ solves 
\begin{align*}
\p_t u -\Delta u = 0 \text{ in } \Omega\times (0,T]
\end{align*}
and satisfies the Signorini type boundary conditions on $\mathcal{M}_T:=\mathcal{M}\times (0,T]$:
\begin{align*}
u> h &\Rightarrow \p_{\nu } u =0,\\
u=h &\Rightarrow \p_{\nu} u \geq 0,
\end{align*}
where $\nu$ is the outer normal at $\mathcal{M}_T$. Given suitable initial and boundary data, the well-posedness of this problem in the sense of variational inequalities haven been shown in \cite{DL76}. 
To study the pointwise regularity of the solutions, one applies local coordinate transformation to straighten $\mathcal{M}$ and consider $u-h$ instead of $u$. Then after a rescaling and translation, one is naturally led to the variable coefficient parabolic Signorini problem in \eqref{eq:main}. Note that \eqref{eq:main} also includes the case when a time-dependent source term is present and that the semi-permeable membrane $\mathcal{M}$ is moving in time. 

Concerning the pointwise H\"older regularity results, Arkhipova and Uraltseva showed that spacial derivatives of solutions to \eqref{eq:main} are in the parabolic Hölder class $H^{\alpha, \frac{\alpha}{2}}_{loc}(Q_1^+\cup Q'_1)$ for some $\alpha\in (0,1)$, cf. \cite{AU88, AU}. Recently, similar regularity properties of  solutions were shown for the fully nonlinear parabolic Signorini problem, cf. \cite{C22} and \cite{HT22}, and for almost minimizers with parabolic H\"older continuous coefficients $a^{ij}$, cf. \cite{JP23}.   

When $a^{ij}=\delta^{ij}$, we have the following results concerning the optimal regularity of the solution and regularity of the free boundary: When $f\in L^\infty$, the optimal $H^{3/2,3/4}$ regularity of the solutions was proved by Danielli, Garofalo, Petrosyan and To in \cite{DGPT} using Almgren's frequency function approach, and also by Athanasopoulos, Caffarelli and Milakis \cite{ACM18, ACM19} by showing quasi-convexity properties of solutions under some global assumptions of the initial data. When $f\in H^{1,1/2}$ or better, the Hölder regularity properties of the regular free boundary were studied in \cite{DGPT, PS14, ACM19, PZ19}. When $f$ is smooth, the regular free boundary is smooth in space and time \cite{BSZ17}.
 The structure of the singular free boundary was studied in \cite{DGPT}. It is known by an extension technique that the parabolic Signorini problem is closely related to the obstacle problem for the fractional heat operator $(\p_t-\Delta)^{1/2}$. The above well-posedness and regularity results haven been generalized to the obstacle problem for the fractional heat operator $(\p_t-\Delta)^s$, $s\in (0,1)$, in \cite{BDGP20, BDGP21}. 

There are not many results on the optimal regularity  for variable coefficients parabolic Signorini problem with rough coefficients or rougher than $L^\infty$ source terms. In a recent paper Torres-Latorre shows when $a^{ij}=\delta^{ij}$ and $f\in W^{1,p}$ in space and time for some $p>n+2$, the regular free boundary is $C^{1,\alpha}_{x,t}$ for some $\alpha\in (0,1)$, under the assumption that the regular free boundary is a  parabolic Lipschitz graph with sufficiently small Lipschitz constant \cite{TL23}. The proof is based on establishing a boundary Harnack inequality with nonzero right hand side. Our results generalize the known results in the sense that we only require $f\in L^p$ and $a^{ij}\in W^{1,1}_p$ ($p>2(n+2)$) to obtain the optimal regularity of the solution. Furthermore, we get the H\"older regularity of the regular free boundary without assuming any Lipschitz regularity. The techniques we use in the proof are quite flexible when dealing with perturbations. They can be adapted to study the variable coefficients parabolic thin obstacle problem, where the obstacle is sitting in a hypersurface which divides the domain. The regularity assumption of $\p_t a^{ij}$ can be relaxed, if $a^{ij}$ satisfies the off-diagonal condition (iii). For  presentation simplicity, in this paper we do not pursue these variations.


\subsection{Plan of the paper}
The rest of the paper is planned as follows: In section \ref{sec:pre} we introduce the solution class and notations used in the paper. In section \ref{sec:Carleman} we show a parabolic Carleman estimate for global solutions and derive a three sphere's inequality from it. In Section \ref{sec:al_opt} we introduce the notion of the  vanishing order and  study its properties using the three sphere's inequality. We also obtain an almost optimal growth estimate for solutions. In Section \ref{sec:opt} we introduce the Weiss energy and show a discrete decay estimate of the Weiss energy at the regular free boundary points. This gives the optimal regularity of the solution stated in Theorem \ref{thm:opt}. The regularity of the regular free boundary (Theorem \ref{thm:reg_fb}) is shown in Section \ref{sec:fb_reg}, cf. Theorem \ref{thm:opt_fb2}. 

\section{Preliminaries}\label{sec:pre}
 \subsection{Notations}\label{subsec:notation}
We will use the following notations: $\R^n_+:= \{x=(x_1,\cdots, x_n)\in \R^n: x_n> 0\}$; For $r>0$, we define 
\begin{align*}
B_r(x_0)&:=\{x \in \R^n : \; |x-x_0|<r\},\quad (\text{Euclidean balls})\\
 B_r^+(x_0)&:=B_r(x_0)\cap \R^n_+, \\
  B'_r(x_0)&:=B_r(x_0)\cap \{x_n=0\},\\
  Q_r(x_0,t_0)&:=B_r(x_0)\times (t_0-r^2,t_0],\quad (\text{Parabolic cylinders})\\
  Q_r^+(x_0,t_0)&:=Q_r(x_0,t_0)\cap \{x_n> 0\},\\
  Q'_r(x_0,t_0)&:=Q_r(x_0,t_0)\cap \{x_n=0\},\\
  S_r(t_0)&:=\R^n\times (t_0-r^2,t_0],\quad (\text{parabolic strips})\\
  S_r^+(t_0)&:=\R^n_+\times (t_0-r^2,t_0],\\
  S'_r(t_0)&:=S_r(t_0)\cap \{x_n=0\},\\
  A_{r_1,r_2}(t_0)&:=S_{r_2}(t_0)\setminus S_{r_1}(t_0),\quad (\text{parabolic annuli})\\
  A_{r_1,r_2}^+(t_0)&:=S_{r_2}^+(t_0)\setminus S_{r_1}^+(t_0),\\
  A'_{r_1,r_2}(t_0)&:=S'_{r_2}(t_0)\setminus S'_{r_1}(t_0),\quad \text{ for } 0<r_1<r_2.
\end{align*}
If $x_0$ is the origin (or $(x_0,t_0)=(0,0)$ or $t_0=0$) we drop the dependence in the notations on $x_0$ (or $(x_0,t_0)$ or $t_0$). 
We use $\nabla u=(\p_1u,\cdots, \p_n u)$ to denote the spacial gradient of $u$.

We recall the definition of the parabolic H\"older spaces and Sobolev spaces, cf. \cite{DGPT}. Let $\Omega\subset \R^n$ be bounded and open and $\Omega_T:=\Omega\times (0,T]$. Given $\ell>0$, $\ell=m+\gamma$ for $m\in \N$ and $\gamma\in (0,1]$, we say $u\in H^{\ell,\ell/2}(Q_r)$ if the following norm is finite:
\begin{align*}
\|u\|_{H^{\ell,\ell/2}(\Omega_T)}&:= \sum_{k=0}^{m}\sum_{|\alpha|+2j=k}\|\p_x^\alpha\p_t^ju\|_{L^\infty(\Omega_T)}\\
&+\sum_{|\alpha|+2j=m} \langle \p_x^\alpha\p_t^j u \rangle^{(\gamma)}_{\Omega_T} + \sum_{|\alpha|+2j=m-1,\ m\geq 1}\langle \p_x^\alpha\p_t^j u \rangle^{(\frac{1+\gamma}{2})}_{t,\Omega_T},
\end{align*}
 where for $\beta\in (0,1)$,
 \begin{align*}
 \langle u\rangle^{(\beta)}_{\Omega_T}&:=\sup_{(x,t), (y,s)\in \Omega_T}\frac{|u(x,t)-u(y,s)|}{(|x-y|+\sqrt{|t-s})^\beta},\\
 \langle u \rangle ^{(\beta)}_{t,\Omega_T}&:=\sup_{(x,t), (x,s)\in \Omega_T}\frac{|u(x,t)-u(x,s)|}{|t-s|^{\beta}}.
 \end{align*}
 In this paper, we mainly use the following H\"older spaces: for $\beta\in (0,1)$, then 
 \begin{align*}
 \|u\|_{H^{\beta,\beta/2}(\Omega_T)}&=\|u\|_{L^\infty(\Omega_T)}+\langle u \rangle^{(\beta)}_{\Omega_T},\\
 \|u\|_{H^{1+\beta, (1+\beta)/2}(\Omega_T)}&=\|u\|_{L^\infty(\Omega_T)}+\|\p_x u\|_{L^\infty(\Omega_T)}+\langle \p_x u \rangle^{(\beta)}_{\Omega_T}+ \langle u\rangle_{t,\Omega_T}^{(\frac{1+\beta}{2})}.
 \end{align*}
 The parabolic Sobolev spaces $W^{2m,m}_p(\Omega_T)$, $m\in \N$, consists of those functions $u$ such that $\|u\|_{W^{2m,m}_{p}(\Omega_T)}<\infty$, where
 \begin{align*}
 \|u\|_{W^{2m,m}_{p}(\Omega_T)}:=\sum_{|\alpha|+2j\leq m}\|\p^\alpha_x\p_t^{2j}u\|_{L^p(\Omega_T)}.
 \end{align*}
 We will denote by $W^{1,1}_p(\Omega_T)$ by functions with finite $\|\cdot \|_{W^{1,1}_p(\Omega_T)}$ norm, where
 \begin{align*}
 \|u\|_{W^{1,1}_p(\Omega_T)}:=\|u\|_{L^p(\Omega_T)}+\|\nabla u\|_{L^p(\Omega_T)}+\|\p_t u \|_{L^p(\Omega_T)}.
 \end{align*}
 
In the paper we write $A\lesssim B$ ($A\gtrsim B$) if $A\leq CB$ ($A\geq CB$) for some absolute constant $C$ or a constant $C$ depending only on the dimension.

 \subsection{Classes of solutions}
The techniques used in the paper require working with global solutions. For that we use cut-offs to extend \eqref{eq:main} into a global problem. 
More precisely, let  $\tilde{u}:=u\psi$, where $\psi=\psi(|x|)$ is a smooth cut-off function such that 
\begin{align*}
0\leq \psi\leq 1, \quad \supp(\psi)\subset B_{2/3},\quad \psi=1 \text{ in } B_{1/2}.
\end{align*}
 Let $\tilde{a}^{ij}:=\eta a^{ij}+(1-\eta)\delta^{ij}$, where $\eta=\eta(|x|)$ is a smooth cut-off function such that
 \begin{align*}
0\leq \eta\leq 1, \quad \supp(\eta)\subset B_1,\quad \eta=1 \text{ in } B_{2/3}.
\end{align*}
 Note that $\tilde{u}=u$ in $\overline{B_{1/2}^+}\times (-1,0]$, $\supp (\tilde{u})\subset Q_1$ and 
 the supports of $\psi$ and $\eta$ are chosen such that $\supp(\nabla\eta)\cap \supp (\psi)=\emptyset$ and $\eta=1$ on $\supp(\psi)$. It then follows from a direct computation that $\tilde{u}:S_1^+\rightarrow \R$ solves the global Signorini  problem
\begin{equation*}
\begin{split}
\p_t\tilde{u}-\p_i(\tilde{a}^{ij}\p_j\tilde{u})=\tilde{f}\quad &\text{ in } S_1^+,\\
\tilde{u}\geq 0,\ \p^{\tilde{A}}_{\nu}\tilde{u}\geq 0,\ \tilde{u}\p_\nu^{\tilde{A}}\tilde{u}=0 \quad &\text{ on } S'_1,
\end{split}
\end{equation*}
where $\tilde{A}=(\tilde{a}^{ij})_{ij}$ and 
\begin{align*}
\tilde{f}:=f\psi- \p_i\psi a^{ij}\p_ju -\p_i(a^{ij}u)\p_j\psi -a^{ij} u \p_{ij}\psi.
\end{align*}
Here we have used that support conditions for the cut-offs. Note that $\supp(\tilde{f})\subset \overline{Q_1^+}$, and moreover $\tilde{f}\in L^p(S_1^+)$, since $u, \nabla u\in L^\infty(Q_1^+)$. Moreover, it is not hard to see that $\tilde{a}^{ij}$ satisfies (i)--(iii) in the whole strip $S_1^+$.

\begin{rmk}
When the inhomogeneity $f=0$ in the local problem \eqref{eq:main}, we still have in general $\tilde{f}\neq 0$  after the extension. But in this case, $\tilde{f}$ satisfies the support condition 
 \begin{equation}\label{eq:support}
 \supp(\tilde{f})\subset (B_{2/3}\setminus B_{1/2})\times (-1,0].
 \end{equation}
\end{rmk}

\begin{defi}[Global solutions] 
\label{defi:global}
Let $p\in (n+2,\infty]$. Let $f\in L^p(S_1^+)$, $A=(a^{ij})\in W^{1,1}_p(S^+_1; \mathcal{M}^{n\times n})$, $\supp(\nabla a^{ij})\subset \overline{Q_1^+}$ and $A$ satisfies the assumptions (i)--(iii) in $S_1^+$. We say that $u\in \mathcal{G}^{A,f}_p(S_1^+)$, if  $\supp (u)\subset \overline{Q_1^+}$, $u\in W^{2,1}_2(S_1^+)\cap L^\infty(S_1^+)$, $\nabla u \in H^{\alpha,\frac{\alpha}{2}}(S_1^+\cup S'_1)$ for some $\alpha\in (0,1)$ and $u$ solves
\begin{equation}\label{eq:global_sol}
\begin{split}
\p_tu-\p_i(a^{ij}\p_ju)=f\quad &\text{ in } S_1^+,\\
u\geq 0,\ \p^{A}_{\nu}u\geq 0,\ u\p_\nu^{A}u=0 \quad &\text{ on } S'_1.
\end{split}
\end{equation}
In Section \ref{sec:al_opt} we will also use the solution class $\mathcal{G}^{A,f}_p(S_1)$. We say $u\in \mathcal{G}_p^{A,f}(S_1)$ if $u, A, f$ are even about $x_n$ and $u\big|_{S_1^+}\in \mathcal{G}^{A,f}_p(S_1^+)$.
\end{defi}


In the paper we work with solutions defined on  lower half cylinders $Q_r(x_0,t_0)=B_r(x_0)\times (t_0-r^2, t_0]$, which do not contain any information of the solutions beyond $t_0$. Thus the topological free boundary $\Gamma_u$, which is defined in \eqref{eq:fb}, may not be preserved when we restrict our solutions to sub-cylinders, i.e. $(x_0,t_0)\in \Gamma_u$ is a free boundary point for $u:Q_1^+\rightarrow \R$, but $(x_0,t_0)$ may not be a free boundary point for $u\big|_{Q_{r}(x_0,t_0)}$ (see also the comment below \cite[Definition 4.1]{DGPT}). Because of this, similar as in \cite{DGPT}, we will work with the \emph{extended free boundary}, which resolves the restriction issue. Note that by doing so, we may include more points with higher vanishing order in the interior of the contact set (cf. \cite[Proposition 10.8]{DGPT}).

\begin{defi}[Extended free boundary]
\label{defi:ext_fb}
Let $u\in \mathcal{G}^{A,f}(S_1^+)$. We define the extended free boundary to be
\begin{align*}
\Gamma_u^\ast:=\p_{Q'_1}\{(x',0,t)\in Q'_1: u(x',0,t)=0,\quad \p_n u(x',0,t)=0\}.
\end{align*}
\end{defi}
It is immediate from the H\"older continuity of $\nabla u$ that $\Gamma_u\subset \Gamma_u^\ast$. On the other hand, it is easy to construct a solution whose extended free boundary is not empty while the classical free boundary is empty, cf. \cite[Remark 10.9]{DGPT}.

 \section{Parabolic Carleman estimate}\label{sec:Carleman}
In this section we derive a parabolic Carleman estimate for a global solution $u\in \mathcal{G}^{A,f}_p(S_1^+)$, where $\supp(u)\subset \overline{B_1^+}\times [-\rho^2, -r^2]$, $0<r<\frac{\rho}{2}<\rho<\frac{1}{2}$. The main result of the section states as follows:

\begin{prop}\label{prop:Carleman}
Let $u\in \mathcal{G}^{A,f}_p(S_1^+)$ with $\supp(u)\subset \overline{B_1^+}\times [-\rho^2, -r^2]$, $0<r<\frac{\rho}{2}<\rho<\frac{1}{2}$. Let
\begin{align*}
\delta:=\sup_{\tilde{r}\in [r,\frac{1}{2}]} \|\p_ta^{ij}\|_{L^{\frac{n+2}{2}}(A_{\tilde r, 2\tilde r})}+\|\nabla a^{ij}\|_{L^{n+2}(A_{\tilde{r}, 2\tilde{r}}^+)}+\|a^{ij}-\delta^{ij}\|_{L^\infty(A_{\tilde{r}, 2\tilde{r}}^+)}. 
\end{align*} 
Let $\phi:(-1,0)\rightarrow \R$ be the following weight function:
\begin{align*}
\phi(t):=\tilde\phi(|\ln(-t)|),\quad \tilde\phi(s):=s+c_0(s\arctan s- \frac{1}{2}\ln(1+s^2)).
\end{align*}
where $c_0\in (0,1/4)$ is a small fixed constant. 
Then there are constant $\delta_0=\delta_0(n)>0$ and $C=C(n,\|a^{ij}\|_{W^{1,1}_p})>0$ such that if $\delta\leq\delta_0$, then for any $\tau>1$ and for $\tilde\gamma:=\frac{1}{n+2}-\frac{1}{p}$ we have
\begin{align*}
&\quad \tau^{\frac{1}{2}} \left\| e^{\tau \phi(t)+\frac{|x|^2}{8t}} (-t)^{-\frac{1}{2}}(1+|\ln(-t)|^2)^{-\frac{1}{2}} u\right\|_{L^2(S_1^+)}+ \left\|e^{\tau \phi(t)+\frac{|x|^2}{8t}} (1+|\ln(-t)|^2)^{-\frac{1}{2}}\nabla u\right\|_{L^2(S_1^+)}\\
&\leq Cc_0^{-1}\left(\tau \left\|e^{\tau \phi(t)+\frac{|x|^2}{8t}}(-t)^{-\frac{1}{2}+\tilde\gamma}u\right\|_{L^2(S_1^+)}+\left\|e^{\tau \phi(t)+\frac{|x|^2}{8t}}(-t)^{\frac{1}{2}}f\right\|_{L^2(S_1^+)}\right).
\end{align*}
\end{prop}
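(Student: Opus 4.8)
# Proof Strategy for the Parabolic Carleman Estimate

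The plan is to follow the perturbative scheme sketched in the introduction: first establish the estimate for the constant-coefficient operator $\partial_t-\Delta$, then absorb the variable-coefficient error terms using the smallness of $\delta$ together with a weighted $H^2$-estimate. The overall structure mirrors the elliptic argument of \cite{KRS16}, but with substantial additional care for the Gaussian weight and the lack of regularity of $\partial_t u$.

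\textbf{Step 1: Reduction to a conjugated operator and penalization.} First I would conjugate the equation by the weight: set $v := e^{\tau\phi(t)+|x|^2/(8t)} u$ and compute the conjugated operator $\mathcal{L}_\tau v := e^{\tau\phi+|x|^2/(8t)}(\partial_t-\Delta)(e^{-\tau\phi-|x|^2/(8t)} v)$. Because $u$ has only $W^{2,1}_2$ regularity (so $\partial_t u$ is merely $L^2$), the pointwise manipulations needed for integration by parts are not justified directly. I would therefore introduce a family of penalized problems — replacing the Signorini condition on $S_1'$ by a penalty term $\beta_\varepsilon(u)$ as in the standard regularization of variational inequalities — whose solutions $u_\varepsilon$ enjoy enough regularity ($W^{2,1}_p$ in the interior, say) to carry out all the computations, and then pass to the limit $\varepsilon\to 0$ at the end, using the uniform bounds provided by the estimate itself together with the convergence $u_\varepsilon\to u$.

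\textbf{Step 2: The constant-coefficient Carleman estimate.} For the operator $\partial_t-\Delta$ I would expand $\mathcal{L}_\tau = \mathcal{S}_\tau + \mathcal{A}_\tau$ into its (formally) symmetric and antisymmetric parts with respect to the weighted $L^2$-inner product, and compute the commutator $[\mathcal{S}_\tau,\mathcal{A}_\tau]$. The choice of $\phi$ — asymptotically linear with a small strictly convex correction governed by $c_0$ — is designed exactly so that the commutator, after the substitution $s=|\ln(-t)|$, produces a positive lower-order term proportional to $\tau\,\phi''$, which is comparable to $c_0\tau(1+|\ln(-t)|^2)^{-1}(-t)^{-1}$; this is the source of both terms on the left-hand side (the $u$-term from the commutator, the $\nabla u$-term from a Rellich–Nečas / boundary identity controlling the Dirichlet energy). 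The Gaussian factor $e^{|x|^2/(8t)}$ is what makes the constant-coefficient frequency function monotone (Poon's formula), and here it plays the analogous role at the level of the Carleman inequality; crucially, the conormal boundary term on $S_1'$ has a favorable sign thanks to the Signorini conditions $u\geq 0$, $\partial_\nu u\geq 0$, $u\partial_\nu u = 0$ (using assumption (iii) so that $\partial_\nu^A = -\partial_n$ after even reflection). The factor $\tau^{1/2}$, rather than $\tau^{3/2}$ as in the elliptic case, reflects the parabolic scaling.

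\textbf{Step 3: The weighted $H^2$-estimate and absorption of the perturbation.} The error term is $e^{\tau\phi+|x|^2/(8t)}\partial_i((a^{ij}-\delta^{ij})\partial_j u)$, which involves $\nabla a^{ij}\cdot\nabla u$ and $(a^{ij}-\delta^{ij})D^2 u$. Controlling the latter requires a Carleman-type estimate for $\|e^{\tau\phi+|x|^2/(8t)}(-t)^{1/2}D^2u\|_{L^2}$; this is the new ingredient needed in the parabolic setting, since the naive $H^2$-estimate does not interact well with the Gaussian weight. I would derive it by differentiating the equation, testing against a suitable weighted multiplier, and again exploiting the convexity of $\phi$, working on the dyadic parabolic annuli $A_{\tilde r, 2\tilde r}$ where $\delta$ is defined. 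Then, using the Hölder exponents $p > n+2$ and the Sobolev embedding $W^{1,1}_p \hookrightarrow H^{\gamma,\gamma/2}$, the perturbation terms are bounded (via Hölder's inequality with exponents tied to $n+2$, which is why $\delta$ is measured in $L^{(n+2)/2}$ and $L^{n+2}$ norms, and where the exponent $\tilde\gamma = \frac1{n+2}-\frac1p$ enters) by $C\delta$ times the left-hand side plus the weighted $H^2$-term; choosing $\delta_0$ small lets us absorb these into the left-hand side. The main obstacle, and the most delicate part of the whole argument, is making this absorption work uniformly in $\tau$ and across all dyadic scales simultaneously: one must track how the weight $e^{\tau\phi}(-t)^{\pm 1/2}(1+|\ln(-t)|^2)^{\pm 1/2}$ behaves on each annulus, ensure the constants in the interpolation/Hölder steps do not accumulate, and balance the $D^2u$-term against the $(-t)^{1/2}f$-term and the $\tau(-t)^{-1/2+\tilde\gamma}u$-term on the right. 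Finally I would sum the dyadic pieces and pass $\varepsilon\to 0$ to recover the stated inequality for $u \in \mathcal{G}^{A,f}_p(S_1^+)$.
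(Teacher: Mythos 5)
Your Steps 1 and 2 follow the paper's actual route quite closely: penalization of the Signorini condition to make sense of the boundary term involving $\p_t u$, conjugation by $e^{\tau\phi(t)+\frac{|x|^2}{8t}}$, splitting the conjugated heat operator into symmetric and antisymmetric parts, and extracting positivity from $\tau\tilde\phi''$ (the paper does this after the self-similar change of variables $y=x/\sqrt{-t}$, $s=-\ln(-t)$, which turns the spatial part into the Hermite operator $-\Delta_y+|y|^2/16$; also note that the delicate cross boundary term $\int \p_{y_n}w\,\p_s w$ is not sign-definite there \emph{a priori} -- it is killed in the limit $\epsilon\to 0$ using $|B_\epsilon(u^\epsilon)|\le C\epsilon$ from the penalization, not by the Signorini sign, which is used only for the boundary term coming from the symmetric part).

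The genuine gap is in Step 3. You propose to bound the perturbation by ``$C\delta$ times the left-hand side plus the weighted $H^2$-term'' and then absorb it by taking $\delta_0$ small. This cannot close uniformly in $\tau>1$ and in $t$: both perturbation pieces, $\p_i a^{ij}\p_j u$ and $(a^{ij}-\delta^{ij})\p_{ij}u$, must be routed through the weighted $H^2$-estimate (Sobolev embedding on the dyadic slabs brings in $D^2u$ and $\sup$-in-time gradient terms), and the output of that estimate is of the form $\tau\,2^{2m}\|e^{\psi}u\|_{L^2(N_m)}+\|e^{\psi}f\|_{L^2(N_m)}$, i.e.\ a $u$-term carrying a full factor $\tau$ and no factor $(1+|\ln(-t)|^2)^{-1/2}$. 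The left-hand side only controls $\tau^{1/2}\|e^{\psi}(-t)^{-1/2}(1+|\ln(-t)|^2)^{-1/2}u\|$, so absorption would require $\delta\lesssim \tau^{-1/2}|\ln(-t)|^{-1}$, which fails for large $\tau$ or small scales since $\delta_0=\delta_0(n)$ is fixed; moreover the gain $(-t)^{\tilde\gamma}$ is bounded below on $[-\rho^2,-r^2]$ and cannot substitute for smallness. This is precisely why the proposition as stated keeps $\tau\|e^{\tau\phi+\frac{|x|^2}{8t}}(-t)^{-\frac12+\tilde\gamma}u\|_{L^2}$ on the right-hand side: in the paper the perturbation terms $X_1,X_2$ are estimated (via the dyadic decomposition, H\"older in the annuli producing the exponent $\tilde\gamma$, and Lemma \ref{lem:app_H2}) and simply left on the right, while absorption is deferred to the three-sphere inequality (Corollary \ref{3sphere}), where it works because $\tau\le\tau_0$ and the radii are taken below a threshold $R_0$. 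Correspondingly, the smallness $\delta\le\delta_0(n)$ is used only to make the weighted $H^2$-estimate itself valid, not to absorb anything in the Carleman inequality. If you replace your absorption step by this ``keep it on the right-hand side'' bookkeeping, your outline matches the paper's proof.
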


\begin{rmk}[Smallness of $\delta$]\label{rmk:smallness}
We note that for $a^{ij}\in W^{1,1}_p(S_1^+)$, $p>n+2$, with bounded support, an application of H\"older's inequality and Sobolev embedding yields
\begin{align*}
\|\nabla a^{ij}\|_{L^{n+2}(A_{\tilde{r}, 2\tilde{r}})}&\leq \|\nabla a^{ij}\|_{L^p(A_{\tilde{r}, 2\tilde{r}})} \tilde r^{2(\frac{1}{n+2}-\frac{1}{p})},\\
\|a^{ij}-\delta^{ij}\|_{L^\infty(A_{\tilde{r}, 2\tilde{r}}^+)}&\leq C \tilde{r}^{1-\frac{n+2}{p}},\\
\|\p_ta^{ij}\|_{L^{\frac{n+2}{2}}(A_{\tilde{r}, 2\tilde{r}}^+)}&\leq \|\p_t a^{ij}\|_{L^p(A_{\tilde{r}, 2\tilde{r}}^+)} \tilde{r}^{2(\frac{1}{n+2}-\frac{1}{2p})}.
\end{align*}
Thus the smallness assumption on $\delta$ in Proposition \ref{prop:Carleman} dose not post any issue in our application. 
\end{rmk}

Compared to the elliptic Carleman estimate in \cite{KRS16}, one additional complication in the parabolic case is that $\p_t u$ is only in $L^2(S_1^+)$. Thus the boundary contribution $\int_{S'_1}\p_tu \p_n u $, which arises in the computation of the commutator in the proof for the Carleman estimate, is not well-defined.   To overcome this technical difficulty, we will derive Carleman estimate for a family of approximating problems and then pass to the limit. More precisely, given $u\in \mathcal{G}^{A,f}_p(S_1^+)$ with $\supp(u)\subset \overline{B_1^+}\times [-\rho^2,-r^2]$. For $R\geq 3$ and $\epsilon\in (0,1/4)$, let $u^\epsilon: B_R^+\times [-1,0]\rightarrow \R$ be a solution to the  penalized problem
\begin{equation}\label{eq:approx}
\begin{split}
\p_t  u^\epsilon -\p_i (a^{ij}_\epsilon\p_j  u^\epsilon)&=f^\epsilon \text{ in } B_R^+\times (-1,0],\\
a^{nn}_\epsilon\p_n  u^\epsilon &= \beta_\epsilon( u^\epsilon) \text{ on } B'_R\times (-1,0],\\
 u^\epsilon &= 0 \text{ on } (\p B_R)^+\times (-1,0],\\
 u^\epsilon(\cdot, -1)&=0 \text{ on } B_R^+\times \{-1\},
 \end{split}
\end{equation}
where $\beta_\epsilon:\R\rightarrow \R_-$  is a smooth function with
\begin{align*}
\beta_\epsilon\leq 0, \quad \beta'_\epsilon\geq 0, \quad \beta_\epsilon(s) = \epsilon+\frac{s}{\epsilon} \text{ for } s\leq -2\epsilon^2, \quad \beta_\epsilon(s)=0 \text{ for } s\geq 0,
\end{align*}
and $a^{ij}_\epsilon$, $f^\epsilon$ are smooth approximations of $a^{ij}$ and $f$, respectively, which are chosen such that $\|a^{ij}_\epsilon\|_{W^{1,1}_p(S_1^+)}$ and $\|f^\epsilon\|_{L^p(S_1^+)}$ have uniform upper bound, $a^{ij}_\epsilon$ satisfies the off-diagonal condition and $\supp(\nabla a^{ij}_\epsilon)$, $f^\epsilon$ are essentially supported in $\overline{Q_1^+}$.
It follows from \cite{AU88} that there is a unique solution $u^\epsilon$ to the penalized problem \eqref{eq:approx}, and $u^\epsilon$ converges weakly to $u$ in $W^{1,1}_2(B_R^+\times (-1,0])$ as $\epsilon \to 0.$ The solutions $u^\epsilon$ are smooth and satisfy the global energy estimate
\begin{align}\label{eq:global_energy}
\sup_{t\in(-1,0]}\left(\|u^\epsilon(\cdot, t)\|_{L^2(B_R^+)} + \|\nabla u^\epsilon(\cdot, t)\|_{L^2(B_R^+)}\right) + \|\p_tu^\epsilon\|_{L^2(B_R^+\times (-1,0])} \lesssim \|f\|_{L^2(Q_1^+)}.
\end{align}
Furthermore, $D^2u^\epsilon$ are locally uniformly (in $\epsilon$ and $R$) bounded in $L^2(B_R^+\times (-1,0])$.

We will also need the following weighted $H^2$-estimate for solutions to the penalized problems:

\begin{lem}\label{lem:app_H2}
Let $u^\epsilon$ be the solution to the penalized problem \eqref{eq:approx}. 
 Let $\delta$ be defined in Proposition \ref{prop:Carleman}. For $m\in \N$, we consider the dyadic time-slabs 
\begin{equation}\label{eq:dyadic}
A_m:=\R^n_+\times [-2^{-2m}, -2^{-(2m+2)}], \quad N_m:=A_{m-1}\cup A_m.
\end{equation}
Let $\omega:=\omega(x,t)=1+\frac{|x|}{\sqrt{|t|}}$. 
Then there is $\delta_0=\delta_0(n)>0$, such that if $\delta \leq \delta_0$, then for all $m\in [1,\lfloor \frac{|\ln r|}{\ln 2}\rfloor +1]$, $s\in [0,2]$ and $\tau\geq 1$ we have
\begin{align*}
&\quad \left\|e^{\tau\phi+\frac{|x|^2}{8t}}\omega^s D^2 u^\epsilon \tilde\zeta\right\|_{L^2(A_m)}+ 2^m \left\|e^{\tau\phi +\frac{|x|^2}{8t}}\omega^s  \nabla u^\epsilon \tilde\zeta\right\|_{L^2(A_m)}\\
&\quad +\sup_{t\in [-2^{-2m}, -2^{-(2m+2)}]} \left\|e^{\tau\phi(t) +\frac{|x|^2}{8t}}\omega^s\nabla u^\epsilon(\cdot, t)\tilde\zeta\right\|_{L^2(\R^n_+)}\\
&\lesssim \tau 2^{2m}\left\|e^{\tau\phi + \frac{|x|^2}{8t}}\omega^s u^\epsilon\tilde\zeta\right\|_{L^2(N_m)}+ \left\|e^{\tau \phi + \frac{|x|^2}{8t}}\omega^s f^\epsilon \tilde\zeta\right\|_{L^2(N_m)}+ E(\epsilon,R,m,\tau),
\end{align*}
where the error term $E(\epsilon,R,m,\tau)\rightarrow 0$ if $\epsilon\rightarrow 0$ and then $R\rightarrow \infty$, $\tilde{\zeta}(x,t)=\zeta(\frac{|x|}{\sqrt{|t|}})$ with
$\zeta:[0,\infty)\rightarrow [0,1]$ being a smooth cut-off function, such that 
\begin{equation}\label{eq:cut-off}
\zeta'\leq 0,\quad \zeta(s)=1 \text{ if } |s|\leq R-1, \quad \zeta(s)=0 \text{ if } |s|\geq R.
\end{equation}
\end{lem}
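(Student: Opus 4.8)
The plan is to establish the weighted $H^2$-estimate by testing the penalized equation \eqref{eq:approx} against a suitable second-order weighted quantity, in the spirit of the classical Caccioppoli/Bochner-type argument but carried out in a single dyadic time-slab with the Gaussian weight $e^{\tau\phi+|x|^2/(8t)}$ and the conformal weight $\omega^s$. Concretely, I would first freeze the dyadic scale: on $A_m$ we have $|t|\sim 2^{-2m}$, so $-t^{-1}\sim 2^{2m}$, $\phi$, $\phi'$, $\phi''$ and $|\ln(-t)|$ are all comparable to fixed functions of $m$, and the weight is essentially a Gaussian of fixed width. This reduces the estimate to an (almost) parabolic Calderón–Zygmund-type $L^2$ bound for $\partial_t u^\epsilon - \partial_i(a^{ij}_\epsilon\partial_j u^\epsilon)=f^\epsilon$ on $N_m$ with a weight, where the $2^{2m}$ and $\tau$ factors come from differentiating the weight once or twice in $x$.

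The key steps, in order: (1) Introduce the cut-off $\tilde\zeta$ (which handles the outer boundary $(\partial B_R)^+$, producing the error $E(\epsilon,R,m,\tau)$ that vanishes as $\epsilon\to 0$, $R\to\infty$), multiply $u^\epsilon$ by $\tilde\zeta$ times the exponential weight $w:=e^{\tau\phi+|x|^2/(8t)}\omega^s$, and write the equation satisfied by $v:=u^\epsilon\tilde\zeta$; the lower-order terms and commutators with $\tilde\zeta$ and with $\omega^s$ are controlled by the right-hand side because $|\nabla_x\tilde\zeta|, |\partial_t\tilde\zeta|$ are supported where $|x|/\sqrt{|t|}\sim R$ and $\omega$-derivatives only cost factors of $2^m/\,$ (length scale). (2) Use the interior $W^{2,1}_2$ bound on $u^\epsilon$ (uniform in $\epsilon, R$, as stated after \eqref{eq:global_energy}) to justify all integrations by parts. (3) Run the standard energy identity: test the equation with $-\partial_i(w^2\partial_i v)$ (or equivalently differentiate the equation in tangential directions and test against $w^2\partial_k v$), integrate over a slab, and integrate by parts in $x$ and in $t$; the ellipticity assumption (i) gives coercivity of the leading $\|w\,D^2 v\|_{L^2(A_m)}^2$ term, while the perturbation terms $(a^{ij}_\epsilon-\delta^{ij})$, $\nabla a^{ij}_\epsilon$ and $\partial_t a^{ij}_\epsilon$ are absorbed using the smallness of $\delta$ together with the scale-invariant Sobolev inequalities recorded in Remark \ref{rmk:smallness} (this is exactly why $\delta_0=\delta_0(n)$ appears). (4) The Robin-type boundary term on $B'_R$ coming from $a^{nn}_\epsilon\partial_n u^\epsilon=\beta_\epsilon(u^\epsilon)$ has the right sign (since $\beta'_\epsilon\ge 0$) when paired with the appropriate tangential derivative test function, so it can be discarded; the off-diagonal condition (iii) on $a^{in}_\epsilon$ is what makes this boundary term manageable (no mixed normal-tangential coefficients on $\{x_n=0\}$). (5) Handle the $t$-integration by parts: the term $\int \partial_t v \cdot w^2\,(\cdot)$ produces $-\tfrac12\int (\partial_t w^2)|\nabla v|^2$ plus a slab-boundary term; since $\partial_t(e^{\tau\phi+|x|^2/(8t)})$ contributes $\tau\phi'(-t)^{-1}$ and $-|x|^2/(8t^2)$, on $A_m$ this is $\lesssim \tau 2^{2m}$ times the Gaussian weight, which accounts for the $\tau 2^{2m}$ factor on the right, and the slab-boundary term gives the $\sup_{t}\|\cdots\nabla u^\epsilon(\cdot,t)\tilde\zeta\|_{L^2(\R^n_+)}$ contribution on the left-hand side (which we keep, as in the statement). (6) Finally, re-expand $w\,D^2 v$ in terms of $w\,D^2 u^\epsilon\,\tilde\zeta$ plus controllable commutators, and rename $N_{m}=A_{m-1}\cup A_m$ so that the zeroth-order term on the right lives on the doubled slab (this is the usual "interior estimate loses a scale" phenomenon).

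The main obstacle I anticipate is the interplay between the Gaussian weight $e^{|x|^2/(8t)}$ and the integration by parts in $x$: differentiating this weight brings down a factor $x/(4t)$, which is \emph{not} bounded on $\R^n_+$, so one cannot treat it as a harmless lower-order perturbation. The resolution — and the technical heart of the lemma — is to track these $x/(4t)$ factors together with the conformal weight $\omega^s=(1+|x|/\sqrt{|t|})^s$: on a fixed dyadic slab $|x/(4t)|\lesssim 2^m\,\omega/\sqrt{|t|}\cdot(\text{const})$, so each such factor costs exactly one power of $\omega$ and one factor of $2^m$, which is why the statement carries $\omega^s$ on both sides and the $2^m$, $2^{2m}$ powers are placed precisely where they are. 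Getting the bookkeeping of $\omega$-powers to close (i.e. not losing more than $s$ powers of $\omega$, for every $s\in[0,2]$ uniformly) while simultaneously keeping the $\delta$-perturbation terms below threshold is the delicate part; everything else is a by-now-standard weighted parabolic energy estimate. A secondary technical point is ensuring the error term $E(\epsilon,R,m,\tau)$ genuinely vanishes in the stated order of limits, which follows from the uniform (in $\epsilon,R$) $W^{2,1}_2$ bound and the fact that $\nabla\tilde\zeta$ is supported near $|x|\sim R\sqrt{|t|}$ where the Gaussian weight is exponentially small.
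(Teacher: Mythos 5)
Your overall strategy is the one the paper actually uses: the paper conjugates by $e^{\tau\phi(t)}$ (so the $\tau$-dependence enters as a zeroth-order term $\tau\phi'(t)v^\epsilon\sim \tau 2^{2m}v^\epsilon$ on the right-hand side) and then runs the weighted parabolic energy scheme of Lemma \ref{lem:H2}: test with $u^\epsilon\zeta_1^2G$, test with $\partial_t u^\epsilon\zeta_2^2G$, differentiate tangentially and test with $\partial_\mu u^\epsilon\zeta_2^2G$ (where $\beta'_\epsilon\geq 0$ gives the sign of the boundary term and the off-diagonal condition (iii) reduces the boundary condition to $a^{nn}_\epsilon\partial_n u^\epsilon$), recover $\partial_{nn}u^\epsilon$ from the equation, and absorb the $(a^{ij}_\epsilon-\delta^{ij})$, $\nabla a^{ij}_\epsilon$, $\partial_t a^{ij}_\epsilon$ contributions by H\"older, parabolic Sobolev embedding and the smallness of $\delta$. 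Your steps (1)--(4) and (6) are in line with this.

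However, the mechanism you propose at what you yourself call the technical heart does not close as stated. First, $\partial_t\bigl(e^{\tau\phi+\frac{|x|^2}{8t}}\bigr)$ contains the factor $\frac{|x|^2}{8t^2}$, which is \emph{not} $\lesssim \tau 2^{2m}$ on $A_m$: it equals, up to constants, $2^{2m}\cdot\frac{|x|^2}{|t|}$, and $\frac{|x|^2}{|t|}$ is unbounded. Second, the bookkeeping ``each factor $x/(4t)$ costs one power of $\omega$ and one factor $2^m$'' produces right-hand terms of the form $2^{2m}\|e^{\tau\phi+\frac{|x|^2}{8t}}\omega^{s+2}u^\epsilon\tilde\zeta\|_{L^2}$, i.e.\ two more powers of $\omega$ than the statement allows (the lemma carries the \emph{same} $\omega^s$ on both sides, uniformly for $s\in[0,2]$), so the estimate would not close by this accounting alone. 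The actual closing device in the paper's argument (see the proof of Lemma \ref{lem:H2}, culminating in \eqref{eq:H2-eps}) is the Gaussian-weighted Hardy--Poincar\'e inequality \cite[Claim A.1]{DGPT}, which converts the dangerous terms $\int\frac{|x|^2}{t^2}\,\zeta_1^2|u^\epsilon|^2G$ into gradient terms plus zeroth-order terms with the same $\omega$-weight, to be absorbed into the left-hand side, combined with the fact that the term $\int|\nabla u^\epsilon|^2\frac{|x|^2}{4t^2}\zeta_2^2G$ generated by $\partial_t G$ appears with a favorable sign. Two further points your outline leaves open: to bound $\partial_{nn}u^\epsilon$ (hence all of $D^2u^\epsilon$) from the equation you need a weighted $L^2$ bound on $\partial_t u^\epsilon$, which requires the separate test function $\partial_t u^\epsilon\zeta_2^2G$; and in that step the penalization boundary term is integrated by parts in $t$ via the antiderivative $B_\epsilon$ with $|B_\epsilon(u^\epsilon)|\leq C\epsilon$, which is precisely where the $O(\epsilon)$ part of the error $E(\epsilon,R,m,\tau)$ originates --- it is not only a far-field cut-off effect as your proposal suggests.
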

\begin{proof}
Let $v^\epsilon:=e^{\tau\phi(t)}u^\epsilon$. Then $v^\epsilon$ solves
\begin{align*}
\p_t v^\epsilon -\p_i(a^{ij}_\epsilon \p_j v^\epsilon )&= f^\epsilon + \tau \phi'(t) v^\epsilon \text{ in } B_R^+\times (-1, 0],\\
a^{nn}_\epsilon \p_n v^\epsilon &= e^{\tau \phi(t)} \beta_\epsilon(u^\epsilon) \text{ on } B'_R \times (-1,0].
\end{align*}
Then the proof for the lemma follows along the lines of that for Lemma \ref{lem:H2} (without passing to the limit $\epsilon\rightarrow 0$ and $R\rightarrow\infty$), cf. \eqref{eq:H2-eps}, and we do not repeat it here. 
\end{proof}


\begin{proof}[Proof for Proposition \ref{prop:Carleman}]
Let $u^\epsilon$ be the solution to the penalized problem \eqref{eq:approx}.  Below we will derive a Carleman estimate for $u^\epsilon$ and pass to the limit $\epsilon\rightarrow 0$. For that we extend $u^\epsilon$ to the whole space $\R^n_+\times [-1,0]$ by considering 
\begin{equation}\label{eq:cutoff}
\tilde{u}^\epsilon (x,t):=u^\epsilon(x,t)\zeta(\frac{|x|}{\sqrt{|t|}}),
\end{equation} 
where $\zeta:[0,\infty)\rightarrow [0,1]$ is the smooth cut-off function which satisfies \eqref{eq:cut-off}. The proof for the Carleman estimate is divided into three steps:

\medskip

\emph{Step 1: Carleman inequality for the heat operator. } Let $\tilde u^\epsilon$ be defined in \eqref{eq:cutoff}. Let 
\begin{equation*}
\psi:=\psi(x,t;\tau):=\tau \phi(t)+\frac{|x|^2}{8t}.
\end{equation*}
 We will show that 
\begin{equation}\label{eq:Carleman_heat}
\begin{split}
&\quad \tau \left\| e^{\psi} (-t)^{-\frac{1}{2}}(1+|\ln(-t)|^2)^{-\frac{1}{2}} \tilde u^\epsilon\right\|_{L^2(A_{r,\rho}^+)}^2+ \left\|e^{\psi}  (1+|\ln(-t)|^2)^{-\frac{1}{2}}\nabla \tilde u^\epsilon\right\|_{L^2(A_{r,\rho}^+)}^2\\
&\lesssim c_0^{-1}\left\|e^{\psi}(-t)^{\frac{1}{2}} (\p_t-\Delta)\tilde u^\epsilon\right\|_{L^2(A_{r,\rho}^+)}^2+ E,
\end{split}
\end{equation}
where the error term $E:=E(\epsilon,R,\tau,r,\rho)\rightarrow 0$ if we first let $\epsilon \rightarrow 0$ and then $R\rightarrow \infty$.
The proof for this step is inspired by \cite{KT09}. 

We first write the operator in the new coordinates 
\begin{align}\label{eq:cov}
(y,s)\in \R^n_+\times [0,\infty);\qquad s=-\ln (-t), \quad y=\frac{x}{\sqrt{-t}}.
\end{align}
In the new coordinates, the heat operator reads
\begin{align*}
-t(\p_t-\Delta )= \p_s+\frac{y}{2}\cdot \nabla_y - \Delta_y.
\end{align*}
Conjugating the operator by $(-t)^{\frac{n}{4}}e^{\frac{|x|^2}{8t}}=e^{-\frac{ns}{4}}e^{-\frac{|y|^2}{8}}$ (which corresponds to setting $\tilde{u}^\epsilon=(-t)^{-\frac{n}{4}}e^{-\frac{|x|^2}{8t}}\hat u^\epsilon$), we obtain
\begin{align*}
(-t)^{\frac{n}{4}+1}e^{\frac{|x|^2}{8t}}(\p_t-\Delta)(-t)^{-\frac{n}{4}}e^{-\frac{|x|^2}{8t}}=\p_s +H, \quad H:=- \Delta_y+\frac{|y|^2}{16}.
\end{align*}
In the sequel, we will prove the Carleman estimate for the operator $\p_s+H$. We conjugate the operator $\p_s+H$ by $e^{\tau\tilde{\phi}(s)}$ and define $w^\epsilon=e^{\tau\tilde{\phi}}\hat u^\epsilon$. This leads to
\begin{align*}
e^{\tau\tilde\phi(s)}(\p_s+H)e^{-\tau\tilde{\phi}(s)}=\p_s+H-\tau\tilde{\phi}'=:A+S,
\end{align*}
where 
\begin{align*}
A:=\p_s ,\quad S:=H-\tau\tilde{\phi}'
\end{align*}
are the anti-symmetric and symmetric part of the operator (up to boundary contributions), respectively.  Also, since $$\tilde{u}^\epsilon(x,t)=e^{-\tau\tilde{\phi}(-\ln (-t))}(-t)^{-\frac{n}{4}}e^{-\frac{|x|^2}{8t}}w^\epsilon \left(\frac{x}{\sqrt{-t}},-\ln (-t)\right), $$
we have 
\begin{align*}
\nabla \tilde{u}^\epsilon = e^{-\psi}(-t)^{-\frac{n}{4}}\left[-\frac{x}{4t}w^\epsilon \left(\frac{x}{\sqrt{-t}},-\ln (-t)\right)+\frac{1}{\sqrt{-t}}\nabla_y w^{\epsilon}\left(\frac{x}{\sqrt{-t}},-\ln (-t)\right)\right].
\end{align*}
Now using the change of variables \eqref{eq:cov} (with $dsdy = (-t)^{\frac{n}{2}+1}dt dx$), we find 
\begin{align*}
\left\|e^{\psi}  (1+|\ln(-t)|^2)^{-\frac{1}{2}}\nabla \tilde u^\epsilon\right\|_{L^2}^2 \le 2\| (1+s^2)^{-\frac{1}{2}} |y| w^\epsilon\|_{L^2}^2 +2 \| (1+s^2)^{-\frac{1}{2}} \nabla_y w^\epsilon\|_{L^2}^2.
\end{align*}
Thus, by combining the above estimates, we find that, in order to prove the desired Carleman inequality \eqref{eq:Carleman_heat}, it suffices to show that
\begin{align*}
\tau \| (1+ s^2)^{-\frac{1}{2}}w^\epsilon\|_{L^2}^2 +\| (1+s^2)^{-\frac{1}{2}} |y| w^\epsilon\|_{L^2}^2+ \| (1+s^2)^{-\frac{1}{2}} \nabla_y w^\epsilon\|_{L^2}^2 \lesssim c_0^{-1}\|(A+S)w^\epsilon\|_{L^2}^2.
\end{align*}
Here and in the rest of \emph{step 1}, the $L^2$ norm is taken over $\R^n_+\times [s_2,s_1]$, where $s_2:=2|\ln\rho|$ and $s_1:=2(|\ln r|+ \ln 2)$. In order to get the desired Carleman estimate we compute $\|(A+S)w^\epsilon\|_{L^2}^2$.  Using $(a+b)^2=a^2+b^2+2ab$ followed by integration by parts in $y$-variable and then in $s$-variable, we find
\begin{equation}\label{eq:L2}
\begin{split}
\|(A+S)w^\epsilon\|_{L^2}^2&=\|Aw^\epsilon\|_{L^2}^2 + \|Sw^\epsilon\|_{L^2}^2 + 2\int (\p_sw^{\epsilon})(Hw^{\epsilon}-\tau \tilde{\phi}'w^{\epsilon})\\
&=\|Aw^\epsilon\|_{L^2}^2 + \|Sw^\epsilon\|_{L^2}^2 + 2\int \nabla_y\p_sw^{\epsilon} \cdot \nabla_y w^{\epsilon} \\
&+2\int (\frac{|y|^2}{16}-\tau\tilde{\phi}')(\p_sw^{\epsilon})w^{\epsilon} - \int_{\R^{n-1}\times\{0\}\times [s_2,s_1]}\p_{y_n} w^\epsilon \p_sw^\epsilon \ dy' ds\\
&= \|Aw^\epsilon\|_{L^2}^2 + \|Sw^\epsilon\|_{L^2}^2 +\tau \int \tilde{\phi}''|w^{\epsilon}|^2 +\text{Bdry},
\end{split}
\end{equation}
where the boundary contribution is 
\begin{align*}
\text{Bdry}&:=-\int_{\R^{n-1}\times\{0\}\times [s_2,s_1]}(\p_{y_n} w^\epsilon) (\p_sw^\epsilon) \ dy' ds  + \left[\int_{\R^n_+} |\nabla w^\epsilon|^2 + (\frac{|y|^2}{16}-\tau\tilde\phi') |w^\epsilon|^2  dy \right]^{s_1}_{s_2}.
\end{align*}
\emph{Claim:} We have $|\text{Bdry}|\leq E$, where $E:=E(\epsilon,R,\tau,r,\rho)\rightarrow 0$ if we first let $\epsilon\rightarrow 0$ and then $R\rightarrow \infty$. \\
\emph{Proof of the claim:} Recall that $w^\epsilon (y,s)= (-t)^{\frac{n}{4}}e^{\psi} u^\epsilon(ye^{-\frac{s}{2}}, -e^{-s})\zeta(|y|).$ Then, using $\p_n u^{\epsilon}(x',0,t)=\beta_\epsilon(u^\epsilon)$ and the change of variables \eqref{eq:cov}, we write the boundary contribution in terms of $u^\epsilon$ and in $(x,t)$ variables:
\begin{align*}
&-\int_{A'_{r/2,2\rho}}e^{2\psi} (-t)\beta_\epsilon(u^\epsilon) (\p_t u^\epsilon + \frac{x'}{2t}\cdot \nabla u^\epsilon)\zeta^2 \ dx'dt -\int_{A'_{r/2,2\rho}}e^{2\psi}(\tau\p_s\tilde\phi-\frac{n}{4}) \beta_\epsilon(u^\epsilon) u^\epsilon \zeta^2\ dx'dt \\
&+\left[\int_{\R^n_+} e^{2\psi} (-t)\left|\zeta\nabla  u^\epsilon+ \frac{x}{|x|\sqrt{|t|}}u^\epsilon \zeta'+ \frac{x}{4t} u^\epsilon \zeta\right|^2 \ dx \right]^{t=-\frac{r^2}{4}}_{t=-\rho^2}+ \left[\int_{\R^n_+} e^{2\psi}  \left(\frac{|x|^2}{16|t|}-\tau \p_s\tilde{\phi}\right) |u^\epsilon|^2 \zeta^2\ dx\right]^{t=-\frac{r^2}{4}}_{t=-\rho^2}\\
&=: I_1+I_2 + I_3 + I_4.
\end{align*}
For $I_1$, using that $\beta_\epsilon(u^\epsilon)\p_t u^\epsilon = \p_t B_\epsilon(u^\epsilon)$ and $\beta_\epsilon(u^\epsilon)x'\cdot \nabla u^\epsilon=x'\cdot \nabla B_\epsilon(u^\epsilon)$, where $B_\epsilon'=\beta_\epsilon$ with $B_\epsilon(0)=0$ is an anti-derivative of $\beta_\epsilon$, and an integration by parts in $t$ and $x$, we get
\begin{align*}
I_1&=\int_{A'_{r/2,\rho}}\left[\p_t (e^{2\psi} (-t)\zeta^2)-\nabla\cdot (e^{2\psi}x'\zeta^2)\right] B_\epsilon(u^\epsilon)\ dx'dt - \left[\int_{\R^{n-1}\times \{0\}} e^{2\psi}(-t) B_\epsilon(u^\epsilon)) \zeta^2\ dx'\right]^{t=-(\frac{r}{2})^2}_{t=-\rho^2}.
\end{align*} 
We infer from the estimate
\begin{align*}
\left\|\beta_\epsilon(u^\epsilon)\right\|_{L^\infty(B'_R\times [-\rho^2,-(\frac{r}{2})^2])}\leq C,
\end{align*}
where $C=C(n, p, \|f\|_{L^p(Q_1^+)}, \|a^{ij}\|_{W^{1,1}_p(Q_1)^+})$ (cf. Lemma 4 and Lemma 5 of \cite{AU88}) that $u^\epsilon\geq -C\epsilon$. Thus
\begin{align*} 
\|B_\epsilon(u^\epsilon)\|_{L^\infty(B'_R\times [-\rho^2,-(\frac{r}{2})^2])}\leq C\epsilon.
\end{align*}
From this we conclude that $|I_1|\leq C_1\epsilon$, where $C_1=C_1(n, p, \|f\|_{L^p}, \|a^{ij}\|_{W^{1,1}_p}, r,\rho,\tau)$.  To estimate $I_2$ we note that 
\begin{align*}
0\leq \beta_\epsilon(u^\epsilon)u^\epsilon \leq C\epsilon \text{ on } B'_R\times [-\rho^2,-(\frac{r}{2})^2],
\end{align*}
which follows from the $L^\infty$ bound on $\beta_\epsilon(u_\epsilon)$, $\beta_\epsilon(s)=0$ when $s\geq 0$ and $u^\epsilon\geq -C\epsilon$. This implies that $|I_2| \leq C_1 \epsilon$. Here  $C_1$ may vary from lines but is always with the same dependence of the parameters.
To estimate $I_3$ we first use \cite[Claim A.1]{DGPT} with $v=u^\epsilon \zeta$ and then apply the $H^2$-estimate in Lemma \ref{lem:app_H2}  to find 
\begin{align*}
|I_3|&\lesssim \sum_{t=-\frac{r^2}{4},-\rho^2}\int_{\R^n_+} e^{2\psi} (-t)\zeta^2|\nabla  u^\epsilon|^2+ e^{2\psi}|u^\epsilon \zeta'|^2+e^{2\psi}|u^\epsilon \zeta|^2\ dx \\
& \lesssim \tau r^{-2}\int_{A_{r/2,r}^+}e^{2\psi} |u^\epsilon|^2 \zeta^2 + r^{2}\int_{A_{r/2,r}^+} e^{2\psi} |f^\epsilon|^2 \zeta^2 +\tau \rho^{-2}\int_{A_{\rho,2\rho}^+}e^{2\psi} |u^\epsilon|^2 \zeta^2\\
& + \rho^{2}\int_{A_{\rho, 2\rho}^+} e^{2\psi} |f^\epsilon|^2 \zeta^2+ E+ e^{-\frac{(R-1)^2}{4}}\sum_{t=-\frac{r^2}{4},-\rho^2}\int  e^{2\tau \phi(t)} |u^{\epsilon}|^2,
\end{align*}
where $E=E(\epsilon, R, r,\tau)\rightarrow 0$ as $\epsilon\rightarrow 0$ and then $R\rightarrow \infty$.  Here, in the term involving $|u^\epsilon \zeta'|$, we have used the fact that $\supp \zeta' \subset \{(x,t): R-1\leq \frac{|x|}{\sqrt{|t|}}\leq R\}.$  Since by assumption $u$ and $f$ have compact support, it follows that $u^\epsilon,f^\epsilon \to 0$ in $L^2(B_R^+\times [-r^2,-(\frac{r}{2})^2])$ and $L^2(B_R^+\times [-4\rho^2, -\rho^2])$ if we first let  $\epsilon \to 0$ and then $R \to \infty.$ Also, from the global energy estimate \eqref{eq:global_energy} and the fact that $e^{-\frac{(R-1)^2}{4}} \to 0$ as $R \to \infty,$ the last term on the right-hand side of the above inequality goes to $0.$  
Thus, we have that $I_3\rightarrow 0$ if we first let $\epsilon\rightarrow 0$ and then $R\rightarrow \infty$. For the term $I_4$ we use similar estimate and get that $|I_4|\rightarrow 0$ as $\epsilon\rightarrow 0$ and then $R\rightarrow \infty$. This completes the proof for the claim.

We infer from \eqref{eq:L2} that 
\begin{align}\label{eq:Carleman1}
\|\p_s w^\epsilon\|_{L^2}^2+ \|Sw^\epsilon\|_{L^2}^2 + \tau\|(\tilde{\phi}'')^{1/2}w^\epsilon\|_{L^2}^2 \leq \|(\p_s+H-\tau\tilde\phi')w^\epsilon\|_{L^2}^2 + E.
\end{align}
 To obtain the positive contribution involving $\nabla_y w$, we make use of the symmetric part of the conjugated operator. More precisely, an integration by parts in $y$ yields (recall that $S=H-\tau\tilde\phi'$)
\begin{align*}
&\quad \int \tilde\phi''|\nabla w^\epsilon|^2\ dyds +\frac{1}{16}\int \tilde\phi''|y|^2(w^\epsilon)^2 \ dyds\\
&= \int \tilde\phi''Hw^\epsilon w^\epsilon \ dyds - \int_{\R^{n-1}\times \{0\}\times [s_2,s_1]} \tilde{\phi}''\p_n w^\epsilon w^\epsilon \ dy'ds\\
&\leq \int \tilde\phi''Sw^\epsilon w^\epsilon+\tau\int \tilde\phi''\tilde\phi'(w^\epsilon)^2 \ dyds\\
&\leq \frac{1}{\tau}\|Sw^\epsilon\|_{L^2}^2 + \tau\|\tilde\phi''w^\epsilon\|_{L^2}^2 + \tau \|(\tilde\phi'')^{1/2}(\tilde\phi')^{1/2} w^\epsilon\|_{L^2}^2,
\end{align*}
where in the first inequality we have used that the boundary integral is nonnegative, which follows from the facts that $\p_nw^\epsilon w^\epsilon \geq 0$ and that $\tilde\phi''\geq 0$. Since $1/2<1-c_0\pi/2<\tilde\phi'(s)< 1+c_0\pi/2$, $\tilde\phi''(s)=\frac{c_0}{1+s^2}\leq \frac{1}{2}$ for $c_0\leq \frac{1}{4}$ and $\tau\geq 1$,  combining the above inequality with \eqref{eq:Carleman1} we have 
\begin{equation*}
\begin{split}
&\quad \tau \left\|(\tilde\phi'')^{1/2}w^\epsilon\right\|_{L^2}^2 + \left\|(\tilde\phi'')^{1/2}\nabla w^\epsilon\right\|_{L^2}^2 + \left\|(\tilde{\phi}'')^{1/2}|y|w^\epsilon\right\|_{L^2}^2 \lesssim \left\|(\p_s+H-\tau\tilde\phi')w^\epsilon\right\|_{L^2}^2+ E,
\end{split}
\end{equation*}
where the error term $E\rightarrow 0$ as first $\epsilon\rightarrow 0$ and then $R\rightarrow \infty$. Writing the above inequality in terms of $\tilde u^\epsilon$ and recalling $w^\epsilon (y,s)= (-t)^{\frac{n}{4}}e^{\frac{|x|^2}{8t}}e^{\tau\tilde\phi(|\ln(-t)|)}\tilde u^\epsilon(ye^{-\frac{s}{2}}, -e^{-s})$, we conclude the claimed inequality \eqref{eq:Carleman_heat}.

\medskip

\emph{Step 2: Estimate the RHS of \eqref{eq:Carleman_heat}.} We note that $\tilde{u}^\epsilon$ solves
\begin{align}\label{eq:tilde_u}
\p_t \tilde u^\epsilon -\Delta \tilde u^\epsilon = f^\epsilon\zeta +\p_i((a^{ij}_\epsilon-\delta^{ij})\p_ju^\epsilon)\zeta-2\nabla u^\epsilon\nabla\zeta+u^\epsilon(\p_t-\Delta )\zeta.
\end{align}
Since $\supp (\zeta') \subset \{(x,t): R-1\leq \frac{|x|}{\sqrt{|t|}}\leq R\}$, then
\begin{align*}
&\quad \|e^{\psi}(-t)^{\frac{1}{2}} \left(\nabla u^\epsilon\cdot \nabla\zeta + u^\epsilon (\p_t-\Delta)\zeta\right)\|_{L^2(A_{r,\rho}^+)}\leq C_{\tau, \rho, r}e^{-\frac{R^2}{4}}\left(\|u^\epsilon\|_{L^2(B_R^+\times (-1,0])} + \|\nabla u^\epsilon\|_{L^2(B_R^+\times (-1,0])}\right).
\end{align*}
By the uniform global energy estimate the above right hand side is uniformly bounded in $\epsilon$, thus goes to zero as $R\rightarrow \infty$. 
Then in order to estimate the RHS of \eqref{eq:Carleman_heat} we need to estimate
\begin{equation*}
\begin{split}
X_1:=\left\|e^{\psi}(-t)^{\frac{1}{2}}(a^{ij}_\epsilon-\delta^{ij})\p_{ij}u^\epsilon\zeta\right\|_{L^2(A_{r/2,\rho}^+)}^2,\ X_2:=\left\|e^{\psi}(-t)^{\frac{1}{2}} \p_i a^{ij}_\epsilon\p_ju^\epsilon\zeta\right\|_{L^2(A_{r/2,\rho}^+)}^2.
\end{split}
\end{equation*}
For that we introduce the following quantity: for $\gamma:=1-\frac{n+2}{p}$,
\begin{align*}
C(a^{ij}):= \sup_{A_{r/2,\rho}^+} \frac{|a^{ij}(x,t)-\delta^{ij}|}{(|x|+\sqrt{-t})^\gamma} + \sup_{\tilde{r}\in [\rho, \frac{1}{2}]}\tilde{r}^{-\frac{2\gamma}{n+2}}\|\nabla a^{ij}\|_{L^{n+2}(A_{\tilde{r},2\tilde{r}}^+)}.
\end{align*}
Note that $C(a^{ij})\lesssim \|a^{ij}\|_{W^{1,1}_p(A_{r,\rho}^+)}$ under our assumptions on $a^{ij}$.

\emph{Estimate for $X_1$:} We use the dyadic decomposition in time together with the weighted $H^2$-estimate for $u^\epsilon$ in Lemma \ref{lem:app_H2}. More precisely, let $A_m$ and $N_m$, $m\in \mathcal{N}:=\{\lfloor -\frac{\ln \rho}{\ln 2}\rfloor ,\cdots, \lfloor -\frac{\ln r}{\ln 2}\rfloor +1\}$, be defined in \eqref{eq:dyadic}. Then with $\omega(x,t):=1+\frac{|x|}{\sqrt{-t}}$ we have
\begin{align*}
X_1&\leq  C(a^{ij})^2 \left\|e^\psi (-t)^{\frac{1}{2}}(|x|+\sqrt{-t})^{\gamma} \p_{ij}u^\epsilon \zeta\right\|_{L^2(A_{r/2,\rho}^+)}^2\\
&\lesssim C(a^{ij})^2 \sum_{m\in \mathcal{N}} 2^{-2m(1+\gamma)}\left\|e^{\psi}\omega^{\gamma}\p_{ij}u^\epsilon \zeta\right\|_{L^2(A_m)}^2.
\end{align*}
 By Lemma \ref{lem:app_H2}, if $\delta\leq \delta_0(n)$ small, then 
\begin{align*}
X_1&\lesssim C(a^{ij})^2\left(\tau^2 \sum_{m\in \mathcal{N}} 2^{-2m(-1+\gamma)}\left\|e^{\psi} \omega^{\gamma}u^\epsilon \zeta\right\|_{L^2(N_m)}^2+ 2^{-2m(1+\gamma)}\left\|e^\psi \omega^\gamma f^\epsilon\zeta\right\|_{L^2(N_m)}^2+ E\right),
\end{align*}
where $E:=(\epsilon,R,\tau,\rho, r)\rightarrow 0$ as $\epsilon\rightarrow 0$ and then $R\rightarrow \infty$. 
Using $\gamma \leq 1$, \cite[Claim A.1]{DGPT} and the gradient estimate in Lemma \ref{lem:H2}, we can estimate the first term on the above right hand side by
\begin{align*}
\|e^\psi \omega^\gamma u^\epsilon\zeta\|_{L^2(N_m)}^2 &\leq \|e^{\psi}\omega u^\epsilon \zeta\|_{L^2(N_m)}^2 \\
&\lesssim  \|e^{\psi}  u^\epsilon \zeta\|_{L^2(A_m\cup A_{m-1}\cup A_{m-2})}^2 + \|e^\psi |t|f^\epsilon\zeta\|_{L^2(A_m\cup A_{m-1}\cup A_{m-2}}^2).
\end{align*}
Thus summing up and using that $f^\epsilon$ is essentially supported in $Q_1^+\cup Q'_1$ (thus $|t|^{\gamma/2}\omega^\gamma\lesssim 1$ in $A_{r/2,2\rho}^+$), we obtain 
\begin{align*}
X_1 &\lesssim C(a^{ij})^2 \left(\tau^2 \left\|e^{\psi}(-t)^{-\frac{1}{2}+\frac{\gamma}{2}} u^\epsilon \zeta\right\|_{L^2(A_{r/4,2\rho}^+)}^2 + \left\|e^{\psi}(-t)^{\frac{1}{2}}f^\epsilon\zeta\right\|_{L^2(A_{r/4,2\rho}^+)}^2 + E\right),
\end{align*}
where $E\rightarrow 0$ if $\epsilon\rightarrow 0$ and then $R\rightarrow \infty$.

\emph{Estimate for $X_2$:} By H\"older's inequality and Sobolev embedding, 
\begin{align*}
X_2&\lesssim \sum_{m\in \mathcal{N}} 2^{-2m}\left\|e^{\psi} \p_ia^{ij}\p_j u^\epsilon \zeta\right\|_{L^2(A_m)}^2\lesssim \sum_{m\in \mathcal{N}} 2^{-2m}\|\p_ia^{ij}\|_{L^{n+2}(A_m)}^2 \|e^{\psi}\p_j u^\epsilon \zeta\|_{L^{\frac{2(n+2)}{n}}(A_m)}^2\\
&\lesssim \sum_{m\in \mathcal{N}} 2^{-2m}\|\p_ia^{ij}\|_{L^{n+2}(A_m)}^2\cdot  \left(\sup_{t\in [2^{-2m}, 2^{-2m-2}]}\|e^{\psi}\nabla u^\epsilon (\cdot, t)\zeta\|_{L^2(\R^n_+)}^2\right)^{\frac{2}{n+2}}\|\nabla(e^{\psi}\nabla u^\epsilon \zeta)\|_{L^2(A_m)}^{\frac{2n}{n+2}}.
\end{align*}
Invoking the weighted $H^2$-estimate for $u^\epsilon$, cf. Lemma \ref{lem:app_H2}, and using that
\begin{align*}
\|\nabla a^{ij}\|_{L^{n+2}(A_m)}^2\leq C(a^{ij})2^{-\frac{4m\gamma}{n+2}},
\end{align*}
and arguing as in the estimate for $X_1$, we get for $\tilde\gamma:=\frac{\gamma}{n+2}$,
\begin{align*}
X_2&\lesssim C(a^{ij})^2 \left(\tau^2 \left\|e^\psi(-t)^{-\frac{1}{2}+\tilde\gamma} u^\epsilon \zeta\right\|_{L^2(A_{r/4,\rho}^+)}^2  + \left\|e^\psi (-t)^{\frac{1}{2}+\tilde\gamma}f^\epsilon \zeta\right\|_{L^2(A_{r/4,2\rho}^+)}^2+E\right),
\end{align*}
where $E\rightarrow 0$ as $\epsilon\rightarrow 0$ and then $R\rightarrow \infty$.
Combining the estimates for $X_1$ and $X_2$ together we have
\begin{equation}\label{eq:x12}
\begin{split}
X_1+X_2&\lesssim C(a^{ij})^2 \left(\tau^2\left\|e^\psi (-t)^{-\frac{1}{2}+\tilde\gamma}u^\epsilon \zeta\right\|_{L^2(A_{r/4,2\rho}^+)}^2 + \left\|e^\psi (-t)^{\frac{1}{2}} f^\epsilon \zeta\right\|_{L^2(A_{r/4,2\rho}^+)}^2 + E\right).
\end{split}
\end{equation}

\medskip

\emph{Step 3: Conclusion.} In view of \eqref{eq:Carleman_heat}, \eqref{eq:tilde_u} as well as \eqref{eq:x12}, we conclude
\begin{align*}
&\tau \left\| e^{\psi} (-t)^{-\frac{1}{2}}(1+|\ln(-t)|^2)^{-\frac{1}{2}} \tilde u^\epsilon\right\|_{L^2(A_{r,\rho}^+)}^2+ \left\|e^{\psi}(1+|\ln(-t)|^2)^{-\frac{1}{2}}\nabla \tilde u^\epsilon\right\|_{L^2(A_{r,\rho}^+)}^2\\
&\lesssim  C(a^{ij})^2c_0^{-1}\left(\tau^2\left\|e^{\psi}(-t)^{-\frac{1}{2}+\tilde\gamma} \tilde{u}^\epsilon\right\|_{L^2(A_{r/4,2\rho}^+)}^2 +\left\|e^{\psi}(-t)^{\frac{1}{2}} f^\epsilon\zeta\right\|_{L^2(A_{r/4,2\rho}^+)}^2
+ E\right).
\end{align*}
We pass to the limit by first letting $\epsilon\rightarrow 0$ and then increase the support of $\zeta$ by letting $R\rightarrow \infty$. Since $\|\tilde{u}^\epsilon\|_{L^2(A_{r/4,r}^+)},  \|\tilde{u}^\epsilon\|_{L^2(A_{\rho,2\rho}^+)}\rightarrow 0$ as $\epsilon\rightarrow 0$ due to the support assumption of $u$ (and the same applies to the integral involving $f^\epsilon$),  we obtain the desired Carleman estimate. 
\end{proof}

We derive an immediate consequence of the Carleman inequality, the three "sphere" inequality for solutions to the parabolic Signorini problem. 
\begin{cor}\label{3sphere}
Let $u\in \mathcal{G}^{A,f}_p(S_1^+)$ with $p>n+2$. Assume that $1<\tau\leq \tau_0<\infty$. Then there is $R_0=R_0(\tau_0,c_0, n,p,\|f\|_{L^p}, \|a^{ij}\|_{W^{1,1}_p})\in (0,\frac{1}{2})$ and $C=C(n,\|a^{ij}\|_{W^{1,1}_p})$, such that for any $r_1,r_2,r_3\in (0,R_0)$ with $r_1<r_2<r_3/2$ and $2r_1<r_2$ we have
\begin{equation}
\label{eq:three_sphere}
\begin{split}
&\quad \tau^{\frac{1}{2}}(r_2)^{-1} |\ln (r_2)|^{-1}e^{\tau\phi(-r_2^2)}\|ue^{\frac{|x|^2}{8t}}\|_{L^2(A^+_{r_2,2r_2})}\\
&\leq Cc_0^{-1}\left((r_1)^{-1} e^{\tau\phi(-r_1^2)}\|u e^{\frac{|x|^2}{8t}}\|_{L^2(A^+_{r_1,2r_1})}+(r_3)^{-1} e^{\tau\phi(-r_3^2)}\|u e^{\frac{|x|^2}{8t}}\|_{L^2(A^+_{r_3,2r_3})}\right.\\
&\quad +\left.\|(-t)^{\frac{1}{2}}e^{\tau\phi(t)}e^{\frac{|x|^2}{8t}}f\|_{L^2(A^+_{r_1,2r_3})}\right).
\end{split}
\end{equation}
Here $\phi$ is the weight function defined in Proposition \ref{prop:Carleman}. 
\end{cor}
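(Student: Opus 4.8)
The three sphere's inequality is the standard corollary extracted from a Carleman estimate by a suitable choice of cutoff in time. The plan is to apply Proposition~\ref{prop:Carleman} to a truncated version of $u$ that is compactly supported in the parabolic strip $\overline{B_1^+}\times[-r_3^2,-r_1^2]$, and then to bound the error terms generated by the truncation using the weight function $\phi$ and its monotonicity properties.

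First I would fix a smooth cutoff $\theta(t)$ with $\theta\equiv 1$ on $[-r_3^2/4,-(2r_1)^2]$ (say), $\theta\equiv 0$ outside $[-r_3^2,-r_1^2]$, and with derivative bounds $|\theta'|\lesssim r_1^{-2}$ near $t\sim -r_1^2$ and $|\theta'|\lesssim r_3^{-2}$ near $t\sim -r_3^2$. Set $v:=u\theta$. Then $v\in\mathcal G^{A,f_\theta}_p(S_1^+)$ with $\supp(v)\subset\overline{B_1^+}\times[-r_3^2,-r_1^2]$ and
\[
(\p_t-\p_i(a^{ij}\p_j))v=\theta f+\theta' u=:f_\theta,
\]
so the Signorini conditions on $S_1'$ are preserved since $\theta$ depends only on $t\geq 0$. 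Choosing $\rho=r_3$ and $r=r_1$ in Proposition~\ref{prop:Carleman} (after checking via Remark~\ref{rmk:smallness} that $\delta\le\delta_0$ once $R_0$ is small, which is exactly where $R_0$ enters), I apply the Carleman estimate to $v$.

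On the left-hand side I restrict the integration to the annulus $A^+_{r_2,2r_2}$ where $\theta\equiv 1$, keeping only the zeroth-order term; since $(-t)^{-1/2}(1+|\ln(-t)|^2)^{-1/2}\gtrsim r_2^{-1}|\ln r_2|^{-1}$ and $e^{\tau\phi(t)}\gtrsim e^{\tau\phi(-r_2^2)}$ (using monotonicity of $\phi$ in $-t$ — note $\phi$ is increasing) on that annulus, this recovers the left side of \eqref{eq:three_sphere} up to constants. On the right-hand side I split $f_\theta=\theta f+\theta' u$: the $\theta f$ piece is controlled by the $f$-term in \eqref{eq:three_sphere}; the $\theta'u$ piece is supported in $A^+_{r_1,2r_1}\cup A^+_{r_3/2,r_3}$ (wait — in the two transition regions $\{-(2r_1)^2\lesssim t\lesssim -r_1^2\}$ and $\{-r_3^2\lesssim t\lesssim -r_3^2/4\}$), where $(-t)^{1/2}|\theta'|\lesssim r_1^{-1}$, resp.\ $\lesssim r_3^{-1}$, and where $e^{\tau\phi(t)}\lesssim e^{\tau\phi(-r_1^2)}$, resp.\ $e^{\tau\phi(-(r_3/2)^2)}\lesssim e^{\tau\phi(-r_3^2)}$ (again by monotonicity, absorbing the bounded factor $\phi(-r_3^2/4)-\phi(-r_3^2)\le$ const into $C$, using $\tau\le\tau_0$); this produces exactly the two boundary annulus terms on the right of \eqref{eq:three_sphere}. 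The first term on the RHS of the Carleman estimate, $\tau\|e^{\tau\phi}(-t)^{-1/2+\tilde\gamma}v\|_{L^2}$, is handled by noting $(-t)^{\tilde\gamma}\le R_0^{2\tilde\gamma}$ is small, so this term can be absorbed into the left-hand side after the estimate is summed/rescaled — this absorption, together with rewriting the remaining $(-t)^{-1/2}v$ term over the full range as a sum of the annulus norms via the same dyadic comparison, is the one genuinely fiddly point.

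The main obstacle I expect is bookkeeping rather than conceptual: carefully matching the $L^2$ norms over full parabolic annuli $A^+_{r,2r}$ appearing in \eqref{eq:three_sphere} with the strip norms $L^2(S_1^+)$ in Proposition~\ref{prop:Carleman}, and in particular absorbing the $\tilde\gamma$-gain term $\tau\|e^{\psi}(-t)^{-1/2+\tilde\gamma}v\|_{L^2(A^+_{r_1,2r_3})}$ on the right into the corresponding left-hand side quantity. This requires the smallness $R_0^{2\tilde\gamma}\ll1$, which forces the dependence of $R_0$ on $\tau_0,c_0,n,p,\|f\|_{L^p},\|a^{ij}\|_{W^{1,1}_p}$ as stated; it also uses that on $A^+_{r_1,2r_3}$ the logarithmic factor $(1+|\ln(-t)|^2)^{-1/2}$ and the polynomial weight differ only by constants once $\tau\le\tau_0$ is fixed. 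Once these comparisons are in place, \eqref{eq:three_sphere} follows directly.
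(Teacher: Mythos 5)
Your proposal follows essentially the same route as the paper: multiply $u$ by a time cutoff supported between the inner and outer radii, apply Proposition \ref{prop:Carleman} to the truncated function (the Signorini conditions survive since the cutoff depends only on $t$), absorb the $\tau\|e^{\psi}(-t)^{-1/2+\tilde\gamma}\cdot\|$ term into the left-hand side using the extra factor $(-t)^{\tilde\gamma}\le R_0^{2\tilde\gamma}$ together with $\tau\le\tau_0$ (this is exactly how $R_0$ acquires its dependence), and read off the two annulus terms from the commutator $u\,\p_t\theta$. So the strategy and the key absorption step are the ones the paper uses.

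The one place where your version is genuinely off is the placement of the cutoff transitions. You put the outer transition at $-t\in[r_3^2/4,\,r_3^2]$, inside the support window $[-r_3^2,-r_1^2]$. This causes three (related) problems: (i) the plateau $\{\theta\equiv1\}=[-r_3^2/4,-(2r_1)^2]$ need not contain $A^+_{r_2,2r_2}$, whose time range reaches down to $-(2r_2)^2$, in the admissible regime $r_3/4<r_2<r_3/2$, so the step ``restrict the LHS to $A^+_{r_2,2r_2}$ where $\theta\equiv1$'' fails there; (ii) the resulting error term lives on $A^+_{r_3/2,r_3}$, which is not contained in $A^+_{r_3,2r_3}$, so you do not literally obtain \eqref{eq:three_sphere} but a relabelled variant; (iii) on that transition region $-t\le r_3^2$, so $\phi(t)\ge\phi(-r_3^2)$ and the comparison $e^{\tau\phi(t)}\lesssim e^{\tau\phi(-r_3^2)}$ costs a factor $e^{C\tau_0}$, putting $\tau_0$ into $C$, contrary to the stated dependence. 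The paper avoids all three issues by placing \emph{both} transitions on the far side in time: $\eta\equiv1$ on $A_{\frac{3r_1}{2},\frac{5r_3}{4}}$ and $\eta\equiv0$ outside $A_{\frac{5r_1}{4},\frac{3r_3}{2}}$, so that on each transition region $-t>r_i^2$, hence $\phi(t)\le\phi(-r_i^2)$ with constant $1$ uniformly in $\tau$, the transition annuli sit inside $A^+_{r_1,2r_1}$ and $A^+_{r_3,2r_3}$ exactly as in \eqref{eq:three_sphere}, and $\eta\equiv1$ on $A_{r_2,2r_2}$ precisely under the hypotheses $2r_1<r_2<r_3/2$. If you adopt that cutoff geometry (the support then extends to $|t|^{1/2}=\tfrac32 r_3<1$, which is harmless), the rest of your argument goes through as you wrote it.
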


\begin{proof}
Let $\eta=\eta(t)\geq 0$ be a smooth cut-off function such that
\begin{align*}
\eta=1 \text{ in } A_{\frac{3r_1}{2}, \frac{5r_3}{4}}, \quad \eta=0 \text{ outside } A_{\frac{5r_1}{4}, \frac{3r_3}{2}},\\
|\p_t\eta|\lesssim \frac{1}{r_1^2} \text{ in } A_{\frac{5r_1}{4}, \frac{3r_1}{2}}, \quad |\p_t\eta|\lesssim \frac{1}{r_3^2} \text{ in } A_{\frac{5r_3}{4}, \frac{3r_3}{2}}.
\end{align*}
Then $\tilde{u}=u\eta$ is supported in $A_{\frac{5r_1}{4}, \frac{3r_3}{2}}^+$ and satisfies
\begin{align*}
\p_t\tilde{u}-\p_i(a^{ij}\p_j \tilde u)= f\eta +u\p_t\eta &\text{ in } S_1^+\\
\tilde u\geq 0, \ \p_\nu \tilde u\geq 0, \ u\p_{\nu} \tilde u =0 & \text{ on }  S'_1.
\end{align*}
Note that by Remark \ref{rmk:smallness} there is $\tilde{R}_0=\tilde{R}_0(n,p,\|a^{ij}\|_{W^{1,1}_p})$ small, such that the Carleman estimate holds true if $r_3\leq \tilde{R}_0$.
We apply the Carleman estimate Proposition \ref{prop:Carleman} to $\tilde u$ and get
\begin{align*}
&\tau^{\frac{1}{2}} \left\| e^{\tau\phi(t)+\frac{|x|^2}{8t}} (-t)^{-\frac{1}{2}}(1+|\ln(-t)|^2)^{-\frac{1}{2}} \tilde u\right\|_{L^2(A^+_{r_1, 2r_3})}+ \left\|e^{\tau\phi(t)+\frac{|x|^2}{8t}}  (1+|\ln(-t)|^2)^{-\frac{1}{2}}\nabla\tilde u\right\|_{L^2(A^+_{r_1,2r_3})}\\
&\leq Cc_0^{-1}\left(\tau\left\|e^{\tau\phi(t)+\frac{|x|^2}{8t}}(-t)^{-\frac{1}{2}+\tilde\gamma}\tilde u\right\|_{L^2(A^+_{r_1,2r_3})}+\left\|e^{\tau\phi(t)\frac{|x|^2}{8t}}(-t)^{\frac{1}{2}}f\eta\right\|_{L^2(A^+_{r_1,2r_3})}\right.\\
&\quad \left. +\left\|e^{\tau\phi(t)+\frac{|x|^2}{8t}}(-t)^{\frac{1}{2}}u\p_t\eta\right\|_{L^2(A^+_{r_1,2r_3})}\right).
\end{align*}
We now choose $R_0=R_0(\tau_0, c_0,n,p,\|f\|_{L^p}, \|a^{ij}\|_{W^{1,1}_p}) <\tilde{R}_0$ to be small enough such that when $r_3\leq R_0,$  the first term on the right hand side above can be absorbed by the left hand side. This is possible due to the presence of the additional factor $(-t)^{\tilde{\gamma}}$ in the first term on the right hand side above. For the third term on the right hand side we infer from the support of $\eta$ that
\begin{align*}
\left\|(-t)^{\frac{1}{2}}e^{\tau\phi(t)+\frac{|x|^2}{8t}} u\p_t\eta\right\|_{L^2(A^+_{r_1,2r_3})}&\lesssim \left\|(-t)^{-\frac{1}{2}}e^{\tau\phi(t)+\frac{|x|^2}{8t}} u\right\|_{L^2(A^+_{\frac{5r_1}{4}, \frac{3r_1}{2}})}+ \left\|(-t)^{-\frac{1}{2}}e^{\tau\phi(t)+\frac{|x|^2}{8t}}u\right\|_{L^2(A^+_{\frac{5r_3}{4}, \frac{3r_3}{2}})}.
\end{align*}
Thus we have
\begin{align*}
&\tau^{\frac{1}{2}}(r_2)^{-1} |\ln (r_2)|^{-1}e^{\tau\phi(-r_2^2)}\|ue^{\frac{|x|^2}{8t}}\|_{L^2(A^+_{r_2,2r_2})}\\
&\leq C c_0^{-1}\left((r_1)^{-1} e^{\tau\phi(-r_1^2)}\|u e^{\frac{|x|^2}{8t}}\|_{L^2(A^+_{r_1,2r_1})}+(r_3)^{-1} e^{\tau\phi(-r_3^2)}\|u e^{\frac{|x|^2}{8t}}\|_{L^2(A^+_{r_3,2r_3})}\right.\\
&\quad +\left.\|(-t)^{
\frac{1}{2}}e^{\tau\phi(t)}e^{\frac{|x|^2}{8t}}f\eta\|_{L^2(A^+_{r_1,2r_3})}\right).
\end{align*}
This completes the proof for the corollary.
\end{proof}
\begin{rmk}[Strengthened three-sphere]
	We can strengthen Corollary \ref{3sphere} by using the antisymmetric part of the conjugated operator in \emph{Step 1} of the proof for Proposition \ref{prop:Carleman}, see also \cite[Remark 14]{R14} for the elliptic case:
We use the Poincar\'e inequality to estimate $\|\partial_s w^\epsilon\|_{L^2}$ on the left hand side of \eqref{eq:Carleman1} to get
\begin{align*}
\|w^\epsilon-w^\epsilon(\cdot, \tilde s_i)\|_{L^2(\R^n_+\times (\tilde s_2,\tilde s_1))} \lesssim (\tilde s_2-\tilde s_1) \|\partial_s w^\epsilon\|_{L^2(\R^n_+\times (\tilde s_2,\tilde s_1))},\quad i=1,2
\end{align*}
for any $\tilde s_1$ and $\tilde s_2$ such that $-\ln \rho^2\leq \tilde s_2<\tilde s_1\leq -\ln r^2$. Given $0<r<\tilde r<\rho<1$, we now apply the above inequality with $s_2=-\ln\rho^2$ and $s_1=-\ln \tilde r^2$ (or $s_2=-\ln \tilde r^2$ and $s_1=-\ln r^2$) and then write $w^\epsilon$ in terms of $u^\epsilon$. Let $\epsilon\rightarrow 0$ as well as the support of the cut-off function swipe $\R^n$ (i.e. let $R\rightarrow \infty$). Since  $u(\cdot, -r^2)=0$ and $u(\cdot, -\rho^2)=0$ by our assumption and $D_{x,t} u^\epsilon$ is uniformly bounded in $L^2$, by the trace theorem we thus have that $u^\epsilon(\cdot, -r^2)\rightarrow 0$ and $u^\epsilon(\cdot, -\rho^2)\rightarrow 0$ in $L^2(\R^n_+)$. Thus we get an additional contribution on the left hand side of the Carleman estimate: for $\tilde r\in (r,\rho)$, 
\begin{align*}
\ln(\rho/\tilde r)^{-1}\|e^{\tau\phi(t)+\frac{|x|^2}{8t}}(-t)^{-\frac{1}{2}} u \|_{L^2(A^+_{\tilde r,\rho})}^2 + \ln (\tilde r/r)^{-1}\|e^{\tau\phi(t)+\frac{|x|^2}{8t}}(-t)^{-\frac{1}{2}} u \|_{L^2(A^+_{r,\tilde r})}^2
\end{align*} 
This leads to an improvement of the three-sphere inequality \eqref{eq:three_sphere}:
		\begin{equation}
			\label{eq:three_sphere_strong}
			\begin{split}
				&\quad \tau^{\frac{1}{2}}(r_2)^{-1} |\ln (r_2)|^{-1}e^{\tau\phi(-r_2^2)}\|ue^{\frac{|x|^2}{8t}}\|_{L^2(A^+_{r_2,2r_2})}\\
				&\quad +\max\{(\operatorname{ln}(r_2/r_1))^{-1}, (\ln(r_3/r_2))^{-1}\}(r_2)^{-1}e^{\tau\phi(-r_2^2)}\|ue^{\frac{|x|^2}{8t}}\|_{L^2(A^+_{r_2,2r_2})}\\
				&\leq Cc_0^{-1}\left((r_1)^{-1} e^{\tau\phi(-r_1^2)}\|u e^{\frac{|x|^2}{8t}}\|_{L^2(A^+_{r_1,2r_1})}+(r_3)^{-1} e^{\tau\phi(-r_3^2)}\|u e^{\frac{|x|^2}{8t}}\|_{L^2(A^+_{r_3,2r_3})}\right.\\
&\quad +\left.\|(-t)^{\frac{1}{2}}e^{\tau\phi(t)}e^{\frac{|x|^2}{8t}}f\|_{L^2(A^+_{r_1,2r_3})}\right).
			\end{split}
		\end{equation}
		The strengthened inequality will be crucial later in the proof for  the doubling inequality, cf. Lemma \ref{doubling}.
\end{rmk}

\section{Almost optimal regularity}\label{sec:al_opt}
In this section, we introduce a notion of vanishing order and derive several properties of it, i.e. well-definiteness, upper semi-continuity and the gap of the vanishing order beyond $3/2$. We establish a doubling inequality which is crucial later for the compactness in the blowup arguments. Furthermore, we derive an almost optimal (up to a logarithmic loss) growth estimate of the solution. For the proofs we  rely on the three sphere's inequality  obtained in the previous section.

\medskip

Throughout this section, for notational simplicity we will work with $u\in \mathcal{G}^{A,f}_p(S_1)$ with $p>n+2$ and $u, A$ and $f$ even about $x_n$. 

\begin{defi}[Vanishing order]\label{defi:vanishing_order}
Given $u\in \mathcal{G}^{A,f}_p(S_1)$ and $(x_0,t_0)\in Q_{1/4}$, we define the vanishing order of $u$ at $(x_0,t_0)$ as 
\begin{align*}
\kappa_{(x_0,t_0)}:=\limsup_{r\rightarrow 0+}\frac{\ln \left(\fint_{A_{r,2r}(x_0,t_0)}u^2e^{\frac{\sigma(x-x_0;x_0,t_0)}{4(t-t_0)}} \ dxdt \right)^{\frac{1}{2}}}{\ln r}\in [0,\infty],
\end{align*}
where
$\sigma(y;x_0,t_0):=\langle A(x_0,t_0)^{-1}y, y\rangle$ for any $y\in \R^n$ and $A(x_0,t_0)^{-1}$ is the inverse of the matrix $A(x_0,t_0)=(a^{ij}(x_0,t_0))$. Here we define $\ln 0:=-\infty$, so the vanishing order of the trivial solution is $+\infty$.

When $p>n+2$, it follows from the regularity results in \cite{AU} that if $(x_0,t_0)\in \Gamma_u^\ast$, then $\kappa_{(x_0,t_0)}\in [ 1+\alpha,\infty]$ for some $\alpha\in (0,1)$.
\end{defi}
We  note that $\kappa_{(x_0,t_0)}$ can also be written as
\begin{align*}
\kappa_{(x_0,t_0)} = \limsup_{r\rightarrow 0+} \frac{\ln \left(\fint_{A_{r,2r}(x_0,t_0)}u^2 G_{(x_0,t_0)} \ dxdt \right)^{\frac{1}{2}}}{\ln r},
\end{align*}
where $G_{(x_0,t_0)}$ is the parametrix
\begin{align}\label{eq:parametrix}
G_{(x_0,t_0)}(x,t)=\frac{(\det(A(x_0,t_0)))^{\frac{1}{2}}}{(4\pi (t_0-t))^{\frac{n}{2}}}e^{\frac{\sigma(x-x_0;x_0,t_0)}{4(t-t_0)}} \text{ for } t<t_0,
\end{align}
 and $G_{(x_0,t_0)}(x,t)=0$ if $t\geq t_0$. When $(x_0,t_0)=(0,0)$, $G_{(0,0)}$ is the standard Gaussian and we write $G:=G_{(0,0)}$. Here and in Definition \ref{defi:vanishing_order} the averaged integral is defined as
\begin{align*}
\fint_U f w\ dxdt:=\frac{\int_{U} f w \ dxdt}{\int_U w\ dxdt},
\end{align*}
for the weight $w$ and $U\subset \R^n\times (-\infty,0)$. 

\begin{rmk}\label{rmk:sigma}
The weight function arises naturally from the change of variables: fix any $(x_0,t_0)\in Q'_{1/4}$, we apply the change of variable
\begin{align*}
T_{(x_0,t_0)}:A_{r,2r}(x_0,t_0)&\rightarrow A_{r,2r}\\
(x,t)&\mapsto (y,s)=T_{(x_0,t_0)}(x,t):=(B(x_0,t_0)(x-x_0),t-t_0),
\end{align*}
 where  $B^T(x_0,t_0)B(x_0,t_0)=A(x_0,t_0)^{-1}$. Note that $B(x_0,t_0)$ also satisfies the off-diagonal condition, i.e. $b^{in}(x'_0,0,t_0)=0$ for $i\in \{1,\cdots, n-1\}$. Thus $A'_{r,2r}(x_0,t_0)$ is mapped onto $A'_{r,2r}$. Let $v(y, s):=u(T^{-1}_{(x_0,t_0)}(y,s))$. Then $v$ solves the equation
\begin{align*}
\p_s v(y,s) - \nabla\cdot (C(y,s)\nabla v(y,s))=\tilde{f}(y,s) \text{ in } A_{r,2r},
\end{align*}
where with $B:=B(x_0,t_0)$
\begin{align*}
\quad C(y,s):=B^TA(T_{(x_0,t_0)}^{-1}(y,s))B,\quad \tilde{f}(y,s):=f(T_{(x_0,t_0)}^{-1}(y,s)),
\end{align*}
and  $v(y,s)$ satisfies the Signorini boundary condition on $A'_{r,2r}$. 
It is not hard to see that $C(0,0)=id$, 
\begin{align*}
\frac{7}{9}|\xi|^2 \leq \langle C(y,s)\xi, \xi\rangle \leq \frac{9}{7}|\xi|^2, \quad \forall \xi\in \R^n, \quad \forall (y,s)\in A_{r,2r},
\end{align*} 
and $C$ satisfies assumptions (ii) and (iii). 
Thus the three sphere's inequality \eqref{eq:three_sphere_strong} holds for $v$. Writing it back in term of $u$ and the original variables, we then obtain a three sphere's inequality for $\|u e^{\frac{\sigma(x-x_0;x_0,t_0)}{8(t-t_0)}}\|_{L^2(A_{r,2r})}$.
\end{rmk}

We first prove that in certain cases, the $\limsup$ is actually a limit in the definition of the vanishing order. 

\begin{prop}\label{prop:limit}
If  $\kappa_{(x_0,t_0)}\leq 2-\frac{n+2}{p}$, then 
\begin{align*}
\limsup_{r\rightarrow 0+} \frac{\ln \|ue^{\frac{\sigma(x-x_0; x_0,t_0)}{8(t-t_0)}}\|_{L^2(A_{r,2r}(x_0,t_0))}}{\ln r} =\liminf_{r\rightarrow 0+} \frac{\ln \|ue^{\frac{\sigma(x-x_0;x_0,t_0)}{8(t-t_0)}}\|_{L^2(A_{r,2r}(x_0,t_0))}}{\ln r}.
\end{align*}
When $f$ satisfies the support condition \eqref{eq:support}, then the above equality holds true as long as $\kappa_{(x_0,t_0)}<\infty$. 
\end{prop}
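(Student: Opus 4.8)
The plan is to show that the quantity
\[
N(r):=\frac{\ln \big\|u e^{\frac{\sigma(x-x_0;x_0,t_0)}{8(t-t_0)}}\big\|_{L^2(A_{r,2r}(x_0,t_0))}}{\ln r}
\]
has a genuine limit as $r\to 0^+$ by exploiting the strengthened three-sphere inequality \eqref{eq:three_sphere_strong} applied (after the change of variables in Remark \ref{rmk:sigma}) to $u$ centered at $(x_0,t_0)$. Write $H(r):=\big\|u e^{\frac{\sigma}{8(t-t_0)}}\big\|_{L^2(A_{r,2r}(x_0,t_0))}$. The key point is that, because $\kappa_{(x_0,t_0)}\le 2-\frac{n+2}{p}$, the source term in the three-sphere inequality is of strictly lower order than the boundary terms and can be absorbed: indeed, since $f\in L^p$ with $p>n+2$, Hölder's inequality gives a bound $\|(-t)^{1/2}e^{\tau\phi}e^{\sigma/8(t-t_0)}f\|_{L^2(A^+_{r_1,2r_3})}\lesssim r_3^{1+\gamma}\cdot(\text{weighted }L^p\text{ norm of }f)$ with $\gamma=1-\frac{n+2}{p}$, so along the relevant dyadic scales this term is dominated by $r^{\kappa+\epsilon}$-type growth coming from the genuine solution terms once $\kappa<2-\frac{n+2}{p}$. (In the case $f$ satisfies the support condition \eqref{eq:support}, $f$ vanishes near the origin, so the source term is identically zero for small radii and the restriction $\kappa<2-\frac{n+2}{p}$ is not needed — this is why the last sentence of the proposition only requires $\kappa<\infty$.)

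Granting that the $f$-term is negligible, the three-sphere inequality becomes essentially a log-convexity statement: with $r_1=r/2$, $r_2=r$, $r_3=2r$ suitably iterated (or more precisely with $r_j$ chosen as a geometric progression and the $\max\{(\ln(r_2/r_1))^{-1},(\ln(r_3/r_2))^{-1}\}$ factor kept), one obtains, after taking logarithms and dividing by $\ln r$, that $r\mapsto N(r)$ cannot oscillate: any subsequential value of $N$ that is strictly below the $\limsup$ would, when fed into the three-sphere inequality together with a nearby scale realizing the $\limsup$, force a contradiction with the convexity. Concretely, I would set $\overline\kappa=\limsup_{r\to0}N(r)$ and $\underline\kappa=\liminf_{r\to0}N(r)$, assume for contradiction $\underline\kappa<\overline\kappa$, pick scales $r_1\ll r_2\ll r_3$ with $N(r_2)$ close to $\underline\kappa$ while $H(r_1),H(r_3)$ are controlled by $\overline\kappa$ (possible since $\overline\kappa$ is the $\limsup$, so $H(\rho)\lesssim \rho^{\overline\kappa-\epsilon}$ eventually), plug into \eqref{eq:three_sphere_strong}, and derive $r_2^{\underline\kappa+\epsilon}\gtrsim \big(r_2^{\overline\kappa-\epsilon}\big)$-type inequalities which are violated for $\epsilon$ small and the scales widely separated. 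The additional $\max\{(\ln(r_2/r_1))^{-1},(\ln(r_3/r_2))^{-1}\}$ term on the left of \eqref{eq:three_sphere_strong} is exactly what is needed to beat the $|\ln r_2|^{-1}$ loss and make this argument close.

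The main obstacle I anticipate is bookkeeping the logarithmic factors and the weight $\phi(-r^2)=\tilde\phi(|\ln r^2|)$ precisely enough: $\phi$ is only \emph{asymptotically} linear, $\tilde\phi'(s)\to 1$ but $\tilde\phi(s)=s+O(\ln s)$, so $e^{\tau\phi(-r_j^2)}$ is not simply a power of $r_j$ and the comparison of the three scales must track the $O(\ln)$ corrections; one has to choose $\tau$ (within the allowed range $1<\tau\le\tau_0$, via Corollary \ref{3sphere}) and let it play off against the separation of scales, or alternatively optimize in $\tau$ as in the standard Carleman-to-frequency passage. A second, more routine, technical point is justifying the change of variables in Remark \ref{rmk:sigma} uniformly in $(x_0,t_0)\in Q'_{1/4}$ so that the three-sphere constants do not degenerate, and handling the case $(x_0,t_0)\notin\{x_n=0\}$ (interior points), where one does not even need the Signorini structure and can invoke the classical parabolic three-sphere/doubling directly. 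Once the oscillation of $N(r)$ is ruled out, the equality of $\limsup$ and $\liminf$ is immediate.
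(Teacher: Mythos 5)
There is a genuine gap in the core of your contradiction scheme. You need upper bounds on $H(r_1)$ and $H(r_3)$ of the form $H(\rho)\lesssim\rho^{\overline\kappa-\epsilon}$, and you justify them by asserting that, since $\overline\kappa$ is the $\limsup$, this bound holds ``eventually''. This is backwards: writing $N(\rho)=\ln H(\rho)/\ln\rho$ with $\ln\rho<0$, the statement $\limsup_{\rho\to0}N(\rho)=\overline\kappa$ gives, for \emph{all} small $\rho$, only the \emph{lower} bound $H(\rho)\geq\rho^{\overline\kappa+\epsilon}$; the \emph{upper} bound $H(\rho)\leq\rho^{\overline\kappa-\epsilon}$ is available only along a subsequence. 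An upper bound valid for all small $\rho$ is governed by the $\liminf$, i.e.\ it is essentially the content of the proposition itself (cf.\ Remark \ref{growth}, which is stated only \emph{after} Proposition \ref{prop:limit}), so invoking it is circular. In addition, your bound for the inhomogeneity term is not correct as stated: because of the Carleman weight $e^{\tau\phi}$ (which blows up like a negative power of $-t$ determined by $\tau$), H\"older's inequality gives $\lesssim\|f\|_{L^p}\,r_3^{\delta}$ with $\delta=2-\frac{n+2}{p}-(\kappa-\epsilon)$, not a $\tau$-independent $r_3^{1+\gamma}$; the hypothesis $\kappa_{(x_0,t_0)}\leq 2-\frac{n+2}{p}$ (equality allowed) enters precisely to make $\delta\geq0$.

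The repair is exactly how the paper argues, and it removes the need for a contradiction argument altogether: use the subsequential upper bound only at the \emph{inner} radius, taking $r_1=r_j$ along a sequence realizing the $\limsup$ (so that $r_j^{-1}e^{\tau\phi(-r_j^2)}H(r_j)\leq C$ with the choice $\tau=\frac{\kappa-\epsilon+n/2}{2+c_0\pi}$), and take the \emph{outer} radius $r_3\sim1$ fixed, where the corresponding term is bounded by the global $L^2$ norm of $u$ and no decay information is required. Then the plain three-sphere inequality \eqref{eq:three_sphere} bounds the weighted norm at \emph{every} intermediate scale $r_2$, and after taking logarithms and dividing by $\ln r_2$ the factors $|\ln r_2|^{-1}$ and the $O(\ln)$ corrections in $\tilde\phi$ vanish in the limit, yielding $\liminf_{r\to0}N(r)\geq\kappa+\frac{n+2}{2}-\epsilon$ directly; letting $\epsilon\to0$ finishes. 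The strengthened inequality \eqref{eq:three_sphere_strong} and the $\max\{(\ln(r_2/r_1))^{-1},(\ln(r_3/r_2))^{-1}\}$ gain are not needed here (they are used later, for the doubling inequality of Lemma \ref{doubling}). Your treatment of the support case \eqref{eq:support} (the $f$-term is then bounded for every finite $\tau$, so only $\kappa<\infty$ is needed) is in line with the paper.
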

\begin{proof}
Without loss of generality assume that $(x_0,t_0)=(0,0)$ and for simplicity let $\kappa:=\kappa_{(0,0)}$. 
For any $0<\epsilon\ll 1$ fixed, define
$\tau=\frac{\kappa-\epsilon+n/2}{2+c_0\pi}>1$ ($\tau >1$ can be achieved by taking $c_0$ sufficiently small depending on $\alpha$). Noting that
\begin{align}\label{eq:asymp}
\frac{\tau\tilde\phi(2|\ln r|)}{|\ln r|}\to \tau(2+c_0\pi)\ \text{ as } r\rightarrow 0+
\end{align}
and that 
\begin{align*}
\kappa=\limsup_{r\rightarrow 0}\frac{\ln \|ue^{\frac{|x|^2}{8t}}\|_{L^2(A_{r,2r})}}{\ln r}-\frac{n+2}{2},
\end{align*} 
we have a sequence $r_j \rightarrow 0$ such that 
$$(r_j)^{-1} e^{\tau\phi(-r_j^2)}\|u e^{\frac{|x|^2}{8t}}\|_{L^2(A_{r_j,2r_j})} \le C.$$ 
For given $r_2$ with $0<r_2 \ll 1.$ Choose $r_j$ such that $r_j <r_2/2.$ We now set $r_1=r_j$ and $r_3\sim 1$ in the three
sphere’s inequality \eqref{eq:three_sphere}. Then, the first  term on the right hand side of \eqref{eq:three_sphere} is uniformly bounded. For the third term on the right hand side of \eqref{eq:three_sphere}, we first use the fact that $\frac{\tau\tilde\phi(2|\ln r|)}{|\ln r|}$ is non-increasing as a function of $r$, along with \eqref{eq:asymp}, and then apply H\"older's inequality to estimate
\begin{equation}\label{eq:est_f_1}
\begin{split}
&\quad \|(-t)^{\frac{1}{2}}e^{\tau\phi}e^{\frac{|x|^2}{8t}}f\|_{L^2(A_{r_j,2r_3})}\lesssim \|\sqrt{-t} (\sqrt{-t})^{-(\kappa-\epsilon +\frac{n}{2})} e^{\frac{|x|^2}{8t}}f\|_{L^2(A_{r_j,2r_3})}\\
&\lesssim \|f\|_{L^p(A_{r_1,2r_3})}\|(\sqrt{-t})^{-(\kappa-\epsilon +\frac{n-2}{2})}e^{\frac{|x|^2}{8t}}\|_{L^{\frac{2p}{p-2}}(A_{r_j,2r_3})}\lesssim \|f\|_{L^p(A_{r_j,2r_3})}r_3^{\delta}
\end{split}
\end{equation}
with $\delta=-(\kappa-\epsilon)+(2-\frac{n+2}{p}) \ge 0,$ by our choice of $\kappa$. More precisely, we have that $\delta\geq 0$ for all $\epsilon\geq \kappa-(2-\frac{n+2}{p})$ (in particular for all $\epsilon\geq 0$, if $\kappa\leq  2-\frac{n+2}{p}$). Thus when $\kappa\leq 2-\frac{n+2}{p}$, the right hand side of \eqref{eq:three_sphere} is uniformly bounded and we get
\begin{align*}
(r_2)^{-1} |\ln (r_2)|^{-1}e^{\tau\phi(-r_2^2)}\|ue^{\frac{|x|^2}{8t}}\|_{L^2(A_{r_2,2r_2})} \leq C, \quad \forall 0<r_2\ll r_3. 
\end{align*}
Taking the logarithm on both sides and dividing by $\ln r_2$ we get
\begin{align*}
\frac{\ln \|ue^{\frac{|x|^2}{8t}}\|_{L^2(A_{r_2,2r_2})}}{\ln r_2}\geq \frac{\tau \tilde\phi(2|\ln r_2|)}{|\ln r_2|}+1+\frac{\ln |\ln r_2|}{\ln r_2} +\frac{\ln C}{\ln r_2}.
\end{align*}
Thus by \eqref{eq:asymp} we have
\begin{align*}
\liminf_{r_2\rightarrow 0+} \frac{\ln \|ue^{\frac{|x|^2}{8t}}\|_{L^2(A_{r_2,2r_2})}}{\ln r_2} \geq \kappa +\frac{n+2}{2}-\epsilon. 
\end{align*}
Since $\epsilon>0$ is arbitrary, we obtain the desired estimate.

If $f$ is supported away from $(0,0)$, then for any $\tau<\infty$
\begin{align*}
\|(-t)^{\frac{1}{2}}e^{\tau\phi}e^{\frac{|x|^2}{8t}}f\|_{L^2(A_{r_1,2r_3})}\leq C_\tau.
\end{align*}
Thus the conclusion is true as long as $\kappa<\infty$. 
\end{proof}

\begin{rmk}\label{growth}
An immediate consequence of Proposition \ref{prop:limit} is the following growth estimate for $u\in \mathcal{G}^{A,f}_p(S_1)$:
	For every $\epsilon\in (0,1/4)$ and for every $(x_0,t_0) \in Q_{1/4}$ with $\kappa_{(x_0,t_0)}\leq \bar \kappa$, where $\bar \kappa=2-\frac{n+2}{p}$ and $\bar\kappa\in (0,\infty)$ if $f$ satisfies additionally the support condition \eqref{eq:support},  there exists $r_{\epsilon}=r(\epsilon,u,\bar\kappa, (x_0, t_0))>0$ such that for all $0<r<r_\epsilon$, we have
	\begin{equation*}
	r^{\kappa_{(x_0,t_0)}+\frac{n+2}{2}+\epsilon} \le \big\|ue^{\frac{\sigma(x-x_0;x_0,t_0)}{8(t-t_0)}}\big\|_{L^2(A_{r,2r}(x_0,t_0))} \le r^{\kappa_{(x_0,t_0)}+\frac{n+2}{2}-\epsilon}.
	\end{equation*}
	While the lower bound is in general not uniform in $u$ and $(x_0,t_0)$, the upper bound can be improved, as we will show in Lemma \ref{lem:growth}.
\end{rmk}

In the next lemma we establish the upper semi-continuity of the map $(x_0,t_0)\mapsto \kappa_{(x_0,t_0)}$. 

\begin{lem}[Upper semi-continuity]
\label{lem:upper_semi}
	Let $u\in \mathcal{G}^{A,f}_p(S_1)$. Assume that $\kappa_{(x_0,t_0)}<2-\frac{n+2}{p}$.  Then the mapping $(x,t) \mapsto \kappa_{(x,t)}$ is upper semi-continuous at $(x_0,t_0)$, i.e. 
	\begin{align*}
	\limsup_{\substack{(x,t)\rightarrow (x_0,t_0),\\(x,t)\in  Q_{\delta}(x_0,t_0)}} \kappa_{(x,t)}\leq \kappa_{(x_0,t_0)}.
	\end{align*}
	If in addition $f$ satisfies \eqref{eq:support}, then the upper semi-continuity is satisfied at all points with  $\kappa_{(x_0,t_0)}<\infty$. 
	\end{lem}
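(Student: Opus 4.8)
The plan is to argue by contradiction, using the three sphere's inequality of Corollary \ref{3sphere} --- transferred to an arbitrary base point via Remark \ref{rmk:sigma} --- to propagate the smallness of $u$ forced by a high vanishing order at nearby points down to a \emph{fixed} intermediate scale, with all constants uniform in the base point; passing to the limit will then force $\kappa_{(x_0,t_0)}$ itself to be large, a contradiction. After normalizing $(x_0,t_0)=(0,0)$ and writing $\kappa_0:=\kappa_{(0,0)}<\bar\kappa:=2-\frac{n+2}{p}$, I would first observe that, since $\nabla u$ is H\"older continuous up to $S'_1$, one has $\kappa_0\in\{0,1\}\cup[1+\alpha,\bar\kappa)$, with the cases $\kappa_0\in\{0,1\}$ immediate from the continuity of $u$, $\nabla u$ and the structure of the contact set; so one may assume $\kappa_0\ge 1+\alpha$, hence $\kappa_0+\frac n2>2$. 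Supposing $\limsup_{(x,t)\to(0,0)}\kappa_{(x,t)}>\kappa_0$, I would pick $\lambda$ with $\kappa_0<\lambda<\limsup_{(x,t)\to(0,0)}\kappa_{(x,t)}$, a sequence $(x_j,t_j)\to(0,0)$ with $\kappa_{(x_j,t_j)}>\lambda$ for all $j$, a number $\eta\in(0,\min\{\lambda-\kappa_0,\bar\kappa-\kappa_0\})$, and then $\tau>1$ with $\tau(2+c_0\pi)=\kappa_0+\frac n2+\eta$ (possible since $\kappa_0+\frac n2>2$ and $c_0$ is small).

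The core step is to fix $R\in(0,R_0)$ small and, for each $j$ and each $\rho_2\in(0,R/2)$, apply the transferred three sphere's inequality at $(x_j,t_j)$ with $r_2=\rho_2$, $r_3=R$ and $r_1=\rho_1$ running through a subsequence $\rho_1\to 0$ realizing $\kappa_{(x_j,t_j)}>\lambda$, i.e.\ along which $\|u e^{\sigma(\cdot-x_j;x_j,t_j)/(8(t-t_j))}\|_{L^2(A_{\rho_1,2\rho_1}(x_j,t_j))}\le\rho_1^{\lambda+\frac{n+2}{2}}$. Since $e^{\tau\phi(-\rho_1^2)}\le\rho_1^{-\tau(2+c_0\pi)}$, the $r_1$-term on the right of \eqref{eq:three_sphere} is $\lesssim\rho_1^{\lambda-\kappa_0-\eta}\to0$; the $r_3$-term is $\lesssim\|u\|_{L^2(S_1^+)}$ uniformly in $j$ (the weight $e^{\sigma/(8(t-t_j))}$ is $\le1$ on $A_{R,2R}(x_j,t_j)$); and --- this is the delicate point --- the inhomogeneity term stays bounded uniformly in $j$: exactly as in \eqref{eq:est_f_1} one estimates $(-(t-t_j))^{1/2}e^{\tau\phi(t-t_j)}\lesssim(\sqrt{-(t-t_j)}\,)^{1-\tau(2+c_0\pi)}$ and applies H\"older's inequality with $f\in L^p$, the relation $\tau(2+c_0\pi)=\kappa_0+\frac n2+\eta<\bar\kappa+\frac n2$ being precisely what makes the resulting weight lie in $L^{\frac{2p}{p-2}}$, with bound $\lesssim\|f\|_{L^p}R^{\bar\kappa-\kappa_0-\eta}$. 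Letting $\rho_1\to0$ in \eqref{eq:three_sphere} yields, with $C$ independent of $j$ and of $\rho_2$,
\begin{equation*}
\big\|u e^{\sigma(\cdot-x_j;x_j,t_j)/(8(t-t_j))}\big\|_{L^2(A_{\rho_2,2\rho_2}(x_j,t_j))}\ \le\ C\,\rho_2\,|\ln\rho_2|\,e^{-\tau\phi(-\rho_2^2)},\qquad\rho_2\in(0,R/2).
\end{equation*}

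Finally, I would let $j\to\infty$ with $\rho_2$ fixed: using $A(x_j,t_j)\to A(0,0)=\Id$ (H\"older continuity of the coefficients), the continuity of $u$ and dominated convergence, the left-hand side converges to $\|u e^{|x|^2/(8t)}\|_{L^2(A_{\rho_2,2\rho_2})}$, so the same bound holds at the origin for every $\rho_2\in(0,R/2)$. Taking logarithms, dividing by $\ln\rho_2<0$, letting $\rho_2\to0$ and using $\phi(-\rho_2^2)/|\ln\rho_2|\to2+c_0\pi$, one gets $\kappa_{(0,0)}+\frac{n+2}2\ge1+\tau(2+c_0\pi)=\kappa_0+\frac{n+2}2+\eta$, i.e.\ $\kappa_0\ge\kappa_0+\eta$, contradicting $\eta>0$; this proves the upper semi-continuity. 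When $f$ additionally satisfies \eqref{eq:support}, the inhomogeneity term in \eqref{eq:three_sphere} is bounded for \emph{every} finite $\tau$, so $\bar\kappa$ may be replaced by any finite number and the scheme applies whenever $\kappa_{(x_0,t_0)}<\infty$; a base point $(x_j,t_j)$ with $\kappa_{(x_j,t_j)}=+\infty$ is handled identically, since the bound $\|u e^{\sigma/(8(t-t_j))}\|_{L^2(A_{\rho_1,2\rho_1}(x_j,t_j))}\le\rho_1^{\lambda+\frac{n+2}{2}}$ then holds along a subsequence for every $\lambda$. I expect the main obstacle to be the uniform-in-$j$ control of the inhomogeneity term --- which is exactly where the hypothesis $\kappa_{(x_0,t_0)}<\bar\kappa$ is used --- together with checking that the three sphere's inequality can legitimately be anchored at the moving base points $(x_j,t_j)$ (for base points off the thin space this requires the interior variant of the Carleman estimate of Proposition \ref{prop:Carleman}).
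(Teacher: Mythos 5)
Your proof is correct, and it reorganizes the same Carleman/three-sphere machinery in a genuinely different way from the paper. The paper argues directly: with $\tau(2+c_0\pi)=\kappa_{(0,0)}+\epsilon+\tfrac n2$ it applies \eqref{eq:three_sphere} at the nearby point, bounds the $r_3$- and $f$-terms by a constant $M$, and then uses the \emph{lower} growth bound at the origin (Remark \ref{growth}, hence Proposition \ref{prop:limit}) together with a weight-comparison claim (replacing $e^{\sigma(x-x_0;x_0,t_0)/8(t-t_0)}$ by $e^{(1+Cr_2^{\gamma})|x|^2/8t}$ on a fixed annulus centered at the origin) to make the middle-scale term dominate $2M$; absorbing $M$ and letting $r_1\to 0$ then yields $\kappa_{(x_0,t_0)}\le \kappa_{(0,0)}+\epsilon$ on the full neighborhood $Q_{r_2^2/16}$. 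You instead run a contradiction along a sequence $(x_j,t_j)$: the assumed large order at $(x_j,t_j)$ kills the $r_1$-term (rather than the order at the origin controlling the $r_2$-term), giving a decay estimate at fixed intermediate scales centered at $(x_j,t_j)$ that is uniform in $j$; letting $j\to\infty$ by dominated convergence over the moving strips transfers this bound to the origin and contradicts the $\limsup$ definition of $\kappa_{(0,0)}$. This trades the paper's two auxiliary ingredients (the lower bound of Remark \ref{growth} and the Gaussian weight-comparison claim) for a continuity/dominated-convergence passage, which is harmless since $u$ is bounded with compact support, and the neighborhood formulation of upper semicontinuity you obtain is equivalent to the stated one. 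Your treatment of the inhomogeneity term matches \eqref{eq:est_f_1} (the constraint $\kappa_{(0,0)}+\eta<2-\tfrac{n+2}{p}$ is exactly what makes the weight lie in $L^{\frac{2p}{p-2}}$), and your preliminary reduction to $\kappa_{(0,0)}\ge 1+\alpha$ to ensure $\tau>1$ addresses a point the paper leaves implicit. The one caveat you share with the paper's own proof is anchoring the three-sphere inequality at base points off the thin space, since Remark \ref{rmk:sigma} is stated only for $(x_0,t_0)\in Q'_{1/4}$; you flag this explicitly, and the paper uses it in the same way, so it is not a gap relative to the paper's argument.
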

\begin{proof}
	Without loss of generality we will show the upper semi-continuity at  $(0,0)$ and we let $\kappa:=\kappa_{(0,0)}$ for simplicity. Assume that $\kappa<2-\frac{n+2}{p}$. It is easy to see from the proof that this assumption can be replaced by $\kappa<\infty$ if $f$ satisfies the support condition \eqref{eq:support}, as the constraint for $\kappa$ only comes from estimating the inhomogeneity term $\|f e^{\frac{|x|^2}{8t}}\|_{L^2}$. Let $0<\epsilon<2-\frac{n+2}{p}-\kappa$ be arbitrary.
%
%
%
%
Fix $r_2>0$ small enough, which will be chosen later and will depend on $n,p,\epsilon$ and $u$. Let $(x_0,t_0) \in Q_{r_2^2/16}$. We apply \eqref{eq:three_sphere} of Corollary \ref{3sphere} at $(x_0,t_0)$ with $\tau=\frac{\kappa+\epsilon+n/2}{2+c_0\pi}$ and three radii $r_1,r_2,r_3$, where $4r_1<r_2$, $2r_2<r_3=\frac{R_0}{4}$ with $R_0\in (0,1)$ being the radius from Corollary \ref{3sphere}, to get
\begin{equation}\label{eq:upp_sem}
\begin{split}
&\quad \tau^{\frac{1}{2}}(r_2)^{-1} |\ln (r_2)|^{-1}e^{\tau\phi(-r_2^2)}\big\|ue^{\frac{\sigma(x-x_0;x_0,t_0)}{8(t-t_0)}}\big\|_{L^2(A_{r_2/2,2r_2}(x_0,t_0))}\\
&\leq C(r_1)^{-1} e^{\tau\phi(-r_1^2)}\big\|u e^{\frac{\sigma(x-x_0;x_0,t_0)}{8(t-t_0)}}\big\|_{L^2(A_{r_1,2r_1}(x_0,t_0))}+ X,
\end{split}
\end{equation}
where $C=C(n,p,\|f\|_{L^p}, \|a^{ij}\|_{W^{1,1}_p})$ and 
\begin{equation*}
\begin{split}
X&:=C\left((r_3)^{-1} e^{\tau\phi(-r_3^2)}\big\|u e^{\frac{\sigma(x-x_0;x_0,t_0)}{8(t-t_0)}}\big\|_{L^2(A_{r_3,2r_3}(x_0,t_0))}\right.\\
&\left. \quad \quad +\big\|(t_0-t)^{\frac{1}{2}}e^{\tau\phi(t-t_0)}e^{\frac{\sigma(x-x_0;x_0,t_0)}{8(t-t_0 )}}f\big\|_{L^2(A_{r_1,2r_3}(x_0,t_0))} \right).
\end{split}
\end{equation*}
We are going to show that for $r_2$ chosen sufficiently small,  $X$ will be absorbed by the left hand side of \eqref{eq:upp_sem}. Clearly, for $(x_0,t_0) \in Q_{r_2^2/16}$ we have $A_{r_3,2r_3}(x_0,t_0) \subset A_{r_3,3r_3}$, $A_{r_1,2r_3}(x_0,t_0) \subset A_{r_1,3r_3}.$ Therefore, by H\"older's inequality and a change of variable
\begin{align*}
X &\leq C \left( (r_3)^{-1} e^{\tau\phi(-r_3^2)}\|u\|_{L^2(A_{r_3,3r_3})} \|e^{\frac{|x|^2}{8t}}\|_{L^2(A_{r_3,2r_3})}\right.\\
&\quad +\left.\|(-t)^{\frac{1}{2}}e^{\tau\phi(t)}e^{\frac{|x|^2}{8t }}\|_{L^{\frac{2p}{p-2}}(A_{r_1,2r_3})}\|f\|_{L^p(A_{r_1,3r_3})}\right) \\
& =: M=M(R_0,\epsilon,\|u\|_{L^2(Q_{R_0})},\|f\|_{L^p(Q_{R_0})}),
\end{align*}
where the second term on the right hand side is estimated in a similar way as \eqref{eq:est_f_1}, i.e.
\begin{align*}
\|(-t)^{\frac{1}{2}}e^{\tau\phi(t)}e^{\frac{|x|^2}{8t }}\|_{L^{\frac{2p}{p-2}}(A_{r_1,2r_3})}
\lesssim \|(-t)^{-(\kappa+\epsilon+\frac{n-2}{2})}e^{\frac{|x|^2}{8t}}\|_{L^{\frac{2p}{p-2}}(A_{r_1,2r_3})}\lesssim r_3^{\delta'},
\end{align*}
with $\delta'=\frac{-\kappa p -\epsilon p + 2p -(n+2)}{p}>0$ by our choice of $\epsilon $ and $\kappa$. 

Now we estimate the left hand side of \eqref{eq:upp_sem} from below. We first note that for $(x_0,t_0)\in Q_{r_2^2/16}$ we have
\begin{align}\label{eq:lb_lhs}
\big\|ue^{\frac{\sigma(x-x_0;x_0,t_0)}{8(t-t_0)}}\big\|_{L^2(A_{r_2/2,2r_2}(x_0,t_0))} \gtrsim \|ue^{(1+Cr_2^\gamma)\frac{|x|^2}{8t}}\|_{L^2(A_{r_2,2r_2})}
\end{align}
for some absolute constant $C>0$. Indeed, for all $(x,t) \in \supp(u)\cap A_{r_2/2,2r_2}(x_0,t_0)$, using the $H^{\gamma,\gamma/2}$-regularity of $A=(a^{ij})$ we have 
\begin{align*}
	\frac{\sigma(x-x_0;x_0,t_0)}{t-t_0}-\frac{|x_0-x|^2}{t-t_0}
	\gtrsim r_2^\gamma\frac{|x-x_0|^2}{t-t_0}.
\end{align*}
 Furthermore,  if $r_2\ll 1$, it is not hard to see that for  $(x,t)$ as above,
\begin{align*}
\frac{|x-x_0|^2}{t-t_0}-\frac{|x|^2}{t}\gtrsim -1.
\end{align*}
Combining the above two inequalities together we have 
\begin{align*}
\frac{\sigma(x-x_0;x_0,t_0)}{t-t_0}\ge (1+Cr_2^\gamma) \frac{|x|^2}{t}-C
\end{align*}
for some absolute constant $C>0$, which implies 
	$e^\frac{\sigma(x-x_0;x_0,t_0)}{8(t-t_0)} \gtrsim e^{(1+Cr_2^\gamma)\frac{|x|^2}{8t}}$ for all $(x,t) \in \supp(u)\cap A_{r_2/2,2r_2}(x_0,t_0)$.
Also,  $(x_0,t_0) \in Q_{r_2^2/16}$ gives that $A_{r_2,2r_2} \subset A_{r_2/2,2r_2}(x_0,t_0)$. Hence \eqref{eq:lb_lhs} holds true.

\emph{Claim:} We have
\begin{align*}
\lim_{r\rightarrow 0+}\frac{\ln \|u e^{\frac{|x|^2}{8t}}\|_{L^2(A_{r,2r})}}{\ln r} = \lim_{r\rightarrow 0+}\frac{\ln \|u e^{(1+Cr^\gamma)\frac{|x|^2}{8t}}\|_{L^2(A_{r,2r})}}{\ln r}.
\end{align*}
\emph{Proof of the claim:} The inequality $\leq $ is trivial. To see the other direction, we note that $e^{Cr^\gamma|x|^2/t}\gtrsim 1$ when $|x|\lesssim r^{1-\frac{\gamma}{2}}$ and $t\sim -r^2$. Then it follows
\begin{align}\label{eq:est1}
\int_{-(2r)^2}^{-r^2}\int_{\R^n}u^2 e^{(1+Cr^\gamma)\frac{|x|^2}{4t}}\gtrsim \int_{-(2r)^2}^{-r^2}\int_{|x|\lesssim r^{1-\frac{\gamma}{2}}} u^2 e^{\frac{|x|^2}{4t}}.
\end{align}
Since 
\begin{align*}
\int_{-(2r)^2}^{-r^2}\int_{|x|\gtrsim r^{1-\frac{\gamma}{2}}} u^2 e^{\frac{|x|^2}{4t}} &\lesssim \|u\|_{L^\infty(A_{r,2r})}^2 r^{n+2}e^{-r^{-\gamma}}\leq C(n,p,\|u\|_{L^2(Q_1)}, \|f\|_{L^p(Q_1)}) r^{n+2}e^{-r^{-\gamma}}
\end{align*}
and 
$$\int_{-(2r)^2}^{-r^2}\int u^2e^{\frac{|x|^2}{4t}}\geq r^{n+2+2\kappa+\epsilon}$$ for $0<r<r_\epsilon\ll 1$,
which follows from the growth estimate in Remark \ref{growth}, we conclude that for $r\ll 1$, 
\begin{align}\label{eq:est2}
\int_{-(2r)^2}^{-r^2}\int_{|x|\lesssim r^{1-\frac{\gamma}{2}}} u^2 e^{\frac{|x|^2}{4t}}\gtrsim \int_{-(2r)^2}^{-r^2}\int_{\R^n} u^2 e^{\frac{|x|^2}{4t}}.
\end{align}
Combining \eqref{eq:est1} and \eqref{eq:est2} together then yield the desired inequality. This completes the proof for the claim.

Now we can choose $r_2=r_2(n,p,\epsilon, u)$ sufficiently small such that 
\begin{align*}
&\quad\tau^{\frac{1}{2}}(r_2)^{-1} |\ln (r_2)|^{-1}e^{\tau\phi(-r_2^2)} \|ue^{(1+Cr_2^\gamma)\frac{|x|^2}{8t}}\|_{L^2(A_{r_2,2r_2})}\\
&\ge r_2^{-\kappa-\frac{n+2}{2}-\frac{\epsilon}{2}}\|ue^{\frac{|x|^2}{8t}}\|_{L^2(A_{r_2,2r_2})}\geq 2M,
\end{align*}
		where the first inequality is true from \eqref{eq:asymp}, our choice of $\tau$ and the claim,  and the second inequality is possible by the lower bound in Remark \ref{growth}. Combining this with \eqref{eq:upp_sem} and the upper bound for $X$ we get
\begin{align*}
M\leq C( r_1)^{-1} e^{\tau\phi(-r_1^2)}\|u e^{\frac{\sigma(x-x_0;x_0,t_0)}{8(t-t_0)}}\|_{L^2(A_{r_1,2r_1}(x_0,t_0))}.
\end{align*}
Taking the logarithm both sides and dividing by $\operatorname{ln}r_1$, we get 
\begin{align*}
\frac{\operatorname{ln}(C/ M)}{\operatorname{ln}r_1} 
\ge -1+ \frac{\tau\phi(-r_1^2)}{\operatorname{ln}r_1} +\frac{\ln \|u e^{\frac{\sigma(x-x_0;x_0,t_0)}{8(t-t_0)}}\|_{L^2(A_{r_1,2r_1}(x_0,t_0))}}{\operatorname{ln}r_1}.
\end{align*}
We now take $r_1\rightarrow 0+$ and use \eqref{eq:asymp} to find
$\tau (2+c_0\pi) +1 \ge \kappa_{(x_0,t_0)}+\frac{n+2}{2}$.
From the definition of $\tau$ we can re-write the above inequality as 
\begin{align*}
	\kappa_{(0,0)}+\epsilon \ge \kappa_{(x_0,t_0)}.
\end{align*}
Since this is true for all $(x_0,t_0) \in Q_{r_2^2/16}$ and $\epsilon \in (0,2-\frac{n+2}{p}-\kappa)$ is arbitrary, we complete the proof of the lemma.
\end{proof}

The upper semi-continuity of the vanishing order immediately gives a uniform lower bound for the growth of solutions around points varying in a compact set:

\begin{cor}\label{cor:lower_bound}
		Assume that $u\in \mathcal{G}^{A,f}_p(S_1)$. Let $K\subset S_1$ be a compact set  (in the topology generated by the parabolic cylinders $\{B_r(x)\times (t-r^2, t]\cap S_1\}_{(x,t)\in \R^n\times \R}$) with $\kappa_{(x,t)} < \bar{\kappa}$ for all $(x,t) \in K$, for some  $\bar{\kappa} \in (0,2-\frac{n+2}{p})$. Then there exists a neighborhood $U$ of compact set $K$ and $r_0=r_0(K,\bar{\kappa},u)>0$ such that for all $(x_0,t_0) \in U$ the following holds 
		\begin{align*}
			 r^{-\frac{n+2}{2}}\|ue^{\frac{\sigma(x-x_0;x_0,t_0)}{8(t-t_0)}}\|_{L^2(A_{r,2r}(x_0,t_0))} \ge r^{\bar{\kappa}} \hspace{3mm}\text{for all} \hspace{3mm} 0<r<r_0.
		\end{align*}
		If $f$ satisfies \eqref{eq:support}, then $\bar \kappa $ can be taken in $(0,\infty)$. 
\end{cor}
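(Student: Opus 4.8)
The plan is to upgrade the upper semi-continuity from Lemma~\ref{lem:upper_semi} into a quantitative growth lower bound valid on an entire sub-cylinder at a fixed scale, and then patch such local estimates together by compactness. First I would record a uniform gap: since $\kappa_{(x,t)} < \bar\kappa < 2 - \frac{n+2}{p}$ for every $(x,t) \in K$, Lemma~\ref{lem:upper_semi} applies at each point of $K$, so $(x,t) \mapsto \kappa_{(x,t)}$ is upper semi-continuous on $K$; being upper semi-continuous on a compact set, it attains its maximum there, whence $\kappa_{\max} := \max_{(x,t)\in K}\kappa_{(x,t)} < \bar\kappa$. I would then fix $\eta := \bar\kappa - \kappa_{\max} > 0$ and $\epsilon := \eta/3$, observing that $\epsilon < 2 - \frac{n+2}{p} - \kappa_{(x_*,t_*)}$ for every $(x_*,t_*) \in K$ (because $\bar\kappa < 2 - \frac{n+2}{p}$), so the hypotheses of Lemma~\ref{lem:upper_semi} are met at each such base point with this $\epsilon$.

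Next I would reopen the proof of Lemma~\ref{lem:upper_semi} at a base point $(x_*,t_*) \in K$ (translated to the origin, as there) with the above $\epsilon$. That argument fixes a small radius $r_2 = r_2(n,p,\epsilon,u,(x_*,t_*))$ and produces the positive constant $M$ appearing there (depending on $R_0$, $\epsilon$, $\|u\|_{L^2(Q_{R_0})}$, $\|f\|_{L^p(Q_{R_0})}$ and uniform in the small radius) so that, for all $(x_0,t_0) \in Q_{r_2^2/16}(x_*,t_*)$ and all $0 < r < r_2/4$,
$$ M \le C\, r^{-1} e^{\tau\phi(-r^2)} \big\| u\, e^{\frac{\sigma(x-x_0;x_0,t_0)}{8(t-t_0)}} \big\|_{L^2(A_{r,2r}(x_0,t_0))}, \qquad \tau := \frac{\kappa_{(x_*,t_*)} + \epsilon + n/2}{2 + c_0\pi}; $$
indeed the quantity $X$ there is bounded by $M$ uniformly in $r$, $r_2$ is chosen so that the left-hand side of \eqref{eq:upp_sem} is at least $2M$, and the point is to keep $r$ as a free variable rather than send it to $0$. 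Since $\tilde\phi' < 1 + c_0\pi/2$ and $\tilde\phi(0)=0$ we have $\tilde\phi(s) < (1+c_0\pi/2)s$, hence $e^{-\tau\phi(-r^2)} = e^{-\tau\tilde\phi(2|\ln r|)} \ge r^{\tau(2+c_0\pi)} = r^{\kappa_{(x_*,t_*)}+\epsilon+n/2}$, and I would rearrange the above into
$$ r^{-\frac{n+2}{2}} \big\| u\, e^{\frac{\sigma(x-x_0;x_0,t_0)}{8(t-t_0)}} \big\|_{L^2(A_{r,2r}(x_0,t_0))} \ge \frac{M}{C}\, r^{\kappa_{(x_*,t_*)}+\epsilon}. $$
Shrinking the scale to $\rho_* := \min\{r_2/4,\, (M/C)^{1/\epsilon},\, 1\}$ absorbs the constant $M/C$, giving $r^{-\frac{n+2}{2}} \| u\, e^{\sigma(x-x_0;x_0,t_0)/8(t-t_0)} \|_{L^2(A_{r,2r}(x_0,t_0))} \ge r^{\kappa_{(x_*,t_*)}+2\epsilon} \ge r^{\kappa_{\max}+2\epsilon} = r^{\bar\kappa-\eta/3} \ge r^{\bar\kappa}$ for all $0 < r < \rho_*$. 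Thus each $(x_*,t_*) \in K$ admits a neighborhood $V_{(x_*,t_*)} := Q_{r_2^2/16}(x_*,t_*)$ and a scale $\rho_* > 0$ on which the claimed lower bound holds for every base point.

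Finally, the sets $\{V_{(x_*,t_*)}\}_{(x_*,t_*)\in K}$ form a cover of the compact set $K$, so I would extract a finite subcover $V_{(x_1,t_1)}, \dots, V_{(x_N,t_N)}$, set $U := \bigcup_{i=1}^N V_{(x_i,t_i)}$ (a neighborhood of $K$) and $r_0 := \min_{1 \le i \le N}\rho_i > 0$; any $(x_0,t_0) \in U$ then lies in some $V_{(x_i,t_i)}$, and the previous step yields $r^{-\frac{n+2}{2}}\| u\, e^{\sigma(x-x_0;x_0,t_0)/8(t-t_0)}\|_{L^2(A_{r,2r}(x_0,t_0))} \ge r^{\bar\kappa}$ for all $0 < r < r_0$, which is the assertion. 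When $f$ satisfies the support condition \eqref{eq:support}, the inhomogeneity terms in the proof of Lemma~\ref{lem:upper_semi} are harmless for every finite $\tau$, so the restriction $\bar\kappa < 2 - \frac{n+2}{p}$ can be dropped and the same argument works for any $\bar\kappa \in (0,\infty)$. I expect the main obstacle to be the second step: bare upper semi-continuity does not by itself furnish a radius that works uniformly in the base point, so one has to revisit the three-sphere mechanism behind Lemma~\ref{lem:upper_semi} and observe that it already outputs a growth lower bound on a full sub-cylinder at a fixed scale; granted that, the uniform gap $\kappa_{\max} < \bar\kappa$ and a finite cover complete the proof.
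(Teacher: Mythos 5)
Your proof is correct and follows essentially the route the paper intends (it only sketches this corollary via Remark \ref{growth}, Lemma \ref{lem:upper_semi} and compactness): you rightly observe that the statement of upper semi-continuity alone gives no radius uniform in the base point, and instead extract from the proof of Lemma \ref{lem:upper_semi} the quantitative bound $M\le C r^{-1}e^{\tau\phi(-r^2)}\|u e^{\sigma(\cdot)/8(t-t_0)}\|_{L^2(A_{r,2r}(x_0,t_0))}$ valid for all $(x_0,t_0)$ in a fixed sub-cylinder and all small $r$, which after the elementary estimate $e^{-\tau\phi(-r^2)}\ge r^{\tau(2+c_0\pi)}$ and a finite subcover yields the claim. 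The only blemishes are cosmetic and inherited from the paper (e.g.\ the implicit requirement $\tau>1$ in Corollary \ref{3sphere}), so nothing further is needed.
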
 

The proof of the above corollary is straightforward, using Remark \ref{growth}, Lemma \ref{lem:upper_semi}, and the basic properties of compact sets. It is similar to the elliptic case (see \cite[Corollary 4.6]{KRS16}), so we skip the details here.

 While the constant in the lower bound in Corollary \ref{cor:lower_bound} depends on the solution $u$, for the upper bound the constant can be made uniform in $u$. This is shown in the following lemma:
 
\begin{lem}
\label{lem:growth}
	Let $u\in \mathcal{G}^{A,f}_p(S_1)$. Let $R_0$ be the radius in Corollary \ref{3sphere}. Let $\bar \kappa:=2-\frac{n+2}{p}$ for general $f\in L^p(S_1^+)$, and $\bar\kappa$ be an arbitrary number in $(0,\infty)$ if $f$ additionally satisfies the support condition \eqref{eq:support}. Then there exists a constant $C=C(n, p, \bar \kappa, \|u\|_{L^2(S_1)}, \|f\|_{L^p(S_1)},\|a^{ij}\|_{W^{1,1}_p(S_1)})$ such that the following statements hold:
	\begin{itemize}
	\item [(i)] For all $(x_0,t_0) \in Q_{1/4}$ and for all $0<r<R_0/4$
	\begin{align*}
		r^{-\frac{n+2}{2}}\|ue^{\frac{\sigma(x-x_0;x_0,t_0)}{8(t-t_0)}}\|_{L^2(A_{r,2r}(x_0,t_0))} \le C |\operatorname{ln}{(r)}|^2r^{\min\{\kappa_{(x_0,t_0)}, \bar \kappa\}}.
	\end{align*}
	\item [(ii)] Suppose $\kappa_{(x_0,t_0)}<\bar \kappa$. Then there is $r_u =r(u, n, \bar\kappa, (x_0,t_0), R_0)\in (0,1/4)$ such that for all $r\in (0,r_u)$ and $\delta\in (0,1/2)$ we have
\begin{align*}
\frac{\|ue^{\frac{\sigma(x-x_0;x_0,t_0)}{8(t-t_0)}}\|_{L^2(A_{\delta r,2\delta r}(x_0,t_0))}}{\|ue^{\frac{\sigma(x-x_0;x_0,t_0)}{8(t-t_0)}}\|_{L^2(A_{r,2r}(x_0,t_0))}}\leq C |\ln\delta|^2\delta^{\kappa_{(x_0,t_0)}+\frac{n+2}{2}}.
\end{align*}
\end{itemize}
\end{lem}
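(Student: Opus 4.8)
The plan is to extract both estimates from the (strengthened) three sphere's inequality after tuning the Carleman parameter $\tau$ to the vanishing order at the base point. By the change of variables of Remark \ref{rmk:sigma} we may take $(x_0,t_0)=(0,0)$; write $\omega(r):=\|ue^{\sigma(x;0,0)/(8t)}\|_{L^2(A_{r,2r})}$ and $h(r):=r^{-1}e^{\tau\phi(-r^2)}\omega(r)$, so that Corollary \ref{3sphere} reads $\tau^{1/2}|\ln r_2|^{-1}h(r_2)\lesssim c_0^{-1}\bigl(h(r_1)+h(r_3)+F(r_1,r_3)\bigr)$ with $F(r_1,r_3)=\|(-t)^{1/2}e^{\tau\phi+\sigma/(8t)}f\|_{L^2(A_{r_1,2r_3})}$. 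Two features of the weight will be used throughout: since $\phi(-r^2)=\tilde\phi(2|\ln r|)$ with $\tilde\phi(s)=s+c_0(s\arctan s-\tfrac12\ln(1+s^2))$ and $\tilde\phi'(s)=1+c_0\arctan s\in(1,1+c_0\pi/2)$, one has $r^{-\tau(2+c_0\pi)}|\ln r|^{-\tau c_0}\lesssim e^{\tau\phi(-r^2)}\le r^{-\tau(2+c_0\pi)}$ and
\[
\frac{e^{\tau\phi(-r^2)}}{e^{\tau\phi(-(\delta r)^2)}}\le\delta^{\tau(2+c_0\pi)}\bigl(1+|\ln\delta|/|\ln r|\bigr)^{\tau c_0}\qquad(0<\delta<\tfrac12,\ 0<r<\tfrac12).
\]

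\textbf{Part (i).} Put $\mu:=\min\{\kappa_{(0,0)},\bar\kappa\}$. Since spatial derivatives of $u$ are parabolically H\"older continuous (cf. \eqref{eq:assump_u}), we may assume $\kappa_{(0,0)}$ is bounded below by a fixed positive constant (low vanishing orders being handled directly by the a priori regularity of $u$); then, choosing the fixed constant $c_0$ small enough depending on $n$ and the H\"older exponent $\alpha$, for any small $\epsilon>0$ the choice $\tau=\tau_\epsilon$ with $\tau_\epsilon(2+c_0\pi)=\mu+\tfrac{n}{2}-\epsilon$ satisfies $1<\tau_\epsilon\le\tau_0:=\tfrac{\bar\kappa+n/2}{2+c_0\pi}$, so the radius $R_0$ of Corollary \ref{3sphere} is admissible and $\epsilon$-independent. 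By the definition of $\kappa_{(0,0)}$ as a $\limsup$ (and by Proposition \ref{prop:limit} when $\kappa_{(0,0)}\le\bar\kappa$) there is a sequence $r_j\downarrow0$ with $\omega(r_j)\le r_j^{\mu+(n+2)/2-\epsilon''}$, $\epsilon''<\epsilon$; hence $h(r_j)\to0$. Running the three sphere's inequality with $r_1=r_j$, $r_2=r\in(0,R_0/8)$, $r_3=R_0/4$ and letting $j\to\infty$ kills the inner term and gives $h(r)\lesssim|\ln r|\bigl(h(R_0/4)+F(0^+,R_0/2)\bigr)$, the inhomogeneity term being finite and estimated by H\"older's inequality and Sobolev embedding exactly as in \eqref{eq:est_f_1} (using $\mu\le\bar\kappa=2-\tfrac{n+2}{p}$), with a constant that degenerates only like a negative power of $\epsilon$ in the borderline case $\mu=\bar\kappa$. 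Converting back to $\omega$ gives $\omega(r)\le C_\epsilon|\ln r|^{1+\tau_\epsilon c_0}r^{\mu+(n+2)/2-\epsilon}$; optimizing in $\epsilon$ (take $\epsilon$ a fixed multiple of $|\ln r|^{-1}$) trades the $r^{-\epsilon}$ loss, and the possible $\epsilon^{-c}$, for additional powers of $|\ln r|$, and a sufficiently small choice of $c_0$ keeps the total logarithmic exponent at most $2$, which is (i).

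\textbf{Part (ii).} Now $\kappa:=\kappa_{(0,0)}<\bar\kappa$, so part (i) applies with exponent $\kappa$ and serves as an a priori bound. Fix $\epsilon\in(0,\bar\kappa-\kappa)$ small and $\tau$ with $\tau(2+c_0\pi)=\kappa+\tfrac{n}{2}-\epsilon$. Apply the strengthened three sphere's inequality \eqref{eq:three_sphere_strong} with $r_2=\delta r$, $r_3=r$, $r_1\to0$: the a priori bound (or Proposition \ref{prop:limit}) forces $h(r_1)\to0$, while the additional term on the left of \eqref{eq:three_sphere_strong}, whose coefficient tends to $(\ln(r_3/r_2))^{-1}=|\ln\delta|^{-1}$, replaces $|\ln(\delta r)|$ by $|\ln\delta|$, so that $h(\delta r)\lesssim|\ln\delta|\bigl(h(r)+F(0^+,2r)\bigr)$ with $F(0^+,2r)\lesssim r^{\bar\kappa-\kappa+\epsilon}$ (finite with bounded constant since $\kappa<\bar\kappa$). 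Converting $h$ to $\omega$ with the two weight facts, the main term produces $|\ln\delta|^{1+\tau c_0}\delta^{\kappa+(n+2)/2-\epsilon}\omega(r)$, and the $f$-remainder produces $|\ln\delta|\,|\ln(\delta r)|^{\tau c_0}\delta^{\kappa+(n+2)/2-\epsilon}r^{\bar\kappa+(n+2)/2}$; using the lower growth bound of Remark \ref{growth}, namely $r^{\bar\kappa+(n+2)/2-\nu}\le\omega(r)$ for $r<r_u=r_u(u,\nu)$, one absorbs both $r^{\bar\kappa+(n+2)/2}$ and the residual $|\ln r|^{\tau c_0}$ (split off from $|\ln(\delta r)|^{\tau c_0}$ and dominated by a small power of $r$) into $\omega(r)$. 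This gives $\omega(\delta r)\lesssim|\ln\delta|^{1+\tau c_0}\delta^{\kappa+(n+2)/2-\epsilon}\omega(r)$; letting $\epsilon\to0$ and taking $c_0$ small yields $\omega(\delta r)\le C|\ln\delta|^2\delta^{\kappa+(n+2)/2}\omega(r)$, with the $u$-dependence of $r_u$ exactly as in the statement.

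\textbf{Expected main obstacle.} The mechanism is straightforward once the three sphere's inequality is in hand; the real work is the careful bookkeeping of logarithms. The weight $\phi$ is only asymptotically linear, so each passage between $h$ and $\omega$, each ratio of weights, and each estimate of the inhomogeneity costs a factor $|\ln r|^{\pm\tau c_0}$ or $\epsilon^{-c}$; the $\epsilon$-deficit in the exponent must be \emph{traded} for — rather than added to — a controlled power of $|\ln r|$ via $\epsilon\sim|\ln r|^{-1}$, and in the borderline regime $\kappa_{(x_0,t_0)}\ge\bar\kappa$ of (i) one must check that the degenerating inhomogeneity integral still leaves the final logarithmic exponent below $2$. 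The non-uniformity in $u$ of the lower bound in Remark \ref{growth} is what forces the threshold $r_u$ in part (ii) and cannot be avoided.
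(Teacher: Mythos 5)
Your proposal is correct and follows essentially the same route as the paper: both parts rest on the (strengthened) three-sphere inequality with the Carleman parameter tuned to $\kappa_{(x_0,t_0)}+\tfrac n2$, a sequence $r_j\downarrow 0$ realizing the vanishing order to remove the innermost term, the H\"older estimate \eqref{eq:est_f_1} for the inhomogeneity, and, in (ii), the $|\ln\delta|^{-1}$ term of \eqref{eq:three_sphere_strong} together with the non-uniform lower bound of Remark \ref{growth} to absorb the $f$-term (which is exactly what forces the threshold $r_u$). The only real difference is bookkeeping of the parameter: you fix a slightly subcritical $\tau_\epsilon$ and trade the $r^{-\epsilon}$ (resp. $\delta^{-\epsilon}$) loss for logarithms via $\epsilon\sim|\ln r|^{-1}$, whereas the paper takes $\tau_j$ adapted to $r_j$ and passes to the limit $\tau_j\to\tau_\infty=\frac{\kappa+n/2}{2+c_0\pi}$, which avoids the $\epsilon$-trade and the $\epsilon^{-c}$ degeneracy you rightly flag in the borderline case.
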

\begin{proof}
	Without loss of generality we shall assume $(x_0,t_0)=(0,0)$ and denote $\kappa:=\kappa_{(0,0)}$.  Assume that $\kappa<\bar \kappa$. From Remark \ref{growth}, given $\epsilon_{j} >0$ with $\epsilon_j\rightarrow 0$,  there exists $r_j$ with $r_j \rightarrow 0$ such that
	\begin{align}\label{eq:rj}
		r_j^{-(\kappa+\frac{n+2}{2}-\epsilon_j)}\|ue^{\frac{|x|^2}{8t}}\|_{L^2(A_{r_j,2r_j})}\rightarrow 0 \text{ as } j\rightarrow \infty.
	\end{align}
 Define $\tau_j:=\left(\kappa +\frac{n}{2}-\epsilon_j\right)\frac{-\operatorname{ln}r_j}{\tilde{\phi}(2|\operatorname{ln} (r_j)|)}.$ Then we have 
 \begin{align}\label{eq:tauj}
 	(r_j)^{-1} e^{\tau_j\tilde{\phi}(2|\operatorname{ln (r_j)}|)}=r_j^{-(\kappa+\frac{n+2}{2}-\epsilon_j)}.
 	\end{align}
 We now apply the strengthened three sphere's inequality \eqref{eq:three_sphere_strong} with radii $r_1=r_j,$ $r_2=\delta r$ with $\delta\in (0,1/2)$, $r_3=r$ and $\tau=\tau_j$ to obtain 
 \begin{equation}\label{eq:upp_bd1}
 		\begin{split}
 		&\quad (\delta r)^{-1} |\ln \delta |^{-1}e^{\tau_j\phi(-(\delta r)^2)}\|ue^{\frac{|x|^2}{8t}}\|_{L^2(A_{\delta r,2\delta r})}\\
 		&\leq C\left( (r_j)^{-1} e^{\tau_j\phi(-r_j^2)}\|u e^{\frac{|x|^2}{8t}}\|_{L^2(A_{r_j,2r_j})}+r^{-1} e^{\tau_j\phi(-r^2)}\|u e^{\frac{|x|^2}{8t}}\|_{L^2(A_{r,2r})}\right.\\
 		&\left.\quad +\|(-t)^{\frac{1}{2}}e^{\tau_j\phi(t)}e^{\frac{|x|^2}{8t}}f\|_{L^2(A_{r_j,2r})}\right).
 	\end{split}
 \end{equation}
\emph{Proof for (i):} By \eqref{eq:rj} and \eqref{eq:tauj}, the first term on the above right hand side goes to zero as $j\rightarrow \infty$. We apply H\"older's inequality to the third term and find that it is bounded by $C_{\bar\kappa}\|f\|_{L^p(Q_{2r})}$ with our choice of $\tau_j$.  In view of \eqref{eq:asymp}, $\tau_j \rightarrow \tau_\infty:=\frac{\kappa +n/2}{2+c_0\pi}$ as $j\rightarrow \infty.$ Therefore, by choosing $r=R_0/2$ and since $\delta\in (0,1/2)$ is arbitrary, we get from \eqref{eq:upp_bd1} that 
 \begin{equation*}
	\begin{split}
		s^{-1} |\ln s|^{-1}e^{\tau_\infty\phi(-s^2)}\|ue^{\frac{|x|^2}{8t}}\|_{L^2(A_{s,2s})}\leq C, \quad \forall 0<s<R_0/4,
	\end{split}
\end{equation*}
where $C=C(n,p,R_0, \|u\|_{L^2(Q_{R_0})}, \|f\|_{L^p(Q_{R_0})}, \|a^{ij}\|_{W^{1,1}_p})$. 
Recalling that $\phi(-r^2)=\tilde\phi(2|\ln r|)$ and the asymptotics of $\tilde\phi$ 
\begin{align*}
\tilde\phi(s)= (1+\frac{c_0\pi}{2})s - c_0-\frac{c_0}{2}\ln(1+s^2)+ o(s^{-1})\ \text{ as } s\rightarrow \infty,
\end{align*}
we get  
\begin{align*}
		& |\ln s|^{-1}s^{-\kappa- \frac{n+2}{2}}(1+4|\operatorname{ln}s|^2)^{-\frac{c_0(\kappa+n/2)}{4+2c_0\pi}}\|ue^{\frac{|x|^2}{8t}}\|_{L^2(A_{s,2s})}\leq C. 
\end{align*}
Since $\kappa <\bar\kappa$, by choosing $c_0=c_0(n,p,\bar\kappa)$ small enough a priori, we can ensure
$\frac{c_0(\kappa +n/2)}{4+2c_0\pi}<\frac{1}{2}$.
Consequently, we obtain
\begin{align*}
	s^{- \frac{n+2}{2}} \|ue^{\frac{|x|^2}{8t}}\|_{L^2(A_{s,2s})}\leq C_{\bar\kappa} s^{\kappa}|\ln s|^2.
\end{align*}
If $\kappa\geq \bar\kappa$, by replacing $\kappa$ by $\bar\kappa$ in the above argument, we obtain the desired growth estimate. 

\medskip

\emph{Proof for (ii):} By H\"older's inequality, if $\kappa<\bar\kappa = 2-\frac{n+2}{p}$, the third term on the right hand side  of \eqref{eq:upp_bd1} is actually bounded from above by $C(n,p,\epsilon_0)r^{\epsilon_0}\|f\|_{L^p(S_{2r})}$ for some $\epsilon_0\in (0,2-\frac{n+2}{p}-\kappa)$ fixed. In view of Remark \ref{growth}, if $r<r_0$ for some $r_0=r_0((x_0,t_0), u, n, p, \|f\|_{L^p}, \|u\|_{L^2})$, we have
 \begin{align*}
r^{-1} e^{\tau_\infty \phi(-r^2)}\|u e^{\frac{|x|^2}{8t}}\|_{L^2(A_{r,2r})}\geq r^{\epsilon_0/2}\geq C(n,p,\epsilon_0)r^{\epsilon_0}\|f\|_{L^p(Q_{2r})}.
\end{align*}
Thus we get from \eqref{eq:rj} and \eqref{eq:upp_bd1} (in the limit when $j\rightarrow \infty$) that 
\begin{equation*}
	\begin{split}
		 \|ue^{\frac{|x|^2}{8t}}\|_{L^2(A_{\delta r,2 \delta r})}
		\leq C \delta |\ln \delta| e^{\tau_\infty\phi(-r^2)-\tau_\infty\phi(-(\delta r)^2)}\|u e^{\frac{|x|^2}{8t}}\|_{L^2(A_{r,2r})}.
	\end{split}
\end{equation*}
From the asymptotics of $\phi(-r^2)$ with $r\ll 1$ we have
\begin{align*}
e^{\tau_\infty \phi(-r^2)} = e^{-\tau_\infty c_0}r^{-\tau_\infty(2+c_0\pi)}r^{o\left(\frac{1}{\operatorname{ln}(r)}\right)}(1+4|\ln r|^2)^{-\frac{\tau_\infty c_0}{2}}.
\end{align*}
Clearly, $r^{o\left(\frac{1}{\operatorname{ln}(r)}\right)} \backsim 1.$ Thus, after a few simplifications and using the explicit expression of $\tau_\infty$ we get
\begin{align}\label{eq:asymp_phi}
e^{\tau_\infty\phi(-r^2)-\tau_\infty\phi(-(\delta r)^2)}\leq C \delta^{\kappa+\frac{n}{2}} E(r,\delta)^{-1},
\end{align}
 where 
 \begin{equation}\label{eq:def_E}
\begin{split}
 E(r,\delta)&:=(1+4|\ln r|^2)^{\frac{\tau_\infty c_0}{2}}(1+4|\ln (\delta r)|^2)^{-\frac{\tau_\infty c_0}{2}}
\end{split}
 \end{equation}
Choosing $c_0=c_0(n,p)$ small such that $c_0\tau_\infty<1/2$, we have that $E(r,\delta)^{-1}\lesssim 1+ \frac{|\ln\delta|}{|\ln r|}$. Thus 
\begin{equation*}
	\begin{split}
		\|ue^{\frac{|x|^2}{8t}}\|_{L^2(A_{\delta r,2 \delta r})}
		\leq C  |\ln \delta|^2 \delta^{\kappa+\frac{n+2}{2}} \|u e^{\frac{|x|^2}{8t}}\|_{L^2(A_{r,2r})}.
	\end{split}
\end{equation*}
When $f$ satisfies \eqref{eq:support}, for all $\kappa\leq \bar \kappa\in (0,\infty)$ and some fixed $\epsilon_0\in (0,1)$, we have 
\begin{align*}
\|(-t)^{\frac{1}{2}}e^{\tau_j\phi(t)}e^{\frac{|x|^2}{8t}}f\|_{L^2(A_{r_j,2r})}\leq C_{\bar \kappa} r^{\epsilon_0}\|f\|_{L^p(S_{2r})},
\end{align*}
for all  $0<r<r_{\bar \kappa}$. Then arguing as above we obtain the desired inequality. 
\end{proof}

In the next lemma we derive a doubling inequality, which will play a crucial role in the blow-up analysis:

\begin{lem}[Doubling]\label{doubling}
Let $u\in \mathcal{G}^{A,f}_p(S_1)$. Let $(x_0,t_0) \in Q_{1/4}$ with $\kappa_{(x_0,t_0)}<\bar\kappa$, where $\bar\kappa=2-\frac{n+2}{p}$, and $\kappa\in (0,\infty)$ if $f$ additionally satisfies the support condition \eqref{eq:support}. Then given $\epsilon >0$, there exist $r_{\epsilon}=r_{\epsilon}(\epsilon,\bar\kappa,(x_0,t_0),u,n,p, \|f\|_{L^p})$ and $C=C(n,p,\bar \kappa, \|f\|_{L^p},\|u\|_{L^2})$, such that for all $0<r<r_{\epsilon}$ and $0<\sigma<\frac{1}{8}$ we have
	\begin{align*}
\|ue^{\frac{\sigma(x-x_0;x_0,t_0)}{8(t-t_0)}}\|_{L^2(S_r(x_0,t_0))} \leq C   \sigma^{-\kappa_{(x_0,t_0)}-\frac{n+2}{2}-\epsilon} \|ue^{\frac{\sigma(x-x_0;x_0,t_0)}{8(t-t_0)}}\|_{L^2(S_{\sigma r}(x_0,t_0))}.
	\end{align*}
	If $f$ satisfies the support condition \eqref{eq:support}, then for any $\bar \kappa\in (0,\infty)$, the above inequality holds at any $(x_0,t_0)$ such that $\kappa_{(x_0,t_0)}\in (0,\bar\kappa)$, where the constants $C$ and $r_\epsilon$ depend additionally on $\bar\kappa$.
\end{lem}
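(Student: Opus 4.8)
The plan is to recast the strip inequality as a doubling estimate for the annular quantities $E(\rho):=\|ue^{w}\|_{L^2(A_{\rho,2\rho}(x_0,t_0))}$, where $w:=\tfrac{\sigma(x-x_0;x_0,t_0)}{8(t-t_0)}$, and then to read the latter off from a \emph{single} application of the strengthened three sphere's inequality \eqref{eq:three_sphere_strong} (available for this $\sigma$-weighted quantity by Remark~\ref{rmk:sigma}). Write $\kappa:=\kappa_{(x_0,t_0)}$. It suffices to treat $\epsilon$ with $\kappa+\epsilon<\bar\kappa$, since for larger $\epsilon$ the claim follows from that for a smaller value ($\sigma^{-a}$ being increasing in $a$). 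I would set $\tau:=\tfrac{\kappa+\epsilon/2+n/2}{2+c_0\pi}$, so that $\tau(2+c_0\pi)=\kappa+\epsilon/2+n/2$; after shrinking $c_0$ depending on $n$ and $\epsilon$ — using that $\kappa\ge1$ whenever $\kappa>0$ — we may assume $\tau>1$. Put $H(\rho):=\rho^{-1}e^{\tau\phi(-\rho^2)}E(\rho)$.

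The preparatory step is to control the terms that the three sphere's inequality produces. From $\tilde\phi(s)\le(1+\tfrac{c_0\pi}{2})s$ one gets $e^{\tau\phi(t)}\le(-t)^{-(\kappa+\epsilon/2+n/2)/2}$, so by H\"older's inequality, carried out exactly as in \eqref{eq:est_f_1}, $\|(-t)^{1/2}e^{\tau\phi}e^{w}f\|_{L^2(A_{\rho,R_0})}\le C\|f\|_{L^p}$ \emph{uniformly} in small $\rho$ — the time integral converging precisely because $\kappa+\epsilon/2<\bar\kappa$ (automatic, for any finite $\tau$, when $f$ satisfies the support condition \eqref{eq:support}) — while $H(R_0/2)\le C(\tau,R_0,\|u\|_{L^2})$ since $e^{w}\le1$. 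On the other hand the lower growth bound of Remark~\ref{growth} with parameter $\epsilon/4$ gives $E(\rho)\ge\rho^{\kappa+(n+2)/2+\epsilon/4}$, hence $H(\rho)\gtrsim\rho^{-\epsilon/4}(1+|\ln\rho|^2)^{-\tau c_0/2}\to\infty$ as $\rho\to0$; so $H(\rho)$ dominates both of the above constants once $\rho<\rho_*$ for a suitable $\rho_*$ depending on $u,\|f\|_{L^p},\epsilon,\dots$.

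Then I would apply \eqref{eq:three_sphere_strong} once, to the triple $r_1=\sigma r$, $r_2=r$, $r_3=R_0/2$ (admissible for $r<R_0/4$, $\sigma<\tfrac18$). Keeping on the left only the contribution $\max\{(\ln\tfrac1\sigma)^{-1},(\ln\tfrac{R_0}{2r})^{-1}\}\,r^{-1}e^{\tau\phi(-r^2)}E(r)$ and bounding the prefactor below by $(\ln\tfrac1\sigma)^{-1}$, this reads
\[
(\ln\tfrac1\sigma)^{-1}H(r)\ \le\ Cc_0^{-1}\Big(H(\sigma r)+H(R_0/2)+\|(-t)^{1/2}e^{\tau\phi}e^{w}f\|_{L^2(A_{\sigma r,R_0})}\Big),
\]
and if $r_\epsilon$ is chosen so small that $\sigma r<r/8<\rho_*$ for all $r<r_\epsilon$, the last two terms are swallowed by $H(\sigma r)$, giving $H(r)\le C\ln\tfrac1\sigma\,H(\sigma r)$. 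Returning to $E$ via $E(\rho)=\rho e^{-\tau\phi(-\rho^2)}H(\rho)$, and using $\phi(-(\sigma r)^2)-\phi(-r^2)\le(2+c_0\pi)|\ln\sigma|$ (from $\tilde\phi'\le1+\tfrac{c_0\pi}{2}$), this turns into
\[
E(r)\ \le\ C\ln\tfrac1\sigma\,\sigma^{-1}e^{\tau[\phi(-(\sigma r)^2)-\phi(-r^2)]}E(\sigma r)\ \le\ C\ln\tfrac1\sigma\,\sigma^{-(\kappa+(n+2)/2+\epsilon/2)}E(\sigma r);
\]
absorbing $\ln\tfrac1\sigma\le C_\epsilon\sigma^{-\epsilon/2}$ gives the annular doubling $E(r)\le C\sigma^{-\kappa-(n+2)/2-\epsilon}E(\sigma r)$ for $r<r_\epsilon$, $\sigma\in(0,\tfrac18)$. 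Finally, upgrading to strips: decomposing $S_\rho=\bigsqcup_{j\ge0}A_{2^{-j-1}\rho,\,2^{-j}\rho}$ and summing the bounds of Lemma~\ref{lem:growth}(ii) — the geometric-type series converging because $\kappa+\tfrac{n+2}{2}>0$ — gives $\|ue^{w}\|_{L^2(S_\rho)}\le CE(\rho/2)$, while $\|ue^{w}\|_{L^2(S_\rho)}\ge E(\rho/2)$ is immediate; combining these with the annular doubling at radius $r/2$ and with $A_{\sigma r/2,\sigma r}\subset S_{\sigma r}$ closes the argument.

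I expect the main obstacle to be the absorption in the third step: the inhomogeneity and the fixed-scale term $H(R_0/2)$ are only bounded constants, and burying them inside $H(\sigma r)$ is exactly what forces the (permitted) dependence of $r_\epsilon$ on $u$. Equally, one must use the \emph{strengthened} inequality \eqref{eq:three_sphere_strong} and not \eqref{eq:three_sphere}: there the coefficient $\tau^{1/2}|\ln r|^{-1}$ multiplying $H(r)$ tends to $0$ as $r\to0$, which would leave a final constant growing like $|\ln r|$ and hence not absorbable into any power of $\sigma$, whereas $(\ln\tfrac1\sigma)^{-1}$ depends only on $\sigma$ and the ensuing logarithmic loss is recovered at the cost of $\epsilon/2$.
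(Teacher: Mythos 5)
Your argument is correct and follows essentially the same route as the paper: a single application of the strengthened three-sphere inequality \eqref{eq:three_sphere_strong} with radii $(\sigma r,\,r,\,\sim R_0)$ and the same choice of $\tau$, absorption of the fixed-scale and inhomogeneity terms using the lower growth bound of Remark \ref{growth}, conversion of the weight asymptotics into the factor $\sigma^{-\kappa-\frac{n+2}{2}-\frac{\epsilon}{2}}$ with the logarithm absorbed at the cost of $\epsilon/2$, and a dyadic summation to pass from annuli to strips. The only cosmetic deviations are that you swallow the constant terms into $H(\sigma r)$ on the right rather than bounding the left-hand side below by twice those constants, and that you upgrade to strips via Lemma \ref{lem:growth}(ii) plus one application of the annular doubling instead of applying the annular doubling at every dyadic scale and summing.
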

\begin{proof}
	Without loss of generality we assume that $(x_0,t_0)=(0,0)$ with $\kappa:=\kappa_{(0,0)}<2-\frac{n+2}{p}$. First note that, for a given $\epsilon >0$, by Remark \ref{growth} we have
\begin{align}\label{eq:growth}
r^{\kappa+\frac{n+2}{2}+\frac{\epsilon}{4}} \leq
 \|ue^{\frac{|x|^2}{8t}}\|_{L^2(A_{r,2r})} \leq r^{\kappa+\frac{n+2}{2}-\frac{\epsilon}{4}}\hspace{3mm}\text{for all}\hspace{3mm}0<r<r_{\epsilon}.
 \end{align}  
We use the strengthened three sphere's inequality \eqref{eq:three_sphere_strong} with radii $r_2=r<r_{\epsilon}$, $r_1=s<r/2$ and $r_3=R_0/4$, and weighting factor
$ \tau = \frac{\kappa +n/2+ \epsilon/2}{2+c_0\pi}$ 
 to find
 \begin{equation}\label{eq:3s_app}
\begin{split}
&\quad  
(\ln(r/s))^{-1}r^{-1}e^{\tau\phi(-r^2)}||ue^{\frac{|x|^2}{8t}}||_{L^2(A_{r,2r})}\\
&\leq C\left( s^{-1} e^{\tau\phi(-s^2)}\|u e^{\frac{|x|^2}{8t}}\|_{L^2(A_{s,2s})}
+(R_0)^{-1} e^{\tau\phi(-R_0^2/16)}\|u e^{\frac{|x|^2}{8t}}\|_{L^2(A_{R_0/4,R_0/2})}\right.\\
&\;\; \left.\quad + \|(-t)^{\frac{1}{2}} e^{\tau\phi(t)}e^{\frac{|x|^2}{8t}} f  \|_{L^2(A_{s,R_0/2})}\right).
			\end{split}
		\end{equation}
	Note that if $\frac{\epsilon}{2}\in (0,2-\frac{n+2}{p}-\kappa)$, the second and third term on the right hand side of  \eqref{eq:3s_app} can be bounded from above by $M=M(n,p,\|f\|_{L^p(S_{R_0})}, \|u\|_{L^2(S_{R_0})})$ (cf. the proof for Lemma \ref{lem:upper_semi} for a similar estimate). 
Further, we claim that by choosing $0<r<r_\epsilon$ with $r_\epsilon$ sufficiently small, the left hand side of \eqref{eq:3s_app} has the lower bound:
\begin{align*}
(\ln(r/s))^{-1}r^{-1}e^{\tau\phi(-r^2)}\|ue^{\frac{|x|^2}{8t}}\|_{L^2(A_{r,2r})}\geq 2M.
\end{align*}
To show this, we note that from the asymptotics of $\tilde{\phi}$ in \eqref{eq:asymp} 
\begin{align*}
\frac{\tau\tilde\phi(2|\ln r|)}{|\ln r|}> -\frac{\epsilon}{8}+ \tau(2+c_0\pi),\quad \forall r\in (0,r_\epsilon),
\end{align*}
for some $r_\epsilon$ sufficiently small. Consequently, we find
 $
 e^{\tau\phi(-r^2)} \ge r^{-\tau(2+c_0\pi)+\frac{\epsilon}{8}}$ for $0<r<r_\epsilon$.
Using the lower bound in the growth estimate \eqref{eq:growth},  the definition of $\tau$ and after possibly shrinking $r_\epsilon$, we obtain
\begin{align*}
(\ln(r/s))^{-1}r^{-1}e^{\tau\phi(-r^2)}\|ue^{\frac{|x|^2}{8t}}\|_{L^2(A_{r,2r})}&\geq (\ln(r/s))^{-1}r^{-1}e^{\tau\phi(-r^2)}r^{\kappa+\frac{n}{2}+1+\frac{\epsilon}{4}} \\
&\geq |\ln r|^{-1}r^{-\frac{\epsilon}{8}}\geq 2M.
\end{align*}
This combined with \eqref{eq:3s_app} gives
\begin{equation*}
\begin{split}
\|ue^{\frac{|x|^2}{8t}}\|_{L^2(A_{r,2r})}
\leq C \operatorname{ln}\left(\frac{r}{s}\right)  \left(\frac{r}{s}\right) e^{\tau\phi(-s^2)-\tau\phi(-r^2)}\|u e^{\frac{|x|^2}{8t}}\|_{L^2(A_{s,2s})}.
	\end{split}
\end{equation*}
Writing $s=\sigma r$ and invoking \eqref{eq:asymp_phi} we get
\begin{equation}\label{eq:3sph_doubling21}
\|ue^{\frac{|x|^2}{8t}}\|_{L^2(A_{r,2r})}
 	 \leq C |\operatorname{ln}\sigma| \sigma^{-\kappa-\frac{n+2}{2}-\frac{\epsilon}{2}}E(r,\sigma)\|ue^{\frac{|x|^2}{8t}}\|_{L^2(A_{\sigma r, 2\sigma r})},
\end{equation} 
where $E(r,\sigma)$ is defined in \eqref{eq:def_E}. It is not hard to see from the explicit expression  that $E(r,\sigma)\leq 1$. Thus we obtained the doubling inequality for $\|u e^{\frac{|x|^2}{8t}}\|_{L^2(A_{r,2r})}$.
To replace $A_{r,2r}$ and  $A_{\sigma r,2\sigma r}$ by $S_r$ and $S_{\sigma r}$ respectively, we apply \eqref{eq:3sph_doubling21} with $2^{-k}r$ and sum up in $k\in \N$. 
%
	%
%

When $f$ satisfies the support condition \eqref{eq:support}, the second and third term of \eqref{eq:3s_app} is bounded by $M=M(n,p,\|f\|_{L^p}, \|u\|_{L^2}, \bar \kappa)$ for $\kappa\leq \bar \kappa$. Thus the doubling inequality holds at all points with $\kappa_{(x_0,t_0)}\leq \bar \kappa$, with the constant $C$ and $r_\epsilon$ additionally depending on $\bar \kappa$. 
\end{proof}

An important consequence of the doubling inequality is the compactness of the blow-up sequences:

\begin{prop}\label{prop:compactness}
	Let $u\in \mathcal{G}^{A,f}_p(S_1)$, $p>n+2$, be a non-trivial solution. Let $(x_0,t_0)\in \Gamma_u^\ast\cap Q_{1/4}$ with $\kappa_{(x_0,t_0)}<2-\frac{n+2}{p}$,  or $\kappa_{(x_0,t_0)}<\infty$ if $f$ additionally satisfies \eqref{eq:support}. For $r>0$ define the rescalings 
\begin{align}\label{eq:u_r}
u_r^{(x_0,t_0)}(x,t) =\frac{u(x_0+r\sqrt{A(x_0,t_0)}x,t_0+r^2t)}{\left(H_r^{(x_0,t_0)}(u)\right)^{1/2}},
\end{align}
where
\begin{align*}
H_r^{(x_0,t_0)}(u)&:=\frac{1}{r^2}\int_{A_{\frac{r}{2},r}(x_0,t_0)} u(x,t)^2 G_{(x_0,t_0)}(x,t) \ dxdt
\end{align*}
and $G_{(x_0,t_0)}$ is the parametrix defined in \eqref{eq:parametrix}.		 Then there exists a sequence $r_j \rightarrow 0$ and $u_0^{(x_0,t_0)}:\R^n \times (-\infty,0]\rightarrow \R$, even about $x_n$, such that 
\begin{align*}
\int_{S_R} |u_{r_j}^{(x_0,t_0)}-u_0^{(x_0,t_0)}|^2 G\ dxdt \rightarrow 0,\\
\int_{S_R}|t||\nabla (u_{r_j}^{(x_0,t_0)}-u_0^{(x_0,t_0)})|^2 G\ dxdt \rightarrow 0
\end{align*}
for all $R\geq 1$. Moreover, $u_0^{(x_0,t_0)}\in W^{2,1}_2(Q_R)$ for each $R>0$ and $u_0^{(x_0,t_0)}$ is a nontrivial solution to the Signorini problem:
	 \begin{align*}
		 	\p_t v- \Delta v= 0 &\text{ in } \R^n \times (-\infty,0],\\
		 	v\geq 0, \quad -\p_n v \geq 0,\quad v \p_n v =0 &\text{ on } \R^{n-1}\times \{0\} \times (-\infty,0],
		 \end{align*}
in the sense that it solves the Signorini problem in every $Q_R$. The point $(0,0)\in \Gamma_{u_0^{(x_0,t_0)}}^\ast$ and the vanishing order of $u_0^{(x_0,t_0)}$ at $(0,0)$ is $\kappa_{(x_0,t_0)}$.
\end{prop}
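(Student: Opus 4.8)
The plan is a blow-up compactness argument in which the doubling inequality (Lemma~\ref{doubling}) and the growth estimates (Remark~\ref{growth}, Lemma~\ref{lem:growth}) take over the role played by Poon's monotonicity formula in the constant coefficient theory of \cite{DGPT}. By Remark~\ref{rmk:sigma} we may argue as if $(x_0,t_0)=(0,0)$ and $A(0,0)=\Id$, so that $G=G_{(0,0)}$ is the standard Gaussian, $H_r=r^{-2}\int_{A_{r/2,r}}u^2G$, and $u_r(x,t)=u(rx,r^2t)/H_r^{1/2}$; write $\kappa:=\kappa_{(0,0)}$. A direct computation shows $u_r$ solves $\p_t u_r-\p_i(a_r^{ij}\p_j u_r)=f_r$ in $S_{1/r}^+$ together with the Signorini conditions on $S'_{1/r}$, where $a_r^{ij}(x,t)=a^{ij}(rx,r^2t)$ and $f_r(x,t)=r^2H_r^{-1/2}f(rx,r^2t)$; moreover $u_r$ is even in $x_n$ and $\int_{A_{1/2,1}}u_r^2G\equiv 1$ by the definition of $H_r$ and the change of variables $y=rx,\ s=r^2t$.

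\emph{Vanishing of the perturbations.} Using the $H^{\gamma,\gamma/2}$-regularity and the $W^{1,1}_p$-scaling of the coefficients one finds, on every fixed $Q_R^+$, that $\|a_r^{ij}-\delta^{ij}\|_{L^\infty}+\|\nabla a_r^{ij}\|_{L^p}+\|\p_t a_r^{ij}\|_{L^p}\to 0$ as $r\to 0$, while $a_r^{ij}$ stays uniformly elliptic and off-diagonal; in particular the weighted energy and $H^2$-estimates (Lemma~\ref{lem:H2}, Lemma~\ref{lem:app_H2}) apply to $u_r$ with $r$-independent constants. By the same change of variables $\int_{S_R}f_r^2G\approx r^2H_r^{-1}\int_{S_{cRr}}f^2G$, which by H\"older's inequality (here $p>n+2$ guarantees $G\in L^{p/(p-2)}$ on parabolic strips) is $\lesssim r^2H_r^{-1}\|f\|_{L^p}^2(Rr)^{2(p-n-2)/p}$; since the doubling inequality yields $H_r\gtrsim r^{2\kappa+\epsilon}$ for every $\epsilon>0$, this tends to $0$ precisely when $\kappa<2-\tfrac{n+2}{p}$ (when $f$ obeys \eqref{eq:support} instead, $f_r\equiv 0$ on $S_R$ for $r$ small, so no restriction on $\kappa$ is needed).

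\emph{Uniform bounds and compactness.} Decomposing $S_R$ into dyadic parabolic annuli and feeding in the doubling inequality and Lemma~\ref{lem:growth} for $u$, one extracts a subsequence $r_j\to 0$ and obtains, uniformly in $j$, the bounds $\int_{S_R}u_{r_j}^2G\le C_R$ and $\int_{A_{\sigma,2\sigma}}u_{r_j}^2G\le C\sigma^{2\kappa+2-\epsilon}$ for $0<\sigma<\tfrac18$; testing the equation for $u_{r_j}$ with $u_{r_j}G$ --- the boundary term on $S'_R$ drops out because $u_{r_j}\p_\nu^{A_{r_j}}u_{r_j}=0$ --- produces the analogous bounds for $\int_{S_R}|t||\nabla u_{r_j}|^2G$ and $\int_{A_{\sigma,2\sigma}}|t||\nabla u_{r_j}|^2G$, and away from $\{t=0\}$ the problem is a uniformly elliptic Signorini problem with $L^p$ inhomogeneity, so $\|u_{r_j}\|_{W^{2,1}_2(\R^n_+\times[-R^2,-\delta^2])}\le C_{R,\delta}$ by \cite{AU}. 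Aubin--Lions and a diagonal argument then give $u_0$, even in $x_n$, with $u_{r_j}\to u_0$ strongly in $W^{1,0}_2(\R^n_+\times[-R^2,-\delta^2])$ for all $R,\delta$; summing the dyadic estimates yields $\int_{\{-\delta^2<t\le 0\}\cap S_R}(u_{r_j}^2+|t||\nabla u_{r_j}|^2)G\to 0$ as $\delta\to 0$ uniformly in $j$, while the $|x|$-tails are absorbed by a weighted bound $\int_{S_R}u_{r_j}^2(1+|x|^2)G\le C_R$, and together these upgrade the convergence to the two claimed strong limits on every $S_R$, $R\ge 1$. I expect this uniform control near $\{t=0\}$, furnished by the doubling inequality in place of a monotonicity formula, to be the main obstacle.

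\emph{Identification of the limit.} Passing to the limit in the penalised (or variational) formulation, using $a_{r_j}^{ij}\to\delta^{ij}$, $f_{r_j}\to 0$ and the strong convergence, shows $u_0$ solves the Signorini problem for $\p_t-\D$ in each $Q_R$; its $W^{2,1}_2(Q_R)$-regularity follows from \cite{AU}, and $u_0\not\equiv 0$ since $\int_{A_{1/2,1}}u_0^2G=\lim_j\int_{A_{1/2,1}}u_{r_j}^2G=1$ (the slab $A_{1/2,1}$ being away from $\{t=0\}$, where the convergence is strong). Transporting the doubling inequality and Lemma~\ref{lem:growth}(ii) for $u$ through the change of variables gives, uniformly in $j$ and $\sigma$,
\begin{align*}
c\,\sigma^{\kappa+\frac{n+2}{2}+\epsilon}\le\frac{\|u_{r_j}e^{\frac{|x|^2}{8t}}\|_{L^2(A_{\sigma,2\sigma})}}{\|u_{r_j}e^{\frac{|x|^2}{8t}}\|_{L^2(A_{1/2,1})}}\le C\,\sigma^{\kappa+\frac{n+2}{2}-\epsilon},
\end{align*}
and letting $j\to\infty$ and then taking logarithms identifies the vanishing order of $u_0$ at $(0,0)$ with $\kappa=\kappa_{(x_0,t_0)}$. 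Finally, since $\kappa\ge 1+\al>1$, both $u_0(0,0)=0$ and $\nabla u_0(0,0)=0$; as $u_0$ is a nontrivial solution of the Signorini problem for the heat operator, the strong unique continuation principle prevents $(0,0)$ from lying in the interior of $\{u_0=\p_n u_0=0\}$, whence $(0,0)\in\Gamma_{u_0}^\ast$.
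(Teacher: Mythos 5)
Your overall strategy is the paper's: rescale, use Remark \ref{growth} and the doubling inequality to control the inhomogeneity and to get uniform Gaussian-weighted bounds, invoke the weighted $H^2$-estimate of Lemma \ref{lem:H2}, obtain compactness on compact sets away from $\{t=0\}$ from the \cite{AU} regularity, gain tightness near $\{t=0\}$ by summing the dyadic annulus estimates from Lemma \ref{lem:growth}(ii), pass to the limit in the variational formulation, and preserve the vanishing order through the two-sided annulus bounds. Two of your variations are harmless: you control the $|x|$-tails by a Chebyshev argument with the $\omega$-weighted bounds (the paper uses the log-Sobolev inequality of \cite{DGPT} instead), which works provided you also record the weighted \emph{gradient} bound from Lemma \ref{lem:H2} with $s\in(0,2]$, since you prove the gradient convergence directly rather than via the stability estimate of Lemma \ref{lem:energy_close} as the paper does; and the lower bound $H_r\gtrsim r^{2\kappa+2\epsilon}$ comes from Remark \ref{growth} (i.e.\ Proposition \ref{prop:limit}), not from the doubling inequality, a misattribution without consequence.

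The genuine gap is the sentence ``its $W^{2,1}_2(Q_R)$-regularity follows from \cite{AU}'', and more generally the claim that $u_0$ solves the Signorini problem in every $Q_R$, i.e.\ \emph{up to} $t=0$. All the control you transport to the limit is Gaussian-weighted: $\int_{S_R}(u_0^2+|t||\nabla u_0|^2)G<\infty$. For fixed $x\neq 0$ the weight $G(x,t)=(-4\pi t)^{-n/2}e^{|x|^2/4t}$ vanishes faster than any power as $t\to 0^-$, so these bounds give no unweighted $L^2$ (let alone $L^\infty$) control of $u_0$ on $B_R\times(-\delta^2,0]$ away from the spatial origin, and the hypotheses needed to apply \cite{AU} (or any local up-to-final-time estimate) on $Q_R$ are exactly what is missing. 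The paper bridges this, following \cite[Theorem 7.3(iii)]{DGPT}, by the $L^\infty$--$L^2$ sub-mean value estimate of Lemma \ref{lem:linfty}: since $u_0$ is a nonnegative-type subsolution with Tychonoff growth, the Gaussian $L^2$ mass on an earlier annulus bounds $\sup$ over a forward-in-time cylinder, yielding $u_0\in L^\infty(Q_R)$ for every $R$; only then does the energy estimate give $u_0\in W^{2,1}_2(Q_R)$ and the validity of the variational inequality up to $t=0$. Your proof needs this step (or an equivalent quantitative control crossing $t=0$); as written, the $W^{2,1}_2(Q_R)$ claim and the assertion that the limit solves the problem in $Q_R$ rather than merely in $B_A\times(-R^2,-\delta^2]$ are unsupported.
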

\begin{proof}
The proof follows along the same lines as Theorem 7.3 in \cite{DGPT}. For completeness we provide an  outline of the proof here. For simplicity we assume that $(x_0,t_0)=(0,0)$ and furthermore we drop the dependence in the notation on $(0,0)$ and simply write $u_r$, $H_r(u)$. 
Clearly, $u_r$ solves the following:
\begin{equation}
	\begin{split}
		\p_t u_r - \p_i(a^{ij}(rx,r^2t)\p_j u_r)= f_r &\text{ in } B_{2/r}\times [-1/r^2,0],\\
		u_r\geq 0, \quad \p_\nu u_r \geq 0,\quad u_r\p_\nu u_r =0 &\text{ on } B'_{2/r}\times [-1/r^2,0],
	\end{split}
\end{equation}
where $f_r(x,t):=\frac{r^2f(rx,r^2t)}{H_r(u)^{1/2}}.$
Using Lemma \ref{lem:H2} with $s=0$, for all $0<R<(2r)^{-1}$ we obtain
\begin{equation}\label{eq:new_H2}
\begin{split}
&\int_{S_R} (u_r^2 +|t||\nabla u_r|^2 +|t|^2 |D^2u_r|^2 +|t|^2(\dt u_r)^2)G\\
 &\leq C \left(\int_{S_{2R}} |t|^2 f_r^2 G + \int_{S_{2R}}u_r^2G\right).
\end{split}
\end{equation}  
To bound the first term on the right hand side of \eqref{eq:new_H2}, we apply H\"older's inequality to get 
\begin{equation}\label{eq:inhomo}
\begin{split}
\int_{S_{2R}} |t|^2| f_r|^2 G &\leq \left(\int_{S_{2R}} |f_r|^p\right)^{\frac{2}{p}}\left( \int_{S_{2R}}(|t|^2 G)^{\frac{p}{p-2}}\right)^{\frac{p-2}{p}}\leq CR^{2(3-\frac{n+2}{p})}\|f_r\|_{L^p(S_{2R})}^2.
\end{split}
\end{equation}
Writing the above upper bound in terms of $f$ and using $2rR<1$ we get 
\begin{align*}
\int_{S_{2R}}|t|^2 |f_r|^2 G \leq C\frac{(rR)^{2(3-\frac{n+2}{p})}}{r^2 H_r(u)}\|f\|_{L^p(S_{1})}^2.
\end{align*}
From the lower bound in the growth estimate, cf. Remark \ref{growth}, given $\epsilon\in (0, 2-\frac{n+2}{p}-\kappa)$ fixed, there is $r_\epsilon=r_\epsilon(u)\ll 1$ such that $H_r(u) \geq r^{2(\kappa+\epsilon)}$ for all $r\in (0,r_\epsilon)$. Thus for all $r\in (0,r_\epsilon)$ we have
\begin{align*}
\int_{S_{2R}}|t|^2 |f_r|^2 G \leq Cr^{2(2-\frac{n+2}{p}-\kappa-\epsilon)} R^{2(3-\frac{n+2}{p})} \|f\|^2_{L^p(S_1)}\leq C(R).
\end{align*}
For the second term on the right hand side of \eqref{eq:new_H2} we apply the doubling inequality, cf. Lemma \ref{doubling}, to get for $r\leq r_u$,
\begin{align*}
\int_{S_{2R}} u_r^2 G \leq \frac{\int_{S_{2rR}} u^2 G }{\int_{S_{2r}} u^2 G} \leq C(R).
\end{align*}
Plugging in the above two inequalities to \eqref{eq:new_H2} we obtain the uniform Sobolev estimates for $u_r$: for all $0<r<r_\epsilon$,
\begin{align}\label{eq:uniform_H2}
\int_{S_R} (u_r^2 +|t||\nabla u_r|^2 +|t|^2 |D^2u_r|^2 +|t|^2(\dt u_r)^2)G &\leq C(R).
\end{align}
\emph{Claim:} There is a subsequence $u_{r_j}$ such that for all $R>0$,
\begin{align*}
\int_{S_R}|u_{r_j}-u_0|^2G \rightarrow 0
\end{align*}
In particular, $u_0$ is nontrivial. We note that in view of Lemma \ref{lem:energy_close}, the above claim also implies that 
\begin{align*}
\int_{S_R}|t||\nabla (u_{r_j}-u_0)|^2G \rightarrow 0,\quad \forall R>0.
\end{align*}
\emph{Proof of the claim:} By Lemma \ref{lem:growth}(ii), for any $r\in (0,r_u)$ with some $r_u$ sufficiently small and for any small $\delta>0$ fixed, we have
\begin{align*}
H_\delta(u_r)=\frac{H_{\delta r} (u)}{H_r(u)}\leq C |\ln \delta|^4 \delta ^{2\kappa}.
\end{align*}
This implies the smallness of the integral near $t=0$:
\begin{align*}
\int_{S_\delta} u_r^2 G\leq \sum_{i=0}^\infty (2^{-i}\delta)^2 H_{2^{-i}\delta}(u_r) \leq C\delta,\quad \forall 0<r<r_u.
\end{align*}
Here we have used that $\kappa\geq 0$. Next, we take $A=A(\delta,R)$ such that $\int_{\R^n \setminus B_{A-1}}G(x,s)dx \le e^{-1/\delta}$ for all $-R^2 <s<0.$ Let $\zeta_A \in C_c^{\infty}(B_A)$ be a smooth cut-off function such that $0 \le \zeta_A \le 1$ and $\zeta_A\equiv 1.$ in $B_{A-1}.$   Now, from an application of the log-Sobolev inequality to $u_r(1-\zeta_A)$, cf.  \cite[Lemma 7.7]{DGPT}, we find
\begin{align*}
\int_{S_R\cap \{|x|\geq A\}} u_r^2 G\leq \int_{S_R} u_r^2(1-\zeta_A)^2 G \le C\delta \int_{S_R} [u_r^2+|t||\nabla u_r|^2] G.
\end{align*} 
Finally, we use \eqref{eq:uniform_H2} to get 
\begin{align*}
\int_{S_R\cap \{|x|\geq A\}} u_r^2 G\leq \delta C(R), \quad \forall \  0<r<r_u.
\end{align*}
In $E:=E_{R,\delta, A}= B_A\times (-R^2,-\delta^2]$, the Gaussian is bounded from above and below, therefore \eqref{eq:uniform_H2} implies that $\{u_r\}_{0<r<r_u}$ is uniformly bounded in $W^{1,1}_2(E)$. Thus there is a subsequence $u_{r_j}=u_{r_j}^{\delta, A}$ such that $u_{r_j}\rightarrow u_0$ strongly in $L^2(E)$.
Let $\delta\rightarrow 0$ and $A\rightarrow \infty$, by a standard diagonal argument we obtain the desired sequence, which completes the proof of the claim.

Since $\nabla u_r$  is uniformly (in $r$) bounded in $H^{\alpha, \alpha/2}(B_R\times (-R^2, -\delta^2))$ for any fixed $\delta \in (0,R)$ and for some $\alpha\in (0,1)$ (this is due to the fact that $u_r$ is uniformly bounded in $W^{1,1}_2(B_R\times (-R^2,-\delta^2)$), and furthermore, $u_r$ satisfies the variational inequality 
\begin{align*}
\int_{t_1}^{t_2}\int_{B_R} a^{ij}(r\cdot, r^2\cdot) \p_i u_{r}\p_j (v-u_r) + \p_tu_r(v-u_r) \ dxdt \geq \int_{t_1}^{t_2}\int_{B_R}f_r (v-u_r) \ dxdt 
\end{align*}
for all $v\in \{ v\in W^{1,0}_2(B_R\times (-R^2,-\delta^2): v=u_r \text{ on } \p B_R\times (-R^2,-\delta^2), \ v\geq 0 \text{ on } B'_R\times (-R^2,-\delta^2)\},$ $v$ are even about $x_n$ and $-R^2<t_1<t_2<-\delta^2$. Passing to the limit along a possibly  further subsequence in the variational inequality we conclude that $u_0$ solves the constant coefficient Signorini problem in $B_R\times (-R^2, -\delta^2)$ for each $R$ and $\delta>0$. By the same argument as in \cite[Theorem 7.3(iii)]{DGPT} using the sub-mean value property (cf. Lemma \ref{lem:linfty}), we show that $u_0 \in L^\infty(Q_R)$ for each $R$, and thus by the energy estimate $u\in W^{2,1}_2(Q_R)$.

In the end, we show that the vanishing order of $u_0$ at $(0,0)$ is $\kappa$ and $(0,0)$ belongs to the extended free boundary of $u_0$. The vanishing order is preserved due to the refined growth estimate Lemma \ref{lem:growth}(ii) and the doubling inequality, cf. Lemma \ref{doubling}. More precisely, by Lemma \ref{lem:growth}(ii) and Lemma \ref{doubling} (or \eqref{eq:3sph_doubling21}), for given $\epsilon\in (0,1/8)$, there is $r_{u,\epsilon}=r(u,\epsilon, n, p, \|f\|_{L^p}, \kappa)>0$ such that for all $r\in (0,r_{u,\epsilon})$ and for all $\sigma\in (0,1/8)$ we have
\begin{align*}
C^{-1} \sigma^{\kappa+\epsilon}\leq \frac{\sigma^{-\frac{n+2}{2}}\|ue^{\frac{|x|^2}{8t}}\|_{L^2(A_{\sigma r, 2\sigma r})}}{\|ue^{\frac{|x|^2}{8t}}\|_{L^2(A_{\frac{r}{2}, r})}}\leq C |\ln \sigma|^2 \sigma ^\kappa.
\end{align*}
From the definition of $u_r$, cf. \eqref{eq:u_r}, we have
	\begin{equation}\label{eq:quotient_u}
	\|u_r e^{\frac{|x|^2}{8t}}\|_{L^2(A_{\sigma,2\sigma})}
= c_n\frac{\|ue^{\frac{|x|^2}{8t}}\|_{L^2(A_{\sigma r, 2\sigma r})}}{\|ue^{\frac{|x|^2}{8t}}\|_{L^2(A_{\frac{r}{2},r})}}.
\end{equation}
Therefore we obtain
	\begin{align*}
	C^{-1}   \sigma^{\kappa+\epsilon}\leq 	\sigma^{-\frac{n+2}{2}}\|u_re^{\frac{|x|^2}{8t}}\|_{L^2(A_{\sigma,2\sigma})} \leq C|\ln\sigma|^2\sigma^{\kappa} ,\quad \forall r\in (0,r_{u,\epsilon}), \sigma\in (0,\frac{1}{8}).
	\end{align*}
	By the strong $L^2$ convergence along a subsequence, the above inequality also holds for the limiting function $u_0$. Consequently, using $\epsilon>0$ is arbitrary, we get that the vanishing order of $u_0$ at $(0,0)$ is also $\kappa$. To see that $(0,0)\in \Gamma_{u_0}^\ast$, we first notice  that  $\kappa>1+\alpha$ for some $\alpha>0$ by  our assumption. This implies $u_0(0,0)=0$ and $\p_nu(0,0)=0$ (because otherwise either $\kappa=0$ or $\kappa=1$). Then it suffices to show $(0,0)$ is not in the interior of the set $\{(x',0,t)\in Q'_1:u_0(x',0,t)=0,\quad \p_n u_0(x',0,t)=0\}$. Assume by contradiction that if $(0,0)$ is an interior point, then $u_0=0$ in an open neighborhood of $(0,0)$ in $Q'_1$. Using that $u_0$ solves the heat equation in $\{x_n>0\}\times \R_-$, we have $D_x^\alpha u \p_t^k u_0(0,0)=0$ for all multi-index $\alpha$ and $k\in \N$, i.e. $u_0$ vanishes infinite order at $(0,0)$. This contradicts to our assumption that $\kappa<\infty$.
 This completes the proof of the proposition. 
\end{proof}

At the end of the section we  prove that if $p>2(n+2)$ or $p>n+2$ if $f$ satisfies \eqref{eq:support}, the lowest vanishing order at a free boundary point is $\frac{3}{2}$ and there is a gap around it. As a consequence we obtain an almost optimal growth estimate for $u$:
\begin{prop}\label{prop:almost_op}
Let $u\in\mathcal{G}^{A,f}_p(S_1)$ and let $(x_0,t_0)\in \Gamma_u^\ast\cap Q_{1/4}$. Then the following statements hold
\begin{itemize}
\item [(i)] If $p\in (2(n+2),\infty]$ and $\kappa_{(x_0,t_0)}< 2-\frac{n+2}{p}$, then $\kappa_{(x_0,t_0)}=\frac{3}{2}$. 
\item [(ii)] If $p\in (n+2,2(n+2)]$, then  $\kappa_{(x_0,t_0)}\geq 2-\frac{n+2}{p}$. 
\item [(iii)] If $p\in (n+2,\infty]$, $f$ satisfies \eqref{eq:support} and $\kappa_{(x_0,t_0)}<2$, then $\kappa_{(x_0,t_0)}=\frac{3}{2}$. 
\end{itemize}
As a consequence, there is a constant $C=C(n,p, \|u\|_{L^2}, \|f\|_{L^p})$ such that for all $(x_0,t_0)\in \Gamma_u^\ast\cap Q_{1/4}$  we have
\begin{align*}
r^{-\frac{n+2}{2}}\|u e^{\frac{\sigma(x-x_0;x_0,t_0)}{8(t-t_0)}}\|_{L^2(A_{r,2r}(x_0,t_0))}\leq C r^{\bar\kappa}|\ln r|^2, 
\end{align*}
where $\bar\kappa=\frac{3}{2}$ in case (i) and (iii), and $\bar \kappa=2-\frac{n+2}{p}$ in case (ii). 
\end{prop}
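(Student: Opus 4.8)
The plan is to deduce everything from the classification of the vanishing order of \emph{global} solutions of the constant coefficient parabolic Signorini problem. The point is that whenever $(x_0,t_0)\in\Gamma_u^\ast\cap Q_{1/4}$ and $\kappa:=\kappa_{(x_0,t_0)}$ lies in the range where the blow-up construction is available --- i.e.\ $\kappa<2-\frac{n+2}{p}$ in general, or $\kappa<\infty$ when $f$ satisfies the support condition \eqref{eq:support} --- Proposition \ref{prop:compactness} produces a blow-up limit $u_0=u_0^{(x_0,t_0)}$ which is a nontrivial solution of the Signorini problem for the heat operator on $\R^n\times(-\infty,0]$, with $(0,0)\in\Gamma_{u_0}^\ast$ and vanishing order at $(0,0)$ equal to $\kappa$. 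The one external input I would invoke, due to Danielli--Garofalo--Petrosyan--To and provable via Poon's monotonicity formula, cf.\ \cite{DGPT}, is that the vanishing order of such a nontrivial global solution at a free boundary point belongs to $\{3/2\}\cup[2,\infty)$; in particular it is $\ge 3/2$ and never lies in the open interval $(3/2,2)$. The only genuinely delicate point is to check that the notion of vanishing order used in \cite{DGPT} (the Almgren--Poon frequency) agrees with $\kappa_{u_0}(0,0)$ as defined in Definition \ref{defi:vanishing_order}; this is standard, but it is where the real content is imported.

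With this in hand, (i) and (iii) follow immediately: under their hypotheses Proposition \ref{prop:compactness} applies, and since in case (i) we have $\kappa<2-\frac{n+2}{p}\le 2$ (here $p>2(n+2)$ also forces $2-\frac{n+2}{p}>\frac{3}{2}$) and in case (iii) $\kappa<2$, the classification leaves only $\kappa=\frac{3}{2}$ in both situations. For (ii) I would argue by contradiction: if $\kappa<2-\frac{n+2}{p}$ then, since $p\le 2(n+2)$ gives $2-\frac{n+2}{p}\le\frac{3}{2}$, we would have $\kappa<\frac{3}{2}$; but Proposition \ref{prop:compactness} still applies and produces a nontrivial global solution with free-boundary vanishing order $<\frac{3}{2}$, contradicting the classification. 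Hence $\kappa\ge 2-\frac{n+2}{p}$.

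For the concluding uniform growth estimate I would combine (i)--(iii) with Lemma \ref{lem:growth}(i). Put $\hat\kappa:=2-\frac{n+2}{p}$ in cases (i) and (ii) and $\hat\kappa:=2$ in case (iii) (admissible in Lemma \ref{lem:growth} precisely because $f$ satisfies \eqref{eq:support} in case (iii)), and put $\bar\kappa:=\frac{3}{2}$ in cases (i),(iii) and $\bar\kappa:=2-\frac{n+2}{p}$ in case (ii). The already-proved parts give $\min\{\kappa_{(x_0,t_0)},\hat\kappa\}\ge\bar\kappa$ for every $(x_0,t_0)\in\Gamma_u^\ast\cap Q_{1/4}$: if $\kappa_{(x_0,t_0)}<\hat\kappa$ then $\kappa_{(x_0,t_0)}=\frac{3}{2}=\bar\kappa$ in cases (i),(iii) while this case is excluded in (ii), and if $\kappa_{(x_0,t_0)}\ge\hat\kappa$ then $\min\{\kappa_{(x_0,t_0)},\hat\kappa\}=\hat\kappa\ge\bar\kappa$. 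Since $\hat\kappa$ is fixed in each case, Lemma \ref{lem:growth}(i) (applied with its parameter chosen to be $\hat\kappa$) yields, for all $(x_0,t_0)\in Q_{1/4}$ and $0<r<R_0/4$,
\begin{align*}
r^{-\frac{n+2}{2}}\big\|ue^{\frac{\sigma(x-x_0;x_0,t_0)}{8(t-t_0)}}\big\|_{L^2(A_{r,2r}(x_0,t_0))}\le C|\ln r|^2\, r^{\min\{\kappa_{(x_0,t_0)},\hat\kappa\}},
\end{align*}
with $C$ depending only on $n,p,\hat\kappa,\|u\|_{L^2(S_1)},\|f\|_{L^p(S_1)},\|a^{ij}\|_{W^{1,1}_p(S_1)}$; restricting to $(x_0,t_0)\in\Gamma_u^\ast\cap Q_{1/4}$ and using $r<1$ together with $\min\{\kappa_{(x_0,t_0)},\hat\kappa\}\ge\bar\kappa$ gives the desired bound $C|\ln r|^2 r^{\bar\kappa}$. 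The main obstacle, as indicated, is the classification step; the rest is bookkeeping with the exponents produced by Proposition \ref{prop:compactness} and Lemma \ref{lem:growth}.
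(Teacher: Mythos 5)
Your proposal is correct and follows essentially the same route as the paper: blow up at the point via Proposition \ref{prop:compactness}, transfer the vanishing order to the constant-coefficient global limit, invoke the frequency gap of \cite{DGPT} to get $\kappa=\tfrac{3}{2}$ (or a contradiction in case (ii) since $2-\tfrac{n+2}{p}\le\tfrac32$ there), and then conclude the uniform growth bound from Lemma \ref{lem:growth}(i). Your explicit bookkeeping with $\hat\kappa$ and $\min\{\kappa_{(x_0,t_0)},\hat\kappa\}\ge\bar\kappa$ is just a more detailed version of the paper's one-line appeal to Lemma \ref{lem:growth}(i), and your flag about matching the Almgren--Poon frequency with Definition \ref{defi:vanishing_order} is the same implicit step the paper absorbs into its citation of \cite{DGPT}.
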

\begin{proof}
	We will show it for $(0,0)$ and for simplicity we write $\kappa:=\kappa_{(0,0)}$. 
	
	We first prove the case (i). The proof for case (iii) will be identical.   Assume that $\kappa<2-\frac{n+2}{p}$.  
	Let  $u_r$ be the blow-up sequence defined in Proposition \ref{prop:compactness}. 
By taking $r \rightarrow 0$ along converging subsequences, cf.  Proposition \ref{prop:compactness}, we find
that the blow-up limit  $u_0$ is a solution to the Signorini problem with constant coefficient, $(0,0)\in \Gamma_{u_0}^\ast$ and the vanishing order of $u_0$ at $(0,0)$ is $\kappa\in (1+\alpha, 2-\frac{n+2}{p})$.  Now by the gap of the vanishing order for the constant coefficient problem in \cite{DGPT} we conclude that $\kappa=\frac{3}{2}$. 

Next we prove the case (ii). Assume by contradiction that $\kappa<2-\frac{n+2}{p}$, then arguing as above using the frequency gap for the blow-up limits we conclude that $\kappa\geq \frac{3}{2}$. This is however a contradiction if $p\leq 2(n+2)$. 

With the estimate of the lowest vanishing order at hand, we conclude the growth estimate by Lemma \ref{lem:growth}(i). This completes the proof for the proposition.
\end{proof}

The almost optimal $L^2$-growth estimate combined with an $L^\infty-L^2$ estimate, cf. Lemma \ref{lem:linfty}, gives the almost optimal $L^\infty$-growth estimate of the solution. 
\begin{cor}\label{cor:al_opt_infty}
Let $u\in \mathcal{G}^{A,f}_p(S_1)$. Suppose that $p>2(n+2)$ (or $p>n+2$ when $f$ satisfies the support condition \eqref{eq:support}). Then there is a constant $C=C(n,p,\|u\|_{L^2(S_1)}, \|f\|_{L^p(S_1)},  \|a^{ij}\|_{W^{1,1}_p(S_1)})>0$ such that for all $(x_0,t_0)\in \Gamma_u^\ast\cap Q_{1/4}$ and for all $r\in (0,\frac{1}{4})$ we have
\begin{align*}
\sup_{B^+_r(x_0)\times (t_0-r^2, t_0+r^2)\cap S_1}|u|\leq Cr^{\frac{3}{2}}|\ln r|^2.
\end{align*}
When $p\in (n+2, 2(n+2)]$, we obtain for $\gamma=1-\frac{n+2}{p}$,
\begin{align*}
\sup_{B_r^+(x_0)\times (t_0-r^2, t_0+r^2)\cap S_1}|u|\leq Cr^{1+\gamma}|\ln r|^2.
\end{align*}
\end{cor}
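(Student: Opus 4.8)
The plan is to deduce the $L^\infty$-bound from the almost optimal $L^2$-growth estimate in Proposition \ref{prop:almost_op} by combining it with an $L^\infty$--$L^2$ estimate (the sub-mean value / interior estimate for subsolutions of the parabolic Signorini problem, Lemma \ref{lem:linfty}), together with standard parabolic interior/boundary estimates for the full inhomogeneous problem. First I would fix $(x_0,t_0)\in\Gamma_u^\ast\cap Q_{1/4}$ and $r\in(0,1/4)$. After the change of variables straightening the metric at $(x_0,t_0)$ as in Remark \ref{rmk:sigma} (which changes $\sigma(x-x_0;x_0,t_0)$ into $|x|^2$ up to controlled constants and only perturbs the coefficients), we may assume $(x_0,t_0)=(0,0)$ and work with the Gaussian weight. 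The key point is that $|u|$ (more precisely $|u|$, or $u_\pm$ after the even reflection) is a subsolution of a parabolic equation with $L^p$ right-hand side satisfying the Signorini condition, so Lemma \ref{lem:linfty} gives, for dyadic scales $\rho\sim r$,
\begin{align*}
\sup_{Q_{\rho/2}^+} |u| \lesssim \left(\fint_{Q_\rho^+} |u|^2\right)^{1/2} + \rho^{2-\frac{n+2}{p}}\|f\|_{L^p(Q_\rho^+)}.
\end{align*}

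Next I would relate the solid average $\left(\fint_{Q_\rho^+}|u|^2\right)^{1/2}$ to the Gaussian-weighted annular quantity controlled by Proposition \ref{prop:almost_op}. On $Q_\rho^+$ the weight $e^{\frac{|x|^2}{8t}}$ is comparable to a dimensional constant (bounded above and below), and $Q_\rho^+$ is covered by finitely many dyadic annuli $A_{2^{-k}\rho,2^{-k+1}\rho}^+$, $k\ge 0$; summing the bound $2^{-k\rho}{}^{-\frac{n+2}{2}}\|u e^{\frac{|x|^2}{8t}}\|_{L^2(A_{2^{-k}\rho,2^{-k+1}\rho}^+)}\le C(2^{-k}\rho)^{\bar\kappa}|\ln(2^{-k}\rho)|^2$ over $k$ — which converges since $\bar\kappa>0$ — gives
\begin{align*}
\rho^{-\frac{n+2}{2}}\|u\|_{L^2(Q_\rho^+)} \lesssim \rho^{\bar\kappa}|\ln\rho|^2,
\end{align*}
where $\bar\kappa=\frac32$ if $p>2(n+2)$ (or $p>n+2$ with the support condition) and $\bar\kappa=1+\gamma=2-\frac{n+2}{p}$ if $p\in(n+2,2(n+2)]$. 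Hence $\left(\fint_{Q_\rho^+}|u|^2\right)^{1/2}\lesssim \rho^{\bar\kappa}|\ln\rho|^2$. For the inhomogeneity term, $\rho^{2-\frac{n+2}{p}}\|f\|_{L^p(Q_\rho^+)}\lesssim \rho^{2-\frac{n+2}{p}}\|f\|_{L^p(S_1^+)}$, and since $2-\frac{n+2}{p}\ge \bar\kappa$ in all cases (with equality exactly in case (ii)), this is also $\lesssim \rho^{\bar\kappa}|\ln\rho|^2$. Taking $\rho\sim r$ yields the estimate over $B_r^+(0)\times(t_0-r^2,t_0]$; to extend the supremum to the forward slab $(t_0,t_0+r^2)$ one uses that near $\Gamma_u^\ast$ the interior regularity of $u$ together with the local maximum principle propagates the bound forward by a fixed dimensional factor (alternatively one reruns the argument at $(x_0,t_0+r^2/2)$ after noting that the vanishing order there is controlled by upper semicontinuity, Lemma \ref{lem:upper_semi}, or simply $\le$ a fixed value away from $(x_0,t_0)$).

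The main obstacle I anticipate is making the forward-in-time extension of the supremum rigorous: the solution class $\mathcal{G}_p^{A,f}$ only records data up to the final time, so one must be slightly careful in justifying control on $(t_0,t_0+r^2)$ — the cleanest route is to observe that Lemma \ref{lem:linfty} is an interior-in-time estimate centered at a point strictly before the final time $0$ (recall $(x_0,t_0)\in Q_{1/4}$, so $t_0+r^2<0$ for $r<1/4$ small enough; for the boundary case $t_0+r^2$ close to $0$ one shrinks $r$ or uses the explicit $W^{2,1}_2$-bound from \eqref{eq:assump_u}), so that $Q_{2\rho}(0,t_0+\rho^2/2)\subset S_1^+$ is an admissible cylinder on which both the $L^\infty$--$L^2$ estimate and the $L^2$-growth estimate apply. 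A secondary technical point is tracking that the change of variables at $(x_0,t_0)$ distorts balls and the Gaussian only by constants depending on $\|a^{ij}\|_{W^{1,1}_p}$ and $n$, which is exactly what Remark \ref{rmk:sigma} provides; the $|\ln r|^2$ factor is robust under these comparisons since replacing $r$ by $cr$ changes $|\ln r|^2$ by a bounded factor.
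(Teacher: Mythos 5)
Your overall architecture is the paper's: recenter via Remark \ref{rmk:sigma} and combine the almost optimal $L^2$-growth of Proposition \ref{prop:almost_op} with the $L^\infty$--$L^2$ estimate of Lemma \ref{lem:linfty}. But the way you invoke Lemma \ref{lem:linfty} creates a genuine gap. The lemma does not bound $\sup_{Q_{\rho/2}^+}|u|$ by the solid average $\big(\fint_{Q_\rho^+}|u|^2\big)^{1/2}$; it bounds $\sup_{B_r\times[-c_0r^2,c_0r^2]}|u|$ by $H_r(u)^{1/2}=\big(\tfrac{1}{r^2}\int_{A_{r/2,r}}u^2G\big)^{1/2}$, a Gaussian-weighted integral over the bottom time-slab, global in space; and precisely because the representation formula (Lemma \ref{lem:sub_mean_value}) jumps from the slab $\{t\approx t_0-r^2\}$ to all later times, the supremum already includes the forward slab, cf. Remark \ref{rmk:linfty}, so no separate forward-in-time propagation is needed. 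Your substitute step --- converting the annular Gaussian bounds of Proposition \ref{prop:almost_op} into a solid $L^2$ average over $Q_\rho^+$ --- fails: the weight $e^{\frac{|x-x_0|^2}{8(t-t_0)}}$ is \emph{not} bounded below by a dimensional constant on $Q_\rho^+$. On the time-slab $A_{2^{-k}\rho,2^{-k+1}\rho}(x_0,t_0)$ intersected with $\{|x-x_0|\le\rho\}$ it can be as small as $e^{-c4^{k}}$ (and infinitely many slabs are needed, not finitely many), so the dyadic summation only controls the paraboloidal region $\{|x-x_0|\lesssim\sqrt{t_0-t}\}$; the portion of $Q_\rho^+$ with $\sqrt{t_0-t}\ll|x-x_0|\le\rho$ is not reachable from the growth estimate centered at $(x_0,t_0)$. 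Your fallback of rerunning the argument at $(x_0,t_0+r^2/2)$ is also unsound: that point need not lie in $\Gamma_u^\ast$, so neither Proposition \ref{prop:almost_op} nor upper semicontinuity of the vanishing order gives a growth estimate there.

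The repair is to use Lemma \ref{lem:linfty} exactly as stated. After the recentring of Remark \ref{rmk:sigma}, on $A_{r/2,r}(x_0,t_0)$ one has $G_{(x_0,t_0)}\sim r^{-n}e^{\frac{\sigma(x-x_0;x_0,t_0)}{4(t-t_0)}}$, so Proposition \ref{prop:almost_op} gives $H_r^{(x_0,t_0)}(u)^{1/2}\lesssim r^{\bar\kappa}|\ln r|^2$ directly. Then Lemma \ref{lem:linfty} (using the remark following it, which requires the hypotheses only up to time $\min\{t_0+c_0r^2,0\}$, plus the harmless rescaling $r\mapsto r/\sqrt{c_0}$, which changes $|\ln r|^2$ only by a bounded factor) yields the supremum bound on the full cylinder $B_r^+(x_0)\times(t_0-r^2,t_0+r^2)\cap S_1$, the inhomogeneity contributing $r^{1+\gamma}\|f\|_{L^p}\lesssim r^{\bar\kappa}$ since $1+\gamma=2-\tfrac{n+2}{p}\ge\bar\kappa$. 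With that substitution the dyadic bridging and the forward-time discussion in your write-up become unnecessary.
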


\begin{rmk}\label{rmk:opt_h}
When $p\in (n+2, 2(n+2)]$, the above growth estimate at free boundaries together with the interior regularity estimate for the linear problem directly yields the almost optimal regularity of the solution $u\in \mathcal{G}^{A,f}_p(S_1^+)$ claimed in Theorem \ref{thm:opt}(ii). The proof is identical with that of \cite[Theorem 9.1]{DGPT}, and we do not repeat here.
\end{rmk}

\section{Optimal regularity}\label{sec:opt}
In this section we derive the optimal regularity of the solution. Without loss of generality throughout this section we assume that $(0,0)\in \Gamma_u^\ast$ and let $\kappa:=\kappa_{(0,0)}$. The arguments in this section work for other free boundary points as well, after making the change of variable in Remark \ref{rmk:sigma}. First we consider a one-parameter family of Weiss type energies for $u$ centered at $(0,0)$: for any $r\in (0,1)$,
\begin{align*}
W^{\kappa}_u(r):=\frac{1}{r^{2\kappa+2}}\int_{A^+_{r/2,r}}\left(|t||\nabla u|^2 -\frac{\kappa}{2}u^2\right)  G\ dxdt,
\end{align*}
where $G$ is the standard Gaussian, i.e.
\begin{align*}
G(x,t):=\frac{1}{(-4\pi t)^{\frac{n}{2}}}e^{\frac{|x|^2}{4t}},\text{ if } t\leq 0, \quad G(x,t)=0\text{ if } t>0.
\end{align*}
We note that $W^{\kappa}_u(r)$ is well-defined for all $r\in (0,1)$ and furthermore, $r\mapsto W^{\kappa}_u(r)$ is continuous. We also note the following scaling property: let
\begin{align*}
u_{\kappa,r}(x):=\frac{u(rx,r^2 t)}{r^\kappa}, \quad r>0,
\end{align*}
then
\begin{align*}
W^{\kappa}_{u}(r)=W^{\kappa}_{u_{\kappa,r}}(1).
\end{align*}

\subsection{Properties of the Weiss energy} In the following proposition we provide another expression of the Weiss energy:

\begin{prop}\label{prop:Weiss2}
Let $u\in \mathcal{G}^{A,f}_p(S_1^+)$ and let $g:=\p_ia^{ij}\p_j u + (a^{ij}-\delta^{ij})\p_{ij}u + f$. Then $W^{\kappa}_u(r)$ for $r\in (0,1)$ can also be written as 
\begin{align}\label{eq:Weiss_2}
W^{\kappa}_u(r)=\frac{1}{2 r^{2\kappa+2}}\int_{A^+_{r/2,r}} u(Zu-\kappa u)G\ dxdt -\frac{1}{r^{2\kappa+2}} \int_{A^+_{r/2,r}}t u g G\ dxdt,
\end{align}
where $Z:=x\cdot \nabla + 2t\p_t$ is the vector field associated with the parabolic dilation. If (i) $p>2(n+2)$ and $\kappa=\frac{3}{2}$, or (ii) $p>n+2$, $f$ satisfies the support condition \eqref{eq:support} and $\kappa\leq \bar\kappa$,  then there exists a constant $C_0=C_0(p,n,\|f\|_{L^p},\|a^{ij}\|_{W^{1,1}_p}, \|u\|_{L^2}, \bar\kappa)$ and $\sigma=\sigma(n,p)\in (0,1)$ such that
\begin{align*}
\left|W^{\kappa}_u(r)-\frac{1}{2 r^{2\kappa+2}}\int_{A^+_{r/2,r}} u(Zu-\kappa u)G\ dxdt\right|\leq C_0r^{\sigma}.
\end{align*}
\end{prop}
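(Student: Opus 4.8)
\emph{Step 1: the identity \eqref{eq:Weiss_2}.} The plan is to obtain \eqref{eq:Weiss_2} by a slice-wise integration by parts in the spatial variable. First rewrite the equation: since $\partial_t u-\partial_i(a^{ij}\partial_j u)=f$ and $\partial_i(a^{ij}\partial_j u)=\Delta u+\partial_i a^{ij}\partial_j u+(a^{ij}-\delta^{ij})\partial_{ij}u$, one has $(\partial_t-\Delta)u=g$ with $g$ as in the statement. Then, for each fixed $t\in(-r^2,-r^2/4]$, integrate $\int_{\R^n_+}|t||\nabla u|^2G\,dx$ by parts in $x$. The contribution at spatial infinity vanishes since $\supp u\subset\overline{Q_1^+}$, and the boundary term on $\{x_n=0\}$ is a multiple of $\int_{\{x_n=0\}}|t|\,u\,\partial_n u\,G\,dx'$, which is zero by the Signorini complementarity condition together with the off-diagonal assumption (iii) (so that $\partial^A_\nu u=-a^{nn}\partial_n u$ on $S'_1$, hence $u\,\partial_n u=0$ there). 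Using $\nabla G=\tfrac{x}{2t}G$ and then substituting $\Delta u=\partial_t u-g$, the integrand reorganizes as $\tfrac12 u\,(x\cdot\nabla u+2t\,\partial_t u)G-t\,u\,g\,G=\tfrac12 u\,(Zu)\,G-t\,u\,g\,G$; dividing by $r^{2\kappa+2}$ and subtracting $\tfrac{\kappa}{2r^{2\kappa+2}}\int_{A^+_{r/2,r}}u^2G$ yields \eqref{eq:Weiss_2}. Note that no integration by parts in $t$ is used, so there are no time-slice boundary contributions, and all the manipulations are justified by $u\in W^{2,1}_2$ with $\nabla u\in H^{\alpha,\alpha/2}$.

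\emph{Step 2: the estimate.} It remains to bound $r^{-2\kappa-2}\big|\int_{A^+_{r/2,r}}t\,u\,g\,G\,dxdt\big|\le C_0 r^\sigma$. I would rescale: writing $u_{\kappa,r}(x,t):=r^{-\kappa}u(rx,r^2t)$, $a^{ij}_r(x,t):=a^{ij}(rx,r^2t)$ and $f_r(x,t):=r^{2-\kappa}f(rx,r^2t)$, a change of variables turns the quantity into $\big|\int_{A^+_{1/2,1}}t\,u_{\kappa,r}\,g_r\,G\,dxdt\big|$, where $g_r=\partial_i a^{ij}_r\partial_j u_{\kappa,r}+(a^{ij}_r-\delta^{ij})\partial_{ij}u_{\kappa,r}+f_r$. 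On $A_{1/2,1}$ the Gaussian is bounded above and below; applying the weighted $H^2$-estimate of Lemma~\ref{lem:app_H2} (with $\tau=1$, on a fixed unit-scale slab) together with the almost optimal growth estimate of Proposition~\ref{prop:almost_op} and the $L^\infty$--$L^2$ estimate of Lemma~\ref{lem:linfty} bounds $\|u_{\kappa,r}\|_{W^{2,1}_2(A^+_{1/2,1})}+\|u_{\kappa,r}\|_{L^\infty(A^+_{1/2,1})}$ by $C|\ln r|^2$. On the other hand $\|a^{ij}_r-\delta^{ij}\|_{L^\infty(A_{1/2,1})}\lesssim r^\gamma$ and $\|\nabla a^{ij}_r\|_{L^{n+2}(A_{1/2,1})}=\|\nabla a^{ij}\|_{L^{n+2}(A_{r/2,r})}\lesssim r^{\tilde\gamma}$ with $\tilde\gamma:=2(\tfrac1{n+2}-\tfrac1p)>0$ (by Hölder continuity of $a^{ij}$, $a^{ij}(0,0)=\delta^{ij}$, and Remark~\ref{rmk:smallness}); and $\|f_r\|_{L^p(A_{1/2,1})}\lesssim r^{2-\kappa-\frac{n+2}{p}}$, whose exponent is positive in case (i) since $\kappa=\tfrac32$ and $p>2(n+2)$, while in case (ii) the $f_r$-term vanishes on $A^+_{1/2,1}$ because $\supp f$ is away from the origin for $r$ small. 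Pairing these bounds via Hölder's inequality yields $\big|\int_{A^+_{1/2,1}}t\,u_{\kappa,r}\,g_r\,G\big|\le C_0 r^\sigma|\ln r|^{C}$ with $\sigma=\min\{\gamma,\tilde\gamma,\tfrac12-\tfrac{n+2}{p}\}$ in case (i) and $\sigma=\min\{\gamma,\tilde\gamma\}$ in case (ii); absorbing $|\ln r|^{C}$ into a marginally smaller power (still depending only on $n,p$) finishes the proof.

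\emph{Main obstacle.} The only non-routine point is the uniform (polylogarithmic in $r$) control of $u_{\kappa,r}$ in $W^{2,1}_2(A^+_{1/2,1})$: this requires the weighted $H^2$-estimate of Lemma~\ref{lem:app_H2}/\ref{lem:H2} on a fixed unit-scale slab, whose right-hand side contains the $L^2$-norm of $u_{\kappa,r}$ on the slightly enlarged slab $\R^n_+\times[-4,-1/4]$; the latter corresponds after unscaling to $A^+_{r/2,2r}$ and is bounded by $C|\ln r|^2$ by applying the growth estimate of Proposition~\ref{prop:almost_op} at the scales $r/2$ and $r$ and summing the two dyadic pieces. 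Everything else is bookkeeping of scaling exponents.
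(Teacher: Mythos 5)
Your Step 1 is correct and is the same computation as the paper's: integration by parts in $x$ only, using $\nabla G=\frac{x}{2t}G$, the substitution $\Delta u=\p_t u-g$, and $u\,\p_n u=0$ on $\{x_n=0\}$ (complementarity plus the off-diagonal condition), with no time-slice boundary terms; this gives \eqref{eq:Weiss_2} exactly as in the paper.

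In Step 2 the strategy (weighted energy/$H^2$ estimates, almost optimal growth, H\"older) is also the paper's, but several of your stated bounds are false as written because $A^+_{1/2,1}=\R^n_+\times(-1,-1/4]$ is spatially \emph{unbounded}: the Gaussian is not bounded below there; $\|a^{ij}_r-\delta^{ij}\|_{L^\infty(A_{1/2,1})}$ is of order $1$, not $r^\gamma$, since $|a^{ij}(rx,r^2t)-\delta^{ij}|\lesssim (r|x|+r)^\gamma$ and $|x|$ ranges up to $1/r$ on $\supp u_{\kappa,r}$; and likewise $\|u_{\kappa,r}\|_{L^\infty(A^+_{1/2,1})}$ and the \emph{unweighted} $W^{2,1}_2$-norm on the slab are not $O(|\ln r|^2)$ --- only their Gaussian-weighted counterparts are controlled. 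This is precisely the point the paper's proof is built around: the coefficient error is measured against the weight $\omega^\gamma=(1+|x|/\sqrt{-t})^\gamma$, and the extra weight is paid for via \eqref{eq:weight_u} and \eqref{eq:est_f}, i.e.\ one splits into the region $\{\omega\le (\sqrt{-t})^{-\epsilon_0/2}\}$, where Lemma \ref{lem:growth}/Proposition \ref{prop:almost_op} applies and the weight costs only $r^{-\epsilon_0}$, and its complement, where the Gaussian decay $e^{-cr^{-\epsilon_0}}$ beats the crude polynomial bounds coming from $u\in L^\infty(S_1^+)$ and $\supp u\subset \overline{Q_1^+}$. Once every sup/$L^2$ bound over the slab in your argument is replaced by this weighted splitting, your exponent bookkeeping ($r^\gamma$, $r^{\tilde\gamma}$, and $r^{1/2-\frac{n+2}{p}}$ from the $f$-term, up to an $\epsilon_0$-loss) reproduces the paper's $\sigma$. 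A minor structural difference: for this proposition the paper integrates the divergence-form part $\p_i((a^{ij}-\delta^{ij})\p_j u)$ by parts once more, so it only needs $\|a-\delta\|_\infty$ and the weighted gradient estimate, whereas your separate treatment of $\p_ia^{ij}\p_ju$ and $(a^{ij}-\delta^{ij})\p_{ij}u$ via $\|\nabla a\|_{L^{n+2}}$ and the full $H^2$-estimate is the route the paper takes in the subsequent Proposition \ref{prop:Weiss}; both work.
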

\begin{proof} 
First using the equation for $u_{\kappa,r}$ in $\R^n_+$
\begin{align}\label{eq:Delta_u}
\p_tu_{\kappa,r} -\Delta u_{\kappa,r} = g_{\kappa,r},\quad g_{\kappa,r}(x,t):= r^{2-\kappa}g(rx,r^2t)
\end{align}
 we write 
\begin{align*}
&\int_{A^+_{1/2,1}} u_{\kappa, r}\left( Zu_{\kappa,r}-\kappa u_{\kappa,r}\right) G =\int_{A^+_{1/2,1}}u_{\kappa, r} \left(x\cdot \nabla u_{\kappa,r}+2t(\Delta u_{\kappa,r} + g_{\kappa,r})\right)G -\int_{A^+_{1/2,1}} \kappa (u_{\kappa, r})^2 G.
\end{align*}
For the term involving $\Delta u_{\kappa,r}$, an integration by parts in $x$ together with the complementary condition at $\{x_n=0\}$ yields
\begin{align*}
\int_{A^+_{1/2,1}}2tu_{\kappa,r}\Delta u_{\kappa,r} G &=\int_{A^+_{1/2,1}} (-2t)|\nabla u_{\kappa,r}|^2 G - \int_{A^+_{1/2,1}}u_{\kappa,r}(x\cdot \nabla u_{\kappa, r}) G.
\end{align*}
Thus we obtain
\begin{align*}
 \int_{A^+_{1/2,1}} u_{\kappa, r}\left( Zu_{\kappa,r}-\kappa u_{\kappa,r}\right) G  &= \int_{A^+_{1/2,1}} \left(2|t| |\nabla u_{\kappa, r}|^2 -\kappa (u_{\kappa,r})^2 \right)G+ \int_{A^+_{1/2,1}}2tu_{\kappa, r} g_{\kappa,r} G\\
&=2W^{\kappa}_u(r)+\int_{A^+_{1/2,1}}2tu_{\kappa, r} g_{\kappa,r} G.
\end{align*}
Rewriting the above identity in terms of $u$ we get the claimed identity \eqref{eq:Weiss_2}.

Now we estimate the error term involving the the function $g$. Recalling that $g=\p_i((a^{ij}-\delta^{ij})\p_ju) +f$ and that $|a^{ij}-\delta^{ij}|\leq C(n,\|a^{ij}\|_{W^{1,1}_p}) (|x|+\sqrt{|t|})^\gamma$ with $\gamma=1-\frac{n+2}{p}$, we obtain from an integration by parts, Young's inequality and Cauchy-Schwarz inequality, that
\begin{equation}\label{eq:ug}
\begin{split}
&\quad \left|\int_{A^+_{r/2,r}}2t ug G \right|= 2\left|\int_{A^+_{r/2,r}}  (\delta^{ij}-a^{ij})(t\p_i u+\frac{1}{2}x_i u) \p_j u G + t uf G\right| \\
&\leq C r^\gamma\int\omega^\gamma \left(|t||\nabla u|^2  + \frac{|x|^2}{|t|}u^2 \right) G + \left(\int u^2 G\right)^{\frac{1}{2}}\left(\int |t|^2 |f|^2 G \right)^{\frac{1}{2}},
\end{split}
\end{equation}
where the integral is taken over ${A^+_{r/2,r}}$ and  $\omega=\omega(x,t)=1+\frac{|x|}{\sqrt{-t}}$. Using \cite[Claim A.1]{DGPT} and the $H^2$-estimate in Lemma \ref{lem:H2}, we get
\begin{align}\label{eq:e1}
\int_{A^+_{r/2,r}}\omega^\gamma \left(|t||\nabla u|^2  + \frac{|x|^2}{|t|}u^2 \right) G \lesssim \int_{S^+_{2r}} \omega^\gamma\left( u^2 + |t|^2 f^2\right)G.
\end{align}
To see \eqref{eq:e1}, we first apply \cite[Claim A.1]{DGPT} with $v=\omega^{\gamma/2}u$ to get
\begin{align*}
\int_{A^+_{r/2,r}} \omega^\gamma \frac{|x|^2}{|t|} u^2 G\lesssim \int_{S^+_{r}} \left(\omega^\gamma |t||\nabla u|^2 + \omega^{\gamma-2} |u|^2\right) G\leq \int_{S^+_{r}} \omega^\gamma\left( |t||\nabla u|^2 +|u|^2\right) G,
\end{align*}
where in the second inequality we have used that $\omega^{\gamma-2}\leq\omega^\gamma$ for $\gamma\in (0,1)$. Now we estimate the gradient term by Lemma \ref{lem:H2} to get \eqref{eq:e1}.
To estimate the right hand side of \eqref{eq:e1} we note the following two estimates: for all $s\in [0,2]$ and $\epsilon_0\in (0,\frac{1}{4}]$,
\begin{align}\label{eq:est_f}
\int_{S^+_{2r}}\omega^{s}|t|^2 f^2 G \lesssim  r^{4+2\gamma}\|f\|_{L^p(S_1^+)}^2,
\end{align}
\begin{align}\label{eq:weight_u}
\int_{S^+_{2r}}\omega^s u^2 G \leq C_{\bar\kappa, \epsilon_0} r^{2\kappa+2-\epsilon_0}.
\end{align}
Indeed, \eqref{eq:est_f} follows from a similar computation as in \eqref{eq:inhomo}. To see \eqref{eq:weight_u} we apply the growth estimate from Lemma \ref{lem:growth} in the region $\{(x,t): \frac{|x|}{\sqrt{-t}}\leq (\sqrt{-t})^{-\epsilon_0/2}\}$ and use the fast decay of the Gaussian in the complement.

Substituting \eqref{eq:e1}, \eqref{eq:est_f} and \eqref{eq:weight_u} into \eqref{eq:ug},  we get
\begin{align*}
\int_{A^+_{r/2,r}}|2t ug G| &\leq  C_{\epsilon_0} r^{5+\gamma-\epsilon_0}+ Cr^{4+3\gamma}\|f\|_{L^p(S_1^+)}+ C_{\epsilon_0}\|f\|_{L^p(S^+_1)} r^{\frac{9}{2}+\gamma-\epsilon_0}\leq C_0 r^{5+\sigma},
\end{align*}
where $\sigma:=\gamma-\epsilon_0-\frac{1}{2}$. Taking $\epsilon_0=\frac{1}{2}(\gamma-\frac{1}{2})$, we complete the proof of the case $p>2(n+2)$ (in this case $\gamma>1/2$ and $\kappa=3/2$).

Now we consider $p>n+2$ and $f$ satisfies the support condition \eqref{eq:support}. In this case \eqref{eq:est_f} can be improved. More precisely,  applying H\"older's inequality and using \eqref{eq:support} we get
\begin{align}\label{eq:est_suppf}
\int_{S^+_{2r}}\omega^{s}|t|^2 f^2 G \lesssim  r^{4+2\gamma}\left(1+\frac{1}{r}\right)^{s}e^{-\frac{1}{4r^2}}\|f\|_{L^p}^2,\quad \forall s\geq 0.
\end{align}
Thus, in this case, we find
\begin{align*}
\frac{1}{r^{2\kappa+2}}\int_{A^+_{r/2,r}}|2t ug G| &\leq C \frac{r^\gamma}{r^{2\kappa+2}} \int \omega^\gamma(u^2+ |t|^2 f^2)G + \frac{1}{r^{2\kappa+2}}\left(\int u^2 G\right)^{\frac{1}{2}}\left(\int |t|^2 f^2 G\right)^{\frac{1}{2}}\\
&\leq  C_{\bar\kappa, \epsilon_0} r^{\gamma-\epsilon_0}+ C\|f\|_{L^p}^2 \frac{e^{-\frac{1}{4r^2}}}{r^{2\kappa-2-2\gamma}}+ C\|f\|_{L^p}\frac{e^{-\frac{1}{8r^2}}}{r^{\kappa-1-\gamma+\epsilon_0/2}}.
\end{align*}
This leads to the desired estimate for some $\sigma=\sigma(n,p)$, which can be chosen the same as above, and $C_0=C_0(n,p,\|f\|_{L^p}, \|a^{ij}\|_{W^{1,1}_p}, \|u\|_{L^2},\bar\kappa)$. This completes the proof for the proposition.
\end{proof}

In the next proposition we show  that if $u\in \mathcal{G}^{A,f}_p(S_1^+)$ with $p>2(n+2)$, then the Weiss energy with $\kappa=3/2$ is almost monotone.
\begin{prop}[Almost monotonicity]\label{prop:Weiss}
If (i) $p>2(n+2)$ and $\kappa=\frac{3}{2}$, or (ii) $p>n+2$, $f$ satisfies the support condition \eqref{eq:support}) and $\kappa\leq \bar\kappa$, then there exists $\sigma=\sigma(n,p)\in (0,1)$ and $C_0=C_0(n,p,\|u\|_{L^2(S^+_1)}, \|f\|_{L^p(S_1^+)},\|a^{ij}\|_{L^2(S_1^+)}, \bar\kappa)$ such that
\begin{align*}
\frac{d}{dr} W^{\kappa}_u(r) \geq \frac{1}{2r^{2\kappa+3}}\int_{A^+_{r/2,r}}(Zu-\kappa u)^2 G \ dxdt -C_0 r^{-1+\sigma},\quad \forall r\in (0,1/2).
\end{align*}
Here the constants $\sigma$ and $C_0$ have the same dependence as in Proposition \ref{prop:Weiss2}, thus can be chosen to be the same. 
 \end{prop}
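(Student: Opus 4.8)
The plan is to differentiate the Weiss energy in the form given by Proposition \ref{prop:Weiss2}. Write $W^\kappa_u(r) = W^\kappa_{u_{\kappa,r}}(1)$ using the scaling property, so that $\frac{d}{dr}W^\kappa_u(r)$ becomes a differentiation in $r$ of an expression involving $u_{\kappa,r}$ and the rescaled inhomogeneity $g_{\kappa,r}$. The key algebraic identity is that differentiating $u_{\kappa,r}(x,t)=r^{-\kappa}u(rx,r^2t)$ in $r$ produces $\partial_r u_{\kappa,r} = \frac{1}{r}(Z u_{\kappa,r} - \kappa u_{\kappa,r})$, where $Z = x\cdot\nabla + 2t\partial_t$ is the parabolic dilation field. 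Thus the ``good'' quantity $v_{\kappa,r} := Zu_{\kappa,r} - \kappa u_{\kappa,r}$ is, up to the factor $r$, the derivative of the rescaled solution. After this substitution, $\frac{d}{dr}W^\kappa_u(r)$ will split into a manifestly nonnegative term $\frac{1}{2r^{2\kappa+3}}\int_{A^+_{r/2,r}}(Zu-\kappa u)^2 G$ coming from the $|t||\nabla u|^2$ part (via an integration by parts in $x$ using the Signorini complementarity condition $u\,\partial_n u = 0$ and $\partial_n u \le 0$, $u\ge 0$ on $\{x_n=0\}$, which makes a boundary term have a favorable sign), plus boundary contributions from the annular region $\partial A^+_{r/2,r}$ (the time slices $t=-r^2/4$ and $t=-r^2$), plus error terms involving $g_{\kappa,r}$.

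Concretely, I would follow the computation in \cite{Shi20} adapted to the variable-coefficient setting: start from \eqref{eq:Weiss_2}, and compute $\frac{d}{dr}$ of each of the two terms. For the main term $\frac{1}{2r^{2\kappa+2}}\int_{A^+_{r/2,r}}u(Zu-\kappa u)G$, differentiating the prefactor, the domain, and the integrand, and using that $\partial_t(u(Zu-\kappa u)G)$ can be manipulated via the heat equation $\partial_t u - \Delta u = g$ for $u$, one reorganizes the resulting expression. The crucial point is that the terms combine into $\frac{1}{2r^{2\kappa+3}}\int (Zu-\kappa u)^2 G$ plus terms that are controlled: the spatial integration by parts uses $\int 2t u\,\Delta u\, G = \int(-2t)|\nabla u|^2 G - \int u(x\cdot\nabla u)G$ together with the boundary sign condition, and the Gaussian identity $\partial_t G = \Delta G$ (or more precisely $x\cdot\nabla G = -\frac{|x|^2}{2t}G$ and $2t\partial_t G = (-n - \frac{|x|^2}{2t})G$, i.e. $ZG = -(n+2t\partial_t)$-type relations) is used to handle the commutator of $Z$ with the Gaussian. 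For the error term $-\frac{1}{r^{2\kappa+2}}\int_{A^+_{r/2,r}}t u g G$, its $r$-derivative is estimated directly: each piece is bounded by $C_0 r^{-1+\sigma}$ using exactly the same estimates \eqref{eq:ug}, \eqref{eq:e1}, \eqref{eq:est_f}, \eqref{eq:weight_u} (respectively \eqref{eq:est_suppf} in the support case) already established in the proof of Proposition \ref{prop:Weiss2}, together with the weighted $H^2$-estimate of Lemma \ref{lem:H2}.

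I expect the main obstacle to be the careful bookkeeping of the boundary contributions on the time slices $\{t=-r^2\}$ and $\{t=-r^2/4\}$ that arise because we work on the parabolic annulus $A^+_{r/2,r}$ rather than on a full parabolic cylinder. In the classical frequency/Weiss-energy computations on $S^+_r$ these slice terms either vanish or telescope; here one must show they either have a good sign or are absorbed into the $C_0 r^{-1+\sigma}$ error using the growth estimate Lemma \ref{lem:growth} and the $H^2$-bound (similar to how the terms $I_3$, $I_4$ were handled in the proof of Proposition \ref{prop:Carleman}). A secondary technical subtlety is justifying the differentiation under the integral sign and the integration by parts given that $u$ only has $W^{2,1}_2$ regularity and $\nabla u \in H^{\alpha,\alpha/2}$ — this is handled, as elsewhere in the paper, by first working with the penalized approximations $u^\epsilon$ or with difference quotients in $r$ and passing to the limit, and by invoking \cite[Claim A.1]{DGPT} to control weighted $L^2$ norms of $u$ by those of $\nabla u$ and $f$.

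Once these two pieces are assembled, the nonnegative term survives on the right-hand side and everything else is bounded below by $-C_0 r^{-1+\sigma}$, giving the claimed differential inequality. Since $\sigma$ and $C_0$ are produced by the very same estimates as in Proposition \ref{prop:Weiss2}, they can be taken identical, as asserted.
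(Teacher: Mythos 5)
Your overall route (differentiate the Weiss energy, use $\p_t u-\Delta u=g$ together with the Signorini conditions so that the derivative yields the square term $\tfrac{1}{2r^{2\kappa+3}}\int (Zu-\kappa u)^2G$ plus a contribution of $g$, then bound that contribution by $C_0r^{-1+\sigma}$) is the same as the paper's, which imports the differential inequality $r^{2\kappa+1}\tfrac{d}{dr}W^{\kappa}_u(r)\ge \tfrac{1}{r^2}\int_{A^+_{r/2,r}}\bigl[(Zu-\kappa u)^2+2tg(Zu-\kappa u)\bigr]G$ from \cite[Theorem 13.1]{DGPT}. The gap is in your error analysis. The $g$-contribution arrives paired with $Zu-\kappa u$, not with $u$, so it must be absorbed into the square term by Young's inequality; this requires the genuinely new bound $\int_{A^+_{r/2,r}}|t|^2|g|^2G\,dxdt\le Cr^{5+\sigma}$, i.e.\ \eqref{eq:est_err}, and establishing it is the bulk of the paper's proof. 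It does not follow from ``exactly the same estimates'' as Proposition \ref{prop:Weiss2}: there, $\int tugG$ was first integrated by parts so that only $|a^{ij}-\delta^{ij}|\,|\nabla u|$ appeared (cf.\ \eqref{eq:ug}); that trick is unavailable here. Bounding $\int t^2g^2G$ forces you to control $(a^{ij}-\delta^{ij})\p_{ij}u$ in the Gaussian-weighted $L^2$ norm via Lemma \ref{lem:H2} with the $\omega^{2\gamma}$ weight, and, separately, $\p_ia^{ij}\p_ju$ via the interpolation argument of the $X_2$-estimate in the proof of Proposition \ref{prop:Carleman} (using $\|\nabla a^{ij}\|_{L^{n+2}}$ and the sup-in-time gradient bound) combined with the almost optimal growth estimate of Proposition \ref{prop:almost_op}; it is exactly here that $p>2(n+2)$, respectively \eqref{eq:est_suppf} under \eqref{eq:support}, is consumed. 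These ingredients are missing from your list, so as written the error term is not controlled.

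Second, your plan of differentiating the two terms of \eqref{eq:Weiss_2} separately fails for the second term: in the scaled form the $r$-derivative hits $g_{\kappa,r}$ and produces $Zg$ (third derivatives of $u$, derivatives of $\nabla a^{ij}$ and of $f$, none of which are controlled), while in the unscaled form the moving annulus produces time-slice integrals of $tugG$ at $t=-r^2$ and $t=-r^2/4$, which would need slice-wise Gaussian bounds on $D^2u$ that Lemma \ref{lem:H2} does not provide (it gives sup-in-time control only for $\nabla u$). The correct bookkeeping, which is what underlies \cite[Theorem 13.1]{DGPT} and what the paper invokes through the identities for $(H^{\delta}_u)'$ and $(I^{\delta}_u)'$, is to differentiate the Dirichlet form of $W^{\kappa}$ in the scaled variables over the fixed annulus $A^+_{1/2,1}$, so that $g$ only ever appears undifferentiated in the cross term $2tg(Zu-\kappa u)$; in that formulation the time-slice boundary terms you single out as the ``main obstacle'' never arise.
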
 
\begin{proof}
The proof follows from a direct computation, see Theorem 13.1 in \cite{DGPT}. 
We rewrite the equation on $\R^n_+$ as 
\begin{align}\label{eq:Delta_u}
\p_tu -\Delta u = g,\quad g=\p_ia^{ij} \p_j u + (a^{ij}-\delta^{ij})\p_{ij}u +f.
\end{align}
Then from the proof for Theorem 13.1 in \cite{DGPT} (using the differential inequality for $(H^{\delta}_u)'$ and $(I^\delta_u)'$ in Lemma 6.1 which holds by a similar approximation argument as Proposition 6.2 in \cite{DGPT}) we have
\begin{align*}
r^{2\kappa+1}\frac{d}{dr}W^{\kappa}_u(r)\geq \frac{1}{r^2} \int_{A^+_{r/2,r}}\left(Zu-\kappa u \right)^2 G +2t g(Zu-\kappa u)G\ dxdt.
\end{align*}
We claim that when $p>2(n+2)$, for some $\sigma=\sigma(n,p)\in (0,1)$ we have
\begin{align}\label{eq:est_err}
\int_{A^+_{r/2,r}} |t|^2 |g|^2 G \ dxdt\leq C r^{5+\sigma}.
\end{align} 
Using the H\"older regularity of $a^{ij}$ and the $H^2$ estimate, cf. Lemma \ref{lem:H2}, we get
\begin{align*}
\int_{A^+_{r/2,r}}t^2 (a^{ij}-\delta^{ij})^2 (\p_{ij}u)^2 G &\leq C \int_{A^+_{r/2,r}}|t|^{2+\gamma} \omega^{2\gamma} (\p_{ij}u)^2 G\\
&\leq C r^{2\gamma}\int_{S^+_{2r}}\omega^{2\gamma}\left(u^2 +|t|^2 f^2\right)G,
\end{align*}
where as before $\omega=1+\frac{|x|}{\sqrt{|t|}}$. 
By \eqref{eq:est_f} and  \eqref{eq:weight_u}, we have that for any $\epsilon_0\in (0,1/4)$ and $C=C(n,p, \epsilon_0, \|u\|_{L^2}, \|f\|_{L^p}, \|a^{ij}\|_{W^{1,1}_p})$
\begin{align*}
&\int_{A^+_{r/2,r}}t^2 (a^{ij}-\delta^{ij})^2 (\p_{ij}u)^2 G \ dxdt \leq Cr^{5+2\gamma-\epsilon_0}+C\|f\|_{L^p}^2r^{4+4\gamma}.
\end{align*}
Next from the estimates for $X_2$ in the proof of Proposition \ref{prop:Carleman} (with $\psi=\frac{|x|^2}{8t} + \frac{n|\ln (-t)|}{4}$ and $\tau=\frac{n}{4}$), as well as the almost optimal regularity of $u$ (cf. Proposition \ref{prop:almost_op}) and H\"older's inequality, we get
\begin{align*}
\int_{A^+_{r/2,r}}t^2(\p_ia^{ij})^2 (\p_ju)^2 G &\leq   Cr^{\frac{4\gamma}{n+2}}\int_{S^+_{2r}} u^2 G  + Cr^{\frac{4\gamma}{n+2}+4}\int_{S^+_{2r}} f^2 G\\
&\leq C r^{5+\frac{4\gamma}{n+2}-\epsilon_0} + C \|f\|_{L^p}^2r^{4+\frac{4\gamma}{n+2}+2\gamma}.
\end{align*}
In the end, by H\"older's inequality, cf. \eqref{eq:est_f}, 
$\int_{A^+_{r/2,r}}t^2 f^2 G\ dxdt\lesssim  r^{4+2\gamma}\|f\|_{L^p}^2$. Combining the above inequalities together we thus obtain the claimed inequality \eqref{eq:est_err} with $\sigma=\min \{\frac{4\gamma}{n+2}-\epsilon_0, -1+2\gamma\}$. Note that $\sigma>0$ if $\epsilon_0=\frac{\gamma}{2(n+2)}$ and $\gamma>\frac{1}{2}$.
When $\kappa=\frac{3}{2}$, using \eqref{eq:est_err} and Young's inequality, we thus obtain
\begin{align*}
\frac{d}{dr}W^{\frac{3}{2}}_u(r)\geq \frac{1}{2r^{6}}\int_{A^+_{r/2,r}}\left(Zu-\kappa u\right)^2 G \ dxdt - C_0 r^{-1+\sigma}.
\end{align*}

In the case, when $p>n+2$ and $f$ satisfies the support condition \eqref{eq:support}, we instead use \eqref{eq:est_suppf} and Lemma \ref{lem:growth} (with a similar argument as in Proposition \ref{prop:Weiss2}) to obtain
\begin{align*}
&\int_{A^+_{r/2,r}}t^2 (a^{ij}-\delta^{ij})^2 (\p_{ij}u)^2 G \ dxdt \leq C_{\bar \kappa, \epsilon_0}r^{2\kappa+2+2\gamma-\epsilon_0}+C\|f\|_{L^p}^2r^{4+4\gamma}e^{-\frac{1}{r^2}},
\end{align*}
and 
\begin{align*}
\int_{A^+_{r/2,r}}t^2(\p_ia^{ij})^2 (\p_ju)^2 G 
&\leq C_{\bar \kappa, \epsilon_0} r^{2\kappa+2+\frac{\gamma}{n+2}-\epsilon_0} + C \|f\|_{L^p}^2r^{4+\frac{2\gamma}{n+2}+2\gamma}e^{-\frac{1}{r^2}}.
\end{align*}
Thus 
\begin{align*}
\frac{1}{r^{2\kappa+3}}\int_{A_{r/2,r}^+}|t|^2 |g|^2 G \leq C_{\bar \kappa,\epsilon_0} r^{-1+\frac{\gamma}{n+2}-\epsilon_0} + C\|f\|_{L^p}^2 \frac{r^{2+\frac{2\gamma}{n+2}+2\gamma}e^{-\frac{1}{r^2}}}{r^{2\kappa+1}}.
\end{align*}
If $\epsilon_0=\frac{\gamma}{2(n+2)}$, then the claimed inequality follows.
This completes the proof for the proposition.
\end{proof}

\begin{rmk}\label{rmk:dr}
When $\kappa=\frac{3}{2}$ and $p>2(n+2)$ (or $p>n+2$, $f$ satisfies the support condition \eqref{eq:support} and $\kappa\leq \bar\kappa$), by scaling we can also write Proposition \ref{prop:Weiss2} and Proposition \ref{prop:Weiss} as
\begin{align*}
\left|W^{\kappa}_{u}(r)-\frac{r}{4}\frac{d}{dr}\int_{A^+_{1/2,1}}(u_{\kappa, r} )^2 G \ dxdt \right|\leq C_0r^{\sigma},
\end{align*}
and
\begin{align*}
\frac{d}{dr}W^{\kappa}_{u}(r)&\geq  \frac{1}{2r}\int_{A^+_{1/2,1}}(Zu_{\kappa, r}-\kappa u_{\kappa, r})^2 G \ dxdt - C_0r^{-1+\sigma}\\
&=\frac{r}{2}\int_{A^+_{1/2,1}} \left(\frac{d}{dr}u_{\kappa,r}\right)^2 G \ dxdt - C_0 r^{-1+\sigma},
\end{align*}
where $\sigma\in (0,1)$ is sufficiently small depends on $p$ and $n$.
\end{rmk}

\subsection{Decay of the Weiss energy when $\kappa=\frac{3}{2}$}
We aim to derive the convergence rate of the Weiss energy when $\kappa=\frac{3}{2}$. Without loss of generality we assume that $(0,0)\in \Gamma^*_u$ and $\kappa=\kappa_{(0,0)}=\frac{3}{2}$. We denote the cone of blow-ups at $(0,0)$ by
\begin{align*}
\mathcal{P}_{3/2}:=\{c\Ree(x'\cdot e+ix_n)^{3/2}, \ c\geq 0,\ e\in \mathbb{S}^{n-2}\}.
\end{align*}
Note that each $h\in \mathcal{P}_{3/2}$ is a stationary solution to the Signorini problem with $a^{ij}=\delta^{ij}$ and $f=0$. Thus by Proposition \ref{prop:Weiss2} (or Remark \ref{rmk:dr}) ,
\begin{align*}
W_{h}^{3/2}(r)\equiv 0,\quad \forall h\in \mathcal{P}_{3/2}.
\end{align*}
For each $r>0$, we write the rescaled and renormalized solution by  
\begin{equation*}
u_r(x,t)=u_{3/2,r}(x,t):=\frac{u(rx,r^2 t)}{r^{3/2}},\quad (x,t)\in S^+_{1/r},
\end{equation*}
 and define its $L^2$ orthogonal complement by  
\begin{align}\label{eq:defi_v}
v_r: S^+_{1/r}\rightarrow \R, \quad v_r:=u_r-h_r,
\end{align}
where 
\begin{align*}
h_r(x',x_n):=c_r\Ree(x'\cdot e_r+ix_n)^{3/2},\quad (c_r,e_r)\in [0,\infty)\times \mathbb{S}^{n-2}
\end{align*}
 is a (weighted) $L^2$-projection of $u_{r}$ on $\mathcal{P}_{3/2}$, i.e.
\begin{align*}
\int_{A^+_{1/2,1}} |u_{r}-h_r|^2 G \ dxdt =\min_{h\in \mathcal{P}_{3/2}}\int_{A^+_{1/2,1}} |u_{r}-h_r|^2 G \ dxdt.
\end{align*} 
Note that here the minimum is attained, because the minimization is over $(c,e)\in [0,\infty)\times \mathbb{S}^{n-2}$, which is of finite dimensional, and $0\in \mathcal{P}_{3/2}$. From the minimality one immediately has
\begin{align}\label{eq:orth}
\int_{A^+_{1/2,1}}v_{r} h_r G \ dxdt =0
\end{align} 
which comes from the minimization in the multiplicative constant $c$ and 
\begin{align}\label{eq:orth2}
\int_{A^+_{1/2,1}}  c_r(x\cdot e') \Ree(x'\cdot e_r+ix_n)^{1/2}v_r G =0,\quad \forall e'\in \mathbb{S}^{n-1}, e'\perp e_r, e_n,
\end{align}
which comes from the minimization in rotations $e\in \mathbb{S}^{n-1}\cap \{x_n=0\}$.
It is not hard to see that the orthogonal complement $v_{r}$ solves the problem
\begin{equation}\label{eq:v}
\begin{split}
&\p_t v_{r}-\Delta v_{r}=g_r \text{ in } S^+_{1/r},\quad \text{ where} \\
g_r &:=(\p_ia^{ij}_r\p_ju_{r})+(a^{ij}_r-\delta^{ij})\p_{ij}u_{r}+ r^{1/2}f_r,\\
 a^{ij}_r(\cdot,\cdot)&:=a^{ij}(r\cdot,r^2\cdot),\quad f_r(\cdot,\cdot):=f(r\cdot,r^2 \cdot),
 \end{split}
\end{equation}
and on $S_{1/r}'$ it satisfies 
\begin{align}\label{eq:v_bdry}
v_r\p_n v_r= -(\p_n u_r) h_r \chi_{\{u=0\}\cap \{h_r>0\}} - (\p_n h_r) u_r \chi_{\{u>0\}\cap \{h_r=0\}}\geq 0.
\end{align}

The next proposition says that the Weiss energy does not increase after projecting out $\mathcal{P}_{3/2}$: 
\begin{lem}\label{lem:Weiss_v}
For each $r\in (0,1)$, let $v_{r}$ be defined as in \eqref{eq:defi_v}. Then 
\begin{align}\label{eq:Weiss_v}
W^{3/2}_{u_{r}}(1)=W^{3/2}_{v_{r}}(1)-2\int_{A'_{1/2,1}}|t|u_{r}\p_n h_r G\ dxdt. 
\end{align}
In particular, 
\begin{align*}
W^{3/2}_{v_{r}}(1)\leq W^{3/2}_{u_{r}}(1).
\end{align*}
Furthermore, we have
\begin{align}\label{eq:Wv}
&\left|W^{3/2}_{v_{r}}(1)  -\frac{1}{2}\int_{A^+_{1/2,1}}v_{r}(Zu_{r}-\frac{3}{2} u_{r}) G  -\int_{A'_{1/2,1}}|t|(\p_n h_{r} v_{r}+\p_nv_{r}h_{r}) G \right|\leq C_0 r^{\sigma},
\end{align}
where $C_0$ and $\sigma$ are the same constant as in Proposition \ref{prop:Weiss2}.
\end{lem}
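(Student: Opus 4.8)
The plan is to expand $W^{3/2}_{u_r}(1)$ along the splitting $u_r=v_r+h_r$ and to exploit that $h_r\in\mathcal{P}_{3/2}$ is $t$-independent, $\tfrac32$-homogeneous, harmonic in $\{x_n>0\}$, and satisfies $h_r\p_n h_r=0$ and $\p_n h_r\le 0$ on $\{x_n=0\}$. For the identity \eqref{eq:Weiss_v} I would write $W^{3/2}_{u_r}(1)=\int_{A^+_{1/2,1}}(|t||\nabla u_r|^2-\tfrac34 u_r^2)G$ and expand the squares, getting $W^{3/2}_{v_r}(1)+W^{3/2}_{h_r}(1)$ plus the cross term $2\int_{A^+_{1/2,1}}(|t|\nabla v_r\cdot\nabla h_r-\tfrac34 v_r h_r)G$. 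Here $W^{3/2}_{h_r}(1)=0$ since $W^{3/2}_h\equiv 0$ for every $h\in\mathcal{P}_{3/2}$, as already recorded above the lemma. For the cross term I would integrate by parts in $x$ on each time slice: using $\nabla G=\tfrac{x}{2t}G$, $\Delta h_r=0$ in $\{x_n>0\}$ and $x\cdot\nabla h_r=\tfrac32 h_r$, the interior contribution equals exactly $\tfrac34\int_{A^+_{1/2,1}}v_r h_r G$, which cancels the remaining half of the cross term and leaves only $-2\int_{A'_{1/2,1}}|t|v_r\p_n h_r G$; since on $\{x_n=0\}$ the factor $\p_n h_r$ is carried by $\{h_r=0\}$, where $v_r=u_r$, this is \eqref{eq:Weiss_v}. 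The inequality $W^{3/2}_{v_r}(1)\le W^{3/2}_{u_r}(1)$ then follows at once, because on $\{h_r=0\}\cap\{x_n=0\}$ one has $u_r\ge 0$ (Signorini) and $\p_n h_r\le 0$, so $-2\int_{A'_{1/2,1}}|t|u_r\p_n h_r G\ge 0$.

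For \eqref{eq:Wv} I would redo the integration by parts behind Proposition \ref{prop:Weiss2}, now for $v_r$, which solves $\p_t v_r-\Delta v_r=g_r$ in $S^+_{1/r}$. Writing $2t\p_t v_r=2t\Delta v_r+2t g_r$ and integrating by parts in $x$, keeping the boundary term on $\{x_n=0\}$ and using $\nabla G=\tfrac{x}{2t}G$, one obtains $\tfrac12\int_{A^+_{1/2,1}}v_r(Zv_r-\tfrac32 v_r)G=W^{3/2}_{v_r}(1)+\int_{A'_{1/2,1}}|t|v_r\p_n v_r G+\int_{A^+_{1/2,1}}t v_r g_r G$. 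Two pointwise identities close the reduction: since $h_r$ is $t$-independent and $\tfrac32$-homogeneous, $Zv_r-\tfrac32 v_r=Zu_r-\tfrac32 u_r$; and on $\{x_n=0\}$, using $u_r\p_n u_r=0$ and $h_r\p_n h_r=0$ from the complementarity conditions, one gets $v_r\p_n v_r=-(\p_n h_r\, v_r+\p_n v_r\, h_r)$. Substituting these gives $W^{3/2}_{v_r}(1)=\tfrac12\int_{A^+_{1/2,1}}v_r(Zu_r-\tfrac32 u_r)G+\int_{A'_{1/2,1}}|t|(\p_n h_r\, v_r+\p_n v_r\, h_r)G-\int_{A^+_{1/2,1}}t v_r g_r G$, so \eqref{eq:Wv} reduces to the bound $\big|\int_{A^+_{1/2,1}}t v_r g_r G\big|\le C_0 r^\sigma$.

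For this last error term I would use Cauchy--Schwarz: $\big|\int_{A^+_{1/2,1}}t v_r g_r G\big|\le\|v_r\|_{L^2(A^+_{1/2,1},G)}\,\|t g_r\|_{L^2(A^+_{1/2,1},G)}$. Since $h_r$ is the $L^2(G)$-projection of $u_r$ onto $\mathcal{P}_{3/2}$ and $0\in\mathcal{P}_{3/2}$, we have $\|v_r\|_{L^2(A^+_{1/2,1},G)}\le\|u_r\|_{L^2(A^+_{1/2,1},G)}$, and by the almost optimal growth estimate (Proposition \ref{prop:almost_op}) together with the scaling of $G$ this is $\le C|\ln r|^2$. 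On the other hand, rescaling the estimate \eqref{eq:est_err} of Proposition \ref{prop:Weiss} yields $\|t g_r\|_{L^2(A^+_{1/2,1},G)}^2=r^{-5}\int_{A^+_{r/2,r}}|t|^2|g|^2G\le C r^\sigma$. Combining, $\big|\int t v_r g_r G\big|\le C|\ln r|^2 r^{\sigma/2}$, which is $\le C_0 r^\sigma$ after shrinking $\sigma$ slightly; in the case $p>n+2$ with $f$ satisfying \eqref{eq:support} the same argument works with \eqref{eq:est_suppf} in place of \eqref{eq:est_f}.

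I expect the main obstacle to be the bookkeeping of the boundary terms on $\{x_n=0\}$: one must use the Signorini complementarity conditions for both $u_r$ and $h_r$ correctly, together with the fact that $\p_n h_r$ is carried by $\{h_r=0\}$ there, and justify the integrations by parts from the $H^{\alpha,\alpha/2}$-regularity of $\nabla u_r$ up to $\{x_n=0\}$ and the explicit one-sided structure of $h_r$. A secondary point is to check that the $g_r$-error genuinely absorbs the logarithmic factor inherited from the growth estimate; everything else (the two integrations by parts and the rescaling identities) is routine.
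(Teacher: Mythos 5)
Your proposal is correct and follows essentially the same strategy as the paper: decompose $u_r=v_r+h_r$, use $W^{3/2}_{h_r}(1)=0$, integrate by parts with $\Delta h_r=0$, $x\cdot\nabla h_r=\tfrac32 h_r$ and the complementarity conditions to get \eqref{eq:Weiss_v} and the sign statement, and then reduce \eqref{eq:Wv} to the bound on $\int_{A^+_{1/2,1}}|t|\,v_r g_r G$. Two minor deviations are worth noting. First, for \eqref{eq:Wv} you apply the Proposition \ref{prop:Weiss2}-type identity directly to $v_r$ (keeping the boundary term and using $Zv_r-\tfrac32 v_r=Zu_r-\tfrac32 u_r$ and $v_r\p_n v_r=-(\p_n h_r\,v_r+\p_n v_r\,h_r)$, both of which check out), whereas the paper applies Proposition \ref{prop:Weiss2} to $u_r$ and then manipulates $\int|t|\nabla v_r\cdot\nabla h_r G$; the outcomes coincide and your route is slightly more direct. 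Second, for the error term you use plain Cauchy--Schwarz together with \eqref{eq:est_err} and the almost optimal growth, which yields $C|\ln r|^2 r^{\sigma/2}$ rather than literally $C_0r^{\sigma}$ with the same $\sigma$ as in Proposition \ref{prop:Weiss2} (the paper instead transfers the $\int|t|\,u\,g\,G$ estimate using $\|v_r\|_{L^2(G)}\le\|u_r\|_{L^2(G)}$); this costs only a relabelling of $\sigma$ and is harmless for all subsequent applications, but it does not give the exponent exactly as advertised in the statement.
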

\begin{proof}
One starts with 
$$u_{r}=v_{r}+h_r,\quad h_r\in \mathcal{P}_{3/2}.$$
From the definition of the Weiss energy, a direct computation yields
\begin{align*}
W^{3/2}_{u_{r}}(1) &=W^{3/2}_{v_{r}}(1)+W^{3/2}_{h_r}(1)+ 2\int_{A^+_{1/2,1}}|t|\nabla v_{r}\cdot\nabla h_r  G\ dxdt-\frac{3}{2}\int_{A^+_{1/2,1}}v_{r}h_r G \ dxdt,\\
&=W^{3/2}_{v_{r}}(1)+2\int_{A^+_{1/2,1}}|t|\nabla v_{r}\cdot\nabla h_r  G\ dxdt.
\end{align*}
Here we have used that $W^{3/2}_{h_r}(1)=0$ and the orthogonality condition \eqref{eq:orth}. Using integration by parts and that $\Delta h_r=0$ in $\R^n_+$, $x\cdot \nabla h_r=\frac{3}{2} h_r$ we get
\begin{align}\label{eq:int1}
\int_{A^+_{1/2,1}}|t|\nabla v_{r}\cdot\nabla h_r  G\ dxdt = -\int_{A'_{1/2,1}}|t|v_{r}\p_n h_r G \ dx'dt.
\end{align}
Recalling the definition of $v_r$ and using the fact that $h_{r}\p_nh_r=0$ on $\R^{n-1}\times \{0\}$, we get the claimed identity \eqref{eq:Weiss_v}.
In particular, $W^{3/2}_{v_{r}}(1)\leq W^{3/2}_{u_{r}}(1)$, as $u_{r}\geq 0$ and $\p_nh_r\leq 0$ on $\R^{n-1}\times \{0\}$.

To show \eqref{eq:Wv} we use the identity
\begin{align*}
W^{3/2}_{v_r}(1)= W^{3/2}_{u_r}(1)-2 \int_{A^+_{1/2,1}}|t|\nabla v_r\cdot \nabla h_r G
\end{align*}
and Proposition \ref{prop:Weiss2} to get
\begin{align}\label{eq:Wv1}
W^{3/2}_{v_r}(1)=\frac{1}{2}\int_{A^+_{1/2,1}}u_r (Zu_r-\frac{3}{2} u_r) G +\int_{A^+_{1/2,1}}|t| u_r g_r G -2 \int_{A^+_{1/2,1}}|t|\nabla v_r\cdot \nabla h_r G.
\end{align}
From \eqref{eq:orth} and that $Zh_r -\frac{3}{2} h_r =0$ we have
\begin{align}\label{eq:Wv2}
\int_{A^+_{1/2,1}}u_r (Zu_r-\frac{3}{2} u_r) G = \int_{A^+_{1/2,1}} v_r(Zu_r-\frac{3}{2}u_r)G + h_r Zv_r G.
\end{align}
We apply an integration by parts to get
\begin{align*}
\int_{A^+_{1/2,1}}|t|\nabla v_r\cdot \nabla h_r G =-\int_{A'_{1/2,1}}|t|h_r \p_n v_r G -\int_{A^+_{1/2,1}}|t|h_r\nabla\cdot (\nabla v_r G).
\end{align*}
Since $\p_t v_r-\Delta v_r=g_r$ in $\R^n_+$, $\nabla\cdot(\nabla v_r G) = (\p_t v_r + \frac{x}{2t}\cdot \nabla v_r)G - g_r G$. This implies 
\begin{align*}
\int_{A^+_{1/2,1}}|t|h_r\nabla\cdot (\nabla v_r G) =-\int_{A^+_{1/2}}\frac{1}{2} h_r Zv_r  G + |t|h_r g_r G.
\end{align*}
Thus from \eqref{eq:int1} and the above two equalities we obtain
\begin{align}\label{eq:Wv3}
2\int_{A^+_{1/2,1}}|t|\nabla v_r\cdot \nabla h_r G =-\int_{A'_{1/2,1}}|t|(v_r\p_nh_r+h_r\p_n v_r)G+\int_{A^+_{1/2}} (\frac{1}{2}h_r Zv_r +|t|h_rg_r) G 
\end{align} 
Combining \eqref{eq:Wv1}, \eqref{eq:Wv2} and \eqref{eq:Wv3} we have
\begin{align*}
W^{3/2}_{v_r}(1)=\frac{1}{2}\int_{A^+_{1/2,1}} v_r(Zu_r-\frac{3}{2}u_r)G +\int_{A'_{1/2,1}}|t|(v_r\p_nh_r+h_r\p_n v_r)G + \int_{A^+_{1/2,1}}|t| v_r g_r G.
\end{align*}
Since $|\int_{A^+_{1/2,1}}|t|v_rgG|\leq C_0r^\sigma$, which follows from $\|v_r\|_{L^2(A^+_{1/2,1})}\leq \|u_r\|_{L^2(A^+_{1/2,1})}$ and the estimates for $|\int_{A^+_{1/2,1}}tugG|$ in Proposition \ref{prop:Weiss2}, we get the claimed inequality \eqref{eq:Wv}.
\end{proof}

The next proposition is a discrete decay estimate of the Weiss energy along the scalings. The proof is based on a compactness argument. 

\begin{prop}\label{prop:epi}
Let $u\in \mathcal{G}^{A,f}_p(S_1^+)$ with (i) $p>2(n+2)$ or (ii) $p>n+2$ and $f$ satisfies \eqref{eq:support}. Assume 
\begin{align*}
\int_{S^+_1} u^2 G \ dxdt=1.
\end{align*}
Then there exists $c_0=c_0(n,p,\|a^{ij}\|_{W^{1,1}_p}, \|f\|_{L^p})\in (0,1)$  (independent of $k$ or $u$) such that
\begin{align*}
W^{3/2}_u(2^{-k-1})\leq (1-c_0)W^{3/2}_u(2^{-k})+2C_02^{-k\sigma/2}, \quad \forall k\in \N.
\end{align*}
Here $C_0$ and $\sigma$ are the constants in Remark \ref{rmk:dr}.
\end{prop}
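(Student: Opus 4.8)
The plan is to argue by contradiction and compactness. Suppose the claimed discrete decay fails. Then there is a sequence of admissible data $a^{ij}_m$, $f_m$ (satisfying the uniform bounds and, in case (ii), the support condition), solutions $u_m\in\mathcal{G}^{A_m,f_m}_p(S_1^+)$ normalized by $\int_{S_1^+}u_m^2 G=1$, and indices $k_m\in\N$ such that
\begin{align*}
W^{3/2}_{u_m}(2^{-k_m-1})> (1-c_m)W^{3/2}_{u_m}(2^{-k_m})+2C_0 2^{-k_m\sigma/2}
\end{align*}
with $c_m\to 0$. Write $r_m:=2^{-k_m}$ and pass to the rescaled functions $w_m:=(u_m)_{3/2,r_m}$, i.e. $w_m(x,t)=r_m^{-3/2}u_m(r_mx,r_m^2t)$, so that $W^{3/2}_{u_m}(2^{-k_m})=W^{3/2}_{w_m}(1)$ and $W^{3/2}_{u_m}(2^{-k_m-1})=W^{3/2}_{w_m}(1/2)$. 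The normalization and the almost monotonicity of the Weiss energy (Proposition \ref{prop:Weiss}) together with the $H^2$-estimate (Lemma \ref{lem:H2}) and the doubling/growth bounds from Section \ref{sec:al_opt} should give that $W^{3/2}_{w_m}(1)$ is bounded, and also bounded below by a fixed multiple of $\|w_m-h_{w_m}\|^2$ where $h_{w_m}$ is the $L^2$-projection of $w_m$ onto $\mathcal{P}_{3/2}$, via Lemma \ref{lem:Weiss_v}; I would want to use this to ensure that after passing to a subsequence $w_m\to w_0$ strongly in the weighted $L^2$ and $H^1_{loc}$ topology on parabolic annuli, with $w_0$ a global solution to the constant-coefficient Signorini problem (the coefficients converge to $\delta^{ij}$ and the inhomogeneities to $0$ after rescaling, using $\gamma=1-\frac{n+2}{p}>0$).

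Next I would separate the two possibilities. If $W^{3/2}_{w_m}(1)\to 0$, then dividing the contradiction inequality is delicate because both sides are small; here one uses the error term $2C_0 2^{-k_m\sigma/2}\to 0$ and the already-established almost monotonicity $W^{3/2}_{w_m}(1/2)\ge W^{3/2}_{w_m}(1)-C_0 r_m^\sigma$ in the opposite direction — combined, the contradiction inequality forces $c_m W^{3/2}_{w_m}(1)$ to dominate $2C_0 2^{-k_m\sigma/2}$ up to lower order, which is impossible once $W^{3/2}_{w_m}(1)\to 0$ since $c_m\to0$. So we may assume $W^{3/2}_{w_m}(1)\to L>0$ along the subsequence. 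Passing to the limit (using strong convergence and lower semicontinuity of the Dirichlet-type term, plus convergence of the $L^2$-term), $W^{3/2}_{w_0}(1)=L>0$. On the other hand, the limiting $w_0$ is a genuine solution of the homogeneous constant-coefficient Signorini problem with $(0,0)$ in its extended free boundary and vanishing order $3/2$, hence by the classification result from \cite{DGPT} its blow-up at $(0,0)$ lies in $\mathcal{P}_{3/2}$, and one has the strict monotonicity/rigidity of the Weiss energy $W^{3/2}$ for the constant-coefficient problem: $W^{3/2}_{w_0}(r)\equiv 0$ when $w_0\in\mathcal{P}_{3/2}$, and more importantly the constant-coefficient epiperimetric-type inequality (or the exact monotonicity with the defect $\int(Zw_0-\frac32 w_0)^2 G$) forces a genuine geometric decay $W^{3/2}_{w_0}(1/2)\le(1-c_0)W^{3/2}_{w_0}(1)$ for some fixed $c_0>0$; this contradicts the limit of the contradiction inequality, which gives $W^{3/2}_{w_0}(1/2)\ge W^{3/2}_{w_0}(1)=L$.

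The main obstacle will be making the compactness argument genuinely quantitative where it matters: establishing that the limit $w_0$ indeed inherits $(0,0)\in\Gamma^*_{w_0}$ with vanishing order exactly $3/2$ (so that the constant-coefficient classification and decay apply) rather than degenerating, and controlling the boundary contribution $-2\int_{A'_{1/2,1}}|t|u_r\partial_n h_r G$ in Lemma \ref{lem:Weiss_v} as well as the error term $g_r$ in \eqref{eq:v} uniformly in $m$ — for this one needs the $H^2$-bounds of Lemma \ref{lem:H2} together with the almost-optimal growth estimate of Proposition \ref{prop:almost_op} and Corollary \ref{cor:al_opt_infty}, and the fact that $\|a^{ij}_{r_m}-\delta^{ij}\|_{L^\infty}\lesssim r_m^\gamma\to 0$. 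One must also check that if the limiting projection $h_{w_0}$ is nontrivial the decay is not destroyed, which is exactly where the orthogonality relations \eqref{eq:orth}--\eqref{eq:orth2} and the constant-coefficient rigidity are used. Once these uniform estimates are in place, the contradiction closes and the value of $c_0$ is the (fixed) decay rate coming from the constant-coefficient problem, shrunk slightly to absorb the vanishing perturbation.
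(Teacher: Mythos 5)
There is a genuine gap, and it sits exactly where you flagged your ``main obstacle''. The proposition assumes only the normalization $\int_{S_1^+}u^2G=1$; it does \emph{not} assume that $(0,0)\in\Gamma_u^\ast$, nor that $\kappa_{(0,0)}=\tfrac32$, nor any doubling/nondegeneracy at the origin. So your plan to pass the unrenormalized rescalings $w_m=(u_m)_{3/2,r_m}$ to a limit $w_0$ that ``inherits $(0,0)\in\Gamma^\ast_{w_0}$ with vanishing order exactly $3/2$'' has nothing to stand on, and the constant-coefficient ``epiperimetric-type decay'' $W^{3/2}_{w_0}(1/2)\le(1-c_0)W^{3/2}_{w_0}(1)$ that you invoke for arbitrary global solutions is not available in the paper (the compactness argument of this very proposition is what replaces it), so that step is essentially circular. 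Your disposal of the degenerate case is also logically wrong: combining the contradiction inequality with the almost monotonicity of Proposition \ref{prop:Weiss} only yields $W^{3/2}_{w_m}(1)\gtrsim c_m^{-1}2^{-k_m\sigma/2}$, which is perfectly compatible with $W^{3/2}_{w_m}(1)\to 0$ because $k_m$ may tend to infinity arbitrarily fast; hence you cannot reduce to the case $W^{3/2}_{w_m}(1)\to L>0$, and in the degenerate regime your rescalings (with no renormalization) have no nontrivial limit, so the whole scheme collapses precisely for the solutions the statement must cover. In addition, even in the case $L>0$, identifying $L$ with $W^{3/2}_{w_0}(1)$ needs strong convergence of gradients, which in the paper comes from the $H^2$/doubling machinery that again presupposes information at $(0,0)$ you do not have.

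The paper's proof avoids all of this by a different normalization: for $r$ in the dyadic interval $I_j$ it subtracts the $L^2(G)$-projection $h_{j,r}\in\mathcal{P}_{3/2}$, sets $v_{j,r}=u_{j,r}-h_{j,r}$, and (Steps 1--2) shows that $H_{v_{j,r}}(1)$ dominates both Weiss energies \emph{and} the quantity $C_02^{-k_j\sigma/2}$ --- this is precisely the role of the additive term $2C_02^{-k\sigma/2}$ in the statement, and it is what guarantees that the rescaled inhomogeneity $\tilde g_j/H^{1/2}$ vanishes in the limit without any nondegeneracy hypothesis on $u$. The blow-ups $\tilde u_j=u_{j,r_j}/H_{u_{j,r_j}}(1)^{1/2}$ and $\tilde v_j=v_{j,r_j}/H_{v_{j,r_j}}(1)^{1/2}$ are then compact by the almost-homogeneity bound \eqref{eq:contra2}, and the contradiction is not obtained from any pre-existing decay for the limit, but from orthogonality: $\tilde v_j$ is, by construction, orthogonal to the $\mathcal{P}_{3/2}$-directions (cf.\ \eqref{eq:orth}, \eqref{eq:orth2}), yet its limit is shown (via the inherited Dirichlet--Neumann conditions and the Liouville-type classification) to be a \emph{nonzero} element of $\mathcal{P}_{3/2}$. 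If you want to salvage your route, you would have to build in this renormalization by the remainder norm and replace the appeal to an epiperimetric inequality by the orthogonality contradiction; as written, the proposal does not prove the statement.
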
 

\begin{proof}
We argue by contradiction.  Assume there were a sequence of solutions $u_j\in \mathcal{G}^{A_j, f_j}_p(S_1^+)$ with $\int_{S^+_1} |u_j|^2 G =1$, $\|A_j\|_{W^{1,1}_p}\leq \|A\|_{W^{1,1}_p}$ and $\|f_j\|_{L^p}\leq \|f\|_{L^p}$,                                                                                                                                                                                                                                                                                                                                                                                                                                                                                                                                                                                                                                                                                                                                                                             a sequence of integers $k_j\rightarrow \infty$ and $c_j\in (0,\frac{1}{4})\rightarrow 0$ such that
\begin{align}\label{eq:con}
W^{3/2}_{u_j}(2^{-k_j-1})>(1-c_j)W^{3/2}_{u_j}(2^{-k_j})+2C_02^{-k_j\sigma/2},\quad \forall j\in \N,
\end{align}
which is equivalent to 
\begin{align*}
\int_{I_j} \frac{d}{dr} W^{3/2}_{u_j}(r)\ dr + 2C_02^{-k_j\sigma/2} < c_j W^{3/2}_{u_j}(2^{-k_j}),\quad I_j:=[2^{-k_j-1},2^{-k_j}].
\end{align*}
The above inequality  together with $\frac{d}{dr}W^{3/2}_{u_j}(r)\geq -C_0 r^{-1+\sigma}$, which is the almost monotonicity of the Weiss energy, cf. Proposition \ref{prop:Weiss}, yields for $k_j\geq k_0(\sigma)$,
\begin{align}\label{eq:contra}
\int_{I_j}\left|\frac{d}{dr} W^{3/2}_{u_j}(r)\right|\ dr+ C_02^{-k_j\sigma/2}\leq c_j W_{u_j}^{3/2}(2^{-k_j}).
\end{align}
Indeed, we note that the almost monotonicity of the Weiss energy gives $|\frac{d}{dr}W^{3/2}_{u_j}(r)|\leq \frac{d}{dr}W^{3/2}_{u_j}(r)+2C_0r^{-1+\sigma}$. Thus if $k_j\geq k_0(\sigma)$ for some $k_0(\sigma)$ large, $\int_{I_j}|\frac{d}{dr}W^{3/2}_{u_j}(r)| dr \leq \int_{I_j} \frac{d}{dr}W^{3/2}_{u_j}(r) \ dr  + \frac{2C_0}{\sigma} 2^{-k_j\sigma}\leq \int_{I_j} \frac{d}{dr}W^{3/2}_{u_j}(r) \ dr  + C_02^{-k_j\sigma/2}$. The inequality \eqref{eq:contra} in particular gives $W^{3/2}_{u_j}(r)>0$ for all $r\in I_j$, if $k_j\geq k_0(\sigma)$.
Furthermore, by the mean value theorem, for some $r_\ast\in I_j$, 
$|W_{u_j}^{3/2}(r) - \fint_{I_j} W^{3/2}_{u_j}(r') \ dr'|=|W_{u_j}^{3/2}(r) -  W^{3/2}_{u_j}(r_\ast) |\leq \int_{I_j}|\frac{d}{dr'}W^{3/2}_{u_j}(r')| \ dr'$. This together with \eqref{eq:contra} yields for $k_j\geq k_0(\sigma)$
\begin{align}\label{eq:contra21}
\left|W_{u_j}^{3/2}(r) - \fint_{I_j} W^{3/2}_{u_j}(r') \ dr'\right|\leq 2c_j \fint_{I_j} W^{3/2}_{u_j}(r') \ dr', \quad \forall r\in I_j.
\end{align}
Using the lower bound for $\frac{d}{dr}W^{3/2}_{u_j}$ in Proposition \ref{prop:Weiss} and \eqref{eq:contra21} we also obtain
\begin{align}\label{eq:contra2}
\fint_{I_j} \int_{A^+_{1/2,1}}(Zu_{j,r}-\frac{3}{2}u_{j,r})^2 G \ dxdtdr+C_0 2^{-k_j\sigma/2} &\lesssim c_j\fint_{I_j}W^{3/2}_{u_j}(r).
\end{align}
Let $h_{j,r}=c_{j,r}\Ree(x'\cdot e_{j,r}+ix_n)^{3/2}$ be the $L^2$-projection of $u_{j,r}$ to $\mathcal{P}_{3/2}$ and let
\begin{align}\label{eq:decomp}
v_{j,r}:=u_{j,r}-h_{j,r}.
\end{align}


\emph{Step 1:} In the first step we aim to show that for $j$ sufficiently large
\begin{align}
\max\{C_02^{-k_j\sigma/2},\fint_{I_j} W^{3/2}_{v_{j,r}}(1), \fint_{I_j}W^{3/2}_{u_{j,r}}(1)\} &\lesssim \fint_{I_j} H_{v_{j, r}}(1),\label{eq:bd_Weiss_v}
\end{align}
where for a function $w$, 
\begin{align*} 
H_{w_r}(1):=\int_{A^+_{1/2,1}} w^2_{r}  G \ dxdt, \quad w_r(x,t):=\frac{w(rx,r^2t)}{r^{3/2}}.
\end{align*}
For that we first recall from \eqref{eq:Wv} that for all $r\in I_j$, 
\begin{align*}
&\left|W^{3/2}_{v_{j,r}}(1)  -\frac{1}{2}\int_{A^+_{1/2,1}}v_{j,r}(Zu_{j,r}-\frac{3}{2} u_{j,r}) G  -\frac{1}{2}\int_{A'_{1/2,1}}|t|(\p_n h_{j,r} v_{j,r}+\p_nv_{j,r}h_{j,r}) G \right|\leq C_0 r^{\sigma}.
\end{align*}
Integrating in $r$ over $I_j$, applying Cauchy-Schwarz to the first integral and using \eqref{eq:contra2} we get
\begin{equation}\label{eq:Wv4}
\begin{split}
\fint_{I_j} W^{3/2}_{v_{j,r}}(1)  &\leq Cc_j\fint_{I_j} W^{3/2}_{u_{j,r}}(1)  + \fint_{I_j}\frac{1}{4}H_{v_{j,r}}(1)  +\frac{1}{2}\fint_{I_j}\int_{A'_{1/2,1}}|t|(\p_n h_{j,r} v_{j,r}+\p_nv_{j,r}h_{j,r}) G\\
&\stackrel{\eqref{eq:Weiss_v}}{=} Cc_j\fint_{I_j} W^{3/2}_{v_{j,r}}(1) + \fint_{I_j}\frac{1}{4}H_{v_{j,r}}(1)+ \left(\frac{1}{2}-2Cc_j\right)\fint_{I_j}\int_{A^+_{1/2,1}}|t|\p_n h_{j,r}v_{j,r}G \\
&\qquad +\frac{1}{2}\fint_{I_j}\int_{A^+_{1/2,1}}|t|\p_nv_{j,r}h_{j,r} G.
\end{split}
\end{equation}
From this and that  $\p_n h_{j,r} v_{j,r}\leq 0$, $\p_nv_{j,r}h_{j,r}\leq 0$ on $S_1'$ we immediately get for $j$ sufficiently large,
\begin{align*}
-\frac{3}{4}\fint_{I_j} H_{v_{j,r}}(1) \ dr\leq  \fint_{I_j}W^{3/2}_{v_{j,r}}(1) \ dr \leq \frac{1}{2}\fint_{I_j}H_{v_{j,r}}(1)\ dr,
\end{align*}
where the first inequality is simply due to the definition of the Weiss energy. Furthermore, from \eqref{eq:Wv4} and the above lower bound for the Weiss energy we get for $j$ sufficiently large,
\begin{equation*}
\begin{split}
-\fint_{I_j}\int_{A'_{1/2,1}}|t|(\p_n h_{j,r} v_{j,r}+\p_nv_{j,r}h_{j,r}) G \ dx'dtdr\leq 4 \fint_{I_j} H_{v_{j,r}}(1)\ dr .
\end{split}
\end{equation*}
We note that the above two inequalities together with 
 \eqref{eq:Weiss_v} yields that 
\begin{align}\label{eq:W_u_v}
\fint_{I_j}W^{3/2}_{u_{j,r}}(1)\ dr \lesssim \fint_{I_j} H_{v_{j,r}}(1) \ dr.
\end{align}
This together with \eqref{eq:contra2} yields
$\fint_{I_j} H_{v_{j,r}}(1) \ dr  \gtrsim C_0 2^{-k_j\sigma/2}$.
Thus we obtain the claimed estimate \eqref{eq:bd_Weiss_v}.


\emph{Step 2:} In this step we  show that \eqref{eq:bd_Weiss_v} holds without taking the average in $r$, i.e. 
\begin{align}\label{eq:pt_Weiss_v}
\max\{C_02^{-k_j\sigma/2}, W^{3/2}_{v_{j,r}}(1), W^{3/2}_{u_{j,r}}(1)\} &\lesssim H_{v_{j,r}}(1), \quad\forall r\in I_j.
\end{align}
For that we first establish the following claim:\\
\emph{Claim:} For all $r\in I_j$ and for all $j$ we have
\begin{align*}
\left|H_{v_{j,r}}(1)-\fint_{I_j} H_{v_{j,r}}(1)\right|\leq \frac{1}{2} \fint_{I_j}H_{v_{j,r}}(1) + C\left( c_j \fint_{I_j}W^{3/2}_{u_{j,r}}(1) -C_02^{-k_j\sigma/2}\right),
\end{align*}
where $C>1$ is some absolute constant. In particular,
\begin{align}\label{eq:H_v_bar}
H_{v_{j,r}}(1)\geq \frac{1}{2}\fint_{I_j}H_{v_{j,r}}(1) -Cc_j\fint_{I_j} W^{3/2}_{u_{j,r}}(1) + C_02^{-k_j\sigma/2},\quad \forall r\in I_j.
\end{align} 
To prove the claim, we first note that a direct computation and Young's inequality lead to 
\begin{equation}
\label{eq:H_v}
\begin{split}
\left|\frac{d}{dr} H_{v_{j,r}}(1)\right|&=2\left|\int_{A^+_{1/2,1}} v_{j,r}\frac{d}{dr} v_{j,r}  G\ dxdt\right|\\
&\leq \frac{1}{4r} H_{v_{j,r}}(1) + 16\int_{A^+_{1/2,1}} r\left(\frac{d}{dr}v_{j,r}\right)^2 G\ dxdt.
\end{split}
\end{equation}
Observe that $
\frac{d}{dr} v_{j,r}=\frac{d}{dr}u_{j,r}=\frac{1}{r}(Zu_{j,r}-\frac{3}{2}u_{j,r})$. Thus  \eqref{eq:contra2} can be rewritten as
\begin{align*}
\int_{I_j}\int_{A^+_{1/2,1}} r\left(\frac{d}{dr}v_{j,r}\right)^2 G\ dxdtdr\lesssim  c_j\fint_{I_j} W^{3/2}_{u_{j,r}}(1) \ dr - C_02^{-k_j\sigma/2}.
\end{align*}
Thus after integrating \eqref{eq:H_v} in $r$ and using that $2|I_j|\geq r\geq |I_j|$ for $r\in I_j$ we get the claimed estimate.\\
We now use \eqref{eq:H_v_bar} and \eqref{eq:bd_Weiss_v} to get 
\begin{align}\label{eq:H_v_bar1}
H_{v_{j,r}}(1)-\frac{1}{2}\fint_{I_j}H_{v_{j,r}}(1)\gtrsim  -c_j\fint_{I_j}H_{v_{j,r}}(1),\quad \forall r\in I_j.
\end{align} 
For $j$ sufficiently large, the above inequality along with  \eqref{eq:contra21} and \eqref{eq:bd_Weiss_v} yields the desired inequality \eqref{eq:pt_Weiss_v}.
 This complete the proof for \emph{Step 2}. 

\medskip

\emph{Step 3: Compactness.} In this step we show that there is a sequence $\tilde{u}_j$, which are suitable rescaling and renormalization of $u_j$, such that $\tilde{u}_j$ converge to a nonzero limit in $\mathcal{P}_{3/2}$.

To show this, first from \eqref{eq:pt_Weiss_v} and \eqref{eq:contra2} we conclude the existence of radii $r_j\in I_j$ such that
\begin{align}
\max\{C_02^{-k_j\sigma/2}, W^{3/2}_{u_{j,r_j}}(1)\}&\lesssim H_{u_{j,r_j}}(1),\label{eq:Weiss_u_j}\\
\int_{A^+_{1/2,1}}(Zu_{j,r_j}-\frac{3}{2}u_{j,r_j})^2 G \ dxdt &\lesssim c_jH_{u_{j,r_j}}(1).\label{eq:contra_u_j}
\end{align}
 We now carry out a blow-up argument: let
\begin{align*}
\tilde{u}_j(x,t)&:=\frac{u_{j,r_j}(x,t)}{H_{u_{j,r_j}}(1)^{1/2}}.
\end{align*}
Then from the scaling we immediately have
\begin{align*}
H_{\tilde{u}_j}(1)=1.
\end{align*}
Furthermore, \eqref{eq:Weiss_u_j} and \eqref{eq:contra_u_j} together with the definition of the Weiss energy yield
\begin{align*}
\int_{A^+_{1/2,1}} t|\nabla \tilde{u}_{j}|^2 G\ dxdt &\lesssim 1,\\
\int_{A^+_{1/2,1}}(x\cdot \nabla \tilde{u}_j+2t\p_t\tilde{u}_j)^2 G\ dxdt &\lesssim 1.
\end{align*}
To see that this yields the desired compactness we make the change of variable
\begin{align*}
(x,t)\mapsto (y,t), \quad y:=\frac{x}{\sqrt{-t}}.
\end{align*}
Then in terms of $\hat u_j(y,t):=\tilde{u}_j(\sqrt{-t}y, t)$ and $d\mu:=e^{-|y|^2/4} dy$, the above estimates yield
\begin{align*}
\int_{A^+_{1/2,1}} t^2 |\p_t\hat u_j|^2+|\nabla \hat u_j|^2 \ d\mu dt&\lesssim 1.
\end{align*}
Due to the compact embedding $W^{1,2}({A^+_{1/2,1}}; d\mu dt)\hookrightarrow L^2({A^+_{1/2,1}}; d\mu dt)$, there is $\hat u_0\in W^{1,2}({A^+_{1/2,1}}; d\mu dt)$ such that along a subsequence (not relabelled) one has
\begin{align*}
\int_{A^+_{1/2,1}}(\hat u_j-\hat u_0)^2 \ d\mu dt \rightarrow 0,
\end{align*}
 and $(\nabla\hat u_j,\p_t\hat u_j)$ converges weakly in $L^2({A^+_{1/2,1}}; d\mu dt)$ to $(\nabla\hat u_0,\p_t\hat u_0)$. Writing the convergence back to $\tilde{u}_j$ we get that
\begin{align*}
\int_{A^+_{1/2,1}} (\tilde{u}_j-\tilde{u}_0)^2 G\ dxdt \rightarrow 0
\end{align*}
and $(\nabla \tilde{u}_j,\p_t\tilde{u}_j)$ converge to $(\nabla \tilde u_0,\p_t\tilde{u}_0)$ weakly in $L^2({A^+_{1/2,1}}, G dxdt)$. In particular, $\int_{A^+_{1/2,1}}\tilde{u}_0^2 G \ dxdt=1$, thus $\tilde{u}_0$ is nontrivial. To derive the limiting equation for $\tilde{u}_0$ we note that $\tilde{u}_j$ solves the Signorini problem
\begin{align*}
\p_t\tilde{u}_j-\Delta \tilde{u}_j&= \frac{\tilde{g}_j}{H_{u_{j,r_j}}(1)^{1/2}} \text{ in }{A^+_{1/2,1}}\\
\tilde{u}_j\geq 0, \ \p_n \tilde{u}_j\leq 0, \ \tilde{u}_j\p_n\tilde{u}_j&=0 \text{ on } {A'_{1/2,1}},
\end{align*}
where 
\begin{align*}
\tilde{g}_j(x,t):= r_j^{1/2}\left(\p_i a^{ik}_j\p_k u_j+(a^{ik}_j-\delta^{ik})\p_{ik}u_j+ f_j\right)(r_jx,r_j^2t),\quad r_j\in I_j.
\end{align*} 
From the proof for Proposition \ref{prop:Weiss} (cf. \eqref{eq:est_err}) and \eqref{eq:Weiss_u_j} we get
\begin{align*}
\frac{\int_{A^+_{1/2,1}} \tilde{g}_j^2 G \ dxdt }{H_{u_{j,r_j}}(1)}\leq C \frac{r_j^{-1+\sigma}r_j}{2^{-k_j\sigma/2}}\leq C r_j^{\sigma/2}.
\end{align*}
As $\int_{A^+_{1/2,1}}\tilde{u}_j^2 G \ dxdt$ is uniformly bounded,  from the up to the boundary uniform H\"older regularity estimate for $\tilde{u}_j$ one has that for each $K\Subset \overline{\R^n_+}\times (-1,-1/2)$ and some $\alpha\in (0,1)$ small,
\begin{align*}
\tilde{u}_j\rightarrow \tilde{u}_0 \text{ in } H^{1+\alpha,\frac{1+\alpha}{2}}(K).
\end{align*} 
This allows us to pass to the limit in the equation for $\tilde{u}_j$ and  conclude that the limit function $\tilde{u}_0$ is a solution to the Signorini problem 
\begin{align*}
\p_t \tilde{u}_0-\Delta \tilde{u}_0=0 &\text{ in } {A^+_{1/2,1}},\\
\tilde{u}\geq 0,\ \p_n\tilde{u}\leq 0, \ \tilde{u} \p_n\tilde{u} =0 &\text{ on } {A'_{1/2,1}}.
\end{align*}
Moreover, by \eqref{eq:contra_u_j} and the weak convergence, $\tilde{u}_0$ is parabolic $3/2$-homogeneous, i.e. 
\begin{align*}
Z\tilde{u}_0=\frac{3}{2}\tilde{u}_0\ \text{ a.e. in } {A^+_{1/2,1}}.
\end{align*}
Therefore from the Liouville theorem, cf. eg. \cite[Proposition 1]{Shi20} and the unique continuation property for parabolic equations, there is some $c_0>0$ and $e_0\in \mathbb{S}^{n-2}$ such that 
\begin{align*}
\tilde{u}_0=c_0\Ree(x'\cdot e_0+ ix_n)^{3/2}\in \mathcal{P}_{3/2}.
\end{align*}

\medskip

\emph{Step 4: } In the last step we show that with the scale $r_j$ chosen in \emph{Step 3}, the sequence $\tilde{v}_j$, which are the rescaled and renormalized remainder terms, up to a subsequence converge to a nonzero function in $\mathcal{P}_{3/2}$. This immediately leads to a contradiction, because each $\tilde{v}_j$ lies in the orthogonal projection of $\mathcal{P}_{3/2}$ by our construction.

More precisely, from \eqref{eq:pt_Weiss_v} we have
\begin{align}
\max\{C_02^{-k_j\sigma/2}, W^{3/2}_{v_{j,r_j}}(1)\}&\lesssim H_{v_{j,r_j}}(1),\label{eq:Weiss_v_j}
\end{align}
where $r_j$ are the same as those in \emph{Step 3}. 
Furthermore, from \eqref{eq:contra2}, \eqref{eq:pt_Weiss_v} and that $Zh-\frac{3}{2} h=0$ for all $h\in \mathcal{P}_{3/2}$, we also have
\begin{align}
 \int_{A^+_{1/2,1}}(Zv_{j,r_j}-\frac{3}{2}v_{j,r_j})^2 G \ dxdt &\lesssim c_j  H_{v_{j,r_j}}(1). \label{eq:contra_v_j}
 \end{align}
 Consider
 \begin{align*}
 \tilde{v}_j(x,t)&:=\frac{v_{j,r_j}(x,t)}{H_{v_{j,r_j}}(1)^{1/2}}.
 \end{align*}
Similar arguments as in \emph{Step 3} yields a further subsequence (not relabelled) and a nontrivial limit function $\tilde{v}_0\in W^{1,2}({A^+_{1/2,1}}; G dxdt)$ such that 
\begin{align*}
\tilde{v}_j\rightarrow \tilde{v}_0 &\text{ in } L^2({A^+_{1/2,1}}; G dxdt)\\
\p_tv_j\rightharpoonup \p_tv_0, \quad \nabla v_j\rightharpoonup \nabla v_0 &\text{ weakly in } L^2({A^+_{1/2,1}};  G dx dt).
\end{align*}
Note from the equation of $v_j$, cf. \eqref{eq:v}, and our normalization, we have that $\tilde{v}_j$ satisfies the equation
\begin{align*}
\p_t \tilde{v}_j-\Delta \tilde{v}_j= \frac{\tilde{g}_j}{H_{v_j}(r_j)^{1/2}} \text{ in } {A^+_{1/2,1}},
\end{align*}
where $\tilde{g}_j$ is the same as in \emph{Step 3}, and on ${A'_{1/2,1}}$ it satisfies 
\begin{align*}
\p_n\tilde{v}_j=0 \text{ on } \{u_{j,r_j}>0\}\cap \{h_{j,r_j}>0\},\quad \tilde{v}_j=0 \text{ on } \{u_{j,r_j}=0\}\cap \{h_{j, r_j}=0\}.
\end{align*}
Again using the estimate for $\tilde{g}_j$ in \emph{Step 3} and that $H_{v_j}(r_j)\gtrsim 2^{-k_j\sigma/2}$, in the limit $j\rightarrow \infty$ we have
\begin{align*}
\p_t \tilde{v}_0-\Delta \tilde{v}_0 =0 \text{ in } {A^+_{1/2,1}}.
\end{align*}
By \eqref{eq:contra_v_j}, $\tilde{v}_0$ is $3/2$-parabolic homogeneous. 
Furthermore,  $\tilde{v}_0$ satisfies the Dirichlet-Neumann condition on ${A'_{1/2,1}}$: 
\begin{align*}
\tilde v_0(x',0,t)& =0  \text{ on } \Lambda_0:=\{(x',x_n): x_n=0, x'\cdot e_0\leq 0\},\\
\p_n \tilde v_0(x',0,t)&=0 \text{ on } \R^{n-1}\setminus \Lambda_0,
\end{align*}
where $e_0$ is the same direction as in \emph{Step 3}. Indeed, this follows from the similar argument as in \cite[Proposition 2]{Shi20}. Then we conclude that $v_0\in \mathcal{P}_{3/2}$ using \eqref{eq:orth2} (cf. \cite[Proposition 2]{Shi20} for more detailed arguments). This however gives a contradiction, as in each step we have already subtract the projection on $\mathcal{P}_{3/2}$.
\end{proof}

Since so far it is not known that the Weiss energy is nonnegative, we need the following proposition, which indicates that the Weiss energy goes to $-\infty$ exponentially fast (in terms of $\ln r$) once it becomes negative. 

\begin{prop}\label{prop:epi1}
	 Let $u\in \mathcal{G}^{A,f}_p(S_1^+)$ with (i) $p>2(n+2)$, or (ii) $p>n+2$ if $f$ satisfies \eqref{eq:support}. Assume that 
	\begin{align*}
		\int_{S_1^+} u^2 G \ dxdt=1.
	\end{align*}
	 Then there exists $c_0=c_0(n,p,\|f\|_{L^p},\|a^{ij}\|_{W^{1,1}_p})\in (0,1)$  (independent of $k$ or $u$)  such that
	\begin{align*}
		W^{3/2}_u(2^{-k-1})\leq (1+c_0)W^{3/2}_u(2^{-k})+2C_02^{-k\sigma/2}, \quad \forall k\in \N,
	\end{align*}
	where the constants $C_0$ and $\sigma$ are the same as in Remark \ref{rmk:dr}.
\end{prop}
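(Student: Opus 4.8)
The plan is to mirror the contradiction argument used in the proof of Proposition~\ref{prop:epi}, but now keeping track of the situation when the Weiss energy is \emph{negative}, so that the inequality we need runs in the opposite direction with a constant $1+c_0$ rather than $1-c_0$. First I would argue by contradiction: suppose there is a sequence $u_j\in\mathcal{G}^{A_j,f_j}_p(S_1^+)$ with $\int_{S_1^+}u_j^2 G=1$, $\|A_j\|_{W^{1,1}_p}\le\|A\|_{W^{1,1}_p}$, $\|f_j\|_{L^p}\le\|f\|_{L^p}$, integers $k_j\to\infty$ and $c_j\in(0,\tfrac14)\to 0$ with
\begin{align*}
W^{3/2}_{u_j}(2^{-k_j-1})>(1+c_j)W^{3/2}_{u_j}(2^{-k_j})+2C_02^{-k_j\sigma/2}.
\end{align*}
Combining this with the almost monotonicity $\frac{d}{dr}W^{3/2}_{u_j}(r)\ge -C_0r^{-1+\sigma}$ from Proposition~\ref{prop:Weiss} and integrating over $I_j:=[2^{-k_j-1},2^{-k_j}]$, one gets (for $k_j\ge k_0(\sigma)$) that $\int_{I_j}|\tfrac{d}{dr}W^{3/2}_{u_j}(r)|\,dr + C_02^{-k_j\sigma/2}\le c_j\,W^{3/2}_{u_j}(2^{-k_j}) + 2C_02^{-k_j\sigma/2}$, which forces $W^{3/2}_{u_j}(r)<0$ on $I_j$ (otherwise the previous display cannot hold together with $W^{3/2}_{u_j}\le$ its endpoint values plus a small error); hence $\fint_{I_j}W^{3/2}_{u_j}(r)\,dr<0$ and, exactly as in \eqref{eq:contra21},
\begin{align*}
\Big|W^{3/2}_{u_j}(r)-\fint_{I_j}W^{3/2}_{u_j}(r')\,dr'\Big|\le 2c_j\Big|\fint_{I_j}W^{3/2}_{u_j}(r')\,dr'\Big|,\qquad\forall r\in I_j.
\end{align*}

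Next I would again use the lower bound for $\frac{d}{dr}W^{3/2}_{u_j}$ in Proposition~\ref{prop:Weiss} to control $\fint_{I_j}\int_{A^+_{1/2,1}}(Zu_{j,r}-\tfrac32 u_{j,r})^2 G$ by $c_j\big|\fint_{I_j}W^{3/2}_{u_j}(r)\big| + C_02^{-k_j\sigma/2}$, i.e.\ the homogeneity defect is small relative to $|\fint_{I_j}W^{3/2}_{u_j}|$. As in Steps~1--2 of the proof of Proposition~\ref{prop:epi}, combining \eqref{eq:Wv}, \eqref{eq:Weiss_v} and Cauchy--Schwarz I would obtain (now using $W^{3/2}_{u_{j,r}}<0$, so that $\fint_{I_j}|W^{3/2}_{u_{j,r}}|\lesssim\fint_{I_j}H_{v_{j,r}}(1)$ after absorbing the nonpositive boundary terms) the bound
$\max\{C_02^{-k_j\sigma/2},\fint_{I_j}|W^{3/2}_{v_{j,r}}(1)|,\fint_{I_j}|W^{3/2}_{u_{j,r}}(1)|\}\lesssim\fint_{I_j}H_{v_{j,r}}(1)$, and then remove the average in $r$ via the derivative estimate \eqref{eq:H_v} exactly as in \eqref{eq:H_v_bar}--\eqref{eq:pt_Weiss_v}. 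From these one extracts radii $r_j\in I_j$ with $\max\{C_02^{-k_j\sigma/2}, |W^{3/2}_{u_{j,r_j}}(1)|\}\lesssim H_{u_{j,r_j}}(1)$ and $\int_{A^+_{1/2,1}}(Zu_{j,r_j}-\tfrac32 u_{j,r_j})^2 G\lesssim c_j H_{u_{j,r_j}}(1)$.

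Then I would run the compactness argument of Steps~3--4 verbatim: normalize $\tilde u_j:=u_{j,r_j}/H_{u_{j,r_j}}(1)^{1/2}$, use the uniform $H^2$- and energy bounds together with the estimate $\int \tilde g_j^2 G / H_{u_{j,r_j}}(1)\lesssim r_j^{\sigma/2}\to 0$, pass to a limit $\tilde u_0$ that solves the constant-coefficient Signorini problem, is parabolically $3/2$-homogeneous, and is nontrivial, hence $\tilde u_0\in\mathcal{P}_{3/2}$ by the Liouville theorem and unique continuation; similarly the renormalized remainders $\tilde v_j:=v_{j,r_j}/H_{v_{j,r_j}}(1)^{1/2}$ converge to a nontrivial $\tilde v_0\in\mathcal{P}_{3/2}$, which contradicts the orthogonality \eqref{eq:orth}--\eqref{eq:orth2} built into the projection. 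The conclusion $W^{3/2}_u(2^{-k-1})\le(1+c_0)W^{3/2}_u(2^{-k})+2C_02^{-k\sigma/2}$ follows.

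The main obstacle is bookkeeping with signs: since the Weiss energy is not yet known to be nonnegative, every estimate that in Proposition~\ref{prop:epi} used $W^{3/2}\ge 0$ (or compared $W^{3/2}$ to a positive quantity) must be rewritten in terms of $|W^{3/2}|$ or of $\fint_{I_j}|W^{3/2}_{u_{j,r}}|$, and one must verify that the nonpositive boundary contributions $\int_{A'_{1/2,1}}|t|\partial_n h_{j,r}v_{j,r}G$ and $\int_{A'_{1/2,1}}|t|\partial_n v_{j,r}h_{j,r}G$ still point in the direction that lets $H_{v_{j,r}}(1)$ dominate. Checking that the contradiction hypothesis with constant $1+c_j$ indeed forces $W^{3/2}_{u_j}<0$ on $I_j$ (so that the whole negative-regime argument is nonvacuous) is the delicate point; once that is in place, the rest is a transcription of the proof of Proposition~\ref{prop:epi}.
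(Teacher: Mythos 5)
Your proposal is correct and is essentially the paper's own argument: negate the discrete inequality, use the almost monotonicity of Proposition \ref{prop:Weiss} to force $W^{3/2}_{u_j}(r)<0$ on $I_j$ with $-W^{3/2}_{u_j}(r)\gtrsim 2^{-k_j\sigma/2}$ and a homogeneity defect of order $c_j$, select good radii $r_j\in I_j$, blow up $\tilde u_j$ and $\tilde v_j$, and contradict the orthogonality of $v_{j,r}$ to $\mathcal{P}_{3/2}$ via the Liouville theorem. Two small points: the right-hand side of your first display should be $-c_j\,W^{3/2}_{u_j}(2^{-k_j})$ rather than $+c_j\,W^{3/2}_{u_j}(2^{-k_j})$ (as written the inequality is not what the hypothesis gives, although the conclusion you draw from it — negativity of $W$ on $I_j$ with the quantitative lower bound on $-W$ — is exactly what the corrected version yields), and the machinery you plan to import from Proposition \ref{prop:epi} (the identity \eqref{eq:Wv}, Cauchy--Schwarz, and the average-removal step \eqref{eq:H_v}--\eqref{eq:pt_Weiss_v}) is unnecessary in this negative regime: once Lemma \ref{lem:Weiss_v} and the negated hypothesis give $W^{3/2}_{v_j}(r)\le W^{3/2}_{u_j}(r)\le 0$ on $I_j$, the definition of the Weiss energy yields pointwise, for every $r\in I_j$, $2^{-k_j\sigma/2}\lesssim -W^{3/2}_{u_j}(r)\le -W^{3/2}_{v_j}(r)\le \tfrac{3}{4}H_{v_{j,r}}(1)$ together with the Dirichlet bounds $\int_{A^+_{1/2,1}}|t||\nabla u_{j,r}|^2G\le \tfrac{3}{4}H_{u_{j,r}}(1)$ and likewise for $v_{j,r}$, which is all that the compactness Steps 3--4 require.
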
 

\begin{proof}
	The proof follows from the same contradiction argument as in Proposition \ref{prop:epi}. Below we only point out the differences in the proof. Assume there is a sequence of solutions $u_j$ with $\int_{S_1^+}u_j^2 G \ dxdt=1$, a sequence of integers $k_j\rightarrow \infty$ and $c_j\in (0,\frac{1}{4})\rightarrow 0$ such that
	\begin{align*}
		W^{3/2}_{u_j}(2^{-k_j-1})>(1+c_j)W^{3/2}_{u_j}(2^{-k_j})+2C_02^{-k_j\sigma/2},\quad \forall j\in \N.
	\end{align*}
This implies that 
	\begin{align*}
		\int_{I_j} \frac{d}{dr} W^{3/2}_{u_j}(r)\ dr +C_02^{-k_j\sigma/2} < -c_j W^{3/2}_{u_j}(2^{-k_j}),\quad I_j:=[2^{-k_j-1},2^{-k_j}].
	\end{align*}
	Together with the almost monotonic property of $r\mapsto W^{3/2}_{u_j}(r)$, cf. Proposition \ref{prop:Weiss}, we get
	\begin{align}\label{eq:contra11}
	(1+c_j)W^{3/2}_{u_j}(r)\lesssim \fint_{I_j}W^{3/2}_u(r')\ dr'\lesssim (1-c_j)W^{3/2}_{u_j}(r),\quad \forall r\in I_j.
	\end{align}
	Furthermore, by Remark \ref{rmk:dr} this implies that for $j$ sufficiently large
	\begin{align}\label{eq:contra1}
		\fint_{I_j} \int_{A^+_{1/2,1}}(Zu_{j,r}-\frac{3}{2}u_{j,r})^2 G \ dxdtdr+ C_02^{-k_j\sigma/2} &\lesssim -c_j\fint_{I_j}W^{3/2}_{u_j}(r)\ dr.
	\end{align}
	Let $h_{j,r}=c_{j,r}\Ree(x'\cdot e_{j,r}+ix_n)^{3/2}$ be the $L^2$-projection of $u_{j,r}$ to $\mathcal{P}_{3/2}$ and let
	\begin{align*}
		v_{j,r}:=u_{j,r}-h_{j,r}.
	\end{align*}
	
	\emph{Step 1:} In the first step we show that for $j$ sufficiently large
	\begin{align}
	2^{-k_j\sigma/2}\leq - W^{3/2}_{u_{j}}(r)\leq -W^{3/2}_{v_{j}}(r) \leq \frac{3}{4} H_{v_{j,r}}(1),\quad \forall r\in I_j,\label{eq:bd_Weiss_v1}
	\end{align}
	particularly $W^{3/2}_{u_j}(r)\leq 0$ for $r\in I_j$, and
	\begin{align}\label{eq:hom_v}
		\fint_{I_j} \int_{A^+_{1/2,1}}(Zv_{j,r}-\frac{3}{2}v_{j,r})^2 G \ dxdtdr  \lesssim c_j\fint_{I_j}H_{v_{j,r}}(1)\ dr . 
	\end{align} 
	For that we note that the first inequality of \eqref{eq:bd_Weiss_v1} follows from  \eqref{eq:contra11} and \eqref{eq:contra1}, and the second  inequality is by Lemma \ref{lem:Weiss_v}. The third inequality of \eqref{eq:bd_Weiss_v1} simply follows from the definition of the Weiss energy.
Using that $Zh=\frac{3}{2}h$ for all $h \in \mathcal{P}_{3/2}$, we obtain \eqref{eq:hom_v} from \eqref{eq:contra1} and \eqref{eq:bd_Weiss_v1}.
	
	\emph{Step 2:} By \eqref{eq:hom_v}, there is a sequence of radii $r_j\in I_j$ such that 
	\begin{align*}
\int_{A^+_{1/2,1}}(Zv_{j,r_j}-\frac{3}{2}v_{j,r_j})^2 G dxdt=\int_{A^+_{1/2,1}}(Zu_{j,r_j}-\frac{3}{2}u_{j,r_j})^2 G dxdt \lesssim c_jH_{v_{j,r_j}}(1).
	\end{align*}
	Furthermore, from $W^{3/2}_{v_j}(r)\leq W^{3/2}_{u_j}(r)\leq 0$ and the definition of the Weiss energy we have
	\begin{align*}
	\int_{A^+_{1/2,1}} |t||\nabla u_{j,r_j}|^2 \lesssim H_{u_{j,r_j}}(1),\quad
	\int_{A^+_{1/2,1}}|t||\nabla v_{j,r_j}|^2 \lesssim H_{v_{j,r_j}}(1).
	\end{align*}
	Therefore, if we consider
	\begin{align*}
	\tilde{u}_j=\frac{u_{j,r_j}}{H_{u_{j,r_j}}(1)^{1/2}},\quad \tilde{v}_j=\frac{v_{j,r_j}}{H_{v_{j,r_j}}(1)^{1/2}}
	\end{align*}
	and argue similarly as in Proposition \ref{prop:epi}, we can find  subsequences (not relabelled) $\tilde{u}_j$, $\tilde{v}_j$ such that
	\begin{align*}
	\tilde{u}_j\rightarrow \tilde{u}_0=c_0\Ree(x'\cdot e_0+ix_n)^{3/2}
	\end{align*}
	in $L^2({A^+_{1/2,1}}, Gdxdt)$, where $c_0>0$ (is such that $H_{\tilde{u}_j}(1)=1$) and $e_0\in \mathbb{S}^{n-1}\cap \{x_n=0\}$, and
	\begin{align*}
	\tilde{v}_j\rightarrow \tilde{v}_0=\tilde{u}_0\in \mathcal{P}_{3/2}
	\end{align*}
	in $L^2({A^+_{1/2,1}}, Gdxdt)$. 
	 This is however a contradiction, as for each $j$ and each $r$ we have subtracted the projection of $u_j$ to $\mathcal{P}_{3/2}$.
\end{proof}

As a consequence of Proposition \ref{prop:epi} and Proposition \ref{prop:epi1}, we obtain the decay estimate of the Weiss energy:

\begin{cor}[Decay estimate of the Weiss energy]\label{Cor:Weiss_pos}
	 Let $u\in \mathcal{G}^{A,f}_p(S_1^+)$ with  (i) $p>2(n+2)$, or (ii) $p>n+2$ if $f$ satisfies \eqref{eq:support}. Assume that 
	\begin{align*}
		\int_{S_1^+} u^2 G \ dxdt=1.
	\end{align*}
	Then there are constants $C>0$ sufficiently large depending on $n,p,\|f\|_{L^p}$, $\|a^{ij}\|_{W^{1,1}_p}$ and $\epsilon_0=\epsilon_0(n,p,\|f\|_{L^p}, \|a^{ij}\|_{W^{1,1}_p})\in (0,1)$,  such that for all $r\in (0,1/4)$, 
	\begin{equation*}
	-Cr^{\sigma/2}\leq W^{3/2}_{u}(r)\leq Cr^{\epsilon_0}.
	\end{equation*}
	Here $\sigma$ is the same constant as in Remark \ref{rmk:dr}.
\end{cor}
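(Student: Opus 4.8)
The plan is to extract the two-sided decay bound directly from the discrete estimates in Proposition \ref{prop:epi} and Proposition \ref{prop:epi1}, combined with the almost monotonicity from Proposition \ref{prop:Weiss} to pass from the dyadic scales $r=2^{-k}$ to arbitrary $r\in(0,1/4)$. Write $w_k:=W^{3/2}_u(2^{-k})$ and $a_k:=2C_0 2^{-k\sigma/2}$. From Proposition \ref{prop:epi} we have $w_{k+1}\le (1-c_0)w_k + a_k$ and from Proposition \ref{prop:epi1} we have $w_{k+1}\le (1+c_0)w_k+a_k$. The first inequality is a contraction whenever $w_k\ge 0$, and the second one controls the (potential) growth of the negative part. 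First I would show the upper bound: iterating $w_{k+1}\le(1-c_0)w_k+a_k$ and using $\sum_k (1-c_0)^{k-j}2^{-j\sigma/2}\lesssim 2^{-k\epsilon_0}$ with $\epsilon_0:=\min\{\sigma/2,\,-\log_2(1-c_0)\}$ (so that $(1-c_0)\le 2^{-\epsilon_0}$ and the geometric-type sum telescopes), one gets $w_k\le (1-c_0)^k w_0 + C2^{-k\epsilon_0}$. Since $|w_0|=|W^{3/2}_u(1/2)|\le C$ by the definition of the Weiss energy and the uniform Sobolev bounds (using $\int_{S_1^+}u^2G=1$ together with Lemma \ref{lem:H2}), this yields $w_k\le C 2^{-k\epsilon_0}$, i.e. $W^{3/2}_u(2^{-k})\le C2^{-k\epsilon_0}$.

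For the lower bound, the key observation is that Proposition \ref{prop:epi1} is designed so that once the Weiss energy dips below $-2^{-k\sigma/2}$ it cannot recover: more precisely one shows that if $w_k\le -A 2^{-k\sigma/2}$ for a suitable absolute constant $A$, then $w_{k+1}\le -A2^{-(k+1)\sigma/2}$, so the "bad" event propagates. But the upper bound already forces $w_k\le C2^{-k\epsilon_0}$, and since $\epsilon_0\le\sigma/2$ we can compare $2^{-k\epsilon_0}$ against the threshold. The clean way is: suppose for contradiction that $W^{3/2}_u(2^{-k_0})<-2C_0 2^{-k_0\sigma/2}$ for some $k_0$; iterating the inequality from Proposition \ref{prop:epi1} downward (or rather the rearranged form $w_{k+1}+ \tfrac{2C_0}{2^{\sigma/2}-1}2^{-(k+1)\sigma/2}\le (1+c_0)(w_k+\tfrac{2C_0}{\ldots}2^{-k\sigma/2})$, absorbing the inhomogeneity into a shifted quantity) shows $w_k\le -c\, (1+c_0)^{k-k_0}2^{-k_0\sigma/2}\to-\infty$, contradicting $w_k\ge W^{3/2}_{\min}>-\infty$, which holds because $W^{3/2}_u(r)\ge -\tfrac{3}{2}r^{-2\kappa-2}\int_{A^+_{r/2,r}}u^2 G$ and the last integral is controlled via the doubling inequality (Lemma \ref{doubling}) by $\int_{S_1^+}u^2G=1$. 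Hence $W^{3/2}_u(2^{-k})\ge -2C_0 2^{-k\sigma/2}$ for all $k$.

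Finally I would interpolate to non-dyadic radii. Given $r\in(0,1/4)$, pick $k$ with $2^{-k-1}\le r\le 2^{-k}$. For the upper bound, by almost monotonicity (Proposition \ref{prop:Weiss}) $W^{3/2}_u(r)\le W^{3/2}_u(2^{-k}) + C_0\int_r^{2^{-k}}\rho^{-1+\sigma}\,d\rho\le C2^{-k\epsilon_0}+C2^{-k\sigma}\le C' r^{\epsilon_0}$ (adjusting $\epsilon_0$ so that $\epsilon_0\le\sigma$, which is already the case). For the lower bound, $W^{3/2}_u(r)\ge W^{3/2}_u(2^{-k-1}) - C_0\int_{2^{-k-1}}^{r}\rho^{-1+\sigma}\,d\rho \ge -2C_0 2^{-(k+1)\sigma/2} - C2^{-k\sigma}\ge -C' r^{\sigma/2}$. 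This gives the stated two-sided bound. The main obstacle is bookkeeping: choosing $\epsilon_0$ and the absorbing constants so that the geometric sums and the shifted-quantity trick for the lower bound close cleanly; the conceptual content is entirely in the two discrete propositions already proved, so no new analytic input is needed beyond the elementary iteration of the recursions and one application each of almost monotonicity and the doubling bound for the uniform two-sided control of $w_0$ and $W^{3/2}_{\min}$.
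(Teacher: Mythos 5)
Your upper bound (iterating Proposition \ref{prop:epi} and bounding the initial Weiss energy by the normalization plus the energy estimate) and your passage from dyadic to arbitrary radii via the almost monotonicity of Proposition \ref{prop:Weiss} coincide with the paper's argument and are fine. The gap is in the lower bound, specifically in the claim that $w_k=W^{3/2}_u(2^{-k})$ admits a uniform finite lower bound $W^{3/2}_{\min}>-\infty$ because ``$W^{3/2}_u(r)\ge -\tfrac32 r^{-2\kappa-2}\int_{A^+_{r/2,r}}u^2G$ and the last integral is controlled via the doubling inequality by $\int_{S_1^+}u^2G=1$.'' The doubling inequality (Lemma \ref{doubling}) bounds the norm on the \emph{larger} strip by the norm on the smaller one, so it gives no upper bound on $r^{-2\kappa-2}\int_{A^+_{r/2,r}}u^2G$ as $r\to0$; the trivial bound $\int_{A^+_{r/2,r}}u^2G\le 1$ only yields $W^{3/2}_u(r)\gtrsim -r^{-5}$, which is useless against exponential decay to $-\infty$. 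Worse, a uniform-in-$r$ bound on $r^{-5}\int_{A^+_{r/2,r}}u^2G$ is precisely the optimal growth estimate (Proposition \ref{prop:opt_growth}), which in the paper is \emph{deduced from} this corollary, so asserting it here would be circular. The only a priori input available at this stage is the almost optimal growth (Proposition \ref{prop:almost_op}, via Lemma \ref{lem:growth}), which gives $r^{-5}\int_{A^+_{r/2,r}}u^2G\lesssim|\ln r|^4$, hence only $W^{3/2}_u(2^{-k})\gtrsim -k^4$.

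Your contradiction scheme can be repaired with exactly this substitution: the exponential blow-up $w_k\le -c(1+c_0)^{k-k_0}2^{-k_0\sigma/2}$ obtained by iterating Proposition \ref{prop:epi1} does contradict the polynomial lower bound $w_k\gtrsim -k^4$, so replacing the unavailable uniform bound by the almost-optimal-regularity bound closes the argument. This is in spirit what the paper does, though it routes the contradiction through $m(r)=\int_{A^+_{1/2,1}}u_r^2G$ rather than through $W$ itself: Remark \ref{rmk:dr} gives $m'(r)\le\frac4r W^{3/2}_u(r)+C_0r^{\sigma-1}$, so once the Weiss energy is exponentially negative, $m(2^{-k})$ grows like $(1+c_0)^{k}$, contradicting $m(2^{-k})\le C_\epsilon 2^{k\epsilon}$ from Proposition \ref{prop:almost_op}. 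Either way, the essential ingredient missing from your write-up is the invocation of the almost optimal growth estimate; the doubling inequality cannot substitute for it.
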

\begin{proof}
\emph{(i) Upper bound:} From Proposition \ref{prop:epi}, for any $k\in \N$, $k\geq 1$, we have
\begin{align*}
W^{3/2}_u(2^{-k})&\leq (1-c_0)^{k} W^{3/2}_u(1)+ 2C_0\sum_{i=0}^{k-1} (1-c_0)^i 2^{-(k-1-i)\sigma/2}\\
&\leq (1-c_0)^{k} W^{3/2}_u(1)+ 2C_0(1-c_0)^{k}.                                                                 
\end{align*}
Let $\alpha_0:=\frac{-\ln (1-c_0)}{\ln 2}$. Then from the above inequality we get
\begin{align*}
W^{3/2}_u(2^{-k})\leq C 2^{-\alpha_0 k},\quad \forall k\in \N.
\end{align*}
where $C=C(n,p,\|f\|_{L^p}, \|a^{ij}\|_{W^{1,1}_p}, \|u\|_{L^2})$. For any $r\in [2^{-k},2^{-k+1}]$, using the almost monotonicity of the Weiss energy, cf. Proposition \ref{prop:Weiss}, we get
\begin{align}\label{eq:interpolation}
W^{3/2}_u(r)\leq W^{3/2}_u(2^{-k+1})+ C_02^{-\sigma k}.
\end{align}
Combining the above two inequalities, we obtain the desired upper bound by setting $\epsilon_0:=\min\{\alpha_0, \sigma\}\in (0,1)$.

\emph{(ii) Lower bound:} 
	 Assume on contrary that for some $k_0\in \N$, $W^{3/2}_u(2^{- k_0})< -C 2^{-\frac{ k_0\sigma}{2}}$ for some large $C$ which will be determined later. We define 
	\begin{equation*}
	m(r):=\int_{A^+_{1/2,1}}u_{r}^2G\ dxdt,\quad u_r(x,t)=\frac{u(rx,r^2 t)}{r^{3/2}}.
	\end{equation*}
	 Then using Remark \ref{rmk:dr} we have that 
\begin{equation*}
m'(r) =\frac{d}{dr}\int_{A^+_{1/2,1}}u_{r}^2G\ dxdt \le \frac{4}{r}W^{3/2}_u(r)+ C_0 r^{\sigma-1}. 
\end{equation*}
Now we integrate from $2^{- k}$ to $2^{- k_0}$ to find
\begin{align}\label{eq:change_m}
m(2^{- k_0})-m(2^{- k}) 
&\leq 4\sum^{j=k}_{j=k_0+1}\int_{2^{- j}}^{ 2^{- (j-1)}}\frac{W^{3/2}_u(r)}{r}\ dr + C_0 \int_{2^{-  k}}^{ 2^{- k_0}}r^{\sigma-1}\ dr.
\end{align}
Using Proposition \ref{prop:epi1} and arguing similarly as in (i), we get that for any $j\geq k_0+1$ and for a $\tilde C=\tilde C(n,p,\|f\|_{L^p},\|a^{ij}\|_{W^{1,1}_p})$,
\begin{align*}
W^{3/2}_u(2^{- j}) 
&\leq (1+c_0)^{j-k_0} W^{3/2}_u(2^{- k_0})+ \tilde C (1+c_0)^{j-k_0}2^{- \frac{k_0\sigma}{2}}.
\end{align*}
Choosing $C\geq 2\tilde C$ in the contradiction assumption, we thus get
\begin{align*}
W^{3/2}_u(2^{- j})\leq -\tilde C(1+c_0)^{j-k_0}2^{-\frac{k_0\sigma}{2}}.
\end{align*} 
Then it follows from \eqref{eq:interpolation} that
\begin{align*}
W^{3/2}_u(r)\leq -\tilde C(1+c_0)^{j-1-k_0}2^{- \frac{k_0\sigma}{2}}+\tilde{C}2^{- \sigma j},\quad \forall r\in [2^{- j}, 2^{- (j-1)}].
\end{align*}
Inserting the above inequality into \eqref{eq:change_m} we have that if $k$ is sufficiently large, then 
\begin{align*}
m(2^{- k_0})-m(2^{- k})
&\le -\tilde C \sum_{j=k_0+1}^{j=k}\left((1+c_0)^{(j-k_0-1)} 2^{-\frac{k_0\sigma}{2}}+2^{- \sigma j}\right)+ \tilde C 2^{-k_0\sigma}\\
&\le - C(1+c_0)^{k},
\end{align*}
where $C$ depends also on $k_0.$ After rearranging the terms we obtain
\begin{align*}
 m(2^{-k})\geq m(2^{-k_0})+ C(1
+c_0)^{k}.
\end{align*}
We now recall the definition of $m(r)$ and use almost optimal regularity for $m(r)$, cf. Proposition \ref{prop:almost_op}, to find
\begin{align*}
m(2^{-k})\leq C_\epsilon 2^{k\epsilon},
\end{align*}
for any $\epsilon\in (0,1)$. This gives a contradiction if $\epsilon\leq \frac12 \log_2(1+c_0)$  and if $k$ is sufficiently large. 
Thus $W^{3/2}_u(2^{- \delta k})\geq -C2^{- \frac{k\sigma}{2}}$ for all $k\in \N$. This together with Proposition \ref{prop:Weiss} completes the proof for all $r$. 
\end{proof}
%
%

The decay estimate of the Weiss energy in Corollary \ref{Cor:Weiss_pos} immediately gives the optimal growth estimate of the solution at $(0,0)\in \Gamma_u$:

\begin{prop}\label{prop:opt_growth}
 Let $u\in \mathcal{G}^{A,f}_p(S_1^+)$ with (i) $p>2(n+2)$, or (ii) $p>n+2$ and $f$ satisfies \eqref{eq:support}. Assume that $(0,0)\in \Gamma_u^\ast$. Then for all $r$ small enough we have 
 \begin{equation*}
 \frac{1}{r^2}\int_{A^+_{r/2,r}}u^2Gdxdt \leq C r^3,
 \end{equation*}
 where $C=C(n,p,\|f\|_{L^p},\|a^{ij}\|_{W^{1,1}_p}, \|u\|_{L^2})$. 
\end{prop}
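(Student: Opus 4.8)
The plan is to read the estimate off the decay of the Weiss energy in Corollary~\ref{Cor:Weiss_pos} together with the identity of Remark~\ref{rmk:dr} relating $W^{3/2}_u$ to the derivative of the renormalized $L^2$-mass. Since both sides of the claimed inequality are quadratic in $u$, by scaling I may assume $\int_{S_1^+}u^2G\,dxdt=1$, which is the normalization under which Corollary~\ref{Cor:Weiss_pos} and Remark~\ref{rmk:dr} are stated (the resulting $\|u\|_{L^2}$–dependence is the one recorded in the statement). I then set
\[
m(r):=\frac{1}{r^{5}}\int_{A^+_{r/2,r}}u^2G\,dxdt=\int_{A^+_{1/2,1}}\big(u_{3/2,r}\big)^2G\,dxdt,\qquad u_{3/2,r}(x,t):=\frac{u(rx,r^2t)}{r^{3/2}},
\]
the second equality being the scaling identity used throughout Section~\ref{sec:opt} (recall $2\kappa+2=5$ when $\kappa=\tfrac32$). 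The asserted estimate $\frac1{r^2}\int_{A^+_{r/2,r}}u^2G\le Cr^3$ is then exactly $\sup_{0<r<1/4}m(r)\le C$.

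Next I would combine two inputs. First, by Remark~\ref{rmk:dr} (the rescaled form of Proposition~\ref{prop:Weiss2}) one has $\big|W^{3/2}_u(r)-\tfrac r4\,m'(r)\big|\le C_0r^{\sigma}$ for $r\in(0,1/2)$, hence $\tfrac r4\,m'(r)\ge W^{3/2}_u(r)-C_0r^{\sigma}$. Second, the lower bound in Corollary~\ref{Cor:Weiss_pos} gives $W^{3/2}_u(r)\ge -Cr^{\sigma/2}$ for $r\in(0,1/4)$. Together these yield $m'(r)\ge -C_1r^{\sigma/2-1}$ on $(0,1/4)$ with $C_1$ of the asserted dependence. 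Since $\rho\mapsto\rho^{\sigma/2-1}$ is integrable near $0$, integrating from $r$ to $1/4$ gives
\[
m(r)=m(1/4)-\int_r^{1/4}m'(\rho)\,d\rho\le m(1/4)+C_1\int_0^{1/4}\rho^{\sigma/2-1}\,d\rho=m(1/4)+\frac{2C_1}{\sigma}\,\big(\tfrac14\big)^{\sigma/2}.
\]
It remains to bound the starting value $m(1/4)$: on $A_{1/8,1/4}=\R^n\times(-\tfrac1{16},-\tfrac1{64}]$ one has $\int_{A_{1/8,1/4}}G\,dxdt=\tfrac3{64}$, and $\supp u\subset\overline{Q_1^+}$, so $m(1/4)\le 4^{5}\cdot\tfrac3{64}\,\|u\|_{L^\infty(Q_1^+)}^2$, which is controlled in terms of $n,p,\|f\|_{L^p},\|a^{ij}\|_{W^{1,1}_p},\|u\|_{L^2}$ via the $L^\infty$–$L^2$ estimate (Lemma~\ref{lem:linfty}) and the global energy bound. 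This gives $\sup_{0<r<1/4}m(r)\le C$ and hence the proposition; a general free boundary point reduces to $(0,0)$ by the change of variables in Remark~\ref{rmk:sigma}, so it suffices to treat $(0,0)$.

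The deduction above is entirely routine \emph{given} Corollary~\ref{Cor:Weiss_pos}. The substantive obstacle lies upstream, in the lower bound $W^{3/2}_u(r)\ge -Cr^{\sigma/2}$: this is the assertion that the Weiss energy cannot drift to $-\infty$, proved in Proposition~\ref{prop:epi1} by the compactness/contradiction scheme combined with the almost-monotonicity of $W^{3/2}_u$ (Proposition~\ref{prop:Weiss}) and the projection identities of Lemma~\ref{lem:Weiss_v}. Once that is in hand, the only mild points of care in the present argument are the scaling normalization and the observation that the exponent $\sigma/2-1$ makes $\int_0^{1/4}\rho^{\sigma/2-1}\,d\rho$ finite, so that $m(r)$ cannot grow as $r\to 0$.
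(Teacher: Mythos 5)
Your proof is correct, and it reaches the stated estimate by a shorter route than the paper, built nonetheless on the same heavy inputs (Corollary \ref{Cor:Weiss_pos}, hence Propositions \ref{prop:epi} and \ref{prop:epi1}, together with Remark \ref{rmk:dr}). You use only the first identity of Remark \ref{rmk:dr}, $\bigl|W^{3/2}_u(r)-\tfrac r4\,m'(r)\bigr|\le C_0r^{\sigma}$, plus the \emph{lower} bound $W^{3/2}_u(r)\ge -Cr^{\sigma/2}$, to get $m'(r)\ge -C_1r^{\sigma/2-1}$, which is integrable near $0$, so $m(r)\le m(1/4)+C$ follows by a single integration; the sign check is right (a large positive Weiss energy only makes $m$ decrease as $r\downarrow 0$, so the upper bound of Corollary \ref{Cor:Weiss_pos} is genuinely not needed), and under your normalization one even has $m(1/4)\le 4^5$ directly, without Lemma \ref{lem:linfty}. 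The paper instead works with $\ell(r)=m(r)^{1/2}$, bounds $|\ell'(r)|\le r^{-1/2}\bigl(\tfrac{d}{dr}W^{3/2}_u(r)+Cr^{-1+\sigma}\bigr)^{1/2}$ via the second identity in Remark \ref{rmk:dr}, integrates with Cauchy--Schwarz, and runs a dyadic summation that uses \emph{both} bounds in Corollary \ref{Cor:Weiss_pos}. What that longer route buys is the quantitative Cauchy estimate $|\ell(r_1)-\ell(r_2)|\lesssim r_2^{\min\{\epsilon_0,\sigma\}/2}$, i.e. $\|(u_{r_1}-u_{r_2})\sqrt G\|_{L^2(A^+_{1/2,1})}\lesssim r_2^{\sigma_0}$, which is exactly the by-product reused in Proposition \ref{prop:rate_conv} to obtain uniqueness and a rate of convergence of the blow-up; your argument yields only boundedness of $m$, which is all the present proposition asserts. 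Your normalization step (dividing so that $\int_{S_1^+}u^2G=1$ and recording the $\|u\|_{L^2}$-dependence) mirrors the paper's own implicit use of Corollary \ref{Cor:Weiss_pos}, so there is nothing to object to there.
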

\begin{proof}
Let $\ell(r):=m(r)^{1/2}$, where $m(r)$ is defined in Corollary \ref{Cor:Weiss_pos}. 
	Then it suffices to show that $\ell(r)\leq C$. A direct computation yields
	\begin{equation*}
	\ell'(r)=\ell(r)^{-1}\int_{A^+_{1/2,1}}u_{r}\frac{d}{dr}u_{r}G\ dxdt.
	\end{equation*}
	We now use Cauchy-Schwarz inequality and Remark \ref{rmk:dr} to obtain
	\begin{align*}
	|\ell'(r)|&\le \left(\int_{A^+_{1/2,1}}\left(\frac{d}{dr}u_{r}\right)^2Gdxdt\right)^{1/2}\leq \frac{1}{\sqrt r}\left(\frac{d}{dr}W_{u}^{3/2}(r) +Cr^{-1+\sigma}\right)^{1/2}.
	\end{align*}
Let $r_1<r_2\leq 1/4$. We now integrate $\ell'(r)$ from $r_1$ to $r_2$ and use Cauchy-Schwarz to obtain
\begin{align*}
|\ell(r_1)-\ell(r_2)| &\leq \int_{r_1}^{r_2}\frac{1}{r^{1/2}}\left(\frac{d}{dr}W_{u}^{3/2}(r) +Cr^{-1+\sigma}\right)^{1/2} \ dr\\
&\leq \left(\int_{r_1}^{r_2}\frac{1}{r}dr\right)^{1/2}\left(\int_{r_1}^{r_2}(\frac{d}{dr}W_{u}^{3/2}(r) +Cr^{-1+\sigma})dr\right)^{1/2}\\
&\leq \left(\operatorname{ln}{r_1/r_2}\right)^{1/2}\left(W_{u}^{3/2}(r_2) +Cr_2^{\sigma}-W_{u}^{3/2}(r_1) -C r_1^{\sigma}\right)^{1/2}.
\end{align*}
Here and below the constant $C=C(n,p,\|u\|_{L^2}, \|f\|_{L^p})$ may vary line by line but is always with the same parameter dependence. By Corollary \ref{Cor:Weiss_pos}, $W_{u}^{3/2}(r_1)\ge -Cr_1^{\sigma}$, therefore we obtain 
\begin{align*}
|\ell(r_1)-\ell(r_2)| &\leq \left(\operatorname{ln}{r_1/r_2}\right)^{1/2}\left(W_{u}^{3/2}(r_2) + C r_2^{\sigma}\right)^{1/2}.
\end{align*}
Using a dyadic argument and the upper bound of the Weiss energy in Corollary \ref{Cor:Weiss_pos}, we find $\ell(r)\leq C$. Indeed, let $k\in \N$ be such that $r_1\in [2^{-(k+1)}r_2,2^{-k}r_2)$. Then
\begin{align*}
|\ell(r_1)-\ell(r_2)|&\leq |\ell(r_1)-\ell(2^{-k}r_2)|+\sum_{j=0}^{k-1}|\ell(2^{-j}r_2)-\ell(2^{-j-1}r_2)|\\
&\leq 2\sum_{j=0}^k\left(W^{3/2}_u(2^{-j}r_2)\right)^{1/2}+C\sum_{j=0}^k(2^{-j}r_2)^{\frac{\sigma}{2}}\leq C(r_2)^{\min \{\frac{\epsilon_0}{2},\frac{\sigma}{2}\}}.
\end{align*}  
This completes the proof.
\end{proof}

With the optimal growth estimate at the free boundaries at hand, a similar argument as in \cite[Section 9]{DGPT} yields the desired optimal H\"older regularity  result in Theorem \ref{thm:opt}:
\begin{proof}[Proof for Theorem \ref{thm:opt}] The proof for Theorem \ref{thm:opt}(ii) has already been done in Remark \ref{rmk:opt_h}, we thus only prove (i). 
Since the proof is similar as the constant coefficient case, cf. \cite[Section 9]{DGPT}, we only provide an outline here. 
Given any $(x_0,t_0)\in \Gamma_u^\ast\cap Q_{1/4}$, we can make a change of variable as in Remark \ref{rmk:sigma} to recenter the problem at $(x_0,t_0)$ and then apply Proposition \ref{prop:opt_growth} and Lemma \ref{lem:linfty} to get
\begin{align*}
\sup_{(B_{r}(x_0,t_0))^+\times [t_0-cr^2, t_0+cr^2]\cap Q_1}|u|\leq C r^{3/2},\quad \forall r\in (0,1/4).
\end{align*}
Here $c\in (0,1)$ is an absolute constant. Combining the above inequality with the interior H\"older regularity estimate for parabolic equations, we obtain the desired up to the boundary H\"older regularity results (see \cite[Theorem 9.1]{DGPT}). 

When $p\in (n+2, \infty]$ and $f$ satisfies \eqref{eq:support}, we have $u\in H^{1+\gamma, \frac{1+\gamma}{2}}(\overline{Q_{1/4}^+})$. This is also optimal, due to the optimal regularity of the linear problem away from the free boundary.
\end{proof}

\section{Regular free boundary and its regularity}\label{sec:fb_reg}
In this section we study the set of points of free boundary with minimal vanishing order, the \emph{regular set}:
\begin{equation*}
\mathcal{R}_u:=\left\{(x_0,t_0)\in \Gamma_u: \kappa_{(x_0,t_0)}=\frac{3}{2}\right\}\subset\Gamma_u
\end{equation*}
We note by the vanishing order gap (Proposition \ref{prop:almost_op}) and the upper semi-continuity (Lemma \ref{lem:upper_semi}), $\mathcal{R}_u$ is relatively open in $\Gamma_u^\ast$.  In the following lemma we show that $\mathcal{R}_u=\mathcal{R}_u^\ast:=\{(x_0,t_0)\in \Gamma_u^\ast: \kappa_{(x_0,t_0)}=\frac{3}{2}\}$.
\begin{lem}\label{lem:ext_bdr}
 Let $u\in \mathcal{G}^{A,f}_p(S_1^+)$. Suppose that (i) $p>2(n+2),$ or (ii) $p>n+2$ and $f$ satisfies the support condition \eqref{eq:support}. Let $\kappa_{(x_0,t_0)}$ be the vanishing order of $u$ at $(x_0,t_0).$ If $(x_0,t_0)\in \Gamma^*_u$ with   $\kappa_{(x_0,t_0)}=\frac32$, then $(x_0,t_0)\in \Gamma_u.$
\end{lem}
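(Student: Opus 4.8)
The plan is a two–step blow-up argument reducing matters to the classification of minimal–frequency blow-ups of the heat operator. Following the convention of Section~\ref{sec:opt} we may assume $(x_0,t_0)=(0,0)$. Since $(0,0)\in\Gamma_u^\ast$ and both $u$ and $\p_nu$ are continuous up to $Q_1'$, the set $\{u=0,\ \p_nu=0\}$ is relatively closed in $Q_1'$ and contains $(0,0)$; in particular $u(0,0)=0$. Hence it suffices to exhibit a sequence of points of the positivity set $\{(x',0,t)\in Q_1':u(x',0,t)>0\}$ converging to $(0,0)$: together with $u(0,0)=0$ this places $(0,0)$ on $\partial_{Q_1'}\{u(\cdot,0,\cdot)>0\}=\Gamma_u$, as desired.

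First I would perform the blow-up at $(0,0)$ provided by Proposition~\ref{prop:compactness}. This is applicable here: in case~(i), $p>2(n+2)$ gives $\tfrac32<2-\tfrac{n+2}{p}$, and in case~(ii) $f$ satisfies \eqref{eq:support} and $\tfrac32<\infty$. Thus along a sequence $r_j\to0$ the rescalings $u_{r_j}:=u_{r_j}^{(0,0)}$ from \eqref{eq:u_r} converge in $L^2(G)$ to a nontrivial global solution $u_0$ of the Signorini problem for the heat operator (constant coefficients, zero inhomogeneity), with $(0,0)\in\Gamma_{u_0}^\ast$ and vanishing order $\kappa_{u_0}(0,0)=\tfrac32$; moreover, by the uniform up-to-the-flat-boundary $H^{1+\alpha,\frac{1+\alpha}{2}}$ bounds of \cite{AU} used in the proof of Proposition~\ref{prop:compactness}, the convergence $u_{r_j}\to u_0$ is also in $C^0_{\mathrm{loc}}$ up to $\{x_n=0\}$ on compact subsets of $\{t<0\}$.

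Next I would identify a blow-up of $u_0$ with an element of $\mathcal{P}_{3/2}$. Since $u_0$ solves the heat–operator Signorini problem with zero right hand side, Poon's monotonicity formula applies to $u_0$ (cf.\ \cite{DGPT}), so any blow-up of $u_0$ at $(0,0)$ is a nontrivial, parabolically $\tfrac32$-homogeneous Signorini solution; by the Liouville-type classification used in the proof of Proposition~\ref{prop:epi} (cf.\ \cite[Proposition~1]{Shi20} and parabolic unique continuation) this blow-up equals $c_0\,\Ree(x'\cdot e_0+ix_n)^{3/2}$ for some $c_0>0$ and $e_0\in\mathbb{S}^{n-2}$. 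This profile is strictly positive at some point $(\bar z',0,\bar s)$ with $\bar s<0$ fixed. Undoing the two levels of rescalings — all of which map $Q_1'$ into $Q_1'$ because the off-diagonal condition (iii) forces $A(\cdot)$, and hence $\sqrt{A(0,0)}$ and $\sqrt{A_{u_0}(0,0)}=\Id$, to have the block structure preserving $\{x_n=0\}$ — and using $C^0_{\mathrm{loc}}$ convergence up to the flat boundary, one obtains first $u_0>0$ at a point $(\bar z',0,-\bar s^\ast)$ with $\bar s^\ast>0$ fixed, hence (since $u_0(0,0)=0$) $(0,0)\in\Gamma_{u_0}$, and then $u>0$ at a sequence of points of $Q_1'$ converging to $(0,0)$. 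This completes the proof.

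The main obstacle is the passage, in the last step, from the $L^2(G)$-convergence recorded in Proposition~\ref{prop:compactness} to genuine pointwise (i.e.\ $C^0$) convergence up to the thin set $\{x_n=0\}$, which is what is needed to transfer strict positivity; the required up-to-the-boundary H\"older convergence is exactly what the uniform $H^{1+\alpha,\frac{1+\alpha}{2}}$ estimates of \cite{AU} deliver, so this is a matter of care rather than of new input, as is the bookkeeping that the blow-up dilations and the matrix square roots preserve the hyperplane $\{x_n=0\}$. A secondary point is the appeal to the classification of $\tfrac32$-homogeneous global Signorini solutions for the heat operator; one could instead invoke \cite[Proposition~10.8]{DGPT} directly for $u_0$ and descend only one level.
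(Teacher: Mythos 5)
Your argument is correct and follows essentially the paper's route: blow up at $(x_0,t_0)$ via Proposition \ref{prop:compactness}, use the locally uniform convergence up to $\{x_n=0\}$ (away from $t=0$) coming from the uniform $H^{1+\alpha,\frac{1+\alpha}{2}}$ bounds to transfer strict positivity of the limit on the thin space back to $u$, and conclude from $u(x_0,t_0)=0$ that $(x_0,t_0)\in\Gamma_u$. The only divergence is that where you perform a second blow-up of $u_0$ and invoke Poon's monotonicity plus the Liouville classification, the paper simply cites \cite[Proposition 10.8]{DGPT} for the constant-coefficient limit --- precisely the shortcut you mention in your closing remark.
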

\begin{proof}
 Given $(x_0,t_0)\in \Gamma^*_u$ with   $\kappa_{(x_0,t_0)}=\frac32.$ Let $\epsilon>0$ be arbitrary. Consider $u_r^{(x_0,t_0)}$ as defined in \eqref{eq:u_r}. Then,  from Proposition \ref{prop:compactness}, we have 
 \begin{align}\label{eq:extl2}
 u_{r_j}^{(x_0,t_0)} \rightarrow u^{(x_0,t_0)} \;\text{in}\; L^2(B_1^+ \times (-1,-\delta^2))
 \end{align} for each $\delta>0$ and $u^{(x_0,t_0)}$ solves the constant coefficient Signorini problem in $Q_1^+$ with vanishing order $\kappa_{(x_0,t_0)}$ at $(0,0)$. Moreover, by Proposition \ref{prop:almost_op} $(0,0) \in \Gamma_{u^{(x_0,t_0)}}^*.$ From \cite[Proposition 10.8]{DGPT}, we get $(0,0) \in \Gamma_{u^{(x_0,t_0)}}.$ 
Consequently, using the definition of $\Gamma_{u^{(x_0,t_0)}},$ we find that
$$Q'_{\epsilon} \cap \{(x',t)\in Q_1'\; : \; u^{(x_0,t_0)}(x',t)>0\} \neq \emptyset$$
for all $Q'_\epsilon$ with $\epsilon>0$ small. Fix an arbitrary $Q'_\epsilon$, 
the continuity of $u^{(x_0,t_0)}$ yields the existence of $Q'_{\tilde{\epsilon}}(y_0,s_0) \subset Q'_{\epsilon}$, $s_0<0$ with $$Q'_{\tilde{\epsilon}}(y_0,s_0) \subset  \{(x',t)\in Q_1'\; : \; u^{(x_0,t_0)}(x',t)>0\}.$$
Using the uniform convergence of $u_{r_j}^{(x_0,t_0)}$ to $u^{(x_0,t_0)}$ along a subsequence $r_j$, we get for $j$ sufficiently large,
 $$u_{r_j}^{(x_0,t_0)}(x,t)>0 \text{ in } Q'_{\bar\epsilon}(y_0,s_0).$$  This then immediately implies that for $j\gg 1$,
 \begin{align*}
 Q'_\epsilon\cap \{(x',t)\in Q'_1: u^{(x_0,t_0)}_{r_j}(x',t)>0\}\neq \emptyset.
 \end{align*}
 Writing this in terms of $u$, we obtain a sequence of $\epsilon_j\rightarrow 0$ such that 
  $$Q'_{\epsilon_j}(x_0,t_0) \cap \{(x',t)\in Q_1'\; : \; u(x',t)>0\} \neq \emptyset.$$
This along with the fact that $u(x_0,t_0) =0$ (by the definition of $\Gamma_u^*$) implies that $(x_0,t_0)\in \p_{Q'_1}\{(x',t)\in Q'_1: u(x',t)>0\}$. This completes the proof of the lemma. 
\end{proof}

In the rest of the section we will show that the decay rate of the Weiss energy  yields the regularity of $\mathcal{R}_u$. 
For that we start by showing the convergence rate of the $3/2$-homogeneous scaling at $(x_0,t_0)$ 
\begin{align}\label{eq:32scaling}
u_{(x_0,t_0),r}(x,t):=\frac{ u(\sqrt{A(x_0,t_0)} rx+x_0, tr^2+t_0)}{r^{3/2}}
\end{align}
 to its blowup limit.

\begin{prop}\label{prop:rate_conv}
Let $u\in \mathcal{G}^{A,f}_p(S_1^+)$. Suppose that (i) $p>2(n+2),$ or (ii) $p>n+2$ and $f$ satisfies the support condition \eqref{eq:support}. Given $(x_0,t_0)\in \mathcal{R}_u\cap Q_{1/2}$,  there is a unique $u_{(x_0,t_0)}\in \mathcal{P}_{3/2}\setminus\{0\}$, a constant $\sigma_0=\sigma_0(n,p)\in (0,1)$  and $C=C(n,p,\|f\|_{L^p(S_1^+)}, \|u\|_{L^2(S_1^+)})$ such that 
\begin{align*}
\|(u_{(x_0,t_0),r}-u_{(x_0,t_0)})\sqrt{G}\|_{L^2(S_1^+)}\leq Cr^{\sigma_0}.
\end{align*}
Here $u_{(x_0,t_0),r}$ is the $3/2$-homogeneous scaling defined in \eqref{eq:32scaling}.
\end{prop}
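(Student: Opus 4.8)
The plan is to convert the algebraic decay of the Weiss energy (Corollary \ref{Cor:Weiss_pos}) into an algebraic rate of $L^2(G)$-convergence of the $3/2$-homogeneous rescalings, following the classical Weiss scheme in its parabolic form \cite{Shi20}. By the change of variables of Remark \ref{rmk:sigma} we may assume $(x_0,t_0)=(0,0)$, $A(0,0)=\Id$, $\kappa_{(0,0)}=\tfrac32$, and the weight is the standard Gaussian $G$; write $u_r:=u_{3/2,r}$ and $m(r):=\int_{A^+_{1/2,1}}u_r^2\,G$. Under either hypothesis all the tools invoked below (Proposition \ref{prop:Weiss}, Corollary \ref{Cor:Weiss_pos}, Proposition \ref{prop:compactness}) are available.

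\textbf{Step 1: integrated energy bound.} By Proposition \ref{prop:Weiss} (in the form of Remark \ref{rmk:dr}) the function $r\mapsto W^{3/2}_u(r)+\tfrac{C_0}{\sigma}r^\sigma$ is nondecreasing, so $W^{3/2}_u(0^+):=\lim_{r\to0}W^{3/2}_u(r)$ exists, and by the two-sided bound of Corollary \ref{Cor:Weiss_pos} it equals $0$. Integrating the inequality of Remark \ref{rmk:dr},
\begin{align*}
\frac{r}{2}\int_{A^+_{1/2,1}}\Big(\frac{d}{dr}u_r\Big)^2 G\ dxdt\le \frac{d}{dr}W^{3/2}_u(r)+C_0 r^{-1+\sigma},
\end{align*}
over $(0,\rho)$ and using $W^{3/2}_u(0^+)=0$ together with $W^{3/2}_u(\rho)\le C\rho^{\epsilon_0}$ yields, with $\sigma_1:=\min\{\epsilon_0,\sigma\}$,
\begin{align*}
\int_0^\rho r\int_{A^+_{1/2,1}}\Big(\frac{d}{dr}u_r\Big)^2 G\ dxdt\,dr\le C\rho^{\sigma_1},\qquad 0<\rho<\tfrac14 .
\end{align*}
\textbf{Step 2: Cauchy estimate on the annulus.} For $0<r_1<r_2<\tfrac14$, since $r\mapsto u_r\in L^2(A^+_{1/2,1};G)$ is absolutely continuous, Cauchy--Schwarz gives
\begin{align*}
\|u_{r_1}-u_{r_2}\|_{L^2(A^+_{1/2,1};G)}\le \Big(\int_{r_1}^{r_2}\frac{dr}{r}\Big)^{1/2}\Big(\int_{r_1}^{r_2}r\Big\|\frac{d}{dr}u_r\Big\|^2_{L^2(A^+_{1/2,1};G)}dr\Big)^{1/2}.
\end{align*}
Applying this on dyadic intervals $I_k=[2^{-k-1},2^{-k}]$ and using that, by Step 1, $\int_{I_k}r\|\tfrac{d}{dr}u_r\|^2dr\le\int_0^{2^{-k}}r\|\tfrac{d}{dr}u_r\|^2dr\le C2^{-k\sigma_1}$, we obtain $\|u_{2^{-k-1}}-u_{2^{-k}}\|_{L^2(A^+_{1/2,1};G)}\le C2^{-k\sigma_1/2}$; hence $\{u_{2^{-k}}\}$ is Cauchy, and telescoping plus running the estimate for general $r$ produces a limit $u_0\in L^2(A^+_{1/2,1};G)$ with
\begin{align*}
\|u_r-u_0\|_{L^2(A^+_{1/2,1};G)}\le C r^{\sigma_1/2},\qquad 0<r<\tfrac14 .
\end{align*}
Since $u_0$ is a genuine (not merely subsequential) limit, it is \emph{unique}.

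\textbf{Step 3: identification and nontriviality, then passage to the strip.} By Step 1 there is $r_j\to0$ with $\int_{A^+_{1/2,1}}(Zu_{r_j}-\tfrac32 u_{r_j})^2G\to0$; the uniform weighted $H^{2,1}$-bounds of Lemma \ref{lem:app_H2}/Lemma \ref{lem:H2} (as in Proposition \ref{prop:compactness}) let us pass to a further subsequence and conclude $Zu_0=\tfrac32u_0$ a.e., i.e.\ $u_0$ is parabolically $3/2$-homogeneous. Passing to the limit in the equation $\partial_t u_{r_j}-\Delta u_{r_j}=g_{r_j}$, whose inhomogeneity satisfies $\int_{A^+_{1/2,1}}g_{r_j}^2G\to0$ by the estimate \eqref{eq:est_err} from the proof of Proposition \ref{prop:Weiss}, and in the Signorini conditions, shows that $u_0$ (extended by homogeneity) is a global solution of the constant coefficient Signorini problem; the Liouville classification of $3/2$-homogeneous global solutions, cf.\ \cite[Proposition 1]{Shi20}, forces $u_0\in\mathcal{P}_{3/2}$. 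Nontriviality: from $|W^{3/2}_u(r)-\tfrac r4 m'(r)|\le C_0r^\sigma$ and $|W^{3/2}_u(r)|\le Cr^{\sigma_1}$ we get $|m'(r)|\le Cr^{\sigma_1-1}$, so $m(0^+)=\lim_r m(r)=\|u_0\|^2_{L^2(A^+_{1/2,1};G)}$ exists; were it $0$, the lower growth bound of Remark \ref{growth} (valid since $\kappa_{(0,0)}=\tfrac32<\bar\kappa$) would give $m(r)\gtrsim r^{\epsilon}$ for arbitrarily small $\epsilon>0$, contradicting $m(r)=|m(r)-m(0^+)|\le Cr^{\sigma_1}$. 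Finally, using $u_r(2^{-j}y,2^{-2j}s)=2^{-3j/2}u_{2^{-j}r}(y,s)$, the $3/2$-homogeneity of $u_0$, and $G(2^{-j}y,2^{-2j}s)=2^{jn}G(y,s)$, a change of variables gives $\int_{A^+_{2^{-j-1},2^{-j}}}|u_r-u_0|^2G=2^{-5j}\|u_{2^{-j}r}-u_0\|^2_{L^2(A^+_{1/2,1};G)}$; summing over $j\ge0$ (with $S_1^+=\bigcup_{j\ge0}A^+_{2^{-j-1},2^{-j}}$) and inserting Step 2,
\begin{align*}
\|(u_r-u_0)\sqrt G\|^2_{L^2(S_1^+)}=\sum_{j\ge0}2^{-5j}\|u_{2^{-j}r}-u_0\|^2_{L^2(A^+_{1/2,1};G)}\le C\sum_{j\ge0}2^{-5j}(2^{-j}r)^{\sigma_1}\le C r^{\sigma_1},
\end{align*}
so the proposition holds with $u_{(x_0,t_0)}=u_0$ and $\sigma_0=\sigma_1/2$.

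\textbf{Main obstacle.} The crux is Steps 1--2: producing a genuine limit with an explicit power rate without a true monotonicity formula, which requires (i) keeping precise track of the variable-coefficient and $L^p$-inhomogeneity error terms, which enter the almost-monotonicity of $W^{3/2}_u$ only with an algebraic loss $r^\sigma$, and (ii) combining the upper and lower bounds of Corollary \ref{Cor:Weiss_pos} to pin down $W^{3/2}_u(0^+)=0$. A secondary technical point is the nontriviality $m(0^+)>0$: because the $3/2$-homogeneous rescaling is \emph{not} $L^2$-normalized, one must prevent it from degenerating, and this is exactly where the sharpness of the vanishing order ($\kappa_{(0,0)}=\tfrac32$, not merely $\ge\tfrac32$) is used.
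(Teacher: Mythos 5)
Your proposal is correct and follows essentially the same route as the paper: your Steps 1--2 reproduce the computation the paper cites from its proof of Proposition \ref{prop:opt_growth} (integrate $\tfrac{d}{dr}u_r$ against the almost-monotonicity of $W^{3/2}_u$ from Remark \ref{rmk:dr} together with the two-sided decay of Corollary \ref{Cor:Weiss_pos} to get the dyadic Cauchy rate and a unique limit), and the limit is then identified as a nontrivial $3/2$-parabolically homogeneous global Signorini solution and placed in $\mathcal{P}_{3/2}$ by the Liouville theorem, exactly as in the paper. The only deviations are cosmetic and harmless: you obtain homogeneity directly from a subsequence along which $\|Zu_r-\tfrac32 u_r\|_{L^2(G)}\to 0$ instead of first proving $W^{3/2}_{u_0}\equiv 0$ via the gradient convergence of Lemma \ref{lem:energy_close}, and your nondegeneracy argument through $m(0^+)$ and Remark \ref{growth} spells out what the paper compresses into the remark that $u_0=0$ would contradict $(0,0)\in\mathcal{R}_u$.
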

\begin{proof}
For simplicity assume that $(0,0)\in \mathcal{R}_u$. It suffices to show the convergence rate at $(0,0)$. For simplicity, we write $u_r:=u_{(0,0),r}$ and $u_0:=u_{(0,0)}$. From the proof for Proposition \ref{prop:opt_growth} we have that
\begin{align*}
\|(u_{r_1}-u_{r_2})\sqrt{G}\|_{L^2(A^+_{1/2,1})}\leq C r_2^{\sigma_0},\quad \forall\ 0<r_1<r_2,
\end{align*}
for some $\sigma_0\in (0,1)$ depending on $n,p$. By rescaling we can get the decay in $A^+_{1/4,1}$ as well. This implies that $\{u_{2^{-k}}\}_{k\in \N}$ is a Cauchy sequence in $L^2(A^+_{1/4,1}; G dxdt)$. Thus there is a unique limit $u_0$ such that 
\begin{align*}
\|(u_r-u_0)\sqrt{G}\|_{L^2(A^+_{1/4,1})}\leq Cr^{\sigma_0},\quad \forall r\in (0,1/4)
\end{align*}
By a simple scaling argument, one can conclude 
\begin{align}\label{eq:ur_cgs}
\|(u_r-u_0)\sqrt{G}\|_{L^2(S_1^+)}\leq Cr^{\sigma_0},\quad \forall r\in (0,1/4).
\end{align} 
The limit function $u_0$ is nonzero, because otherwise we would have $\|u_r\sqrt{G}\|_{L^2(S_1^+)}\leq Cr^{\sigma_0}$, which contradicts the assumption that $(0,0)\in \mathcal{R}_u$. 

Next we show that $u_0\in \mathcal{P}_{3/2}$. First by Lemma \ref{lem:energy_close} we have
\begin{align*}
\|\nabla(u_{r_1}-u_{r_2})\sqrt{|t|G}\|_{L^2(S_{1/4}^+)}\lesssim & \|(u_{r_1}-u_{r_2})\sqrt{G}\|_{L^2(S_{1/2}^+)} + \||t|(f_{r_1}-f_{r_2})\sqrt{G}\|_{L^2(S_{1/2}^+)}\ \\
&+\|A_{r_1}-A_{r_2}\|_{L^\infty}\|(u_{r_2}+|t|f_{r_2})\sqrt{G}\|_{L^2(S_{1}^+)}.
\end{align*} 
Using \eqref{eq:est_f} for $p>2(n+2)$ and \eqref{eq:est_suppf} for $p>n+2$ and $f$ satisfies \eqref{eq:support}, we find $\||t|f_r\sqrt{G}\|_{L^2(S_{1}^+)} \le C r^{-5/2}\||t|f \sqrt{G}\|_{L^2(S_{r}^+)} \le Cr^{\sigma_1}.$ Combining this with  \eqref{eq:ur_cgs} and the H\"older continuity of the matrix $A,$ we obtain
 \begin{align*}
\|\nabla(u_{r_1}-u_{r_2})\sqrt{|t|G}\|_{L^2(S_{1/4}^+)}\le Cr_2^{\sigma_1},
\end{align*}
where $\sigma_1$  depends only on $n$ and $p$. We now proceed similarly to the approach used for \eqref{eq:ur_cgs} to conclude
\begin{align*}
\|\nabla(u_{r}-u_{0})\sqrt{|t|G}\|_{L^2(S_{1/4}^+)}\le Cr^{\sigma_1}.
\end{align*}
Thus passing to the limit in $W^{3/2}_{u_r}(1/4)$ as $r\rightarrow 0$ and using Corollary \ref{Cor:Weiss_pos} we have that $W^{3/2}_{u_0}(r)\equiv 0$ for all $r\in (0,1/8)$. We observe that $u_0$ is a solution to the Signorini problem for the heat equation in any compact sets in $A^+_{\delta, 1/4}$, $\delta>0$, with zero inhomogeneity. Indeed, we have shown that $u_r$ is uniformly bounded in $W^{1,1}_2(K)$ for any compact set $K\subset {A^+_{\delta,1/4}}$ for any $\delta>0$. Then we can pass to  the limit in the variational inequality of $u_r$, cf. the proof for Proposition \ref{prop:compactness}. The inhomogeneity is zero for the equation of $u_0$, because of our assumption that $p>2(n+2)$ (or $p>n+2$ and $f$ satisfies the support condition \eqref{eq:support}). Thus from Proposition \ref{prop:Weiss} (with $C_0=0$, as $a^{ij}=\delta^{ij}$ and $f=0$ for $u_0$) we can conclude that $u_0$ is $3/2$-parabolic homogeneous. By the Liouville theorem, cf. \cite[Proposition 8.5]{DGPT} or \cite[Proposition 1]{Shi20}, we have $u_0\in \mathcal{P}_{3/2}$. 
\end{proof}

The rate of convergence in Proposition \ref{prop:rate_conv} yields immediately that $u_{(x_0,t_0)}$ varies H\"older continuously with respect to $(x_0,t_0)$, which we will show in the next corollary. To fix the notation, given $(x_0,t_0)\in \mathcal{R}_u\cap Q_{1/4}$, let 
\begin{align*}
u_{(x_0,t_0)}(x):=c_{(x_0,t_0)}\Ree(x'\cdot e_{(x_0,t_0)}+ i|x_n|)^{3/2}, 
\end{align*}
where $c_{(x_0,t_0)}>0$ and $e_{(x_0,t_0)}\in \mathbb{S}^{n-1}\cap \{x_n=0\}$, be  the unique blowup limit of $u$ at $(x_0,t_0)$. 
\begin{cor}\label{cor:holder_var}
Let $u_{(x_0,t_0)}$ defined as above be the unique blowup limit at $(x_0,t_0)\in \mathcal{R}_u\cap Q_{1/4}$. Then 
\begin{align*}
\mathcal{R}_u\ni (x_0,t_0)&\mapsto c_{(x_0,t_0)}\in (0,\infty),\\
\mathcal{R}_u\ni (x_0,t_0)&\mapsto e_{(x_0,t_0)}\in \mathbb{S}^{n-1}\cap \{x_n=0\}
\end{align*}
are parabolic H\"older continuous. More precisely, there are constants $\theta=\theta(n,p)\in (0,1)$ and $C=C(n,p,\|f\|_{L^p(S_1^+)}, \|u\|_{L^2(S_1^+)}, \|a^{ij}\|_{W^{1,1}_p})$ such that for any $(x_0,t_0)$ and  $(y_0,s_0)\in \mathcal{R}_u\cap Q_{1/4}$ we have
\begin{align*}
|c_{(x_0,t_0)}-c_{(y_0,s_0)}|+|e_{(x_0,t_0)}-e_{(y_0,s_0)}|\leq C (|x_0-y_0|+|t_0-s_0|^{1/2})^{\theta}.
\end{align*}
\end{cor}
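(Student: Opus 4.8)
The plan is to exploit the quantitative rate of convergence from Proposition \ref{prop:rate_conv} together with a comparison of blow-up limits at two nearby free boundary points. Fix $(x_0,t_0)$ and $(y_0,s_0)$ in $\mathcal{R}_u\cap Q_{1/4}$ and set $\rho:=|x_0-y_0|+|t_0-s_0|^{1/2}$. The key triangle-inequality trick is to evaluate both $3/2$-homogeneous rescalings at the common scale $r\sim \rho^{1/2}$ (or more precisely at a scale comparable to a fixed power of $\rho$, chosen below). On the one hand, Proposition \ref{prop:rate_conv} controls $\|(u_{(x_0,t_0),r}-u_{(x_0,t_0)})\sqrt G\|_{L^2(S_1^+)}\le Cr^{\sigma_0}$ and similarly for $(y_0,s_0)$. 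On the other hand, $u_{(x_0,t_0),r}$ and $u_{(y_0,s_0),r}$ differ only through a recentering of the base point by $\rho$ and a perturbation of the linearizing matrix $\sqrt{A(x_0,t_0)}$ versus $\sqrt{A(y_0,s_0)}$; since $A$ is $H^{\gamma,\gamma/2}$ and bounded, and using the optimal growth bound $\sup_{Q_r^+(x_0,t_0)}|u|\le Cr^{3/2}$ from Proposition \ref{prop:opt_growth} (upgraded to $L^\infty$ via Lemma \ref{lem:linfty}) together with the interior $H^{1+\gamma,(1+\gamma)/2}$-regularity of $u$ away from the free boundary, one gets
\begin{align*}
\|(u_{(x_0,t_0),r}-u_{(y_0,s_0),r})\sqrt G\|_{L^2(S_1^+)}\le C\Big(\frac{\rho}{r}\Big)^{\theta_1}
\end{align*}
for some $\theta_1\in(0,1)$ depending only on $n,p$ (the $\rho/r$ arises because translating by $\rho$ at scale $r$ moves the base point by a fraction $\rho/r$ of the window, and $u$ is uniformly $H^{1+\gamma}$-bounded after the $3/2$-rescaling, so the difference quotient is controlled by that fraction to a Hölder power).

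Combining these three estimates via the triangle inequality yields
\begin{align*}
\|(u_{(x_0,t_0)}-u_{(y_0,s_0)})\sqrt G\|_{L^2(S_1^+)}\le C\Big(r^{\sigma_0}+\big(\tfrac{\rho}{r}\big)^{\theta_1}\Big),
\end{align*}
and optimizing in $r$ — choosing $r=\rho^{\theta_1/(\sigma_0+\theta_1)}$ — gives $\|(u_{(x_0,t_0)}-u_{(y_0,s_0)})\sqrt G\|_{L^2(S_1^+)}\le C\rho^{\theta}$ with $\theta:=\sigma_0\theta_1/(\sigma_0+\theta_1)\in(0,1)$. Finally, since $u_{(x_0,t_0)}=c_{(x_0,t_0)}\Ree(x'\cdot e_{(x_0,t_0)}+i|x_n|)^{3/2}$ and $u_{(y_0,s_0)}$ has the same structural form, and the map $(c,e)\mapsto c\,\Ree(x'\cdot e+i|x_n|)^{3/2}$ from $(0,\infty)\times(\mathbb{S}^{n-1}\cap\{x_n=0\})$ into $L^2(S_1^+;G\,dxdt)$ is a bi-Lipschitz parametrization on the relevant compact range of parameters (here one uses that $c_{(x_0,t_0)}$ and $c_{(y_0,s_0)}$ stay in a fixed compact subset of $(0,\infty)$, which follows from the two-sided growth bounds and the definition of $\mathcal{R}_u$), we can invert to deduce
\begin{align*}
|c_{(x_0,t_0)}-c_{(y_0,s_0)}|+|e_{(x_0,t_0)}-e_{(y_0,s_0)}|\le C\|(u_{(x_0,t_0)}-u_{(y_0,s_0)})\sqrt G\|_{L^2(S_1^+)}\le C\rho^{\theta},
\end{align*}
which is the claim.

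\textbf{Main obstacle.} The delicate point is establishing the intermediate estimate $\|(u_{(x_0,t_0),r}-u_{(y_0,s_0),r})\sqrt G\|_{L^2(S_1^+)}\le C(\rho/r)^{\theta_1}$ uniformly over the admissible scales $r$, since one must simultaneously control (a) the translation of the base point, (b) the change of the affine map $\sqrt{A(\cdot)}$ which is only Hölder in $(x_0,t_0)$, and (c) the fact that the $3/2$-normalization $H_r^{(x_0,t_0)}(u)^{1/2}$ in the denominator is comparable for the two points only because both lie in $\mathcal{R}_u$ — one needs the non-degeneracy $H_r(u)\gtrsim r^3$ (from the lower growth bound, Remark \ref{growth}, refined by the frequency gap) and the upper bound $H_r(u)\lesssim r^3|\ln r|^4$ to see that the two normalizations agree up to a logarithmic-in-$r$, hence harmless, factor after absorbing into the Hölder exponent. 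Care is also needed to ensure that the window $S_1^+$ after rescaling by $r$ at $(y_0,s_0)$ is contained, up to the controlled translation, in a fixed window around $(x_0,t_0)$ where the above estimates of Sections \ref{sec:opt} apply; this forces the restriction $(x_0,t_0),(y_0,s_0)\in Q_{1/4}$ and the smallness of $\rho$ relative to an absolute constant (for $\rho$ bounded below the estimate is trivial by boundedness of $c_{(x_0,t_0)}$ and compactness of the sphere). Once these bookkeeping points are handled, the optimization argument is routine.
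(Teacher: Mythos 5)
Your overall architecture is essentially the paper's: compare the two $3/2$-homogeneous rescalings \eqref{eq:32scaling} at a common scale $r$, invoke Proposition \ref{prop:rate_conv} at each center, estimate the cross term $\|(u_{(x_0,t_0),r}-u_{(y_0,s_0),r})\sqrt G\|_{L^2}$, optimize in $r$, and then read off $c$ and $e$ from the $L^2$ distance of the limiting profiles (the paper does this last step by first comparing the $L^2$ norms to get $c$, and then dividing by $c$ to get $e$, rather than via a bi-Lipschitz parametrization, but that difference is cosmetic). Where you genuinely diverge is the cross term. The paper uses only the interior Lipschitz bound of $u$ together with the $H^{\gamma,\gamma/2}$ continuity of $A^{1/2}$, giving the pointwise bound $C\big((|x_0-y_0|^{\gamma}+|t_0-s_0|^{\gamma/2})|rx|+|x_0-y_0|+|t_0-s_0|^{1/2}\big)$ and hence $\|(u_{(x_0,t_0),r}-u_{(y_0,s_0),r})\sqrt G\|_{L^2}\leq Cr^{-3/2}(|x_0-y_0|^{\gamma}+|t_0-s_0|^{\gamma/2})$, cf. \eqref{eq:interior_holder}; balancing against $r^{\sigma_0}$ then yields $\theta=\gamma\sigma_0/(\sigma_0+\tfrac32)$. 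Your proposed bound $C(\rho/r)^{\theta_1}$ is a sharper variant resting on uniform regularity of the rescaled solutions, but as stated it is not correct \emph{uniformly in $r$}: the coefficient-perturbation contribution is of size $\rho^{\gamma}|x|$, independent of $r$, so the bound can only take the form $(\rho/r)^{\theta_1}$ if $\theta_1\le\gamma$, and near $r\sim\rho$ the estimate degenerates; moreover, to beat the paper's crude $r^{-3/2}\rho^{\gamma}$ you need a Lipschitz bound for $u_{(x_0,t_0),r}$ at unit scale that is uniform in the center and in $r$, which requires the optimal growth estimate (Proposition \ref{prop:opt_growth} plus Lemma \ref{lem:linfty}) at \emph{both} centers together with up-to-the-boundary interior estimates, plus a Gaussian-tail argument for $|x|\gtrsim 1/r$. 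Since you only use the estimate at the single scale $r=\rho^{\theta_1/(\sigma_0+\theta_1)}$, all of this is fixable, but the paper's cruder bound sidesteps these issues and already gives an admissible (smaller) exponent.

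One point in your "main obstacle" is a misreading rather than a gap in the statement: item (c) about reconciling the normalizations $H_r^{(x_0,t_0)}(u)^{1/2}$ and $H_r^{(y_0,s_0)}(u)^{1/2}$ is vacuous, because the rescaling \eqref{eq:32scaling} to which Proposition \ref{prop:rate_conv} applies is normalized by the deterministic factor $r^{3/2}$, not by $H_r(u)^{1/2}$. Had the two normalizations genuinely differed by factors like $|\ln r|^{4}$, this would alter the values of $c_{(x_0,t_0)}$, $c_{(y_0,s_0)}$ themselves and could not be "absorbed into the Hölder exponent"; fortunately no such comparison is needed here.
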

\begin{proof}
First we show the H\"older continuity for $c_{(x_0,t_0)}$.
For that we note that
\begin{align*}
|c_{(x_0,t_0)}-c_{(y_0,s_0)}|=c_n \left| \|u_{(x_0,t_0)}\sqrt{G}\|_{L^2}-\|u_{(y_0,s_0)}\sqrt{G}\|_{L^2}\right|.
\end{align*}
Here and below the $L^2$ norm is taken over $A^+_{1/2,1}$. 
By Proposition \ref{prop:rate_conv},
\begin{align*}
\|(u_{(x_0,t_0),r}-u_{(x_0,t_0)})\sqrt{G}\|_{L^2}\leq C r^{\sigma_0},
\end{align*}
and the same inequality holds if we replace $(x_0,t_0)$ by $(y_0,s_0)$. Thus by triangle inequality,
\begin{align*}
|c_{(x_0,t_0)}-c_{(y_0,s_0)}|&\lesssim \|(u_{(x_0,t_0),r}-u_{(x_0,t_0)})\sqrt{G}\|_{L^2}+ \|(u_{(y_0,s_0),r}-u_{(y_0,s_0)})\sqrt{G}\|_{L^2}\\
&+\big\|(u_{(x_0,t_0),r}-u_{(y_0,s_0),r})\sqrt{G}\big\|_{L^2}\\
&\lesssim Cr^{\sigma_0}+\big\|(u_{(x_0,t_0),r}-u_{(y_0,s_0),r})\sqrt{G}\big\|_{L^2}.
\end{align*}
Note that
\begin{align*}
&\big\|(u_{(x_0,t_0),r}-u_{(y_0,s_0),r})\sqrt{G}\big\|_{L^2}\\
&=r^{-3/2}\|(u(A(x_0,t_0)^{1/2} rx+x_0, tr^2+t_0)-u(A(y_0,s_0)^{1/2} rx+y_0, tr^2+s_0))\sqrt{G}\|_{L^2}.
\end{align*}
From the interior Lipschitz regularity of $u$ and the $H^{\gamma,\gamma/2}$ regularity of the coefficients, we get
\begin{align*}
&|u(A(x_0,t_0)^{1/2} rx+x_0, tr^2+t_0)-u(A(y_0,s_0)^{1/2} rx+y_0, tr^2+s_0)|\\
&\leq C\left(|A(x_0,t_0)^{1/2}-A(y_0,s_0)^{1/2}||rx|+ |x_0-y_0|+|s_0-t_0|^{1/2}\right)\\
&\leq C\left((|x_0-y_0|^{\gamma}+|t_0-s_0|^{\gamma/2})|rx|+|x_0-y_0|+|s_0-t_0|^{1/2}\right).
\end{align*}
Thus
\begin{align}\label{eq:interior_holder}
\big\|(u_{(x_0,t_0),r}-u_{(y_0,s_0),r})\sqrt{G}\big\|_{L^2}\leq Cr^{-3/2}(|x_0-y_0|^{\gamma}+|t_0-s_0|^{\gamma/2}).
\end{align}
This yields
\begin{align*}
|c_{(x_0,t_0)}-c_{(y_0,s_0)}|\leq Cr^{\sigma_0}+Cr^{-3/2}(|x_0-y_0|^{\gamma}+|t_0-s_0|^{\gamma/2}).
\end{align*}
Balancing the two terms on the right hand side we have
\begin{align*}
|c_{(x_0,t_0)}-c_{(y_0,s_0)}|\leq C(|x_0-y_0|+|t_0-s_0|^{1/2})^\theta, \quad \theta:=\frac{\gamma\sigma_0}{\sigma_0+\frac{3}{2}}.
\end{align*}

Notice that
\begin{align*}
|e_{(x_0,t_0)}-e_{(y_0,s_0)}|&\leq c_n \big\|(\Ree(x'\cdot e_{(x_0,t_0)}+i|x_n|)^{3/2}-\Ree(x'\cdot e_{(y_0,s_0)}+i|x_n|)^{3/2}) \sqrt{G}\big\|_{L^2}\\
&=c_n\left\|\left(\frac{u_{(x_0,t_0)}}{c_{(x_0,t_0)}}-\frac{u_{(y_0,s_0)}}{c_{(y_0,s_0)}}\right)\sqrt{G}\right\|_{L^2}.
\end{align*}
Thus the H\"older continuity of $e_{(x_0,t_0)}$ follows from Proposition \ref{prop:rate_conv} and the H\"older continuity of $c_{(x_0,t_0)}$.
\end{proof}

At the end of the section we will show that the regularity free boundary $\mathcal{R}_u$ is locally parabolic H\"older continuous:

\begin{thm}\label{thm:opt_fb2}
Let $u\in \mathcal{G}^{A,f}_p(S_1^+)$. Suppose $p>2(n+2)$. Let $\mathcal{R}_u$ be the regular set of the free boundary. Then $\mathcal{R}_u$ is locally a parabolic Lipschitz continuous graph, i.e. given any $(x_0,t_0)\in \mathcal{R}_u$, there is a small neighborhood $Q'_\delta(x_0,t_0)$ and a function $g:Q_{\delta}''(x_0,t_0)\rightarrow \R$, which is in the parabolic Lipschitz class $H^{1, \frac{1}{2}}$, such that up to a rotation of the coordinates
\begin{align*}
\mathcal{R}_u\cap Q'_{\delta}(x_0,t_0)=\{(x',0, t)\in Q'_{\delta}(x_0,t_0): x_{n-1}=g(x'',t)\}\cap \mathcal{R}_u.
\end{align*}
Moreover, $\nabla''g \in H^{\theta,\theta/2}(Q''_{\delta}(x_0,t_0))$ for some $\theta=\theta(n,p) \in (0,1).$ If instead $f$ satisfies the support condition \eqref{eq:support}. Then, for $p\in (n+2,2(n+2)]$ we have $g \in H^{1,\frac{1+\gamma}{3}}$, with $\gamma=1-\frac{n+2}{p}$, and $\nabla''g \in H^{\theta,\theta/2}(Q''_{\delta}(x_0,t_0))$ for some $\theta=\theta(n,p) \in (0,1).$ 
\end{thm}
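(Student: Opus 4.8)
\textbf{Proof plan for Theorem \ref{thm:opt_fb2}.}

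The strategy is the standard one for deducing graphical regularity of the free boundary from the uniqueness and H\"older continuity of blow-ups, adapted to the parabolic setting as in \cite[Section 12]{DGPT} or \cite{Shi20}. First I would fix $(x_0,t_0)\in \mathcal{R}_u$ and, after the change of variables in Remark \ref{rmk:sigma}, assume $(x_0,t_0)=(0,0)$, $A(0,0)=\Id$, and that the blow-up limit is $u_{(0,0)}=c_0\Ree(x_{n-1}+i|x_n|)^{3/2}$ with $c_0>0$ (so $e_{(0,0)}=e_{n-1}$ after a rotation). The coincidence set of this blow-up is the half-hyperplane $\Lambda_0=\{x_{n-1}\le 0, x_n=0\}$. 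The key analytic input is Corollary \ref{cor:holder_var}: the maps $(y_0,s_0)\mapsto c_{(y_0,s_0)}$ and $(y_0,s_0)\mapsto e_{(y_0,s_0)}$ are parabolic H\"older continuous on $\mathcal{R}_u\cap Q_{1/4}$, with exponent $\theta=\theta(n,p)$. Shrinking to a neighborhood $Q'_\delta(0,0)$, all $e_{(y_0,s_0)}$ stay within a small cone around $e_{n-1}$, so the $(n-1)$-st coordinate is the distinguished direction at every regular point in this neighborhood.

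The core step is a \emph{directional monotonicity / non-degeneracy} estimate: I would show that for every unit vector $\tau$ in the $(n-2)$-plane $\{x_{n-1}=x_n=0\}$ (more precisely, in a small cone $\mathcal{C}$ of directions nearly orthogonal to $e_{n-1}$) and every regular point $(y_0,s_0)$ in the neighborhood, the directional derivative $\partial_\tau u \ge 0$ (or $\le 0$) in a fixed parabolic neighborhood of $(y_0,s_0)$. This follows from the asymptotic expansion: by Proposition \ref{prop:rate_conv}, $u_{(y_0,s_0),r}\to u_{(y_0,s_0)}$ in $L^2(\sqrt G)$ with rate $r^{\sigma_0}$, and $\partial_\tau u_{(y_0,s_0)}=\frac32 c_{(y_0,s_0)}(\tau\cdot e_{(y_0,s_0)})\Ree(\cdots)^{1/2}$ has a strict sign once $\tau\cdot e_{(y_0,s_0)}$ is bounded away from $0$; combined with the up-to-the-boundary $H^{1+\alpha,(1+\alpha)/2}$ gradient estimate of \cite{AU} (uniform on the rescalings) this transfers to $\partial_\tau u$ itself at small but fixed scale. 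Given this, the set $\{x_{n-1}>g(x'',t)\}$ for a suitable $g$ is exactly $\{u>0\}\cap Q'_\delta$ and one reads off: $g$ is parabolic Lipschitz because the level set is trapped between two parabolic cones (the directional monotonicity in the cone $\mathcal{C}$ forces a Lipschitz bound $|g(x'',t)-g(y'',s)|\lesssim |x''-y''|$, and a separate argument using the $3/2$-growth in time — i.e. $|u|\lesssim r^{3/2}$ from Proposition \ref{prop:opt_growth} — gives the $|t-s|^{1/2}$ bound, or $|t-s|^{(1+\gamma)/3}$ in the $f$-supported case with $p\in(n+2,2(n+2)]$ where the growth is only $r^{1+\gamma}$). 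Finally, $\nabla''g\in H^{\theta,\theta/2}$ comes from identifying the normal to $\mathcal{R}_u$ at $(y_0,s_0)$ (within the fixed hyperplane $\{x_n=0\}$) with $e_{(y_0,s_0)}$: indeed the blow-up of the free boundary at $(y_0,s_0)$ is the half-plane orthogonal to $e_{(y_0,s_0)}$, so the spatial gradient of $g$ is a smooth function of $e_{(y_0,s_0)}$, and H\"older continuity of $(y_0,s_0)\mapsto e_{(y_0,s_0)}$ from Corollary \ref{cor:holder_var} yields $\nabla''g\in H^{\theta,\theta/2}$.

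Concretely the steps are: (1) normalize and set up the neighborhood $Q'_\delta(0,0)$ using Corollary \ref{cor:holder_var}; (2) prove uniform (in the regular point and in the small scale) directional monotonicity $\partial_\tau u\ge0$ for $\tau\in\mathcal{C}$, via Proposition \ref{prop:rate_conv}, the explicit form of the blow-up, and the uniform gradient H\"older estimate; (3) deduce that $\{u>0\}\cap Q'_\delta$ is a parabolic-Lipschitz subgraph $\{x_{n-1}>g(x'',t)\}$, using in addition the optimal growth estimate (Proposition \ref{prop:opt_growth}, resp. Corollary \ref{cor:al_opt_infty}) to control the time modulus — giving $g\in H^{1,1/2}$ when $p>2(n+2)$ and $g\in H^{1,(1+\gamma)/3}$ in the $f$-supported low-$p$ case; (4) identify $\nabla''g(x_0'',t_0)$ with (the projection of) $e_{(x_0,t_0)}$ and invoke Corollary \ref{cor:holder_var} for $\nabla''g\in H^{\theta,\theta/2}$. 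The main obstacle I anticipate is step (2): making the directional monotonicity \emph{uniform} over all regular points in the neighborhood and over a fixed (non-shrinking) scale. The rate in Proposition \ref{prop:rate_conv} is only an $L^2(\sqrt G)$ rate, so upgrading it to a pointwise sign of $\partial_\tau u$ requires care — one needs the interior-plus-boundary parabolic regularity estimates applied to the rescalings $u_{(y_0,s_0),r}$ with constants independent of $(y_0,s_0)$ (available since $\|a^{ij}\|_{W^{1,1}_p}$, $\|f\|_{L^p}$, $\|u\|_{L^2}$ are controlled), and one must choose the cone $\mathcal{C}$ and $\delta$ so that the error term $Cr^{\sigma_0}$ (plus the perturbation from $a^{ij}-\delta^{ij}$ and $f$, which is $O(r^\sigma)$) is dominated by the gap $\frac32 c_0|\tau\cdot e_{n-1}|$ at that fixed scale; the bookkeeping of how $\delta$, the cone opening, and the fixed radius interact is the delicate point, exactly as in \cite[Lemma 12.2–Theorem 12.4]{DGPT} and \cite[Section 4]{Shi20}.
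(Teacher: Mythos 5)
Your skeleton (normalize at a regular point, use Corollary \ref{cor:holder_var} and Proposition \ref{prop:rate_conv}, obtain a graph per time slice, get the time modulus from $3/2$-nondegeneracy versus the time-H\"older regularity of $u$, and read off $\nabla''g\in H^{\theta,\theta/2}$ from the H\"older continuity of the normals) is close in spirit to the paper's, but your central step (2) has a genuine gap. You propose to deduce a pointwise sign $\p_\tau u\geq 0$ at a fixed scale, uniformly near the free boundary, from the $L^2(\sqrt G)$ rate of Proposition \ref{prop:rate_conv} plus uniform $H^{1+\alpha,(1+\alpha)/2}$ estimates. This cannot work as stated: the directional derivative of the blow-up, $\p_\tau u_{(y_0,s_0)}=\tfrac32 c(\tau\cdot e)\Ree(x'\cdot e+i x_n)^{1/2}$, itself vanishes (like $\mathrm{dist}^{1/2}$) along the free boundary of the blow-up, which is exactly where you need the sign to conclude the graph property; uniform closeness of gradients only yields positivity of $\p_\tau u$ on compact sets where $\p_\tau u_{(y_0,s_0)}$ is bounded below. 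The classical remedy — the directional monotonicity lemma of \cite{DGPT} (a minimum-principle argument for $\p_e u$, or $\p_e u - Cu$) — requires differentiating the equation, producing source terms $\p_e f$ and $\p_i(\p_e a^{ij}\p_j u)$, which are not available with $f\in L^p$ and $a^{ij}\in W^{1,1}_p$; avoiding exactly this is the point of the paper. (Also, the cone of directions should be a cone \emph{around} $e_{n-1}$, with $\tau\cdot e$ bounded below, not directions "nearly orthogonal to $e_{n-1}$".)

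The paper instead upgrades Proposition \ref{prop:rate_conv} and Corollary \ref{cor:holder_var}, via a compactness argument, to uniform $H^{1,0}(\overline{Q^+_{1/2}})$ convergence of the rescalings $u_{(\bar x,\bar t),r}$ to their blow-ups, uniformly in the regular point, and then proves a two-sided cone condition \eqref{eq:cone}: positivity of $u$ on the cone $\bar x+\mathcal{C}'_\eta(e_{n-1})$ follows from the nondegeneracy $u_{(\bar x,\bar t)}\gtrsim c_{(0,0)}$ on a compact subset of that cone (away from the blow-up's free boundary), while the vanishing of $u$ on the opposite cone follows from the \emph{strict negativity of the Neumann derivative} $\p_n u_{(\bar x,\bar t)}$ in the interior of the blow-up's contact set, transferred to $\p_n u$ by the uniform gradient convergence and combined with the Signorini complementarity $u\,\p_n u=0$. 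Your proposal contains no mechanism at all to show $u\equiv 0$ on the negative side — closeness of $u$ to $u_0=0$ does not give vanishing — so the graph property does not follow from what you have. Finally, for the time regularity you invoke only the upper growth bound $|u|\lesssim r^{3/2}$; what is actually needed is the spatial nondegeneracy $u(x',0,t)\geq c\,(x_{n-1}-g(x'',t))^{3/2}$ (obtained from the blow-up nondegeneracy, Corollary \ref{cor:holder_var} and a scaling argument) played against the time-H\"older estimate $|u(x,t)-u(x,s)|\lesssim |t-s|^{3/4}$ (resp. $|t-s|^{(1+\gamma)/2}$) of Theorem \ref{thm:opt}; the upper bound alone gives nothing for $g$.
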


\begin{proof}
Let $(x_0,t_0)\in \mathcal{R}_u$. Without loss of generality we may assume that $(x_0,t_0)=(0,0)$ and $e_{(x_0,t_0)}=e_{n-1}$, where $e_{n-1}$ denotes the unit vector in $\R^{n-1}$ with all coordinates zero, except for the $(n-1)$-th coordinate. Since $\mathcal{R}_u$ is relatively open, there is $\delta>0$ such that $\Gamma_u^\ast\cap Q'_{\delta}\subset \mathcal{R}_u$. 

First we show that for any $\epsilon>0$, there is a radius $r_\epsilon>0$ such that 
\begin{align}\label{eq:unif}
\|u_{(\bar x, \bar t), r}-u_{(\bar x, \bar t)}\|_{H^{1,0}(\overline{Q_{1/2}^+})}\leq \epsilon
\end{align}
for all $r<r_\epsilon$ and for all $(\bar x, \bar t)\in \Gamma_{u}^\ast\cap Q'_{\delta/2}$. Here $u_{(\bar x, \bar t), r}$ is the $3/2$-homogeneous scaling defined in \eqref{eq:32scaling}.
The proof follows from a compactness argument using Proposition \ref{prop:rate_conv}, Corollary \ref{cor:holder_var} and an interior H\"older estimate. More precisely, assume \eqref{eq:unif} is not true, then one can find a sequence  of regular free boundary points $(\bar x_k, \bar t_k)\in \Gamma_u^\ast \cap Q'_{\delta/2}$, a sequence of radii $r_k\rightarrow 0$ such that 
\begin{align*}
\|u_{(\bar x_k, \bar t_k),r_k}- u_{(\bar x_k, \bar t_k)}\|_{H^{1,0}(\overline{Q_{1/2}^+})}\geq \epsilon_0>0.
\end{align*}
 Clearly, we may assume that $(\bar x_k, \bar t_k) \rightarrow (\bar x_0, \bar t_0)\in \Gamma_u^\ast \cap \overline{Q'_{\delta/2}}.$ Thus, using Corollary \ref{cor:holder_var}, we obtain 
\begin{align}\label{eq:contuni}
\|u_{(\bar x_k, \bar t_k),r_k}- u_{(\bar x_0, \bar t_0)}\|_{H^{1,0}({Q_{1/2}^+})}\geq \epsilon_0/2.
\end{align} 
By Proposition \ref{prop:rate_conv} and Lemma \ref{lem:linfty}, $u_{(\bar x_k, \bar t_k),r_k}$ are uniformly bounded in $L^\infty({Q_{3/4}^+})$ and solve the Signorini problem with inhomogeneities uniformly bounded in $L^p$. By the interior H\"older estimate we have that the sequence $u_{(\bar x_k, \bar t_k), r_k}$ are uniformly bounded in the parabolic H\"older space $H^{1+\alpha,\alpha}(\overline{Q^+_{1/2}})$ for some $\alpha>0$. Thus up to a subsequence (not relabeled) $\|u_{(\bar x_k, \bar t_k), r_k}-\tilde{u}_0\|_{H^{1,0}(\overline{Q_{1/2}^+})}\rightarrow 0$ for some $\bar u_0$. On the other hand, by Proposition \ref{prop:rate_conv} and Corollary \ref{cor:holder_var}, 
\begin{align*}
\|(u_{(\bar x_k \bar t_k), r_k}-u_{(\bar x_0, \bar t_0)})\sqrt{G}\|_{L^2(S_1^+)}\rightarrow 0.
\end{align*}
This implies that $\bar u_0 = u_{(\bar x_0, \bar t_0)}$ in $\overline{B_{1/2}^+}\times [-\delta, -1]$ for any $\delta>0$. This is a contradiction to \eqref{eq:contuni}.


We now show that by taking $\delta$ possibly smaller depending on $(x_0,t_0)$, $\Gamma^\ast_u\cap Q'_{\delta}$ is a graph, i.e.  $\Gamma^\ast_u\cap Q'_{\delta}=\{(x',0, t)\in Q'_{\delta}: x_{n-1}=g(x'',t)\}$ for some function $g$.  For that we will show for any $\eta>0$ small, there is $r_\eta>0$ such that for each $(\bar x, \bar t)\in \Gamma_u^\ast\cap Q'_{\delta}$ one has 
\begin{equation}\label{eq:cone}
\begin{split}
\bar x+\left(\mathcal{C}'_{\eta}(e_{n-1})\cap B'_{r_\eta}\right)&\subset \{u(\cdot,\bar t)>0\},\\
\bar x-\left(\mathcal{C}'_{\eta}(e_{n-1})\cap B'_{r_\eta}\right)&\subset \{u(\cdot,\bar t)=0\},
\end{split}
\end{equation}
 where, for given $e \in \mathbb{S}^{n-1} \cap \{x_n=0\}$, $C'_\eta(e)$ is the space cone
$$\mathcal{C}'_{\eta}(e):=\{x'\in \R^{n-1}: x'\cdot e \geq \eta |x'|\}.$$
Indeed, 
from the explicit expression for $u_{(\bar x, \bar t)}$ and the H\"older continuity of $(\bar x, \bar t)\mapsto c_{(\bar x, \bar t)}$, we know that if $\delta=\delta(0,0)>0$ is chosen small, then for any $(\bar x,\bar t)\in \mathcal{R}_u\cap Q'_{\delta}$ and any $\eta\in (0,1/8)$ we have \begin{equation}\label{eq:blowup-nondeg}
u_{(\bar x, \bar t)}\gtrsim c_{(\bar x, \bar t)}\gtrsim \frac{1}{2}c_{(0,0)} \text{ in } K_\eta:=\mathcal{C}'_{\eta}(e_{(\bar x, \bar t)})\cap \p B'_{1/2}.
\end{equation}
 From the uniform convergence of $u_{(\bar x, \bar t), r}$ to $u_{(\bar x, \bar t)}$, cf. \eqref{eq:unif}, there is $r_\eta>0$ such that $u_{(\bar x, \bar t), r}>0$ in $K_\eta$ if $r<r_\eta$. Scaling back this yields that $\bar x+ \left(\mathcal{C}'_{\eta}(e_{(\bar x, \bar t)})\cap B'_{r_\eta}\right)\subset \{u(\cdot,\bar t)>0\}$ for a possibly bigger $\eta$ which differs by an absolute multiplicative constant. Finally, we use Corollary \ref{cor:holder_var} to get the first inclusion in \eqref{eq:cone}.

Using the strict negativity of the Neumann derivative $\p_n u_{(\bar x, \bar t)}$ in the interior of the contact set $\{(x',t):u_{(\bar x, \bar t)}(x',t)=0\}$ and the uniform convergence of the Neumann derivatives, we can argue as above and obtain the interior cone condition for $\{u(\cdot, \bar t)=0\}$. This completes the proof for \eqref{eq:cone}. 

The cone condition \eqref{eq:cone} implies that for each $t\in (-\frac{\delta^2}{4}, 0]$ the set $\mathcal{R}_u\cap (B_{\delta/2}\times \{t\})$ is a Lipschitz graph, i.e. up to a spacial rotation of the coordinates  $\mathcal{R}_u\cap Q'_{\delta/2}=\{(x',t)\in Q'_{\delta/2}: x_{n-1}=g(x'', t)\}$ for some function $g$ which is Lipschitz in $x''$. Moreover, we have that for all $(\bar x, \bar t) \in Q'_{\delta/2},$
\begin{equation}\label{eq:graph}
\begin{split}
\{x_{n-1}>g(x'', \bar t)\} \cap B_{\delta/2}'(\bar x)&= \{u(\cdot,\bar t)>0\}\cap B_{\delta/2}'(\bar x),\\
\{x_{n-1}\le g(x'', \bar t)\} \cap B_{\delta/2}'(\bar x)&= \{u(\cdot,\bar t)=0\}\cap B_{\delta/2}'(\bar x).
\end{split}
\end{equation}
By letting $\eta\rightarrow 0$ (thus $r_\eta\rightarrow 0$) in  \eqref{eq:cone} we have that 
$g(\cdot, t)$ is differentiable at each point $x''$ with normal vector $e_{(x'',g(x'',t),t)}$  or equivalently the unit normal $\nu(x'';t)=(\nabla''g, -1)/\sqrt{1+|\nabla''g|^2}$ exists for each $x''$. As the normals varies H\"older continuously, cf. Corollary \ref{cor:holder_var}, we conclude that $\nabla'' g\in H^{\theta,\frac{\theta}{2}}$ for some $\theta\in (0,1)$.

The regularity of $g$ in $t$ follows from the nondegeneracy estimate
\begin{align}\label{eq:nondeg_t}
u(x',0,t)\geq c \left(x_{n-1}-g(x'',t)\right)^{\frac{3}{2}}, 
\end{align}
for all $(x',0,t)\in Q'_{\delta/2}$ and $x_{n-1}>g(x'',t)$, and the positive constant $c$ is independent of $t$. This can be shown using \eqref{eq:blowup-nondeg} and Corollary \ref{cor:holder_var}, along with a scaling argument as in \cite[ p.23]{BDGP20}). Indeed using Corollary \ref{cor:holder_var} $\mathcal{C}'_{2\eta}(e_{n-1})\cap \p B'_{1/2} \subset \mathcal{C}'_{\eta}(e_{(\bar x, \bar t)})\cap \p B'_{1/2}.$ Now we use \eqref{eq:unif} and \eqref{eq:blowup-nondeg} to get 
\begin{equation}\label{eq:blowup-nondeg2}
u_{(\bar x, \bar t),r}\left(\frac12e_{n-1},0,0\right)\gtrsim \frac{1}{2}c_{(0,0)},
\end{equation} 
for all $(\bar x, \bar t) \in \mathcal{R}_u \cap Q'_{\delta}$ and $r$ small depending on $c_{(0,0)}$. For possibly smaller $\delta$, let $(x',0,t)\in Q'_{\delta/2}$  with $x_{n-1}>g(x'',t).$ We now take $r=2(x_{n-1}-g(x'',t))$ , $(\bar x, \bar t)=(x'',g(x'',t),t)$ in \eqref{eq:blowup-nondeg2} and scale back to $u$  to conclude \eqref{eq:nondeg_t}.
With the nondegeneracy estimate \eqref{eq:nondeg_t} at hand,  by the similar argument as in  step 2 in the proof for \cite[Theorem 4.22]{BDGP20}, we conclude that $g \in H^{1,\frac12}$ for $p>2(n+2)$.

In the end we show the H\"older $\frac{1}{2}$- regularity of $g$ in $t$. Applying  \eqref{eq:nondeg_t} to $u(x',0,s)$ and the regularity in $t$ of $u$ in Theorem \ref{thm:opt}, we have 
$$u(x',0,t) \geq c |(x_{n-1}- g(x'',s)|^{\frac{3}{2}} - C |t-s|^{\frac{3}{4}}$$
for all $(x',0,s)\in Q'_{\delta/2}$ such that $x_{n-1}>g(x'',s)$ and for all $t\in [-\delta^2/4,0]$. Taking $r:=x_{n-1}- g(x'',s)>0$, we get from the above inequality 
\begin{align}\label{eq:pos}
u(x'', g(x'',s)+ r,0, t) \geq c r^{\frac{3}{2}}- C |t-s|^{\frac{3}{4}} >0
\end{align}
provided that  $C |t -s|^{\frac{3}{4}} \leq c r^{\frac{3}{2}}$.
Notice that  \eqref{eq:pos}  and   \eqref{eq:graph}  imply that 
$g(x'',t)< x_{n-1}= g(x'',s) + r.$
By choosing $r$ such that
$c r^{\frac{3}{2}} = 2 C|t- s|^{\frac{3}{4}},$
we find that
\begin{align*}
g(x'',t)< g(x'',s)+ C |t -s|^{\frac{1}{2}}
\end{align*}
for a possibly different $C$. Interchanging  $t$ and $s$, we thus conclude
$$|g(x'',t) - g(x'',s)| < C |t -s|^{\frac{1}{2}}.$$
Now for $p>n+2$ and $f$ satisfies \eqref{eq:support}, we use Remark \ref{rmk:opt_h} and proceed similarly to conclude that $g \in H^{1,\frac{1+\gamma}{3}}$ with $\gamma=1-\frac{n+2}{p}.$
This completes the proof of the theorem.
\end{proof}

\appendix 

\section{Appendix}
\subsection{Reduction to off-diagonal}
\begin{prop}\label{prop:off_diag}
Let $u\in W^{1,1}_2(Q_1^+)$, $\nabla u\in L^\infty(Q_1^+)$ be a solution of \eqref{eq:main} with $f\in L^p(Q_1^+)$, the coefficients $a^{ij}\in W^{1,1}_p(Q_1^+)$, $p>n+2$, and $a^{ij}$ being uniformly elliptic. Then there exists an open neighborhood $U$ of $Q_{1/2}'$ in $Q_1^+$ and a $W^{2,2}_p$ diffeomorphism $T:U \rightarrow T(U)$ with  $T(x,t)=(T_1(x,t),...,T_{n-1}(x,t),x_n,t)=:(y,\tau)$, such that if $v=u \circ T^{-1}$, then $v$ solves
\begin{equation*}
\begin{split}
\p_{\tau} v - \p_i(b^{ij}(y,\tau)\p_j v)= \tilde{f} &\text{ in } T(U)\cap\{y_n>0\},\\
v\geq 0, \quad \p_{n} v \geq 0,\quad v\p_{n} v=0 &\text{ on } T(U)\cap\{y_n=0\},
\end{split}
\end{equation*}
where $\tilde{f} \in L^p(T(U))$ and $(b^{ij})\in W^{1,1}_p(T(U))$ with $b^{nj}(y',0,\tau)=0$ on $T(U)\cap \{y_n=0\}$, $j\in \{1,\cdots, n-1\}$.
\end{prop}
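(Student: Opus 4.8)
The plan is to straighten the conormal field associated to $A$ at the flat boundary $\{x_n=0\}$ by a time-dependent change of variables that fixes the $x_n$-coordinate and the time. Writing $\p^A_\nu u = a^{ij}\nu_i\p_j u$ with $\nu = -e_n$ on $Q'_1$, the conormal derivative is $-a^{nj}\p_j u = -a^{nn}\p_n u - \sum_{j<n}a^{nj}\p_j u$; the mixed terms $a^{nj}$, $j<n$, are exactly what we must kill. The idea, following the classical flattening trick (cf. the discussion of assumption (iii) in the introduction), is to solve, for each fixed $t$, a first-order system of ODEs whose flow transports the ``bad'' tangential directions into the level sets of the new coordinates. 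Concretely, I would look for $T$ of the form $T_i(x,t) = x_i + x_n\, h_i(x',t) + O(x_n^2)$ for $i<n$, where $h_i$ is chosen so that the leading-order mixed coefficient of the transformed operator vanishes on $\{y_n=0\}$; a direct computation of how $a^{ij}$ transforms under a diffeomorphism shows that the new mixed coefficient on the boundary is a linear combination of $a^{in}(x',0,t)$ and $\partial_n T_i = h_i$, so one solves the resulting linear algebraic relation $h_i = -(a^{nn})^{-1}a^{in}(x',0,t)$ (using uniform ellipticity so $a^{nn}\neq 0$) and extends $h_i$ smoothly (or with the required Sobolev regularity) into the interior.

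The key steps, in order: (1) Define $h_i(x',t) := -(a^{nn}(x',0,t))^{-1}a^{in}(x',0,t)$ for $i\in\{1,\dots,n-1\}$; since $a^{ij}\in W^{1,1}_p$ and by Sobolev embedding $a^{ij}$ is H\"older, the trace of $a^{ij}$ on $Q'_1$ lies in an appropriate Sobolev/H\"older space, and $a^{nn}$ is bounded below by ellipticity, so $h_i$ inherits $W^{1,1}_p$-type regularity on $Q'_1$. (2) Extend each $h_i$ to a neighborhood $U$ of $Q'_{1/2}$ in $Q_1^+$ with the same regularity (a standard extension), and set $T(x,t) := (x_1 + x_n h_1, \dots, x_{n-1}+x_n h_{n-1}, x_n, t)$. (3) Check that $DT = \mathrm{Id} + x_n(\text{bounded})$, so for $U$ small enough in the $x_n$-direction $T$ is a $W^{2,1}_p$ (hence $C^1$) diffeomorphism onto its image, preserving $\{y_n>0\}$ and $\{y_n=0\}$ and acting as the identity there. (4) Transform the equation: with $v = u\circ T^{-1}$, the operator $\p_t - \p_i(a^{ij}\p_j\cdot)$ becomes $\p_\tau - \p_i(b^{ij}\p_j\cdot)$ plus lower-order terms coming from $\p_t T$ and second derivatives of $T$; collect these into $\tilde f\in L^p$, using $\nabla u\in L^\infty$, $f\in L^p$, and the $W^{1,1}_p$ regularity of $T$ and of $a^{ij}$. (5) Verify $b^{ij}\in W^{1,1}_p(T(U))$ (products and compositions of $W^{1,1}_p\cap L^\infty$ functions with a $C^1$ diffeomorphism, using Sobolev multiplication in the supercritical range $p>n+2$) and, crucially, that the boundary mixed coefficients $b^{nj}(y',0,\tau)$ vanish — this is precisely the algebraic identity enforced by the choice of $h_i$. (6) Check the boundary conditions: since $T$ is the identity on $\{x_n=0\}$ and $\p_n$ is unchanged up to a positive factor and tangential corrections that annihilate on the coincidence set, the Signorini conditions $v\geq 0$, $\p_\nu^B v\geq 0$, $v\p_\nu^B v = 0$ transform correctly, and with $b^{nj}(y',0,\tau)=0$ the conormal derivative reduces to $b^{nn}\p_n v$, so after dividing by the positive factor $b^{nn}$ one gets $\p_n v\geq 0$.

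I expect the main obstacle to be step (4)–(5): verifying that the transformed inhomogeneity is genuinely in $L^p$ and the transformed coefficients genuinely in $W^{1,1}_p$ with the correct boundary vanishing, since the change of variables $T$ only has $W^{1,1}_p$ (not smooth) coefficients, so one must be careful that differentiating $b^{ij} = \sum_{k,l}\partial_k T_i\, a^{kl}(T^{-1})\,\partial_l T_j \,|\det DT|^{-1}\cdot(\dots)$ and the drift terms $\partial_t T$ land in $L^p$. Here the supercritical assumption $p>n+2$ is essential: it makes $W^{1,1}_p$ an algebra stable under multiplication and composition with $C^1$ maps, and it guarantees $\nabla u\in L^\infty$ so that the first-order terms produced by flattening (which involve $\nabla u$ times first derivatives of $T$) are controlled in $L^p$. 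A secondary technical point is the precise regularity of the trace of $a^{ij}$ on $Q'_1$ and its extension; one should track that the boundary value $h_i$, once extended, contributes only $W^{1,1}_p$ data, which suffices. Everything else — the diffeomorphism property for small $U$, preservation of half-space and the Signorini structure — is routine.
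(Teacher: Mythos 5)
Your overall strategy is the same as the paper's: you kill the mixed coefficients by forcing $\p_n T_i=-(a^{nn})^{-1}a^{in}(x',0,t)$ on $\{x_n=0\}$ while keeping $T$ equal to the identity in the $x_n$- and $t$-slots, and the algebraic verification that then $b^{nl}(y',0,\tau)=\sum_j a^{nj}\p_{x_j}T_l=a^{nl}-a^{nl}=0$, the inverse-function-theorem step (using $\det JT=1$ on $Q'_{1/2}$), the use of $\nabla u\in L^\infty$ to put the flattening terms into $\tilde f\in L^p$, and the transformation of the Signorini conditions are all exactly as in the paper. The genuine gap is in how you build $T$. Your ansatz $T_i=x_i+x_n h_i$ with $h_i:=-(a^{nn})^{-1}a^{in}(x',0,t)$ and a ``standard extension'' does not deliver the regularity the statement (and the rest of your argument) needs. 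First, $h_i$ is only the \emph{trace} of a $W^{1,1}_p$ function, so it lies in the fractional space $W^{1-1/p,1-1/p}_p$ of $Q'_1$, not in $W^{1,1}_p$; in particular, if you extend $h_i$ constantly in $x_n$, then $\p_tT_i=x_n\p_th_i$ is not even an $L^p$ function ($\p_ta^{ij}$ has no $L^p$ trace), and if you extend $h_i$ to a bulk $W^{1,1}_p$ function $\tilde h_i$, then $D^2_xT_i$ contains $x_n D^2_{x'}\tilde h_i$ and $\p_t\nabla T_i$ contains $x_n\p_t\nabla\tilde h_i$, which are not controlled. These second derivatives are not dispensable: $\tilde f$ contains $a^{ij}\p_{x_ix_j}T_k$ (times bounded $\nabla u$) and $\p_tT_k$ terms, and $b^{kl}=a^{ij}\p_iT_k\p_jT_l\circ T^{-1}\in W^{1,1}_p$ forces $DT\in W^{1,1}_p$. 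So your closing remark that the extended $h_i$ ``contributes only $W^{1,1}_p$ data, which suffices'' is precisely where the argument breaks.

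The repair is the paper's construction: note that $(a^{nn})^{-1}a^{nj}\in W^{1,1}_p(\R^n_+\times\R)$ (after a Sobolev extension), so its boundary trace lies in $W^{1-1/p,1-1/p}_p(\R^{n-1}\times\R)$, and then invoke the inverse trace (lifting) theorem to produce directly $T_j\in W^{2,2}_p(\R^n_+\times\R)$ with the prescribed $1$-jet on the boundary, namely $T_j(x',0,t)=x_j\phi(x_j)$ and $\p_nT_j(x',0,t)=-(a^{nn})^{-1}a^{nj}(x',0,t)$, with $\|T_j\|_{W^{2,2}_p}$ controlled by $\|(a^{nn})^{-1}a^{nj}\|_{W^{1,1}_p}+1$. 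With this $T$ all of your steps (3)--(6) go through verbatim, and both $\tilde f\in L^p$ and $b^{ij}\in W^{1,1}_p$ follow since the full second-order (in $x$) and first-order (in $t$) derivatives of $T$ are genuinely in $L^p$. In short: right idea and right boundary identity, but the explicit product ansatz must be replaced by the trace-lifting construction to obtain the claimed $W^{2,2}_p$ diffeomorphism.
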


\begin{proof}
The proof follows along the same line as in \cite[Page 1183]{Ural89}. Notice that due to the uniform ellipticity $\frac{a^{nj}}{a^{nn}} \in W^{1,1}_p(Q_1^+)$, $j\in \{1,\cdots, n-1\}$. Without loss of generality, in view of Sobolev extension, we can assume that $\frac{a^{nj}}{a^{nn}} \in W^{1,1}_p(\R^{n}_+ \times \R)$. By the trace theorem, $\frac{a^{nj}}{a^{nn}} \in W^{1-1/p,1-1/p}_p(\R^{n-1}\times \R).$ Here, $W^{1-1/p,1-1/p}_p(\R^{n-1}\times \R)$ denotes the fractional Sobolev space $W^{1-1/p,p}(\R^n)$.

 Let $\phi:\R \rightarrow \R$ be a smooth cut-off function such that $\phi=1$ in $(-1,1)$ and $\phi=0$ in $\R \setminus (-2,2)$. There exist $T_j:\R^n_+\times \R\rightarrow \R$, $j\in \{1,\cdots, n-1\}$, such that
\begin{align}\label{eq:def_T}
T_j(x',0,t)=x_j\phi(x_j) \;\;\;\text{and} \;\;\; \p_nT_j(x',0,t)=-\frac{a^{nj}}{a^{nn}}(x',0,t).
\end{align}
 Additionally, by the trace theorem $T_j \in W^{2,2}_p(\R^{n}_+\times \R)$ with 
\begin{align*}
\|T_j\|_{W^{2,2}_p} &\lesssim \|(a^{nn})^{-1}a^{nj}\|_{W^{1-1/p,1-1/p}_p(\R^{n-1}\times \R)}+1\\
& \lesssim \|(a^{nn})^{-1}a^{nj}\|_{W^{1,1}_p(\R^{n}_+\times \R)}+1\leq C,
\end{align*}
where $C$ depends on $\|a^{ij}\|_{W^{1,1}_p(Q_1^+)}$ and the ellipticity constants. 
We now define $T\in W^{2,2}_p(\R^{n}_+\times \R ;\R^{n+1})$ as follows:
$$T(x',x_n,t)=(T_1(x',x_n,t),...,T_{n-1}(x',x_n,t),x_n,t).$$
Notice that $T \in H^{1+\alpha,1+\alpha}(\R^{n}_+\times \R;\R^{n+1})$ with $\alpha:=1-\frac{n+1}{p}$ by the Sobolev embedding.  Also, it is easy to see that $\det(JT)(x',0,t)=1$ for $(x',t) \in Q_{1}'.$ Therefore by inverse function theorem there exists a open neighborhood $U$ in $Q_1^+$ of $Q_{1/2}'$ such that $T$ is a $W^{2,2}_p$ diffeomorphism. From now on we denote $T(x,t)=:(y,\tau)$. We now define $v(y,\tau):=u(T^{-1}(y,\tau))$ and $ b^{kl}(y,\tau)=\sum a^{ij}(\p_{x_i}T_k) (\p_{x_j}T_l)(T^{-1}(y,\tau)).$ Then it is straightforward to check that
\begin{align*}
\p_k(b^{k l}\p_l v) -\p_{\tau}v=\tilde{f} \;\;\;\text{in}\;\;\;T(U)\cap\{y_n>0\},
\end{align*}
where $\tilde{f}=\left(f+\sum a^{ij}(\p_{x_ix_j}T_k) (\p_{x_j}u \p_{y_k}x_j)-\sum (\p_{x_j}u \p_{y_k}x_j)(\p_{t}y_k)\right)(T^{-1}(y,\tau)).$\\
It is straightforward that $T$ preserves the uniform ellipticity and the regularity of the coefficients. Furthermore, 
using $\nabla_x u \in L^\infty$ and the regularity of $T$, we have that $\tilde{f}\in L^p(T(U))$ if $f\in L^p(Q_1^+)$.  It remains to check that $(b^{k\ell})$ satisfies the off-diagonal condition. Indeed, by \eqref{eq:def_T} we have that for $i<n$, $\p_{x_i}T_k =\delta_{ik}$ on $Q'_1$. Consequently, we have for $l<n$
\begin{align*}
b^{nl}(y',0,\tau)&=\sum a^{ij}(\p_{x_i}T_n) (\p_{x_j}T_l)(T^{-1}(y,\tau))\\
&=\sum_{j<n} a^{ij}\delta_{ni} \delta_{lj}(T^{-1}(y,\tau))-\sum a^{in}\delta_{ni} \frac{a^{nl}}{a^{nn}}(T^{-1}(y,\tau))\\
&=a^{nl}-a^{nl}=0,
\end{align*}
where we have used \eqref{eq:def_T}. This completes the proof.
\end{proof}

\subsection{Parabolic $L^\infty-L^2$ estimate}
We first prove the following sub-mean value property for subsolutions:
\begin{lem}\label{lem:sub_mean_value}
Let $u:\R^n\times [s,s+T]\rightarrow \R_+$, $s\in \R$, $T>0$, be a nonnegative function with Tychonoff type growth condition 
\begin{align}\label{eq:T}
\int_{s}^{s+T}\int_{\R^n} \left(|\nabla u(x,t)|^2+u(x,t)^2\right) e^{-\alpha_0|x|^2} \ dxdt<\infty
\end{align}
and satisfies 
\begin{align*}
\di(A\nabla u)-\p_t u \geq -|f|
\end{align*}
for some $f \in L^p(\R^n\times [s,s+T])$, $p>n+2$, where $A=A(x,t)$ is a $n\times n$-symmetric positive definite matrix with 
\begin{align*}
 \lambda^{-1} |\xi|^2\leq \langle A\xi, \xi\rangle\leq \lambda |\xi|^2,\quad \forall \xi\in \R^{n+1}
\end{align*}
for some $\lambda>0$ and $A$ is constant outside a compact set. Then there exists $t_0\in (0,T]$ depending on $\alpha_0, \lambda$ ($t_0=T$ if $\alpha_0=\alpha_0(\lambda)$ is sufficiently small), such that for all $(x,t)\in \R^n\times (s,s+t_0)$,
\begin{align}\label{eq:sub_mean}
u(x,t) \le \int_{\R^n} u(y,s) \G(x,y;s,t)\ dy +\int_{s}^t \int_{\R^n}|f(y,\tau)|\G(x,y;\tau,t)\ dy d\tau,
\end{align}
where $\G$ is the fundamental solution to the operator $\di (A\nabla)-\p_t$. 
\end{lem}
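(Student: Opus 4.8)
The plan is to reduce \eqref{eq:sub_mean} to a comparison argument with an explicit supersolution, following the classical Moser/Aronson heat-kernel-comparison strategy but carefully tracking the Tychonoff growth \eqref{eq:T}. First I would introduce the candidate majorant
\[
v(x,t):=\int_{\R^n} u(y,s)\,\G(x,y;s,t)\,dy+\int_s^t\int_{\R^n}|f(y,\tau)|\,\G(x,y;\tau,t)\,dy\,d\tau,
\]
and observe, using the Gaussian bounds for $\G$ (Aronson's estimates, available since $A$ is uniformly elliptic and constant outside a compact set), together with \eqref{eq:T} and $f\in L^p$ with $p>n+2$, that $v$ is well-defined and finite on a strip $\R^n\times(s,s+t_0)$, where $t_0$ is chosen so that the Gaussian weight $e^{-\sigma|x-y|^2/(4\lambda(t-s))}$ beats the growth weight $e^{\alpha_0|y|^2}$ in \eqref{eq:T}; this is exactly where the restriction on $t_0$ (resp.\ the smallness of $\alpha_0$) enters. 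By construction $v$ solves $\di(A\nabla v)-\partial_t v=-|f|$ in $\R^n\times(s,s+t_0)$ with initial data $u(\cdot,s)$, so $w:=u-v$ is a weak subsolution, $\di(A\nabla w)-\partial_t w\ge 0$, with $w(\cdot,s)\le 0$ and with the same kind of growth control as in \eqref{eq:T} (since both $u$ and $v$ satisfy it).

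The core of the argument is then a Phragmén--Lindelöf / uniqueness-in-the-Tychonoff-class statement: a subsolution $w$ of the uniformly parabolic equation $\di(A\nabla w)-\partial_t w\ge 0$ on $\R^n\times(s,s+t_0)$ with $w(\cdot,s)\le 0$ and growth $\int e^{-\alpha_0|x|^2}(w^2+|\nabla w|^2)<\infty$ must be $\le 0$. I would prove this by the standard energy argument: test the subsolution inequality against $w_+\,\zeta_R^2\,e^{-\gamma|x|^2/(t_1-t)}$ for a cutoff $\zeta_R$ supported in $B_{2R}$, with $\gamma$ small depending on $\lambda$; after an integration by parts and absorbing the gradient terms via Young's inequality, one obtains a Gronwall-type inequality for $\int w_+^2 e^{-\gamma|x|^2/(t_1-t)}\zeta_R^2$ whose boundary-in-$R$ contributions vanish as $R\to\infty$ precisely because of \eqref{eq:T} and the choice of $t_0$ (so that $\gamma/(t_1-s)>\alpha_0$). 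Since the initial trace is nonpositive, Gronwall forces $w_+\equiv 0$ on $\R^n\times(s,s+t_0)$, which is \eqref{eq:sub_mean}. An alternative I'd keep in reserve is to compare directly with $C e^{\beta|x|^2/(t_0-t)}$ as a classical supersolution on the region where $\alpha_0$-growth is dominated, but the weighted-energy route is cleaner and matches the hypotheses as stated.

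The main obstacle I anticipate is \emph{not} the comparison principle per se but the bookkeeping needed to show $v$ is finite and is a genuine (weak) solution of the inhomogeneous problem with the stated initial data, with a growth bound of the same exponential type as \eqref{eq:T}: this requires Aronson's two-sided Gaussian bounds for $\G$ — valid here since $A$ is measurable, uniformly elliptic, and constant outside a compact set — and a careful splitting of the $y$-integral into the region $|y|\lesssim |x|$ and its complement, together with Hölder's inequality in the $f$-term using $p>n+2$ to control $\big\|\G(x,\cdot;\cdot,t)\big\|_{L^{p'}}$. Once these estimates are in place, the choice of $t_0$ (depending on $\alpha_0$ and $\lambda$, with $t_0=T$ when $\alpha_0$ is small relative to $\lambda$) is dictated by requiring the Gaussian decay exponent to exceed $\alpha_0$ uniformly on $(s,s+t_0)$, and the rest is the routine weighted-energy computation sketched above, which I would not write out in full.
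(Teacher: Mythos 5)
Your proposal is correct in outline but proceeds along a genuinely different route from the paper. You construct the global majorant $v$ (the right-hand side of \eqref{eq:sub_mean}), subtract, and reduce to a Phragm\'en--Lindel\"of/uniqueness statement in the Tychonoff class, proved by a weighted energy--Gronwall argument with weight $e^{-\gamma|x|^2/(t_1-t)}$; the constraint $\gamma\lesssim\lambda^{-1}$ (to absorb the cross terms) together with $\gamma/(t_1-s)>\alpha_0$ reproduces exactly the threshold $t_0\sim(\lambda\alpha_0)^{-1}$ of the lemma, so the mechanics are sound. The paper instead never introduces $v$ as a solution: it cuts off $u$ in space, writes the inequality satisfied by $u\zeta_R$, applies the comparison principle on the bounded cylinder $B_{R+1}\times[s,s+T]$ against the representation by $\G$, and then sends $R\to\infty$, using only the Gaussian upper bound on $\G$ plus \eqref{eq:T} and Cauchy--Schwarz to kill the commutator terms supported in $B_{R+1}\setminus B_R$ (this is where $t_0=\min\{\tfrac{1}{8\lambda\alpha_0},T\}$ appears). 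The paper's route is more economical: it needs nothing about $v$ beyond the convergence of the integrals in \eqref{eq:sub_mean}, and in particular nothing about $\nabla v$. Your route buys a clean, reusable uniqueness-class theorem, but the bookkeeping you flag is where the real work sits, and one point deserves emphasis: the assertion that $w=u-v$ has ``the same kind of growth control as in \eqref{eq:T}'' is not automatic, since \eqref{eq:T} is a hypothesis on $u$ only, the growth exponent of $v$ deteriorates as $t-s$ approaches $(2\lambda\alpha_0)^{-1}$, and pointwise Gaussian bounds on $\nabla_x\G$ are not available for merely bounded measurable $A$ (Aronson controls $\G$, not its gradient). You must therefore either derive a weighted Caccioppoli inequality for $v$ to control $\nabla v$ in the weighted $L^2$ sense, or arrange the energy argument so that only $w_+$ itself (with Gaussian weight) enters the boundary-in-$R$ terms, absorbing all gradient contributions before letting $R\to\infty$. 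With that repair spelled out, your argument closes and yields the same statement, with a possibly slightly smaller admissible $t_0$ than the paper's explicit constant.
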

\begin{proof}
We start by taking a cut-off function in space $\zeta_R \in C_c^{\infty}(B_{R+1})$ such that $0 \le \zeta_R \le 1$ and $\zeta_R\equiv 1$ in $B_R.$ Then, $w=u\zeta_R$ solves 
\begin{align*}
\operatorname{div}(A \nabla w)- w_t \ge -|f| \zeta_R + \di(A \nabla \zeta_R) u  + 2\langle A \nabla u, \nabla \zeta_R\rangle.
\end{align*}
We now use the comparison principle in $B_{R+1}\times [s, s+T]$ to get, for all $(x,t)\in B_{R+1}\times [s,s+T]$,
\begin{align*}
w(x,t) &\le \int_{\R^n} u(y,s)\zeta_R \G(x,y;s,t)\ dy +\int_{s}^t \int_{\R^n}|f(y,\tau)|\G(x,y;\tau,t)\ dy d\tau\\
&+\int_{s}^t \int_{\R^n}|u\di(A \nabla \zeta_R) + 2\langle A \nabla u, \nabla \zeta_R\rangle|\G(x,y;\tau,t)\ dy d\tau.
\end{align*}
Note that due to the support assumption of $\nabla A$ and $\nabla\zeta_R$, to show the sub-mean value property, it suffices to show that there exists $t_0$ such that for any $t\in (s,s+t_0)$,
\begin{align}\label{eq:Rinfty}
\int_{s}^t \int_{B_{R+1}\setminus B_R}\left(|u\di( \nabla \zeta_R)|+ |\langle \nabla u, \nabla \zeta_R\rangle|\right)\G(x,y;\tau,t)\ dy d\tau \to 0,\quad R\rightarrow \infty.
\end{align}
From the estimate for the fundamental solution, cf. \cite[Chapter 1]{F64}, there is a constant $K>0$ depending only on $n$ such that given any $(x,t)$ with $\tau<t$,
\begin{align}\label{eq:sub_mp1}
\G(x,y;\tau,t) \le \frac{K}{(4\pi (t-\tau))^{n/2}}e^{-\frac{|x-y|^2}{4\lambda(t-\tau)}}.
\end{align} 
For $(x,t)$ fixed, if $R\gg 2|x|$, then $|x-y|\geq \frac{1}{2}|y|$ for each $y\in B_{R+1}\setminus B_R$. Thus with $t_0=\min\{\frac{1}{8\lambda\alpha_0},T\}$ we have
$
e^{-\frac{|x-y|^2}{4\lambda(t-\tau)}}\leq e^{-\alpha_0|y|^2}$ for any  $(x,t)\in \R^n\times (s,s+t_0)$ and $\tau\in (s,t)$. 
This combined with the growth assumption of $u$, cf. \eqref{eq:T}, and Cauchy-Schwarz  yields that 
\begin{align*}
\int_s^t\int_{B_{R+1}\setminus B_R}|u\di(\nabla \zeta_R)|\G(x,y;\tau, t)\ dyd\tau\rightarrow 0 \text{ as } R\rightarrow \infty.
\end{align*}
The proof for the gradient term in \eqref{eq:Rinfty} follows by the same argument.
\end{proof}

\begin{rmk}
The growth condition \eqref{eq:T} is not only on $u$ but also on $\nabla u$. However, this is not restrictive, since the bound on $|\nabla u|$ can be obtained via a global (or local) energy estimate for subsolutions. More precisely, if $u$ satisfies
\begin{align*}
\int_s^{s+T}\int_{\R^n}|u(x,t)|^2 e^{-2\alpha_0|x|^2}\ dxdt+\int_{\R^n} |u(x,s)|^2 e^{-2\alpha_0|x|^2}\ dx<\infty,
\end{align*}
then by plugging in $\eta=ue^{-2\alpha_0|x|^2}$ in the weak formulation $\int_s^{s+T}\int_{\R^n} A\nabla u\cdot\nabla \eta + \p_t u \eta\leq \int_s^{s+T} \int_{\R^n} |f| \eta$, we can obtain
 that $\int_s^{s+T}\int_{\R^n}|\nabla u|^2 e^{-\alpha_0|x|^2}\leq \int_s^{s+T}\int_{\R^n}(|u(x,t)|^2 +|f|^2)e^{-2\alpha_0|x|^2}+\int_{\R^n} |u(x,s)|^2 e^{-2\alpha_0|x|^2}$.
\end{rmk}

\begin{rmk}\label{rmk:Laplacian}
When the operator is the heat operator $\Delta-\p_t$ in $S_1$ (i.e. $A=id$, $s=-1$ and $T=1$), and $f=0$, from Lemma \ref{lem:sub_mean_value} and the semi-group property of the heat kernel one can conclude that the sub-mean value property \eqref{eq:sub_mean} holds for all $(x,t)\in \R^n\times (-1,0)$. In particular, under the assumption
\begin{align*}
\int_{S_1\setminus S_\delta} (|\nabla u(x,t)|^2+|u(x,t)|^2 ) G(x,t)\ dxdt <\infty,\ \forall 0<\delta \ll 1,
\end{align*}
where $G(x,t)=(-4\pi t)^{-n/2}e^{|x|^2/t}$ for $t<0$ is the standard Gaussian, the sub-mean value property \eqref{eq:sub_mean} holds true.

We also note that if $u$ and $\nabla u$ have at most polynomial growth, then one can take $t_0=T$ in Lemma \ref{lem:sub_mean_value}.
\end{rmk}

\begin{lem}\label{lem:linfty}
Let $u:\R^n\times [-1,1]\rightarrow \R_+$ be a nonnegative function with Tychonoff type growth \eqref{eq:T} and satisfies 
\begin{align*}
\di(A \nabla u)-\dt u \ge -|f|
\end{align*}
for some $f \in L^p(\R^n\times [-1,1])$, $p>n+2$, where $A=A(x,t)$ satisfies the same assumptions as Lemma \ref{lem:sub_mean_value}. 
Then if $\alpha_0$ is sufficiently small depending on $\lambda$,  for some absolute constant $c_0\in (0,1)$ and $\gamma=1-\frac{n+2}{p}$ we have
\begin{align*}
\sup_{B_r\times [-c_0r^2,c_0r^2]}u \lesssim H_r(u)^{1/2} + \|f\|_{L^p(\R^n\times [-1,1])}r^{1+\gamma}, \quad \forall r\in (0,1).
\end{align*}
Here $H_r(u)=\frac{1}{r^2}\int_{A_{r/2,r}} u^2 G$ is defined in Proposition \ref{prop:compactness}.
\end{lem}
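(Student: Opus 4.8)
The strategy is a Moser-type iteration combined with the sub-mean value property from Lemma \ref{lem:sub_mean_value}. By scaling it suffices to treat a normalized version: set $u_r(x,t):=u(rx,r^2t)/H_r(u)^{1/2}$ and observe $u_r$ is again a nonnegative Tychonoff-growth subsolution (for the rescaled matrix $A(r\cdot,r^2\cdot)$, which satisfies the same ellipticity bounds) with inhomogeneity $f_r$ obeying $\|f_r\|_{L^p(S_{2})} \lesssim r^{2-(n+2)/p}H_r(u)^{-1/2}\|f\|_{L^p}$. The claim then reduces to the bound $\sup_{Q_1^{\pm}} u_r \lesssim \|u_r\sqrt G\|_{L^2(A_{1/2,1})} + \|f_r\|_{L^p(S_2)}$, where $Q_1^{\pm}:=B_1\times[-c_0,c_0]$; unwinding the scaling recovers the stated estimate with $r^{1+\gamma}$ since $1+\gamma = 2-(n+2)/p$ - wait, $1+\gamma = 2-(n+2)/p$ only if $\gamma = 1-(n+2)/p$, which is precisely the stated $\gamma$, so this is consistent.

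\textbf{Key steps.} First I would apply the sub-mean value property \eqref{eq:sub_mean} at a point $(x_0,t_0)\in B_r\times[-c_0r^2, c_0r^2]$, but with the initial time taken as $-cr^2$ rather than $-1$: the point is that $\mathcal G(x_0,y;\tau,t_0)$ is comparable to the Gaussian $G$ re-centered appropriately, by the heat-kernel bounds \eqref{eq:sub_mp1} (and a matching lower bound from \cite{F64} or Aronson's estimate). This gives
\[
u(x_0,t_0) \lesssim \int_{\R^n} u(y,-cr^2)\,\mathcal G(x_0,y;-cr^2,t_0)\,dy + \int_{-cr^2}^{t_0}\!\!\int_{\R^n} |f(y,\tau)|\,\mathcal G(x_0,y;\tau,t_0)\,dy\,d\tau.
\]
The inhomogeneity term is controlled by H\"older's inequality against the Gaussian in the $L^{p/(p-1)}$ norm, producing the factor $r^{1+\gamma}\|f\|_{L^p}$ exactly as in the computation \eqref{eq:inhomo}. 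For the first term, the upper Gaussian bound lets me dominate it by $c\,r^{-n}\int_{B_{Cr}} u(y,-cr^2)\,dy$ up to a tail that is exponentially small because of the Tychonoff growth control. The remaining issue is to pass from this time-slice $L^1$ average at $t=-cr^2$ to the space-time $L^2$-Gaussian average $H_r(u)^{1/2}$ over the annulus $A_{r/2,r}$.

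\textbf{The main obstacle.} Bridging that gap is the crux: one cannot directly bound $\int_{B_{Cr}} u(y,-cr^2)\,dy$ by $H_r(u)^{1/2}$ because a priori nothing forces mass at the single slice $t=-cr^2$ to be comparable to the annular average. The standard fix is to first run a Moser iteration on the subsolution $u$ in the annular region $A_{r/2, r}^{\pm}$ (or on the slightly enlarged parabolic annulus) to upgrade the $L^2$-Gaussian bound to an $L^\infty$ bound on a smaller annulus $A_{3r/4, r}^{\pm}$; this uses the Caccioppoli inequality for subsolutions with $L^p$ right-hand side, the parabolic Sobolev embedding, and the comparability of $G$ with a constant on each annulus. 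Then, having $\sup_{A_{3r/4,r}^\pm} u \lesssim H_r(u)^{1/2} + \|f\|_{L^p}r^{1+\gamma}$, one uses this as the initial/boundary data for a comparison argument: apply the comparison principle for $\di(A\nabla\cdot)-\p_t$ on $Q_r^{\pm}\setminus$ (a slightly smaller cylinder) with the subsolution $u$ against the explicit supersolution built from $\sup_{A_{3r/4,r}^\pm} u$ plus the particular solution handling $|f|$, propagating the bound inward to $B_r\times[-c_0r^2, c_0r^2]$. A cleaner alternative, avoiding a second comparison, is: once $u$ is bounded on the annulus by the desired quantity, directly invoke the sub-mean value property \eqref{eq:sub_mean} with initial time $t=-r^2$ (inside the annulus where the bound holds) - then $\int u(y,-r^2)\mathcal G\,dy \lesssim \sup_{A_{r/2,r}^\pm} u \lesssim H_r(u)^{1/2}+\ldots$ immediately, since $\mathcal G$ integrates to at most $1$. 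I expect the Moser iteration step — carefully tracking the $L^p$-inhomogeneity contribution and the Gaussian weights through the iteration — to be the genuinely technical part, though it is entirely standard in spirit; everything else is bookkeeping with heat-kernel bounds and scaling.
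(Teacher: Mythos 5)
Your skeleton agrees with the paper up to the point where the $f$-term is handled (sub-mean value property \eqref{eq:sub_mean} plus H\"older against the kernel, giving $r^{1+\gamma}\|f\|_{L^p}$), but the treatment of the $u$-term has a genuine gap. You bound the initial-slice integral by a local $L^1$ average over $B_{Cr}$ "up to a tail that is exponentially small because of the Tychonoff growth control". The Tychonoff condition \eqref{eq:T} is purely qualitative: it gives finiteness of a global weighted norm of $u$, with no relation to $H_r(u)^{1/2}+\|f\|_{L^p}r^{1+\gamma}$. So the tail contributes a term of the form $C(u)\,e^{-c/r^2}$ (or worse, since \eqref{eq:T} is a space--time integral and gives no bound on a single slice), and your final estimate would carry a constant depending on $u$ itself, which is not the statement of the lemma. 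The same problem reappears in your second step: the Moser/local maximum principle can only give an $L^\infty$ bound on a bounded spatial region of the time-slab $A_{r/2,r}$ (where $G\simeq r^{-n}$; note $G$ is \emph{not} comparable to a constant on the whole slab), so when you re-apply \eqref{eq:sub_mean} from a slice inside the slab, the far-field part of $\int_{\R^n}u(y,s)\G(x_0,y;s,t_0)\,dy$ is again uncontrolled. The comparison-principle variant cannot repair this either: $H_r(u)$ contains no information about times in $(-r^2/4,\,c_0r^2]$, so you have no lateral boundary data on $\p B_{Kr}\times(-r^2/4, c_0r^2]$, and any bounded-domain barrier argument is stuck; the information must propagate forward in time globally in space, which is exactly where the tail lives.

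The paper's proof shows that the "main obstacle" you identify is not actually there, and dissolves the tail issue at the same time. For $x_0\in B_r$, $t_0\in[-\tfrac{1}{100}r^2,\tfrac{1}{100}r^2]$ and $s\in(-r^2,-\tfrac{99}{100}r^2]$, the Gaussian bound \eqref{eq:sub_mp1} yields the pointwise domination $\G(x_0,y;s,t_0)\lesssim \sqrt{G(y,s)}\,(4\pi|s|)^{-n/4}e^{-|y|^2/(24|s|)}$, so Cauchy--Schwarz in $y$ bounds the whole slice integral (near field and tail together) by $\bigl(\int_{\R^n}u^2(y,s)G(y,s)\,dy\bigr)^{1/2}$ times a dimensional constant; averaging the free initial time $s$ over the interval $(-r^2,-\tfrac{99}{100}r^2]$, of length $\sim r^2$, then produces exactly $H_r(u)^{1/2}$. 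No Moser iteration and no second propagation step are needed, and the only structural requirement is that the decay rate of $\G$ beats that of $\sqrt{G}$, which is where the smallness of $\alpha_0$ relative to $\lambda$ enters. Your Moser step is fine as far as it goes, but it neither closes the tail gap nor is it needed once the kernel is split against $\sqrt{G}$ and the initial time is averaged.
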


\begin{rmk}\label{rmk:linfty}
Lemma \ref{lem:linfty} is different from the classical $L^\infty-L^2$ estimate, in the sense that the $L^\infty$ norm is taken over a full parabolic cylinder which includes future time. 
\end{rmk}
\begin{rmk}
It is enough to assume that $u$ and $f$ satisfy the hypothesis of Lemma \ref{lem:linfty} in $\R^n\times [-1,c_0r^2]$ instead of $\R^n\times [-1,1].$
\end{rmk}

\begin{proof}
The proof is similar as that for Theorem 7.3(iii) in \cite{DGPT}. We denote the fundamental solution of the operator $\di(A\nabla )-\dt$ by $\G$. Since $u$ is a nonnegative subsolution with growth assumption at infinity, for given $s\in [-1,0]$ and $(x,t)\in \R^n\times (s,1)$, by Lemma \ref{lem:sub_mean_value} we have
\begin{align}\label{eq:sub_mp}
u(x,t) \le \int_{\R^n} u(y,s)\G(x,y;s,t) dy + \int_{s}^{t}\int_{\R^n} |f(y,\tau)|\G(x,y;\tau,t) dyd\tau.
\end{align}
We recall the bound \eqref{eq:sub_mp1} for the fundamental solution $\G$. 
Let $t\in [-\frac{1}{100}r^2, \frac{1}{100}r^2]$ and $s \in (-r^2,-\frac{99}{100}r^2]$. Then $\frac{1}{2}|s|<t-s<\frac{5}{4\lambda}|s|.$ Thus using \eqref{eq:sub_mp1} we have
\begin{align*}
\G(x,y;s,t)\le \frac{K 2^{\frac{n}{2}}}{(4\pi |s|)^{n/2}}e^{-\frac{|x-y|^2}{5|s|}}\le \frac{K 2^{n/2}}{(4\pi |s|)^{n/2}}e^{-\frac{|y|^2}{6|s|}}e^{\frac{|x|^2}{|s|}}
\end{align*}
where the last inequality is due to $|x-y|^2  \ge \frac{5}{6}|y|^2-5|x|^2$.
Since $\frac{|x|^2}{s}\lesssim 1$ for $x\in B_r$ and $s\in (-r^2,-\frac{99}{100}r^2]$, one has
\begin{align*}
\G(x,y;s,t) \lesssim \sqrt{G(y,s)}\frac{1}{(4\pi |s|)^{n/4}}e^{-\frac{|y|^2}{24|s|}},
\end{align*}
where $G(y,s)$ is the standard Gaussian.
Hence using Cauchy-Schwarz we obtain from \eqref{eq:sub_mp} that for $(x,t)\in B_r\times [-\frac{1}{100}r^2, \frac{1}{100}r^2]$, 
 \begin{align*}
u(x,t) &\lesssim \left(\int_{\R^n} u^2(y,s)G(y,s) dy\right)^{1/2} \left(\int_{\R^n}\frac{1}{(4\pi |s|)^{n/2}} e^{-\frac{|y|^2}{12|s|}} dy\right)^{1/2}\\
& \ \ \  +\int_{s}^{t}\int_{\R^n} |f(y,\tau)|\frac{1}{|t-\tau|^{\frac{n}{2}}}e^{-\frac{|x-y|^2}{4\lambda(t-\tau)}} dyd\tau\\
&\lesssim \left(\int_{\R^n} u^2(y,s)G(y,s) dy\right)^{\frac{1}{2}} + \int_{s-t}^0\int_{\R^n}|f(x-y,\tau+t)|\frac{1}{|\tau|^{\frac{n}{2}}}e^{-\frac{|y|^2}{4\lambda |\tau|}}\ dyd\tau.
\end{align*}
To estimate the integral involving the inhomogeneity, we use   H\"older's inequality to get for $\frac{1}{p}+\frac{1}{q}=1$ and $\gamma=1-\frac{n+2}{p}$,
\begin{align*}
&\int_{s-t}^0\int_{\R^n}|f(x-y,\tau+t)|\frac{1}{|\tau|^{\frac{n}{2}}}e^{-\frac{|y|^2}{4\lambda |\tau|}}\ dyd\tau\\
&\lesssim \left(\int_{-r^2}^0\int_{\R^n}|f(x-y,\tau+t)|^p)\ dy d\tau\right)^{\frac{1}{p}}\left(\int_{-r^2}^0\int_{\R^n}\frac{1}{|\tau|^{\frac{nq}{2}}}e^{-\frac{q|y|^2}{4\lambda|\tau|}}\ dyd\tau\right)^{\frac{1}{q}}\\
&\lesssim r^{\gamma+1}\|f\|_{L^p(\R^n\times [-1,1])}
\end{align*}
Therefore, for any $x\in B_r$, $t\in [-\frac{1}{100}r^2, \frac{1}{100}r^2]$ and $s\in (-r^2, -\frac{99}{100}r^2]$ we have
\begin{align*}
u(x,t)&\lesssim \left(\int_{\R^n}u^2(y,s)G(y,s)\ dy\right)^{\frac{1}{2}} +  r^{\gamma+1}\|f\|_{L^p(\R^n\times [-1,1])}.
\end{align*}
Integrating over $s\in (-r^2,-\frac{99}{100}r^2]$, we obtain
\begin{align*}
u(x,t)
&\lesssim \left(\frac{1}{r^2}\int_{-r^2}^{-\frac{1}{4}r^2}\int_{\R^n}u^2(y,s)G(y,s)dyds\right)^{\frac{1}{2}}+ r^{\gamma+1}\|f\|_{L^p(\R^n\times [-1,1])}\\
&\lesssim H_r(u)^{1/2}+ r^{\gamma+1}\|f\|_{L^p(\R^n\times [-1,1])}.
\end{align*}
Thus we obtain the desired inequality with $c_0=\frac{1}{100}$.
\end{proof}

 \medskip

\subsection{$H^2$ and stability estimates}
\begin{lem}\label{lem:H2}
Let $u$ be a solution to \eqref{eq:main} with $\supp(u) \subset \overline{B_1^+} \times (-1,0].$ Suppose that $a^{ij}$ satisfies the assumptions (i)--(iii).  Then, there exists $\delta  >0$ sufficiently small depending on $n, p$, such that if  \begin{equation*}
\sup_{S_1^+} |a^{ij}-\delta^{ij}| \le \delta \quad \text{and} \quad \|\p_t a^{ij}\|_{L^{\frac{n+2}{2}}(S_1^+)}+\|\nabla a^{ij}\|_{L^{n+2}(S_1^+)} \le \delta,
\end{equation*}
 then for all $0<\rho<1/4$, $0\leq s\leq 2$ and for $\omega(x,t):=1+\frac{|x|}{\sqrt{|t|}}$, we have 
\begin{align}\label{eq:H2}
	& \int_{S^+_{\rho}}\left(|t||\nabla u|^2+ |t|^2  |\p_t u|^2 +  |t|^2| D^2u|^2\right)\omega^{2s}G \lesssim \int_{S^+_{2\rho}} (u^2+|t|^2f^2 )\omega^{2s}G.
\end{align}
\end{lem}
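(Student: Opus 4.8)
\textbf{Proof proposal for Lemma \ref{lem:H2}.}

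The plan is to establish the weighted $H^2$-estimate by an energy method, using the Gaussian-weighted Caccioppoli-type inequalities adapted to the parabolic Signorini problem. The key feature is that the weight $G\omega^{2s}$ involves the Gaussian $G$, which degenerates as $t\to 0^-$; this is precisely why the powers $|t|$, $|t|^2$ appear on the left-hand side, matching the parabolic scaling. First I would localize: since $\supp(u)\subset\overline{B_1^+}\times(-1,0]$, I can introduce a smooth cut-off $\chi=\chi(x,t)$ supported in $S_{2\rho}^+$ and equal to $1$ on $S_\rho^+$, constructed so that $|\nabla\chi|\lesssim\rho^{-1}$, $|\partial_t\chi|\lesssim\rho^{-2}$, and more importantly adapted to the parabolic geometry $\{|x|\le 2\rho\sqrt{|t|/(2\rho)^2}\}$-type regions so that the error terms generated by differentiating $\chi$ are controlled by the Gaussian decay (this is the standard device, cf. \cite[Claim A.1]{DGPT}). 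The off-diagonal assumption (iii) is what allows the even reflection in $x_n$, so I may work on the whole strip $S_{2\rho}$ with $u$ even in $x_n$, turning the Signorini boundary condition into an interior (reflected) equation with coefficients still satisfying the smallness and regularity hypotheses.

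The core computation proceeds in three steps, following the scheme of \cite{DGPT} (Chapter on $W^{2,1}_2$-regularity) but keeping track of the weight $\omega^{2s}$. Step one: the gradient estimate. Testing the equation $\partial_t u-\partial_i(a^{ij}\partial_j u)=f$ against $|t|\,u\,\omega^{2s}G\chi^2$ (using that on the thin set the Signorini conditions give a favorable sign, $u\partial_\nu^A u=0$ and $u\ge 0$, $\partial_\nu^A u\ge 0$), integrating by parts and absorbing, yields
\begin{align*}
\int_{S_\rho^+}|t||\nabla u|^2\omega^{2s}G\lesssim \int_{S_{2\rho}^+}(u^2+|t|^2 f^2)\omega^{2s}G,
\end{align*}
where the $\partial_t$-term produces, after integration by parts in $t$, a term $\int u^2(\ldots)G$ that is controlled because $\partial_t(|t|\omega^{2s}G)$ has the right sign/size relative to $\omega^{2s}G$ (using $\omega^{2s-2}\lesssim\omega^{2s}$), and the smallness $\sup|a^{ij}-\delta^{ij}|\le\delta$ lets us absorb the perturbative part of the ellipticity. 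Step two: the $D^2$-estimate. Here I would test the equation against $-|t|^2\partial_i(a^{ij}\partial_j u)\,\omega^{2s}G\chi^2$, or equivalently differentiate the equation and test against $|t|^2\partial_k u\,\omega^{2s}G\chi^2$ summed over $k$, again exploiting the reflected/Signorini boundary structure to kill the boundary term. This produces $\int|t|^2|D^2u|^2\omega^{2s}G$ on the left, plus error terms of two types: (a) terms with one derivative falling on $a^{ij}$, handled by Hölder's inequality with exponents $(n+2, \frac{2(n+2)}{n}, \ldots)$ combined with the Sobolev embedding $W^{1,2}\hookrightarrow L^{\frac{2(n+2)}{n}}$ in the Gaussian-weighted space (log-Sobolev / parabolic Sobolev inequality) and then absorbed using the smallness $\|\nabla a^{ij}\|_{L^{n+2}}\le\delta$; (b) the term with $\partial_t a^{ij}$, handled similarly with $L^{\frac{n+2}{2}}$ and the smallness $\|\partial_t a^{ij}\|_{L^{\frac{n+2}{2}}}\le\delta$. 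Step three: the $\partial_t u$-estimate follows directly from the equation, $|t|^2|\partial_t u|^2\lesssim |t|^2|\partial_i(a^{ij}\partial_j u)|^2+|t|^2 f^2\lesssim |t|^2|D^2u|^2+|t|^2|\nabla a^{ij}|^2|\nabla u|^2+|t|^2 f^2$, integrating against $\omega^{2s}G$ and using the $D^2$ and gradient bounds together with another application of Hölder and $\|\nabla a^{ij}\|_{L^{n+2}}\le\delta$ for the middle term.

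The main obstacle I anticipate is step two — controlling the error terms involving derivatives of the coefficients in the Gaussian-weighted setting. Unlike the flat case, one cannot simply use the Euclidean Sobolev inequality; one needs the correct weighted interpolation inequality (Nash/log-Sobolev type adapted to $G$ on the slabs $A_{r,2r}$) to estimate $\|e^{(\cdot)}\nabla u\|_{L^{2(n+2)/n}}$ in terms of $\sup_t\|e^{(\cdot)}\nabla u(\cdot,t)\|_{L^2}$ and $\|\nabla(e^{(\cdot)}\nabla u)\|_{L^2}$ — exactly the estimate that appears in the treatment of $X_2$ in the proof of Proposition \ref{prop:Carleman}. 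Keeping the weight $\omega^{2s}$ through these manipulations requires care since $\nabla(\omega^{2s})$ and $\partial_t(\omega^{2s})$ must be bounded pointwise by $\omega^{2s}\cdot(\text{scale})^{-1}$, which holds because $\nabla\omega\sim|t|^{-1/2}$ and $\partial_t\omega\sim\omega/|t|$; the resulting extra factors of $|t|^{\pm1/2}$ combine correctly with the explicit powers of $|t|$ in the statement. A secondary technicality is justifying all the integrations by parts and the absorption of boundary terms rigorously, which (as remarked in the paper for the penalized problems) is most cleanly done by first running the argument on the smooth penalized approximations $u^\epsilon$ of \eqref{eq:approx} and then passing to the limit — this is exactly what Lemma \ref{lem:app_H2} records, and the present lemma is its limiting form.
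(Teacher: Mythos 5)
Your overall architecture (penalized approximation \eqref{eq:approx}, Gaussian-weighted Caccioppoli estimates with the homogeneous cut-offs in $|x|/\sqrt{|t|}$, smallness absorption, and H\"older--Sobolev interpolation for the coefficient errors) matches the paper's proof, and your Step 1 is essentially identical to the paper's gradient estimate. The genuine gap is in your Step 2, the actual second-derivative bound. Your second option --- ``differentiate the equation and test against $|t|^2\partial_k u\,\omega^{2s}G\chi^2$ summed over $k$'' --- fails for $k=n$: differentiating in the normal direction and integrating by parts produces boundary integrals on $\{x_n=0\}$ involving $\partial_{nn}u$ and $\partial_n u$ that have no sign, and neither the Signorini conditions nor the penalized condition $a^{nn}_\epsilon\partial_n u^\epsilon=\beta_\epsilon(u^\epsilon)$ gives a favorable sign after a normal differentiation. (The even reflection does not rescue this: the reflected function solves the equation only up to a measure supported on the contact set, so $D^2u$ cannot be estimated as for an interior equation.) The paper's route is to differentiate only in the tangential directions $\mu\in\{1,\dots,n-1\}$, where the penalized boundary term is $-\int\beta'_\epsilon(u^\epsilon)(\partial_\mu u^\epsilon)^2\zeta_2^2G\le 0$, and then to recover $\partial_{nn}u^\epsilon$ algebraically from the equation using $a^{nn}\ge 7/8$ and the off-diagonal condition (iii); your first option (testing with $-|t|^2\partial_i(a^{ij}\partial_j u)\,\omega^{2s}G\chi^2$) only controls $\|\partial_i(a^{ij}\partial_j u)\|$ and does not by itself yield $\|D^2u\|$ without exactly this tangential-differentiation step.

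The second, related gap is quantitative: you correctly note that the interpolation needed for the $\nabla a^{ij}$ and $\partial_t a^{ij}$ error terms requires $\sup_t\|\omega^s\nabla u(\cdot,t)\sqrt G\|_{L^2}$, but you never produce this quantity. In the paper it comes from a separate intermediate step (testing with $\partial_t u^\epsilon\zeta_2^2G$ at the penalized level, where the boundary contribution is handled through the antiderivative $B_\epsilon$ and is $O(\epsilon)$), which simultaneously delivers the weighted bounds on $|t|^2|\partial_t u|^2$ and $|t||x|^2/|t|^2\,|\nabla u|^2$ used for absorption. Your plan to obtain $\partial_t u$ only at the end from the equation comes too late and creates a circularity: the absorption in Step 2 (and in the sup-in-time estimate itself) involves $\delta\|D^2u^\epsilon\zeta_2\sqrt G\|^2$ terms on the right, so the $\partial_t$-test, the tangential $D^2$ estimate, and the recovery of $\partial_{nn}u$ must be closed together with a single smallness parameter $\delta$, exactly as in Steps 2--3 of the paper's proof, rather than sequentially as you propose.
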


 \begin{proof}
 Our proof is inspired by that of \cite[Lemma 5.1]{DGPT}. 
  We approximate $u$ by $u^{\epsilon}$, which solves
\begin{equation*}
\begin{split}
\p_t  u^\epsilon -\p_i (a^{ij}_{\epsilon}\p_j  u^\epsilon)&=f^\epsilon \text{ in } B_R^+\times (-1,0],\\
a^{nn}_{\epsilon}\p_n  u^\epsilon &= \beta_\epsilon( u^\epsilon) \text{ on } B'_R\times (-1,0],\\
 u^\epsilon &= 0 \text{ on } (\p B_R)^+\times (-1,0],\\
 u^\epsilon(\cdot, -1)&=0 \text{ on } B_R^+\times \{-1\},
 \end{split}
\end{equation*}
where $R\geq 3$, $\beta_\epsilon$ is as defined in \eqref{eq:approx}; $a^{ij}_{\epsilon}$ and $f^\epsilon$ are appropriate mollifications of $a^{ij}$ and $f$.
	Writing the equation in the weak form, for any $\eta\in W^{1,0}_2( B_R^+\times (-1,0])$ vanishing a.e. on $(\p B_R)^+\times (t_1,t_2]$ and $(t_1,t_2] \subset (-1,0]$, we have 
	\begin{align}\label{eq:var_ineq}
		\int_{B_R^+\times (t_1,t_2]} (\p_tu^\epsilon \eta + a^{ij}_{\epsilon}\p_ju^\epsilon \p_i\eta)
		+ \int_{_{B'_R\times (t_1,t_2]}}\beta_\epsilon(u^\epsilon) \eta  = \int_{B_R^+\times (t_1,t_2]} f^\epsilon \eta.
	\end{align}
	Take a smooth function 
  $$h: [0,\infty) \rightarrow \R_+,\quad h(r)=(1+r)^s \iota(r),$$
   where $s\geq 0$ and $\iota: [0,\infty)\rightarrow [0,\infty)$ is a smooth cut-off function such that 
   \begin{align*}
   \iota'\leq 0,\quad \iota=1 \text{ in } [0,R-1], \quad \iota=0 \text{ in } [R,\infty).
   \end{align*} 
   We now fix  $\hat{\zeta_0}(y)=h(|y|) \in C_c^\infty(\R^n\setminus\{0\})\cap Lip(\R^n)$. Now, for the cut-off function $\hat{\zeta_0}$, define the family of homogeneous functions in $S_1$ as follows
	$$\zeta_k(x,t)=|t|^{k/2}\hat{\zeta}_0(x/\sqrt{|t|}),\quad k\in \N.$$
	\medskip
	
	\emph{Step 1: Gradient estimate.} We show that  for any $0<\rho <1/2$, there exists $C_{n,s}$ such that
	\begin{align*}
\int_{S_{\rho}^+}|t| \omega^{2s} |\nabla u|^2 G   \le C_{n, s} \int_{S_{2\rho}^+}   \omega^{2s}(u^2 + t^2 f^2) G. 
\end{align*}
\emph{Proof of Step 1:} We now take $r \in [\rho,2\rho]$ and $\epsilon'>0$ be any small number (Note that $\epsilon'$ is $\delta$ in \cite{DGPT} and the purpose of $\epsilon'$ is to avoid the singularity of the Gaussian kernel at $t=0$). We plug $\eta=u^\epsilon \zeta_1^2G$ in \eqref{eq:var_ineq} to get 
	\begin{align*}
		\int_{A_{\epsilon',r}^+} \p_tu^\epsilon u^\epsilon \zeta_1^2G + \int_{A_{\epsilon',r}^+} a^{ij}_{\epsilon}\p_iu^\epsilon \p_j(u^\epsilon \zeta_1^2G)
		+\int_{A'_{\epsilon',r}}\beta_\epsilon(u^\epsilon)u^\epsilon \zeta_1^2G = \int_{A_{\epsilon',r}^+} f^\epsilon u^\epsilon \zeta_1^2G.
	\end{align*}
On using $s \beta_\epsilon (s) \ge 0$, we obtain
\begin{align}\label{eq:grad_est_start}
		\int_{A_{\epsilon',r}^+} \p_tu^\epsilon u^\epsilon \zeta_1^2G + \int_{A_{\epsilon',r}^+} a^{ij}_{\epsilon}\p_iu^\epsilon \p_j(u^\epsilon \zeta_1^2G)
		 \le \int_{A_{\epsilon',r}^+} f^\epsilon u^\epsilon \zeta_1^2G.
	\end{align}
	We can re-write the above equation as follows
	\begin{align*}
		&\int_{A_{\epsilon',r}^+} \p_tu^\epsilon u^\epsilon \zeta_1^2G +\int_{A_{\epsilon',r}^+} a^{ij}_{\epsilon}\p_iu^\epsilon \p_ju^\epsilon \zeta_1^2G + \int_{A_{\epsilon',r}^+} a^{ij}_{\epsilon}\p_iu^\epsilon \p_j \zeta_1^2 u^\epsilon G \\
&+\int_{A_{\epsilon',r}^+} a^{ij}_{\epsilon}\p_iu^\epsilon \p_jG \zeta_1^2 u^\epsilon 
		 \le \int_{A_{\epsilon',r}^+} f^\epsilon u^\epsilon \zeta_1^2G.
	\end{align*}
Use the ellipticity of $a^{ij}_{\epsilon}$ in the second term of left hand side and $a^{ij}_{\epsilon}=a^{ij}_{\epsilon}-\delta^{ij}+\delta^{ij}$ in the fourth term of left hand side to find (recall that $Z=x\cdot \nabla +2t\p_t$)
 \begin{align}\label{eq:h2s1}
		&\int_{A_{\epsilon',r}^+} \frac{1}{4t}Z((u^\epsilon)^2) \zeta_1^2G +\frac{3}{4}\int_{A_{\epsilon',r}^+} |\nabla u^\epsilon|^2 \zeta_1^2G + \int_{A_{\epsilon',r}^+} a^{ij}_{\epsilon}\p_iu^\epsilon \p_j \zeta_1^2 u^\epsilon G \\
& +\int_{A_{\epsilon',r}^+} (a^{ij}_{\epsilon}-\delta^{ij})\p_iu^\epsilon \p_jG \zeta_1^2 u^\epsilon 
		 \le \int_{A_{\epsilon',r}^+} f^\epsilon u^\epsilon \zeta_1^2G.\notag
	\end{align}
 We handle the first term in left hand side of \eqref{eq:h2s1} by arguing as in \cite{DGPT} to obtain
\begin{align}\label{eq:h2s11}
\int_{A_{\epsilon',r}^+} \frac{1}{4t}Z((u^\epsilon)^2) \zeta_1^2G \ge -r^2 \int_{\R^n_+}u^\epsilon(\cdot, -r^2) \zeta_0^2G(\cdot, -r^2).
\end{align}
We now estimate the fourth term in \eqref{eq:h2s1} as follows. We first use $\sup |a^{ij}_{\epsilon}-\delta^{ij}| \lesssim \delta$, $|\nabla G|\leq \frac{|x|}{2|t|}G$ and then Young's inequality to find
\begin{align*}
\left|\int_{A_{\epsilon',r}^+} (a^{ij}_{\epsilon}-\delta^{ij})\p_iu^\epsilon \p_jG \zeta_1^2 u^\epsilon\right| &\le \delta \int_{A_{\epsilon',r}^+} \frac{|x|}{2|t|} |\nabla u^\epsilon|G \zeta_1^2 |u^\epsilon|\\
& \le \frac{\delta}{2} \int_{A_{\epsilon',r}^+} |\nabla u^\epsilon|^2 G \zeta_1^2 + \delta \int_{A_{\epsilon',r}^+} \frac{|x|^2}{8|t|^2} \zeta_1^2 |u^\epsilon|^2 G.
\end{align*}
Now an application of Claim A.1 in \cite{DGPT} for $v=\zeta_0 u^{\epsilon}$ and recalling that $\zeta_1= \sqrt{|t|} \zeta_0$ yields
\begin{align*}
\int_{A_{\epsilon',r}^+} \frac{|x|^2}{|t|^2} \zeta_1^2 |u^\epsilon|^2 G\lesssim \int_{A_{\epsilon',r}^+} (\zeta_0^2+|\nabla\zeta_1|^2)|u^\epsilon|^2G+ \zeta_1^2 |\nabla u^\epsilon|^2 G
\end{align*}
Combining the above two inequalities we thus obtain
\begin{align}\label{eq:h2s14}
\left|\int_{A_{\epsilon',r}^+} (a^{ij}_{\epsilon}-\delta^{ij})\p_iu^\epsilon \p_jG \zeta_1^2 u^\epsilon\right| \lesssim \delta \int_{A_{\epsilon',r}^+} \zeta_1^2|\nabla u^\epsilon|^2 G  +  (\zeta_0^2 + |\nabla \zeta_1|^2)|u^\epsilon|^2 G.
\end{align}
For the third term in \eqref{eq:h2s1} we use again Young's inequality and get
\begin{align}\label{eq:h2s13}
\int_{A_{\epsilon',r}^+}a^{ij}_\epsilon \p_iu^\epsilon\p_j\zeta_1^2 u^\epsilon G\leq \frac{1}{4}\int_{A_{\epsilon',r}^+} |\nabla u^\epsilon|^2 \zeta_1^2 G + 8\int_{A_{\epsilon',r}^+} |u^\epsilon|^2 |\nabla\zeta_1|^2 G.
\end{align}
We now use \eqref{eq:h2s11}, \eqref{eq:h2s14} and \eqref{eq:h2s13} in \eqref{eq:h2s1} to conclude 
 \begin{align*}
\left(\frac{1}{2}-C_n\delta\right)\int_{A_{\epsilon',r}^+} |\nabla u^\epsilon|^2 \zeta_1^2G 
		 &\le r^2 \int_{\R^n_+}u^\epsilon(\cdot, -r^2) \zeta_0^2G(\cdot, -r^2) \\
		 &+ (8+C_n\delta) \int_{A_{\epsilon',r}^+}  (\zeta_0^2 + |\nabla \zeta_1|^2)|u^\epsilon|^2G +\int_{A_{\epsilon',r}^+} f^\epsilon u^\epsilon \zeta_1^2 G.\notag
\end{align*}
We now use $\zeta_1^2=\zeta_0\zeta_2$ and Young's inequality in the last term on the right-hand side of the above equation to get 
\begin{align*}
\left(\frac{1}{2}-C_n\delta\right)\int_{A_{\epsilon',r}^+} |\nabla u^\epsilon|^2 \zeta_1^2G 
		 &\le r^2 \int_{\R^n_+}u^\epsilon(\cdot, -r^2) \zeta_0^2G(\cdot, -r^2) \\
		 &+ 2(8+C_n\delta) \int_{A_{\epsilon',r}^+}  [(\zeta_0^2 + |\nabla \zeta_1|^2)|u^\epsilon|^2 +|f^\epsilon|^2  \zeta_2^2] G.\notag
\end{align*}
Integrating over $r\in [\rho, 2\rho]$ and choosing $\delta \leq \delta_0=\delta_0(n)$ small we get
\begin{align}\label{eq:grad_est_eps}
\int_{A_{\epsilon',\rho}^+} |\nabla u^\epsilon|^2 \zeta_1^2 G\lesssim \int_{A_{\epsilon',2\rho}^+} [(\zeta_0^2 + |\nabla \zeta_1|^2)|u^\epsilon|^2 + |f^\epsilon|^2\zeta_2^2] G.
\end{align}
Now we first let  $\epsilon\rightarrow 0+$ and then let $R\rightarrow \infty$ and $\epsilon'\rightarrow 0+$. Noticing that 
\begin{align*}
\lim_{R\rightarrow \infty}\lim_{\epsilon\rightarrow 0}\int_{A_{\epsilon',2\rho}^+}\zeta_0^2|u^\epsilon|^2 G= \int_{A_{\epsilon',2\rho}^+}\omega^{2s}u^2 G,
\end{align*}
and
\begin{align*}
\lim_{R\rightarrow \infty}\lim_{\epsilon\rightarrow 0}\int_{A_{\epsilon',2\rho}^+}|\nabla\zeta_1|^2|u^\epsilon|^2 G=s^2  \int_{A_{\epsilon',2\rho}^+} \omega^{2s-2}u^2 G\leq s^2 \int_{A_{\epsilon',2\rho}^+} \omega^{2s}u^2 G,
\end{align*}
 we obtain the claimed estimate.
This completes the proof for \emph{Step 1}.
\\

\emph{Step 2: $L^2$-time derivative and $L^{\infty}$-gradient estimates.} We now plug $\eta =\partial_t u^{\epsilon}\zeta_2^2G$ in \eqref{eq:var_ineq}.
Then, for $\varsigma \in [-\rho^2,-(\epsilon')^2] $ and for any $\tilde\rho\in [\rho, 2\rho]$,  we obtain 
\begin{equation}\label{eq:l_infty1}
\begin{split}
\int_{-\tilde{\rho}^2}^{\varsigma}	\int_{\R^n_+} |\p_tu^\epsilon|^2\zeta_2^2G+ \int_{-\tilde{\rho}^2}^{\varsigma}\int_{\R^n_+} a^{ij}_{\epsilon}\p_ju^\epsilon \p_i(\p_t u^\epsilon \zeta_2^2 G)\\
 + \int_{-\tilde{\rho}^2}^{\varsigma}\int_{\R^{n-1}} \beta_\epsilon(u^\epsilon)\p_tu^\epsilon \zeta_2^2 G = \int_{-\tilde{\rho}^2}^{\varsigma}\int_{\R^n_+} f^\epsilon \p_tu^\epsilon \zeta_2^2 G.
 \end{split}
\end{equation}
For the third term on left hand side of \eqref{eq:l_infty1} we apply an integration by parts in $t-$variable to get
	\begin{align*}
		&\int_{-\tilde{\rho}^2}^{\varsigma}\int_{\R^{n-1}} \beta_\epsilon(u^\epsilon)\p_t u^\epsilon \zeta_2^2 G= \int_{-\tilde{\rho}^2}^{\varsigma}\int_{\R^{n-1}} \p_tB_\epsilon(u^\epsilon)\zeta_2^2 G  \\
		& =-\int_{-\tilde{\rho}^2}^{\varsigma}\int_{\R^{n-1}}B_\epsilon(u^\epsilon) \p_t(\zeta_2^2 G) +\int_{\R^{n-1}}B_\epsilon(u^\epsilon) \zeta_2^2G(\cdot, \varsigma)-B_\epsilon(u^\epsilon)\zeta_2^2 G(\cdot, -\tilde{\rho}^2),
	\end{align*}
	where $B_\epsilon:\R\rightarrow \R$ is the anti-derivative of $\beta_\epsilon$ with $B_\epsilon(0)=0$. Since
	$|B_\epsilon(u^\epsilon)|\leq C\epsilon \text{ on } Q'$  for $C=C(n,p,\|f\|_{L^p}, \|a^{ij}\|_{W^{1,1}_p})$ (cf. Lemma 4 and Lemma 5 in \cite{AU}). Also, we have
	\begin{align*}
	\p_t(\zeta_2^2 G)=2t\zeta_0^2G+|t|^{\frac{1}{2}}\zeta_0\langle\nabla \zeta_0,x\rangle G -\frac{|x|^2}{4} \zeta_0^2G -\frac{n}{2}t\zeta_0^2 G.
\end{align*}	 
Using $\zeta_0$ and $\sqrt{|t|}|\nabla \zeta_0|$ are bounded, and that $u^\epsilon$ is supported in $B_R$, we have that for all $t\in (-1,0)$ 
	\begin{equation}\label{eq:in_bdr}
	\begin{split}
		|\int_{-\tilde{\rho}^2}^{\varsigma}\int_{\R^{n-1}}B_\epsilon(u^\epsilon)\p_t(\zeta_2^2 G)|&\leq  C_{R,\epsilon'}\epsilon \int_{-\tilde{\rho}^2}^{\varsigma}\int_{B'_R}\frac{e^{\frac{|x'|^2}{4t}}}{|t|^{\frac{n}{2}}}dx'dt \leq  C_{R,\epsilon'}\epsilon,\\
		\int_{\R^{n-1}}B_\epsilon(u^\epsilon) \zeta_2^2G(\cdot, \tilde \varsigma) &\le C_{R,\epsilon'}\epsilon,\quad \forall \tilde \varsigma \in [-\tilde\rho^2, -(\epsilon')^2]
		\end{split}
		\end{equation}
	with $C$ particularly independent of $\epsilon$.  For the term on the right hand side of \eqref{eq:l_infty1}, we simply use Young's inequality to get
	\begin{align}\label{eq:in_rhs}
		|\int_{-\tilde{\rho}^2}^{\varsigma}\int f^\epsilon \p_tu^\epsilon \zeta_2^2G|\leq \int_{-\tilde{\rho}^2}^{\varsigma} \int |f^\epsilon|^2\zeta_2^2G + \frac{1}{4}                                                                                                                                                                                                                                                                                                                              \int_{-\tilde{\rho}^2}^{\varsigma} \int |\p_tu^\epsilon|^2 \zeta_2^2G.
	\end{align}
Applying \eqref{eq:in_bdr} and \eqref{eq:in_rhs} to \eqref{eq:l_infty1}  we find
\begin{equation}\label{eq:l_infty2}
\begin{split}
 \int_{-\tilde{\rho}^2}^{\varsigma} \int |\p_tu^\epsilon|^2\zeta_2^2G+\int_{-\tilde{\rho}^2}^{\varsigma}\int a^{ij}_{\epsilon}\p_ju^\epsilon \p_i(\p_t u^\epsilon \zeta_2^2G)\lesssim  C_{R,\epsilon'}\epsilon  + \int_{-\tilde{\rho}^2}^{\varsigma}\int |f^\epsilon|^2\zeta_2^2G.
 \end{split}
\end{equation}
\emph{Claim:} We have for some $C=C_n>0$,
\begin{align*}
X&:=\int_{-\tilde{\rho}^2}^{\varsigma}\int a^{ij}_{\epsilon}\p_ju^\epsilon \p_i(\p_t u^\epsilon \zeta_2^2G)\\
&\ge - \frac{17}{20}\int_{-\tilde{\rho}^2}^{\varsigma}\int  (\p_t u^{\epsilon})^2 \zeta_2^2  G+\left(\frac{1}{64}-C\delta\right)\int_{-\tilde{\rho}^2}^{\varsigma}\int |\nabla u^{\epsilon}|^2 \frac{|x|^2}{4t^2}   \zeta_2^2G\\
& \quad +\frac{1}{2}\int\langle A_{\epsilon} \nabla u^{\epsilon}, \nabla u^{\epsilon}\rangle \zeta_2^2 G(\cdot,\varsigma)-\frac{1}{2}\int\langle A_{\epsilon} \nabla u^{\epsilon}, \nabla u^{\epsilon}\rangle \zeta_2^2 G(\cdot,-\tilde\rho^2)\\
&\quad - C\delta \sup_{[-\tilde{\rho^2},-(\epsilon')^2]}\|\nabla u^{\epsilon}\zeta_2 \sqrt{G(\cdot, t)}\|^2_{L^2}-C \delta \int_{-\tilde{\rho}^2}^{\varsigma}\int |D^2 u^{\epsilon}|^2\zeta_2^2 G \\
&\quad -C \int_{-\tilde{\rho}^2}^{\varsigma} \int |\nabla u^{\epsilon}|^2  (|\nabla\zeta_2|^2 +\zeta_1^2) G.
\end{align*}
\emph{Proof of the claim:} Using Young's inequality and the ellipticity  $|a^{ij}_{\epsilon}\p_ju^{\epsilon}|^2 \le (1+\frac{1}{8})^2|\nabla u^{\epsilon}|^2$ we have
\begin{equation}\label{eq:h2s22}
\begin{split}
X&=\int_{-\tilde{\rho}^2}^{\varsigma}\int a^{ij}_{\epsilon}\p_ju^\epsilon \p_i\p_t u^\epsilon \zeta_2^2G\\
&\quad + 2\int_{-\tilde{\rho}^2}^{\varsigma}\int a^{ij}_{\epsilon}\p_ju^\epsilon  \p_t u^\epsilon \zeta_2 \p_i \zeta_2 G + \int_{-\tilde{\rho}^2}^{\varsigma}\int a^{ij}_{\epsilon}\p_ju^\epsilon \frac{x_i}{2t}   \p_t u^\epsilon \zeta_2^2 G\\
&\ge \int_{-\tilde{\rho}^2}^{\varsigma}\int a^{ij}_{\epsilon}\p_ju^\epsilon \p_i\p_t u^{\epsilon} \zeta_2^2G- 20\int_{-\tilde{\rho}^2}^{\varsigma}\int |\nabla u^\epsilon|^2 |\nabla \zeta_2|^2 G\\
& \quad - \frac{17}{20}\int_{-\tilde{\rho}^2}^{\varsigma}\int  (\p_t u^\epsilon)^2 \zeta_2^2  G - \frac{27}{64}\int_{-\tilde{\rho}^2}^{\varsigma}\int |\nabla u^\epsilon|^2 \frac{|x|^2}{4t^2}   \zeta_2^2 G.
\end{split}
\end{equation}
We now estimate the first term on the right hand side of \eqref{eq:h2s22}. To do this, first we observe that 
	\begin{align}\label{eq:2nd}
		\partial_t (\langle A_{\epsilon} \nabla u^{\epsilon}, \nabla u^{\epsilon}\rangle \zeta_2^2G)=2\langle A_{\epsilon} \nabla u^{\epsilon}, \nabla u^{\epsilon}_t\rangle\zeta_2^2G 
		+\langle \partial_t A_{\epsilon} \nabla u^{\epsilon}, \nabla u^{\epsilon}\rangle \zeta_2^2G \\
		 +2\langle A_{\epsilon} \nabla u^{\epsilon}, \nabla u^{\epsilon}\rangle \zeta_2 \partial_t\zeta_2 G+\langle A_{\epsilon} \nabla u^{\epsilon}, \nabla u^{\epsilon}\rangle\zeta_2^2 \p_tG\notag,
	\end{align}
	where $\partial_t A_{\epsilon}$ is the matrix with entries $(\partial_t A_\epsilon)^{ij}=\partial_t a^{ij}_{\epsilon}.$ We use fundamental theorem of calculus in $t$-variable in \eqref{eq:2nd}  to obtain
\begin{equation}\label{eq:l22}
\begin{split}
		& 2\int_{-\tilde{\rho}^2}^{\varsigma}\int a^{ij}_{\epsilon}\p_ju^\epsilon \p_i\p_t u^{\epsilon} \zeta_2^2 G
		  = \int\langle A_\epsilon \nabla u^{\epsilon}, \nabla u^{\epsilon}\rangle\zeta_2^2 G(\cdot,\varsigma)-\int\langle A_\epsilon \nabla u^{\epsilon}, \nabla u^{\epsilon}\rangle\zeta_2^2 G(\cdot,-\tilde\rho^2)\\
		 &- \int_{-\tilde{\rho}^2}^{\varsigma} \int \partial_t a^{ij}_{\epsilon}\p_ju^\epsilon \p_i u^\epsilon \zeta_2^2 G- 2\int_{-\tilde{\rho}^2}^{\varsigma} \int a^{ij}_{\epsilon}\p_ju^\epsilon \p_i u^\epsilon \zeta_2 \partial_t\zeta_2 G- \int_{-\tilde{\rho}^2}^{\varsigma} \int a^{ij}_{\epsilon}\p_ju^\epsilon \p_i u^\epsilon \zeta_2^2   \p_t G\\
		 &=:I_1+I_2+I_3+I_4+I_5.
	\end{split}
	\end{equation}
	As $\iota'\leq 0$ and $s\leq 2$ by our assumption, we have
	\begin{align*}
	\partial_t\zeta_2&=-\zeta_0+\frac{1}{2}s|y|(1+|y|)^{s-1}\iota(|y|)+\frac{|y|}{2}(1+|y|)^s \iota'(|y|) \\
	&= \iota(|y|) (1+|y|)^{s-1}\left(-1-(1-\frac{s}{2})|y|\right) + \frac{|y|}{2}(1+|y|)^s \iota'(|y|)\le 0.
	\end{align*}
	 Thus $I_4\geq 0$. To estimate $I_3$ we first apply H\"older's inequality and then Sobolev embedding to get
	\begin{align*}
		|I_3 |&\leq  \|\partial_t a^{ij}_{\epsilon}\|_{L^{\frac{n+2}{2}}}\|\nabla u^\epsilon\zeta_2 \sqrt{G}\|_{L^{\frac{2(n+2)}{n}}}^2\\
		%
		%
		&\lesssim  \|\partial_t a^{ij}_{\epsilon}\|_{L^{\frac{n+2}{2}}}\left(\sup_{[-\tilde{\rho^2},-(\epsilon')^2] }\|\nabla u^{\epsilon}\zeta_2 \sqrt{G}(\cdot, t)\|_{L^2}^2\right) ^{\frac{2}{n+2}}\|\nabla(\nabla u^\epsilon\zeta_2 \sqrt{G})\|_{L^2}^{\frac{2n}{n+2}}.
	\end{align*}
	We now recall $\|\partial_t a^{ij}_{\epsilon}\|_{L^{\frac{n+2}{2}}}\lesssim
\|\partial_t a^{ij}\|_{L^{\frac{n+2}{2}}}  =\delta$ and use Young's inequality to obtain
	\begin{equation}\label{eq:i_bdd}
	\begin{split}
	|I_3 |&\lesssim \delta \sup_{[-\tilde{\rho^2},-(\epsilon')^2]}\|\nabla u^{\epsilon}\zeta_2 \sqrt{G}(\cdot,t)\|^2_{L^2}+\delta\|\nabla(\nabla u^\epsilon\zeta_2 \sqrt{G})\|_{L^2}^{2}\\
	&\lesssim \delta \sup_{[-\tilde{\rho^2},-(\epsilon')^2]}\|\nabla u^{\epsilon}\zeta_2 \sqrt{G}\|^2_{L^2} + \delta\|D^2 u^\epsilon\zeta_2 \sqrt{G}\|_{L^2}^{2}\\
	&\quad +\delta\|\nabla u^\epsilon\cdot\nabla \zeta_2\sqrt{G}\|_{L^2}^{2}+\delta\|\nabla u^\epsilon\frac{|x|}{|t|}\zeta_2 \sqrt{G}\|_{L^2}^{2}. 
	\end{split} 
\end{equation} 
We bound $I_5$ from below as
\begin{align}\label{eq:s2l2l}
I_5&=\int_{-\tilde{\rho}^2}^{\varsigma} \int a^{ij}_{\epsilon}\p_ju^{\epsilon} \p_i u^{\epsilon} \zeta_2^2 \frac{|x|^2}{4t^2} G -\frac{n}{2} \int_{-\tilde{\rho}^2}^{\varsigma} \int a^{ij}_{\epsilon}\p_ju^{\epsilon} \p_i u^{\epsilon} \zeta_1^2   G\\
&\ge \frac{7}{8} \int_{-\tilde{\rho}^2}^{\varsigma} \int |\nabla u^{\epsilon}|^2  \zeta_2^2  \frac{|x|^2}{4t^2} G -n\int_{-\tilde{\rho}^2}^{\varsigma} \int |\nabla u^{\epsilon}|^2  \zeta_1^2   G,\notag
\end{align}
where in the last inequality we have used the ellipticity. Therefore, using  \eqref{eq:i_bdd} and \eqref{eq:s2l2l} in \eqref{eq:l22}, we can conclude the following bound for the first term in the right hand side of \eqref{eq:h2s22}: for some $C=C_n>0$,
\begin{equation}\label{eq:l22f}
	\begin{split}
		&2\int_{-\tilde{\rho}^2}^{\varsigma}\int a^{ij}_{\epsilon}\p_ju^{\epsilon} \p_i\p_t u^{\epsilon} \zeta_2^2 G
		 \\
		 &\ge \int\langle A_{\epsilon} \nabla u^{\epsilon}, \nabla u^{\epsilon}\rangle \zeta_2^2 G(\cdot,\varsigma)-\int\langle A_{\epsilon} \nabla u^{\epsilon}, \nabla u^{\epsilon}\rangle \zeta_2^2 G(\cdot,-\tilde\rho^2) \\
		&\quad -C\delta \sup_{[-\tilde{\rho^2},-(\epsilon')^2]}\|\nabla u^{\epsilon}\zeta_2 \sqrt{G}(\cdot, t)\|^2_{L^2} -C \delta \int_{-\tilde{\rho}^2}^{\varsigma}\int |D^2 u^{\epsilon}|^2\zeta_2^2 G \\
		&\quad +\left(\frac{7}{8} -C \delta\right) \int_{-\tilde{\rho}^2}^{\varsigma} \int |\nabla u^{\epsilon}|^2  \zeta_2^2 \frac{|x|^2}{4t^2} G -C \int_{-\tilde{\rho}^2}^{\varsigma} \int |\nabla u^{\epsilon}|^2  (|\nabla\zeta_2|^2 +\zeta_1^2) G.
	\end{split}
\end{equation}
Plugging \eqref{eq:l22f} in \eqref{eq:h2s22}, we thus get the claimed lower bound for $X$, which completes the proof for the claim.

With the claim at hand, \eqref{eq:l_infty2} becomes 
\begin{align*}
&\frac{3}{20}\int_{-\tilde{\rho}^2}^{\varsigma} \int |\p_tu^{\epsilon}|^2\zeta_2^2G+\left(\frac{1}{64}-C\delta\right)\int_{-\tilde{\rho}^2}^{\varsigma}\int |\nabla u^{\epsilon}|^2 \frac{|x|^2}{4t^2}   \zeta_2^2 G\\
& +\frac{1}{2}\int\langle A_{\epsilon} \nabla u^{\epsilon}, \nabla u^{\epsilon}\rangle \zeta_2^2 G(\cdot,\varsigma)- C\delta \sup_{[-\tilde{\rho^2},-(\epsilon')^2]}\|\nabla u^{\epsilon}\zeta_2 \sqrt{G(\cdot, t)}\|^2_{L^2}\\
 &\lesssim C_{R,\epsilon'}\epsilon+ \int_{-\tilde{\rho}^2}^{\varsigma}\int |f^\epsilon|^2\zeta_2^2G  + \delta \int_{-\tilde{\rho}^2}^{\varsigma}\int |D^2 u^{\epsilon}|^2\zeta_2^2 G. \notag \\
& \quad +\int_{-\tilde{\rho}^2}^{\varsigma} \int |\nabla u^{\epsilon}|^2  (|\nabla\zeta_2|^2 +\zeta_1^2) G+\int|\nabla u^{\epsilon}|^2 \zeta_2^2 G(\cdot,-\tilde\rho^2).\notag
\end{align*}
Notice that by ellipticity $\langle A_{\epsilon} \nabla u^{\epsilon}, \nabla u^{\epsilon} \rangle \ge \frac{7}{8}|\nabla u^{\epsilon}|^2.$ Then, we upper bound the right hand side by replacing $\varsigma$ with $-(\epsilon')^2$ to get 
\begin{align*}
&\frac{3}{20}\int_{-\tilde{\rho}^2}^{\varsigma} \int |\p_tu^{\epsilon}|^2\zeta_2^2\xi^2G+\left(\frac{1}{64}-C\delta\right)\int_{-\tilde{\rho}^2}^{\varsigma}\int |\nabla u^{\epsilon}|^2 \frac{|x|^2}{4t^2}   \zeta_2^2 G\\
& +\frac{7}{16}\int|\nabla u^{\epsilon}|^2 \zeta_2^2 G(\cdot,\varsigma)- C\delta \sup_{[-\tilde{\rho}^2,-(\epsilon')^2]}\|\nabla u^{\epsilon}\zeta_2 \sqrt{G(\cdot, t)}\|^2_{L^2}\\
 &\lesssim C_{R,\epsilon'}\epsilon+ \int_{-\tilde{\rho}^2}^{-(\epsilon')^2}\int |f^\epsilon|^2\zeta_2^2G  + \delta \int_{-\tilde{\rho}^2}^{-(\epsilon')^2}\int |D^2 u^{\epsilon}|^2\zeta_2^2 G. \notag \\
& \quad +\int_{-\tilde{\rho}^2}^{-(\epsilon')^2} \int |\nabla u^{\epsilon}|^2  (|\nabla\zeta_2|^2 +\zeta_1^2) G+\int|\nabla u^{\epsilon}|^2 \zeta_2^2 G(\cdot,-\tilde\rho^2).\notag
\end{align*}
Since the above inequality holds for arbitrary $\varsigma\in [-\tilde\rho^2, -(\epsilon')^2]$, we can now take $\sup$ in $\varsigma$ on the left hand side to find
\begin{equation*}
\begin{split}
	&\frac{3}{20}\int_{-\tilde{\rho}^2}^{-(\epsilon')^2} \int |\p_tu^{\epsilon}|^2\zeta_2^2G +\left(\frac{1}{64}-C\delta\right)\int_{-\tilde{\rho}^2}^{-(\epsilon')^2}\int |\nabla u^{\epsilon}|^2 \frac{|x|^2}{4t^2}   \zeta_2^2 G\\
	& + \left(\frac{7}{16}- C \delta\right) \sup_{[-\tilde{\rho^2},-(\epsilon')^2]}\|\nabla u^{\epsilon}\zeta_2 \sqrt{G}\|^2_{L^2}\\
	&\lesssim C_{R,\epsilon'}\epsilon+\int|\nabla u^{\epsilon}|^2 \zeta_2^2 G(\cdot,-\tilde\rho^2)+ \delta \int_{-\tilde{\rho}^2}^{-(\epsilon')^2}\int |D^2 u^{\epsilon}|^2\zeta_2^2 G\\
	&\quad + \int_{-\tilde{\rho}^2}^{-(\epsilon')^2}\int |f^\epsilon|^2\zeta_2^2G+\int_{-\tilde{\rho}^2}^{-(\epsilon')^2} \int |\nabla u^{\epsilon}|^2  (|\nabla\zeta_2|^2 +\zeta_1^2) G.
	\end{split}
\end{equation*}
We now choose $0<\delta<\delta_0(n)$ sufficiently small to conclude
\begin{equation}\label{eq:s2f}
\begin{split}
	&\int_{-\tilde{\rho}^2}^{-(\epsilon')^2} \int |\p_tu^{\epsilon}|^2\zeta_2^2G +\int_{-\tilde{\rho}^2}^{-(\epsilon')^2}\int |\nabla u^{\epsilon}|^2 \frac{|x|^2}{t^2}   \zeta_2^2 G + \sup_{[-\tilde{\rho^2},-(\epsilon')^2]}\|\nabla u^{\epsilon}\zeta_2 \sqrt{G}\|^2_{L^2}\\
	&\lesssim C_{R,\epsilon'}\epsilon+ \int_{-\tilde{\rho}^2}^{-(\epsilon')^2}\int |f^\epsilon|^2\zeta_2^2G+\int|\nabla u^{\epsilon}|^2 \zeta_2^2 G(\cdot,-\tilde\rho^2) \\
	 &+ \delta \int_{-\tilde{\rho}^2}^{-(\epsilon')^2}\int |D^2 u^{\epsilon}|^2\zeta_2^2 G+\int_{-\tilde{\rho}^2}^{-(\epsilon')^2} \int |\nabla u^{\epsilon}|^2  (|\nabla\zeta_2|^2 +\zeta_1^2) G.
	 \end{split}
\end{equation}

\medskip

	\emph{Step 3: $H^2$ estimates.} 
	We take $\p_{\mu}\eta$, where $\mu\in \{1,\cdots, n-1\}$ is a tangential direction in \eqref{eq:var_ineq} (with $r\in [\rho, 2\rho]$ and $\epsilon'>0$) to get
	\begin{align*}
		\int_{A_{\epsilon',r}^+} \p_tu^{\epsilon} \p_{\mu}\eta + \int_{A_{\epsilon',r}^+} \langle A_{\epsilon} \nabla u^{\epsilon}, \nabla \p_{\mu}\eta \rangle
		+ \int_{A'_{\epsilon',r}}\beta_\epsilon(u^\epsilon) \p_{\mu}\eta  = \int_{A_{\epsilon',r}^+} f^\epsilon \p_{\mu}\eta.
	\end{align*}
	Now, we integrate by parts in $\mu-$variable
	and take $\eta = \p_\mu u^\epsilon\zeta_2^2G$ to obtain  
	\begin{equation}\label{eq:H2mu}
	\begin{split}
		X_1+X_2+X_3+X_4&:=\int_{A_{\epsilon',r}^+} \p_{t\mu}u^\epsilon \p_\mu u^\epsilon\zeta_2^2G + \int_{A_{\epsilon',r}^+} \langle \p_{\mu} A_{\epsilon} \nabla u^\epsilon, \nabla (\p_\mu u^\epsilon\zeta_2^2G) \rangle \\
		&\quad + \int_{A_{\epsilon',r}^+} \langle  A_{\epsilon} \nabla \p_{\mu} u^\epsilon, \nabla (\p_\mu u^\epsilon\zeta_2^2G) \rangle +\int_{A_{\epsilon',r}^+} f^\epsilon \p_{\mu}(\p_\mu u^\epsilon\zeta_2^2G)  \\
		&=-\int_{A'_{\epsilon',r}}\beta'_\epsilon(u^\epsilon) (\p_{\mu} u^\epsilon)^2 \zeta_2^2G\leq 0,
		\end{split}
	\end{equation}
	where the last inequality follows from  $\beta'_\epsilon\geq 0$. First we  estimate the term $X_2$ on the left hand side of \eqref{eq:H2mu}:
	\begin{align}\label{eq:2ndH2}
	X_2&=\int_{A_{\epsilon',r}^+} \langle \p_{\mu} A_{\epsilon} \nabla u^\epsilon, \nabla (\p_\mu u^\epsilon) \rangle\zeta_2^2G\\
	&\quad  +2\int_{A_{\epsilon',r}^+} \langle \p_{\mu} A_{\epsilon} \nabla u^\epsilon, \zeta_2\nabla \zeta_2 \rangle\p_\mu u^\epsilon G+\int_{A_{\epsilon',r}^+} \langle \p_{\mu} A_{\epsilon} \nabla u^\epsilon, \nabla G \rangle\p_\mu u^\epsilon\zeta_2^2 \notag \\
	&\le \sqrt{\delta} \int_{A_{\epsilon',r}^+}  |\nabla (\p_\mu u^\epsilon)|^2 \zeta_2^2G+\frac{3}{\sqrt{\delta}}\int_{A_{\epsilon',r}^+}  |\p_\mu a^{ij}_\epsilon \p_j u^\epsilon|^2 \zeta_2^2G \notag\\
	&\quad   +\int_{A_{\epsilon',r}^+}|\nabla \zeta_2|^2 |\p_\mu u^\epsilon|^2 G+\frac{1}{2}\int_{A_{\epsilon',r}^+} |\p_\mu u^\epsilon|^2\frac{|x|^2}{16t^2}\zeta_2^2G. \notag
	\end{align}
	For the second term on the above right hand side, using H\"older's inequality, Sobolev embedding and $\|\nabla a^{ij}_{\epsilon}\|_{L^{n+2}} \lesssim \delta$ and arguing as in step 2 we get
	\begin{align*}
		\int_{A_{\epsilon',r}^+} |\p_\mu a^{ij}_{\epsilon}\p_j u^\epsilon|^2\zeta_2^2G &\lesssim {\delta} \left(\sup_{[-r^2,-(\epsilon')^2]} \int_{\R^n_+}|\nabla u^\epsilon|^2\zeta_2^2G  + \int_{A_{\epsilon',r}^+} |\nabla (\nabla u^\epsilon\zeta_2\sqrt{G})|^2\right)\\
	&\lesssim \delta\left( \sup_{[-r^2,-(\epsilon')^2]} \int_{\R^n_+}|\nabla u^\epsilon|^2\zeta_2^2G(\cdot, t)  + \int_{A_{\epsilon',r}^+} |D^2 u^\epsilon|^2\zeta_2^2G\right.\\
	&\qquad \left.+ \int_{A_{\epsilon',r}^+} | \nabla u^\epsilon|^2 |\nabla\zeta_2|^2G +  \int_{A_{\epsilon',r}^+} | \nabla u^\epsilon|^2 \frac{|x|^2}{t^2}\zeta_2^2G\right).\notag
	\end{align*}
	 Therefore $X_2$ can be bounded from below as 
	 \begin{align*}
	 X_2 &\gtrsim  - \sqrt{\delta} \sup_{[-r^2,-(\epsilon')^2]} \int_{\R^n_+}|\nabla u^\epsilon|^2\zeta_2^2G  - \sqrt{\delta} \int_{A_{\epsilon',r}^+} |D^2 u^\epsilon|^2\zeta_2^2G\\
	&\quad -  \int_{A_{\epsilon',r}^+} | \nabla u^\epsilon|^2 |\nabla\zeta_2|^2G -\int_{A_{\epsilon',r}^+} | \nabla u^\epsilon|^2 \frac{|x|^2}{t^2}\zeta_2^2G.
	 \end{align*}
	 We now estimate $X_3$ in \eqref{eq:H2mu} as follows:
	 \begin{align*}
	  X_3&= \int \langle  (A_{\epsilon}-I) \nabla \p_{\mu} u^\epsilon, \nabla (\p_\mu u^\epsilon\zeta_2^2G) \rangle + \int \langle  \nabla \p_{\mu} u^\epsilon, \nabla (\p_\mu u^\epsilon\zeta_2^2G) \rangle.
	  \end{align*}
	  Using  $|A_\epsilon-I|\lesssim \delta$ and Young's inequality, we can estimate the first term above as
	  \begin{align*}
	  &\int \langle  (A_{\epsilon}-I) \nabla \p_{\mu} u^\epsilon, \nabla (\p_\mu u^\epsilon\zeta_2^2G) \rangle
	 \\
	 &=  \int \langle  (A_{\epsilon}-I) \nabla \p_{\mu} u^\epsilon, \nabla \p_\mu u^\epsilon \rangle\zeta_2^2G + 2 \int \langle  (A_{\epsilon}-I) \nabla \p_{\mu} u^\epsilon,  \p_\mu u^\epsilon\zeta_2\nabla \zeta_2G \rangle\\
	 &\quad  + \int \langle  (A_{\epsilon}-I) \nabla \p_{\mu} u^\epsilon,  \p_\mu u^\epsilon\zeta_2^2 \frac{x}{2t} \rangle G \notag\\
	  &\le 3\delta \int |\nabla \p_{\mu} u^\epsilon|^2\zeta_2^2G + \delta \int   |\p_\mu u^\epsilon|^2 |\nabla \zeta_2|^2 G +\frac{\delta}{2}\int  |\p_\mu u^\epsilon|^2 \zeta_2^2 \frac{|x|^2}{4t^2}  G.
	 \end{align*}
Hence, combining the above estimate, \eqref{eq:H2mu} becomes 
	\begin{equation}\label{eq:H2mu21}
	\begin{split}
		&\int \p_{t\mu}u^\epsilon \p_\mu u^\epsilon\zeta_2^2G + \int \langle   \nabla \p_{\mu} u^\epsilon, \nabla (\p_\mu u^\epsilon\zeta_2^2G) \rangle
		+\int f^\epsilon \p_{\mu}(\p_\mu u^\epsilon\zeta_2^2G) \\
		 &\lesssim \sqrt{\delta} \sup_{[-r^2,-(\epsilon')^2]} \int_{\R^n_+}|\nabla u^\epsilon|^2\zeta_2^2G +  \sqrt{\delta} \int |D^2 u^\epsilon|^2\zeta_2^2G  \\
		 &\quad + \int | \nabla u^\epsilon|^2 |\nabla\zeta_2|^2G +  \int | \nabla u^\epsilon|^2 \frac{|x|^2}{t^2}\zeta_2^2G.
		 \end{split}
	\end{equation} 
	We recall that $Z=x \cdot \nabla +2t \partial_t$ and re-write the left-hand side of \eqref{eq:H2mu21} as follows
	\begin{align*}
	&\int \p_{t\mu}u^\epsilon \p_\mu u^\epsilon\zeta_2^2G + \int \langle   \nabla \p_{\mu} u^\epsilon, \nabla (\p_\mu u^\epsilon\zeta_2^2G) \rangle
		+\int f^\epsilon \p_{\mu}(\p_\mu u^\epsilon\zeta_2^2G)\\
		&= \int \frac{1}{4t}Z((\p_\mu u^\epsilon)^2)\zeta_2^2 G +\int |\nabla \p_\mu u^\epsilon|^2 \zeta_2^2 G + 2 \int \langle   \nabla \p_{\mu} u^\epsilon, \nabla \zeta_2) \rangle\p_\mu u^\epsilon \zeta_2 G\\
		& \ \ \ +\int f^\epsilon (\p_{\mu \mu} u^\epsilon) u^\epsilon\zeta_2^2G +2\int f^\epsilon u^\epsilon(\p_{\mu} \zeta_2)\zeta_2G + \int f^\epsilon u^\epsilon  \zeta_2^2 \p_{\mu} G.
	\end{align*}
	For the first term (i.e., $\int \frac{1}{4t}Z((\p_\mu u^\epsilon)^2)\zeta_2^2 G$), we use the arguments from \cite[page 93]{DGPT}, and for the other terms, we apply Young's inequality with $\sqrt{\delta}$. Consequently, \eqref{eq:H2mu21} becomes
	\begin{equation}\label{eq:H2mu2}
	\begin{split}
		 \int_{A_{\epsilon',r}^+}   | \nabla \p_{\mu} u^\epsilon|^2\zeta_2^2G &\lesssim \sqrt{\delta} \sup_{[-r^2,-(\epsilon')^2]} \int_{\R^n_+}|\nabla u^\epsilon|^2\zeta_2^2G +  \sqrt{\delta} \int_{A_{\epsilon',r}^+} |D^2 u^\epsilon|^2\zeta_2^2G \\
		& \quad + \int_{\R^n_+}\p_{\mu} u^\epsilon(\cdot,-r^2)^2\zeta_1^2G(\cdot,-r^2)+ \int_{A_{\epsilon',r}^+}  | \nabla u^\epsilon|^2 \frac{|x|^2}{t^2}\zeta_2^2G\\
		&\quad + \frac{1}{\sqrt{\delta}}\int_{A_{\epsilon',r}^+}  [| \nabla u^\epsilon|^2(\zeta_1^2+ |\nabla\zeta_2|^2) +(f^{\epsilon} \zeta_2)^2]G.
		\end{split}
	\end{equation} 
From equation \eqref{eq:main}, we conclude
 \begin{align*}
	a^{nn}_{\epsilon}\p_{nn}u^\epsilon=\p_t u^\epsilon -\p_{\mu} (a^{ij}_{\epsilon}\p_j {u^\epsilon})-\p_n a^{nn}_{\epsilon}\p_n u^\epsilon -f^{\epsilon}.
\end{align*}
Thus, applications of Young's inequality together with the ellipticity  yield
\begin{align*}
 \int_{A_{\epsilon',r}^+} |\p_{nn}u^\epsilon|  ^2\zeta_2^2G \lesssim 	 \int_{A_{\epsilon',r}^+}|\p_t u^\epsilon|^2 \zeta_2^2G + \int_{A_{\epsilon',r}^+} | \nabla \p_{\mu} u^\epsilon|^2\zeta_2^2G \\
 +\int_{A_{\epsilon',r}^+} |\nabla a^{ij}_{\epsilon}|^2| \nabla u^\epsilon|^2\zeta_2^2G +\int_{A_{\epsilon',r}^+} (f^\epsilon)^2\zeta_2^2G.
\end{align*}
We now bound the third term on the right hand side of the above equation in the same way as in step 2 to get 
\begin{align}\label{eq:h2s3n}
	\int_{A_{\epsilon',r}^+} |\p_{nn}u^\epsilon|  ^2\zeta_2^2G &\lesssim 	 \int_{A_{\epsilon',r}^+}|\p_t u^\epsilon|^2 \zeta_2^2G + \int_{A_{\epsilon',r}^+} | \nabla \p_{\mu} u^\epsilon|^2\zeta_2^2G +\int_{A_{\epsilon',r}^+} (f^\epsilon)^2\zeta_2^2G \\
	&\quad + \delta \sup_t \int_{\R^n_+}|\nabla u^\epsilon|^2\zeta_2^2G  +  \delta \int_{A_{\epsilon',r}^+} |D^2 u^\epsilon|^2\zeta_2^2G\notag\\
	 &\quad + \delta \int_{A_{\epsilon',r}^+} | \nabla u^\epsilon|^2 |\nabla\zeta_2|^2G +  \delta\int_{A_{\epsilon',r}^+} | \nabla u^\epsilon|^2 \frac{|x|^2}{t^2}\zeta_2^2G.\notag
\end{align}
We now combine \eqref{eq:H2mu2} and \eqref{eq:h2s3n} to conclude
	\begin{align*}
	 &\quad \int_{A_{\epsilon', r}^+}   | D^2 u^\epsilon|^2\zeta_2^2G\\ &\lesssim \sqrt{\delta} \sup_{[-r^2,-(\epsilon')^2]} \int_{\R^n_+}|\nabla u^\epsilon|^2\zeta_2^2G+ \int_{A_{\epsilon',r}^+}| \nabla u^\epsilon|^2 \frac{|x|^2}{t^2}\zeta_2^2G+\int_{A_{\epsilon',r}^+}|\p_t u^\epsilon|^2 \zeta_2^2G\\
	&\quad + \int_{\R^n_+}|\nabla u^\epsilon(\cdot,-r^2)|^2\zeta_1^2G(\cdot,-r^2) \\
	&\quad + \sqrt{\delta} \int_{A_{\epsilon',r}^+} |D^2 u^\epsilon|^2\zeta_2^2G +\frac{1}{\sqrt{\delta}} \int [| \nabla u^\epsilon|^2(\zeta_1^2+ |\nabla\zeta_2|^2) +(f^{\epsilon} \zeta_2)^2]G.
\end{align*} 
We now apply \eqref{eq:s2f} with $\tilde{\rho}=r$ to control the first three terms on the right hand side of the above inequality. We find
	\begin{align*}
	&\int_{A_{\epsilon',r}^+}   | D^2 u^\epsilon|^2\zeta_2^2G\lesssim C_{R,\epsilon'}\epsilon+ \int_{\R^n_+}|\nabla  u^\epsilon|^2(\zeta_1^2+\zeta_2^2)G(\cdot,-r^2)\\
	&\quad + \frac{1}{\sqrt{\delta}}\int_{A_{\epsilon',r}^+} [| \nabla u^\epsilon|^2(\zeta_1^2+ |\nabla\zeta_2|^2) +(f^{\epsilon}\zeta_2)^2]G+ \sqrt{\delta} \int_{A_{\epsilon',r}^+} |D^2 u^\epsilon|^2\zeta_2^2G.
\end{align*} 
We choose $\delta>0$ further small enough depending on $n$. Consequently, we conclude
\begin{align*}
	  &\quad \int_{A_{\epsilon',r}^+}   | D^2 u^\epsilon|^2\zeta_2^2G\\
	 &\lesssim C_{R,\epsilon'}\epsilon+ \int_{\R^n_+}|\nabla  u^\epsilon|^2(\zeta_1^2+\zeta_2^2)G(\cdot,-r^2)+\int_{A_{\epsilon',r}^+} [| \nabla u^\epsilon|^2(\zeta_1^2+ |\nabla\zeta_2|^2) +(f^{\epsilon}\zeta_2)^2]G.
\end{align*}
Now, integrating over $r \in [\rho,\frac{7}{4}\rho]$ and noting that $0\leq \zeta_2\leq \zeta_1$ for $|t|\leq 1$ yields
\begin{align*}
	& \int_{A_{\epsilon',\rho}^+}   | D^2u^\epsilon|^2\zeta_2^2G \lesssim C_{R,\epsilon'}\epsilon+\int_{A_{\epsilon',7\rho/4}^+} [| \nabla u^\epsilon|^2(\zeta_1^2+ |\nabla\zeta_2|^2) +(f^{\epsilon} \zeta_2)^2]G.
\end{align*} 
Using \eqref{eq:s2f} and repeating the same procedure as above we get
\begin{align*}
& \quad \sup_{t\in [-r^2, -(\epsilon')^2]}\int_{\R^n_+}|\nabla u^\epsilon|^2\zeta_2^2G(\cdot, t)+ \int_{A_{\epsilon',\rho}^+}| \nabla u^\epsilon|^2 \frac{|x|^2}{t^2}\zeta_2^2G\\
	&\quad +\int_{A_{\epsilon',\rho}^+}|\p_t u^\epsilon|^2 \zeta_2^2G+ \int_{A_{\epsilon',\rho}^+}   | D^2u^\epsilon|^2\zeta_2^2G\\
	&\lesssim C_{R,\epsilon'}\epsilon+\int_{A_{\epsilon',7\rho/4}^+} [| \nabla u^\epsilon|^2(\zeta_1^2+ |\nabla\zeta_2|^2) +(f^{\epsilon} \zeta_2)^2]G.
\end{align*}
Using the gradient estimate in \emph{step 1}, cf. \eqref{eq:grad_est_eps}, and that 
\begin{equation*}
|\nabla\zeta_k|\lesssim |\zeta_{k-1}|+ |t|^{\frac{k-1}{2}}(1+R)^{s}\chi_{\{R-1\leq \frac{|x|}{\sqrt{|t|}}\leq R\}},\quad \forall k\in \{1,2,\cdots\},
\end{equation*}
 we get
\begin{align*}
\int_{A_{\epsilon',7\rho/4}^+} [| \nabla u^\epsilon|^2(\zeta_1^2+ |\nabla\zeta_2|^2)G&\lesssim \int_{A_{\epsilon',2\rho}^+} [|u^\epsilon|^2 (\zeta_0^2 + |\nabla \zeta_1|^2)+|f^\epsilon|^2\zeta_2^2] G + e^{-\frac{R^2}{8}}\int_{B_R^+\times [-(2\rho)^2,-(\epsilon')^2]} |\nabla u^\epsilon|^2\\
\end{align*}
Since $\int_{B_R^+\times [-(2\rho)^2, -(\epsilon')^2]} |\nabla u^\epsilon|^2\leq C_n \|f\|_{L^2(B_R^+\times [-1,0])}^2$ by the global energy bound, cf. \cite[Chapter 3]{DGPT}, combining together with the gradient estimate in \emph{step 1} we thus get
\begin{equation}\label{eq:H2-eps}
\begin{split}
& \quad \sup_{t\in [-\rho^2, -(\epsilon')^2]}\int_{\R^n_+}|t|^2|\nabla u^\epsilon|^2\zeta_0^2G(\cdot, t)+ \int_{A_{\epsilon',\rho}^+}|t|| \nabla u^\epsilon|^2 \zeta_0^2G\\
	&\quad +\int_{A_{\epsilon',\rho}^+}|t|^2\left(|\p_t u^\epsilon|^2 + |D^2u^\epsilon|^2\right) \zeta_0^2G\\
	&\lesssim C_{R,\epsilon'}\epsilon+C e^{-R^2/8} + \int_{A_{\epsilon',2\rho}^+}\left( |u^\epsilon|^2(\zeta_0^2 + |\nabla \zeta_1|^2) +|f^{\epsilon}|^2\zeta_2^2\right) G.
	\end{split}
\end{equation}
Now we pass first the limit $\epsilon \rightarrow 0$, then $R \rightarrow \infty$ and finally $\epsilon' \rightarrow 0$ in the above inequality, we obtain the desired $H^2$-estimate \eqref{eq:H2}. The proof for the lemma is thus complete.  
\end{proof}

The following lemma is an analogue of \cite[Lemma 5.3]{DGPT}. 
\begin{lem}\label{lem:energy_close}
Let $u_1\in \mathcal{G}^{A_1,f_1}_p(S_1^+)$ and  $u_2\in \mathcal{G}_p^{A_2,f_2}(S_1^+)$. Then there exists $\delta=\delta(n)>0$ such that if $\sup_{S_1^+}|A_i-I|\leq \delta$, $i=1,2$, then for $0<\rho<1/4$ and $\omega:=1+\frac{|x|}{\sqrt{-t}}$, we have
\begin{align*}
\int_{S_{\rho}^+} |t||\nabla (u_1-u_2)|^2G   &\lesssim \int_{S_{2\rho}^+} (| u_1- u_2|^2 + |t|^2| f_1- f_2|^2)G \\
&+ \|A_1-A_2\|^{2}_{L^{\infty}({S_{2\rho}^+})}\int_{S_{4\rho}^+} \omega^2(u_2^2+ t^2f_2^2) G.
\end{align*}
\end{lem}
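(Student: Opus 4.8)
The statement is the natural difference-version of the weighted $H^2$-estimate in Lemma \ref{lem:H2}, with $s=0$; so the plan is to mimic \emph{Step 1} of the proof of Lemma \ref{lem:H2} applied to $w:=u_1-u_2$, carefully tracking the mismatch terms created by $A_1\neq A_2$. First I would set up the approximations: let $u_i^\epsilon$ solve the penalized problems \eqref{eq:approx} associated to $(A_i,f_i)$ as in the proof of Proposition \ref{prop:Carleman}, so that $u_i^\epsilon\to u_i$ weakly in $W^{1,1}_2$, the $u_i^\epsilon$ satisfy the global energy bound \eqref{eq:global_energy}, and $D^2u_i^\epsilon$ are locally uniformly bounded in $L^2$. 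Write $w^\epsilon:=u_1^\epsilon-u_2^\epsilon$; it satisfies
\begin{align*}
\p_t w^\epsilon-\p_i\big((A_1^\epsilon)^{ij}\p_j w^\epsilon\big)=(f_1^\epsilon-f_2^\epsilon)+\p_i\big(((A_1^\epsilon)^{ij}-(A_2^\epsilon)^{ij})\p_j u_2^\epsilon\big)\quad\text{in }B_R^+\times(-1,0],
\end{align*}
with the boundary term $(A_1^\epsilon)^{nn}\p_n w^\epsilon=\beta_\epsilon(u_1^\epsilon)-\beta_\epsilon(u_2^\epsilon)+\big((A_1^\epsilon)^{nn}-(A_2^\epsilon)^{nn}\big)\p_n u_2^\epsilon$ on $B'_R\times(-1,0]$. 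The key sign-favourable structure is preserved: testing the $w^\epsilon$-equation against $\eta=w^\epsilon\zeta_1^2 G$ (with $\zeta_k$ the homogeneous cut-offs from Lemma \ref{lem:H2}, taken with $s=0$), the boundary contribution from the penalization is $\int_{B'_R}(\beta_\epsilon(u_1^\epsilon)-\beta_\epsilon(u_2^\epsilon))(u_1^\epsilon-u_2^\epsilon)\zeta_1^2 G\geq 0$ by monotonicity of $\beta_\epsilon$, so it drops on the correct side exactly as $s\beta_\epsilon(s)\geq0$ was used in \eqref{eq:grad_est_start}.

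Next I would carry out the same chain of estimates as in \emph{Step 1} of Lemma \ref{lem:H2}: rewrite $\int\frac1{4t}Z((w^\epsilon)^2)\zeta_1^2 G\geq -r^2\int_{\R^n_+}(w^\epsilon)^2(\cdot,-r^2)\zeta_0^2 G(\cdot,-r^2)$ via \cite[page 93]{DGPT}-type manipulation; use ellipticity of $A_1^\epsilon$ (valid since $\sup|A_1-I|\leq\delta$) to extract $\tfrac34\int|\nabla w^\epsilon|^2\zeta_1^2 G$; absorb the $\nabla G$ term using $\sup|A_1^\epsilon-I|\lesssim\delta$, $|\nabla G|\leq\frac{|x|}{2|t|}G$, Young's inequality and \cite[Claim A.1]{DGPT} exactly as in \eqref{eq:h2s14}; and control the $\nabla\zeta_1^2$ term by Young's inequality as in \eqref{eq:h2s13}. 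The one genuinely new term is the one carrying $A_1-A_2$: after integration by parts the source $\p_i(((A_1^\epsilon)^{ij}-(A_2^\epsilon)^{ij})\p_j u_2^\epsilon)$ contributes
\begin{align*}
-\int ((A_1^\epsilon)^{ij}-(A_2^\epsilon)^{ij})\p_j u_2^\epsilon\,\p_i(w^\epsilon\zeta_1^2 G),
\end{align*}
which I bound by Cauchy–Schwarz and Young's inequality: the $\p_i w^\epsilon$ piece is absorbed into the good gradient term (paying a small constant times $\|A_1-A_2\|_{L^\infty}^2\int|\nabla u_2^\epsilon|^2\zeta_1^2 G$), while the $\nabla\zeta_1^2 G$ and $\nabla G$ pieces produce $\|A_1-A_2\|_{L^\infty}^2\int(\zeta_0^2+|\nabla\zeta_1|^2+\frac{|x|^2}{|t|^2}\zeta_1^2)|\nabla u_2^\epsilon|^2 G$, again handled by \cite[Claim A.1]{DGPT} applied to $\zeta_0\nabla u_2^\epsilon$ — or more directly, since the weight $\frac{|x|^2}{|t|^2}\zeta_1^2 G$ appears, I invoke the $H^2$-estimate Lemma \ref{lem:H2} for $u_2^\epsilon$ itself (with $s=1$) to bound $\int\omega^2|t||\nabla u_2^\epsilon|^2 G$ by $\int_{S_{4\rho}^+}\omega^2(u_2^2+|t|^2 f_2^2)G$. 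Integrating the resulting inequality in $r\in[\rho,2\rho]$, choosing $\delta=\delta(n)$ small enough to absorb all $\delta$-terms, and then passing to the limit first $\epsilon\to0$ (which kills $\|B_\epsilon(u_i^\epsilon)\|_\infty\to0$-type remainders and replaces $u_i^\epsilon\to u_i$, $f_i^\epsilon\to f_i$ strongly on the relevant sets) and then $R\to\infty$ (using $e^{-R^2/c}\to0$ against the uniform energy bound), yields the claimed estimate.

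\textbf{Main obstacle.} The routine part is the bookkeeping of the many Young's-inequality terms; the one point requiring care is the passage to the limit in the new mismatch term, because it involves $D^2 u_2^\epsilon$-type quantities through the weighted gradient of $u_2^\epsilon$. I would handle this by never differentiating $u_2^\epsilon$ twice: keep the $A_1-A_2$ term in the form $\int((A_1^\epsilon-A_2^\epsilon)\nabla u_2^\epsilon)\cdot\nabla(w^\epsilon\zeta_1^2 G)$ so that only $\nabla u_2^\epsilon$, $\nabla w^\epsilon$ and the (smooth, compactly supported) weights appear; then the weighted $L^2$-bounds on $\nabla u_2^\epsilon$ from Lemma \ref{lem:H2}(applied to $u_2^\epsilon$ with $s=1$, which holds uniformly in $\epsilon,R$ by the approximate version used in Lemma \ref{lem:app_H2}) and on $\nabla w^\epsilon$ (the good term) suffice, and strong $L^2$ convergence of $u_i^\epsilon$, $f_i^\epsilon$ on compact sets away from $t=0$ together with the uniform weighted bounds let me pass to the limit by dominated convergence plus a standard cut-off-near-$t=0$ argument, exactly as in the end of the proof of Lemma \ref{lem:H2}, cf. \eqref{eq:H2-eps}.
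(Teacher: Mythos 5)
Your proposal follows essentially the same route as the paper's proof: approximate by the penalized problems, test the (difference of the) weak formulations with $\eta=(u_1^\epsilon-u_2^\epsilon)\zeta_1^2G$, use $\beta_\epsilon'\ge 0$ so that $(\beta_\epsilon(u_1^\epsilon)-\beta_\epsilon(u_2^\epsilon))(u_1^\epsilon-u_2^\epsilon)\ge 0$, run the \emph{Step 1} machinery of Lemma \ref{lem:H2}, treat the mismatch term $\int\langle(A_1-A_2)\nabla u_2^\epsilon,\nabla((u_1^\epsilon-u_2^\epsilon)\zeta_1^2G)\rangle$ by Young's inequality, control the resulting weighted gradient of $u_2$ by Lemma \ref{lem:H2} with $s=1$, and pass to the limits $\epsilon\to 0$, $R\to\infty$, $\epsilon'\to 0$. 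This is exactly the paper's argument.

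One bookkeeping step as you wrote it does not close and should be corrected. In the Young step for the $\nabla G$-piece you place the weight $\frac{|x|^2}{|t|^2}\zeta_1^2=\frac{|x|^2}{|t|}\zeta_0^2$ on $|\nabla u_2^\epsilon|^2$. This weight is \emph{not} dominated by $|t|\omega^2\zeta_0^2$ (it exceeds it by a factor of order $|t|^{-1}$ near $t=0$), so it cannot be absorbed by the $s=1$ estimate $\int|t|\omega^2|\nabla u_2|^2G\lesssim\int\omega^2(u_2^2+t^2f_2^2)G$ that you invoke; likewise the alternative via \cite[Claim A.1]{DGPT} applied to $\zeta_0\nabla u_2^\epsilon$ would require $\int|t|\,|D^2u_2^\epsilon|^2\zeta_0^2G$, one power of $|t|$ stronger than what Lemma \ref{lem:H2} provides. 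The fix is to distribute the weight as the paper does: since $\frac{|x|}{2|t|}\zeta_1^2G=\frac{|x|}{2}\zeta_0^2G$, Young's inequality yields $\|A_1-A_2\|_{L^\infty}^2\int|x|^2\zeta_0^2|\nabla u_2^\epsilon|^2G+\int(w^\epsilon)^2\zeta_0^2G$, and then $\zeta_1^2+|x|^2\zeta_0^2=(|t|+|x|^2)\zeta_0^2\le|t|\omega^2\zeta_0^2$, so the gradient term is exactly of the form handled by Lemma \ref{lem:H2} with $s=1$, while the $(w^\epsilon)^2$ term is absorbed into the first term on the right-hand side. With this minor adjustment your argument coincides with the paper's proof.
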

\begin{proof}
The proof is similar to Step 1 in the proof of Lemma \ref{lem:H2}. We will here use the same notations. Let $u_i^{\epsilon}$ for $i=1,2$ be the approximations of $u_1$ and $u_2$. We here take $\eta=(u^\epsilon_1-u^\epsilon_2) \zeta_1^2G$ to obtain, for $i=1,2$ and $\epsilon'\ll 1$, 
	\begin{align*}
		&\int_{A^+_{\epsilon',r}} \p_tu^\epsilon_i (u^\epsilon_1-u^\epsilon_2) \zeta_1^2G + \int_{A^+_{\epsilon',r}} \langle A_i \nabla u^\epsilon_i, \nabla((u^\epsilon_1-u^\epsilon_2)\zeta_1^2G)\rangle \\ 
		&+\int_{A'_{\epsilon',r}}\beta_\epsilon(u^\epsilon_i)(u^\epsilon_1-u^\epsilon_2) \zeta_1^2G = \int_{A^+_{\epsilon',r}} f_i^\epsilon (u^\epsilon_1-u^\epsilon_2) \zeta_1^2 G.
	\end{align*}
Note that $\beta_{\epsilon}'\ge 0$ gives
$$(\beta_\epsilon(u^\epsilon_1)-\beta_\epsilon(u^\epsilon_2))(u^\epsilon_1-u^\epsilon_2) \ge 0.$$
Therefore it is easy to find
\begin{align*}
&\int_{A^+_{\epsilon',r}} \p_t(u^\epsilon_1-u^\epsilon_2) (u^\epsilon_1-u^\epsilon_2) \zeta_1^2G + \int_{A^+_{\epsilon',r}} \langle A_1 \nabla(u^\epsilon_1-u^\epsilon_2), \nabla((u^\epsilon_1-u^\epsilon_2) \zeta_1^2G)\rangle \\
&\le \int_{A^+_{\epsilon',r}} (f_1^\epsilon-f_2^\epsilon) (u^\epsilon_1-u^\epsilon_2) \zeta_1^2G +\int_{A^+_{\epsilon',r}} \langle (A_1-A_2) \nabla u^\epsilon_2, \nabla((u^\epsilon_1-u^\epsilon_2) \zeta_1^2G)\rangle.
\end{align*}
We now proceed as in Step 1 of proof of Lemma \ref{lem:H2} (more precisely, from \eqref{eq:grad_est_start} to \eqref{eq:grad_est_eps}) to obtain
 \begin{align*}
 \int_{A^+_{\epsilon',r}}  |\nabla(u^\epsilon_1-u^\epsilon_2)|^2 \zeta_1^2G 
&\lesssim \int_{A^+_{\epsilon',r}} (\zeta_0^2+|\nabla\zeta_1|^2)(u^\epsilon_1-u^\epsilon_2)^2G+\int_{A^+_{\epsilon',r}} (f_1^\epsilon-f_2^\epsilon)^2\zeta_2^2G \\
&+ \int_{A^+_{\epsilon',r}} \langle (A_1-A_2) \nabla u^\epsilon_2, \nabla((u^\epsilon_1-u^\epsilon_2) \zeta_1^2G)\rangle.
\end{align*}
Using Young's inequality in the last term, we get 
\begin{align*}
 \int_{A^+_{\epsilon',r}}  |\nabla(u^\epsilon_1-u^\epsilon_2)|^2 \zeta_1^2G 
&\lesssim \int_{A^+_{\epsilon',r}} (\zeta_0^2+|\nabla\zeta_1|^2)(u^\epsilon_1-u^\epsilon_2)^2G+\int (f_1^\epsilon-f_2^\epsilon)^2\zeta_2^2G \\
&+ \int_{A^+_{\epsilon',r}} |A_1-A_2|^{2}(\zeta_1^2+|x|^2\zeta_0^2) |\nabla u^\epsilon_2|^2 G.
\end{align*}
As in Step 1 of Lemma \ref{lem:H2}, passing to the limit $\epsilon\rightarrow 0$ and then $R\rightarrow \infty$ and $\epsilon'\rightarrow 0$, we conclude 
\begin{align*}
	\int_{S^+_{\rho}} |t| |\nabla(u_1-u_2)|^2 G 
	&\lesssim \int_{S^+_{2\rho}} [(u_1-u_2)^2+t^2(f_1-f_2)^2G\\
	&+ \|A_1-A_2\|^{2}_{L^{\infty}({S^+_{2\rho}})} \int_{S^+_{2\rho}} |t|\omega^2 |\nabla u_2|^2 G.
\end{align*}
Finally we use the energy inequality to find
\begin{align*}
	\int_{S^+_{\rho}} |t| |\nabla(u_1-u_2)|^2 G 
	&\lesssim \int_{S^+_{2\rho}} [(u_1-u_2)^2+t^2(f_1-f_2)^2G\\
	&+ \|A_1-A_2\|^{2}_{L^{\infty}({S^+_{2\rho}})} \int_{S^+_{4\rho}}  \omega^2 (u_2^2 +t^2f_2^2) G.
\end{align*}
This completes the proof of the Lemma \ref{lem:energy_close}.
\end{proof}

\section*{Acknowledgements}
V.A. has been supported by the Academy of Finland grant 347550. This work began during a visit by V.A. to the School of Mathematics at Monash University in Melbourne, in July 2022. He gratefully acknowledges the gracious hospitality of the Centre and the warm work environment. W.S. has been supported by the German Research Foundation (DFG) within the SFB 1481 (INST 222/1473-1).

\bibliography{final}

\begin{thebibliography}{10}

\bibitem{And13}
J.~Andersson.
\newblock Optimal regularity for the {S}ignorini problem and its free boundary.
\newblock {\em Invent. Math.}, 204(1):1--82, 2016.

\bibitem{AU}
A.~Arkhipova and N.~Uraltseva.
\newblock Sharp estimates for solutions of a parabolic {S}ignorini problem.
\newblock {\em Math. Nachr.}, 177:11--29, 1996.

\bibitem{AU88}
A.~A. Arkhipova and N.~N. Uraltseva.
\newblock Regularity of the solution of a problem with a two-sided limit on a
  boundary for elliptic and parabolic equations.
\newblock volume 179, pages 5--22, 241. 1988.
\newblock Translated in Proc. Steklov Inst. Math. {{\bf{1}}989}, no. 2, 1--19,
  Boundary value problems of mathematical physics, 13 (Russian).

\bibitem{ACM18}
I.~Athanasopoulos, L.~Caffarelli, and E.~Milakis.
\newblock On the regularity of the non-dynamic parabolic fractional obstacle
  problem.
\newblock {\em J. Differential Equations}, 265(6):2614--2647, 2018.

\bibitem{ACM19}
I.~Athanasopoulos, L.~Caffarelli, and E.~Milakis.
\newblock Parabolic obstacle problems, quasi-convexity and regularity.
\newblock {\em Ann. Sc. Norm. Super. Pisa Cl. Sci. (5)}, 19(2):781--825, 2019.

\bibitem{AT20}
A.~Audrito and S.~Terracini.
\newblock On the nodal set of solutions to a class of nonlocal parabolic
  equations. to appear in \em{Memoirs of the American Mathematical Society}.
\newblock arXiv: 1807.10135.

\bibitem{BDGP20}
A.~Banerjee, D.~Danielli, N.~Garofalo, and A.~Petrosyan.
\newblock The regular free boundary in the thin obstacle problem for degenerate
  parabolic equations.
\newblock {\em Algebra i Analiz}, 32(3):84--126, 2020.

\bibitem{BDGP21}
A.~Banerjee, D.~Danielli, N.~Garofalo, and A.~Petrosyan.
\newblock The structure of the singular set in the thin obstacle problem for
  degenerate parabolic equations.
\newblock {\em Calc. Var. Partial Differential Equations}, 60(3):Paper No. 91,
  52, 2021.

\bibitem{BG18}
A.~Banerjee and N.~Garofalo.
\newblock Monotonicity of generalized frequencies and the strong unique
  continuation property for fractional parabolic equations.
\newblock {\em Adv. Math.}, 336:149--241, 2018.

\bibitem{BSZ17}
A.~Banerjee, M.~Smit Vega~Garcia, and A.~K. Zeller.
\newblock Higher regularity of the free boundary in the parabolic {S}ignorini
  problem.
\newblock {\em Calc. Var. Partial Differential Equations}, 56(1):Paper No. 7,
  26, 2017.

\bibitem{C22}
G.~Chatzigeorgiou.
\newblock Regularity for the fully nonlinear parabolic thin obstacle problem.
\newblock {\em Commun. Contemp. Math.}, 24(3):Paper No. 2150011, 22, 2022.

\bibitem{CSV20}
M.~Colombo, L.~Spolaor, and B.~Velichkov.
\newblock Direct epiperimetric inequalities for the thin obstacle problem and
  applications.
\newblock {\em Comm. Pure Appl. Math.}, 73(2):384--420, 2020.

\bibitem{CSV202}
M.~Colombo, L.~Spolaor, and B.~Velichkov.
\newblock On the asymptotic behavior of the solutions to parabolic variational
  inequalities.
\newblock {\em J. Reine Angew. Math.}, 768:149--182, 2020.

\bibitem{DGPT}
D.~Danielli, N.~Garofalo, A.~Petrosyan, and T.~To.
\newblock Optimal regularity and the free boundary in the parabolic {S}ignorini
  problem.
\newblock {\em Mem. Amer. Math. Soc.}, 249(1181):v + 103, 2017.

\bibitem{DL76}
G.~Duvaut and J.-L. Lions.
\newblock {\em Inequalities in mechanics and physics}.
\newblock Springer-Verlag, Berlin, 1976.

\bibitem{FS16}
M.~Focardi and E.~Spadaro.
\newblock An epiperimetric inequality for the thin obstacle problem.
\newblock {\em Adv. Differential Equations}, 21(1-2):153--200, 2016.

\bibitem{F64}
A.~Friedman.
\newblock {\em Partial differential equations of parabolic type}.
\newblock Prentice-Hall, Inc., Englewood Cliffs, NJ, 1964.

\bibitem{GPG}
N.~Garofalo, A.~Petrosyan, and M.~Smit Vega~Garcia.
\newblock An epiperimetric inequality approach to the regularity of the free
  boundary in the {S}ignorini problem with variable coefficients.
\newblock {\em J. Math. Pures Appl. (9)}, 105(6):745--787, 2016.

\bibitem{HT22}
X.~Hu and L.~Tang.
\newblock {$H^{1+\alpha}$} estimates for the fully nonlinear parabolic thin
  obstacle problem.
\newblock {\em J. Differential Equations}, 321:40--65, 2022.

\bibitem{JP23}
S.~Jeon and A.~Petrosyan.
\newblock Regularity of almost minimizers for the parabolic thin obstacle
  problem.
\newblock {\em Nonlinear Anal.}, 237:Paper No. 113386, 29, 2023.

\bibitem{KRS16}
H.~Koch, A.~R\"{u}land, and W.~Shi.
\newblock The variable coefficient thin obstacle problem: {C}arleman
  inequalities.
\newblock {\em Adv. Math.}, 301:820--866, 2016.

\bibitem{KRS17}
H.~Koch, A.~R\"{u}land, and W.~Shi.
\newblock The variable coefficient thin obstacle problem: optimal regularity
  and regularity of the regular free boundary.
\newblock {\em Ann. Inst. H. Poincar\'{e} C Anal. Non Lin\'{e}aire},
  34(4):845--897, 2017.

\bibitem{KT09}
H.~Koch and D.~Tataru.
\newblock Carleman estimates and unique continuation for second order parabolic
  equations with nonsmooth coefficients.
\newblock {\em Comm. Partial Differential Equations}, 34(4-6):305--366, 2009.

\bibitem{PS14}
A.~Petrosyan and W.~Shi.
\newblock Parabolic boundary {H}arnack principles in domains with thin
  {L}ipschitz complement.
\newblock {\em Anal. PDE}, 7(6):1421--1463, 2014.

\bibitem{PZ19}
A.~Petrosyan and A.~Zeller.
\newblock Boundedness and continuity of the time derivative in the parabolic
  {S}ignorini problem.
\newblock {\em Math. Res. Lett.}, 26(1):281--292, 2019.

\bibitem{Poon96}
C.-C. Poon.
\newblock Unique continuation for parabolic equations.
\newblock {\em Comm. Partial Differential Equations}, 21(3-4):521--539, 1996.

\bibitem{R14}
A.~R{\"u}land.
\newblock {\em On some rigidity properties in PDEs}.
\newblock PhD thesis, Universit{\"a}ts-und Landesbibliothek Bonn, 2014.

\bibitem{RS17}
A.~R\"{u}land and W.~Shi.
\newblock Optimal regularity for the thin obstacle problem with
  {$C^{0,\alpha}$} coefficients.
\newblock {\em Calc. Var. Partial Differential Equations}, 56(5):Paper No. 129,
  41, 2017.

\bibitem{Shi20}
W.~Shi.
\newblock An epiperimetric inequality approach to the parabolic signorini
  problem.
\newblock {\em Discrete Contin. Dyn. Syst. A}, 40(3):1813--1846, 2020.

\bibitem{TL23}
C.~Torres-Latorre.
\newblock Parabolic boundary Harnack inequalities with right-hand side. to appear in \em{Archive for Rational Mechanics and Analysis}.
\newblock {\em arXiv: 2307.06487}, 2023.

\bibitem{Ural89}
N.~N. Ural'tseva.
\newblock An estimate of the derivatives of the solutions of variational
  inequalities on the boundary of the domain.
\newblock {\em Journal of Soviet Mathematics}, 45(3):1181--1191, 1989.

\bibitem{W99}
G.~S. Weiss.
\newblock A homogeneity improvement approach to the obstacle problem.
\newblock {\em Invent. Math.}, 138(1):23--50, 1999.

\end{thebibliography}
\bibliographystyle{abbrv}

\end{document}